\newtheorem{theorem}{Theorem}[section]
\newtheorem{claim}[theorem]{Claim}
\newtheorem{corollary}[theorem]{Corollary}
\newtheorem{definition}[theorem]{Definition}
\newtheorem{example}[theorem]{Example}
\newtheorem{lemma}[theorem]{Lemma}
\newtheorem{notation}[theorem]{Notation}
\newtheorem{proposition}[theorem]{Proposition}
\newtheorem{question}[theorem]{Question}
\newtheorem{remark}[theorem]{Remark}
\newtheorem*{theorem*}{Theorem}
\def\uh{\upharpoonright}
\DeclareMathOperator{\dom}{dom}
\DeclareMathOperator{\ran}{ran}
\DeclareMathOperator{\stem}{stem}
\DeclareMathOperator{\Ult}{Ult}
\DeclareMathOperator{\cf}{cf}
\DeclareMathOperator{\Ord}{Ord}
\DeclareMathOperator{\ZFC}{ZFC}
\DeclareMathOperator{\GCH}{GCH}
\DeclareMathOperator{\rank}{rank}
\DeclareMathOperator{\Cone}{cone}
\DeclareMathOperator{\crit}{crit}
\DeclareMathOperator{\NL}{NL}
\begin{document}
\title{Prikry-type forcing and minimal $\alpha$-degree}
\author{Yang Sen}

\maketitle
\begin{abstract}
In this paper, we introduce several classes of Prikry-type forcing notions,
two of which are used to produce minimal generic extensions, and the third
is applied in $\alpha$-recursion theory to produce minimal covers. The
first forcing as a warm up yields a minimal generic extension at a measurable
cardinal (in $V$), the second at an $\omega$-limit of measurable cardinals $\langle\gamma_n\colon n<\omega\rangle$  such that each  $\gamma_n$
( $n>0$) carries $\gamma_{n-1}$-many normal measures. Via a notion of $V_\gamma $-degree (see Definition \ref{def:vgammadegree}), we transfer the second Prikry-type construction for minimal generic extensions
to a construction for minimal degrees in $\alpha$-recursion theory. More explicitly,
\begin{theorem*}
Suppose $\langle\gamma_n\colon n<\omega\rangle$  is a strictly increasing sequence of measurable cardinals such that for each $n>0$,  $\gamma_n$ carries at least $\gamma_{n-1}$-many normal measures. Let $\gamma=\sup\{\gamma_n\colon n<\omega\}$. 
Then there is an $A\subset\gamma$ such that
\begin{itemize}
\item[(a)] $(L_{\gamma},\in,A)$ is not admissible.
\item[(b)] The $\gamma$-degree that contains $A$ has a minimal cover.
\end{itemize}
\end{theorem*}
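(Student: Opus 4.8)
The plan is to \emph{transfer} the second Prikry-type construction into $\alpha$-recursion theory at $\alpha=\gamma$: run that forcing over a transitive model $\mathcal A$ of size $\gamma$ coded by $A$, read off a generic object as a subset $B$ of $\gamma$, and then obtain (b) as the assertion that the generic extension $\mathcal A[G]$ is minimal over $\mathcal A$, re-expressed through the $V_\gamma$-degree calculus of Definition~\ref{def:vgammadegree}; part (a) will be essentially free from the shape of $A$. For the coding: since each $\gamma_n$ is inaccessible, $\gamma$ is a strong limit of cofinality $\omega$ with $|V_\gamma|=\gamma$, and $2^{\gamma_n}<\gamma$, so every normal measure on $\gamma_n$ has size $<\gamma$. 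Fix a large $\theta$ and an elementary submodel $\mathcal A\prec(H_\theta,\in)$ with $|\mathcal A|=\gamma$ and $V_\gamma\cup\gamma\subseteq\mathcal A$, containing the second Prikry-type forcing $\mathbb P$ for $\langle\gamma_n\rangle$ together with, for each $n>0$, a family of $\gamma_{n-1}$ normal measures on $\gamma_n$. The transitive collapse of $\mathcal A$ fixes $V_\gamma$, the ordinals below $\gamma$, and each such measure (whose members are subsets of some $\gamma_n\subseteq\mathcal A$), so we may assume $\mathcal A$ is transitive. Let $A\subseteq\gamma$ code $(\mathcal A,\in)$ and the measures via a fixed bijection $\mathcal A\to\gamma$, chosen so that $\langle\gamma_n:n<\omega\rangle$ becomes $\Sigma_1$-definable over $(L_\gamma,\in,A)$. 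Since that sequence is cofinal in $\gamma$ of order type $\omega$, $\Sigma_1$-collection fails over $(L_\gamma,\in,A)$, giving (a).

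Next I would build $G$ and $B$. Working in $V$, enumerate the dense subsets of (the version of) $\mathbb P$ (internal to $\mathcal A$) that lie in $\mathcal A$ as $\langle D_\xi:\xi<\gamma\rangle$, and split the index set into $\omega$ blocks, the $n$-th block $I_n$ ($n>0$) having size $\gamma_{n-1}$ — possible because the $\gamma_{n-1}$ are cofinal in $\gamma$. Using the $\gamma_{n-1}$ normal measures on $\gamma_n$, roughly one per member of $I_n$, carry out the fusion underlying the second construction: build a Prikry tree whose level-$n$ branching is arranged to meet each $D_\xi$ with $\xi\in I_n$, exploiting $\gamma_n$-completeness of the measures to absorb the $\gamma_{n-1}$-sized workload at that level. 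This is exactly the device that lets a genericity recursion run out to length $\gamma$ despite $\cf(\gamma)=\omega$. Let $G$ be the resulting filter, generic over $\mathcal A$, and let $B\subseteq\gamma$ code the pair $(\mathcal A,G)$ under a fixed pairing.

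Now for (b). Plainly $A\le_\gamma B$. Since $G$ is generic over $\mathcal A$, the Prikry sequence it adds dominates every $f\colon\omega\to\gamma$ in $\mathcal A$, so $G\notin\mathcal A$; as every set $\gamma$-recursive in $A$ lies in $\mathcal A$ (Definition~\ref{def:vgammadegree}), $B\not\le_\gamma A$, whence $\deg_\gamma(B)>\deg_\gamma(A)$. For minimality, let $A\le_\gamma C\le_\gamma B$. By Definition~\ref{def:vgammadegree}, $C\le_\gamma B$ means, up to $\gamma$-degree, $C\in\mathcal A[G]$, and $A\le_\gamma C$ means $\mathcal A$ is recoverable from $C$. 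The minimality of the generic extension established for the second Prikry-type forcing relativizes verbatim to $\mathcal A$: it is transitive and models enough $\ZFC$, by elementarity it sees all the Prikry- and splitting-type properties of $\mathbb P$, and $G$ was built generic over it; hence either $C\in\mathcal A$, so $\deg_\gamma(C)=\deg_\gamma(A)$, or $\mathcal A[C]=\mathcal A[G]$, so $B\le_\gamma C$ and $\deg_\gamma(C)=\deg_\gamma(B)$. Therefore $\deg_\gamma(B)$ is a minimal cover of the $\gamma$-degree of $A$.

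The main obstacle is the construction of $G$ together with its use in the last step: one must actually produce a filter generic over the size-$\gamma$ model $\mathcal A$ inside $V$ even though the direct-extension order of $\mathbb P$ is not $\gamma$-closed — this is precisely where the hypothesis that each $\gamma_n$ carries $\gamma_{n-1}$-many normal measures is consumed — and one must check that the forcing dichotomy ``$C\in\mathcal A$ or $\mathcal A[C]=\mathcal A[G]$'' is faithfully matched by the degree dichotomy under Definition~\ref{def:vgammadegree}. The remaining ingredients — the coding, part (a), and the reduction $A\le_\gamma B$ — are routine by comparison.
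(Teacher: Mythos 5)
Your overall strategy --- code a suitable structure by $A$, run the diagonal Prikry forcing over it, and convert minimality of the generic extension into a minimal cover of the $\gamma$-degree of $A$ --- is the same as the paper's, and your treatment of part (a), the coding of a transitive $\mathcal A\supseteq V_\gamma$, and the block-fusion meeting $\gamma$ many dense sets in $\omega$ blocks of size $\gamma_{n-1}$ are sound in outline. But the step you defer as a final ``check'' --- that the forcing dichotomy ``$C\in\mathcal A$ or $\mathcal A[C]=\mathcal A[G]$'' is faithfully matched by the degree dichotomy --- is not a check; it is the main content of the theorem, and your argument for it has a genuine gap. The relation $\leq_\gamma$ is a $\Sigma_1$-\emph{enumeration} reducibility: to get $B\leq_\gamma C$ you must exhibit a single $\gamma$-RE operator turning finite information about $C$ into finite information about $B$. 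The conclusion $\mathcal A[C]=\mathcal A[G]$ only says that $G$ is \emph{definable} from $C$ over the set model $\mathcal A[G]$; that definition is not, without substantial further work, a $\Sigma_1$ enumeration over $(L_\gamma,\in,A)$, and a condition produced by your external fusion over $\mathcal A$ need not even be a $\gamma$-RE object. This is exactly why the paper introduces the separate forcing $\mathbb{Q}_{\mathcal D}$, whose conditions are required to be $V_\gamma$-RE trees, proves that the splitting/deciding data (the operators $\Omega^{(e)}$ and $\Theta^{(e)}$) is produced uniformly $V_\gamma$-recursively in $e$, and only then runs the fusion --- so that the recursion recovering $G$ from a new set $C$ is itself $\Sigma_1$ over the coded structure $\mathcal V_\gamma$, i.e.\ an honest degree reduction. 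Relatedly, the converse direction ``$C\leq_\gamma B$ implies $C\in\mathcal A[G]$ up to degree'' needs the coding bijection $\mathcal A\to\gamma$ to be available inside $\mathcal A[G]$, which your externally chosen bijection need not be; the paper sidesteps this by making its code $W$ of $\mathcal V_\gamma$ itself $V_\gamma$-RE and proving the cone isomorphism of Theorem \ref{tonggou}.

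Two smaller points. First, your argument that $B\not\leq_\gamma A$ rests on ``every set $\gamma$-recursive in $A$ lies in $\mathcal A$,'' which is false: $A$ codes $\mathcal A$, so $A\notin\mathcal A$, and sets computed from $A$ need not lie in $\mathcal A$. The paper instead diagonalizes directly against all enumeration operators via the requirements $N(G)\neq W_e$ for $e<\gamma$. Second, the hypothesis that $\gamma_n$ carries $\gamma_{n-1}$ many normal measures is not what is consumed by the fusion (for which $\gamma_n$-completeness of a single measure per cardinal would suffice); it is consumed in the definition of $\mathbb{P}_{\mathcal D}$ and $\mathbb{Q}_{\mathcal D}$, where distinct nodes at level $n-1$ receive distinct, pairwise disjoint measure-one sets at level $n$ (Proposition \ref{bujiao}), which is what makes the generic branch recoverable from any new subset of $\gamma$ and hence drives minimality.
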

\end{abstract}

\section{Introduction}

Given a $\kappa$-complete ultrafilter $U$ on an infinite cardinal $\kappa$, Prikry forcing notion $\mathbb{P}_U$ is the set of all pairs $(s,A)$, where $s\in[\kappa]^{<\omega}$ and $A\in U$, ordered by $(s,A)\leq(t,B)$ if $t\subset s$, $s\cap(\max(t)+1)=t$ and $s{\setminus}t\subset B$. The basic effect of Prikry forcing is changing the cofinality of $\kappa$ to $\omega$ and preserving all other cofinalities. Since Silver's model for the failure of $\GCH$ at a measurable cardinal (see \cite{2,1}), Prikry forcing became a fundamental tool in forcing construction related to cardinal arithmetic involved large cardinal. Most variations of Prikry like forcings
are discussed in Gitik \cite{18}, which is a comprehensive source for Prikry-type forcings.


In this paper, we introduce three classes of Prikry-type forcings: $\mathbb{P}_{\vec{\mu}}$, $\mathbb{P}_{\mathcal{D}}$ and $\mathbb{Q}_{\mathcal{D}}$.
Most Prikry-type forcings use one measure at one (measurable) cardinal,
the above posets are defined with multiple ultrafilters on each cardinal, which enables us to produce a new structural feature: $\mathbb{P}_{\vec{\mu}}$ and $\mathbb{P}_{\mathcal{D}}$ yield minimal generic extensions;\footnote{\cite{PeterKoepke2012} independently
shows that $\mathbb{P}_{\vec{\mu}}$ yields a minimal generic extension.}
 and as an application in $\alpha$-recursion
theory, $\mathbb{Q}_{\mathcal{D}}$ gives us minimal cover in $\alpha$-degrees.

Assume $P$ is a forcing notion, $G$ is a $P$-generic filter over $V$. By a result of Laver, Woodin, Hamkins(\cite{19}), $V$ is definable in $V[G]$. Then the following are equivalent:
\begin{itemize}
\item[(a)] If $X$ is a set of ordinals in $V[G]$, then $X\in V$ or $V[X]=V[G]$.
\item[(b)] If $M$ is a definable (with parameters) class of $V[G]$, $M$ is an inner model of $\ZFC$ contains $V$, then $M=V$ or $M=V[G]$.
\end{itemize}
%
We say ``\emph{$G$ is minimal over $V$}" or ``\emph{$P$ yields minimal extensions}" if either $(a)$ or $(b)$ holds.
For example, Sacks perfect tree forcing yields minimal extensions, whereas Cohen forcing does not.
It is also a folklore that the classical Prikry forcing $\mathbb{P}_U$ does not yield minimal extensions.

We give two proofs for $\mathbb{P}_{\vec{\mu}}$ and $\mathbb{P}_{\mathcal{D}}$ yielding minimal generic extensions. The first argument is very much combinatorial (see \S 3,\S 4), the second one uses iterated ultrapowers (see \S 5). In the combinatorial method, we use the sum of ultrafilters (see \S 2) to handle forcing conditions, and the computations of the generic filter from a new set for the two posets are different.

In our second argument, we introduce two classes of iterated ultrapowers, $(*)$-iterated ultrapower and diagonal $(*)$-iterated ultrapower, and study the structures of their \emph{intermediate submodels}. Similar to the classical
situation, $\mathbb{P}_{U}$ corresponds to the iterated ultrapower constructed from $U$, our $\mathbb{P}_{\vec{\mu}}$ and $\mathbb{P}_{\mathcal{D}}$ correspond
to relevant $(*)$-iterated ultrapower and diagonal $(*)$-iterated ultrapower respectively. Therefore the iterated ultrapower proofs for $\mathbb{P}_{\vec{\mu}}$ and $\mathbb{P}_{\mathcal{D}}$ are quite similar. This gives us the advantage of studying the posets via the intermediate submodels and vice versa. See the author's doctoral dissertation \cite{Yangsen} for details.

In 1956, Spector showed that
there is a minimal Turing degree. Inspired by this result, we introduces $\mathbb{Q}_{\mathcal{D}}$ (see Section 6), a local version of $\mathbb{P}_{\mathcal{D}}$ used to produce a \emph{minimal cover} in $\alpha$-recursion theoretic sense.

\begin{definition}
Let $\alpha$ be an admissible ordinal and $\mathbf{a},\mathbf{b}$ be $\alpha$-degrees. We say that $\mathbf{a}$ is a \emph{minimal cover} of
$\mathbf{b}$ if $\mathbf{a}>_{\alpha}\mathbf{b}$ and there is no $\alpha$-degree $\mathbf{c}$ such that
$\mathbf{a}>_{\alpha}\mathbf{c}>_{\alpha}\mathbf{b}$. If $\mathbf{a}$ is a minimal cover of $\mathbf{0}$, then we say $\mathbf{a}$ is a \emph{minimal degree}.
\end{definition}

The following minimal cover theorem is a theorem of Shore.

\begin{theorem}[Shore \cite{13}]
If $\alpha$ is a $\Sigma_2$-admissible ordinal, then minimal $\alpha$-degree exists. Suppose $\gamma$ is an admissible ordinal, $A\subset\gamma$ and $(L_{\gamma},\in,A)$ is a $\Sigma_2$-admissible structure, then $A$ has a minimal cover.
\end{theorem}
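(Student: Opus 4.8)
The plan is to adapt Spector's perfect-tree (Sacks) construction to the $\alpha$-recursion setting, using the $\Sigma_2$-admissibility of $\alpha$ (respectively of the structure $(L_\gamma,\in,A)$) to carry out the required searches and the fusion $\alpha$-recursively. First I would fix the appropriate notion of an \emph{$\alpha$-perfect tree}: an $\alpha$-recursive tree $T\subseteq 2^{<\alpha}$ (a set of $\alpha$-finite binary sequences closed under initial segments) such that above every node of $T$ there are two incomparable nodes of $T$, with the splitting nodes arranged so that $T$ admits a canonical $\alpha$-recursive enumeration of its splitting levels. The minimal real $B$ will be obtained as the unique branch lying through a suitable decreasing sequence of such trees.

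The core of the argument is the \emph{splitting dichotomy}. For an $\alpha$-perfect tree $T$ and an index $e$ for a partial $\alpha$-recursive functional, say two nodes $e$-split if the functional, computed relative to the two nodes, produces incompatible outputs. I would show that either (i) there is an $\alpha$-perfect subtree $T'\subseteq T$ all of whose pairwise-incomparable nodes $e$-split, or (ii) there is an $\alpha$-perfect subtree $T'\subseteq T$ together with a node above which no two nodes $e$-split. In case (i) one recovers the branch from the functional by following splitting nodes, so $B\leq_\alpha\Phi_e^B$; in case (ii) the value of $\Phi_e^B$ can be read off $T'$ by an $\alpha$-finite search independent of $B$, so $\Phi_e^B$ (if total) is $\alpha$-recursive. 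Meeting this requirement $R_e$ for every $e$ forces every $C\leq_\alpha B$ to satisfy $C\leq_\alpha\emptyset$ or $B\leq_\alpha C$, which is exactly minimality of the $\alpha$-degree of $B$.

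To satisfy all requirements at once I would run a fusion: enumerate the requirements and build a decreasing sequence $\langle T_\sigma:\sigma<\alpha\rangle$ of $\alpha$-perfect trees, taking $T_{\sigma+1}$ to be the splitting-or-nonsplitting subtree of $T_\sigma$ furnished by the dichotomy for the $\sigma$-th requirement, and thinning at matched splitting levels so that the intersection $B=\bigcap_\sigma[T_\sigma]$ is a single branch. The first clause of the theorem is the case where the base is $\emptyset$; the relative statement is then obtained by carrying out the identical construction inside $(L_\gamma,\in,A)$, replacing $\leq_\alpha$ by reducibility relative to $A$ and $\emptyset$ by $A$, whereupon minimality of the cover follows verbatim.

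The step I expect to be the main obstacle is precisely where $\Sigma_2$-admissibility is indispensable: both deciding which horn of the dichotomy holds and verifying that the fusion stays $\alpha$-perfect are genuinely $\Sigma_2$ facts. Searching for a full $e$-splitting subtree, rather than a single splitting pair, requires iterating a $\Sigma_1$ search cofinally, so the predicate ``$T$ has an $e$-splitting subtree'' is $\Sigma_2$; only $\Sigma_2$-admissibility guarantees that this search is bounded $\alpha$-finitely and that the sequence $\langle T_\sigma:\sigma<\alpha\rangle$ is itself $\alpha$-recursive, hence that its length-$\alpha$ fusion is again an $\alpha$-perfect tree carrying an $\alpha$-recursive branch. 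Establishing the $\alpha$-finiteness of these searches — equivalently, the tameness of the fusion over all $\alpha$ stages — is the technical heart of the proof and the point at which mere admissibility would fail.
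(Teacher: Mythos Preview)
The paper does not prove this theorem: it is stated in the introduction as a quoted result of Shore \cite{13} with no proof given, serving only as background motivation for the paper's own constructions (which work at ordinals where $(L_\gamma,\in,A)$ is \emph{not} admissible, hence where Shore's theorem does not apply). So there is nothing in the paper to compare your proposal against.

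That said, your sketch is a faithful outline of Shore's original argument: the perfect-tree Spector--Sacks construction, the $e$-splitting dichotomy, and the identification of $\Sigma_2$-admissibility as exactly what is needed to bound the search for splitting versus non-splitting subtrees. One point you should be more careful about is the indexing of the fusion. Writing the sequence as $\langle T_\sigma:\sigma<\alpha\rangle$ and asserting it is $\alpha$-recursive glosses over a real difficulty: the number of reduction procedures is $\alpha$, but to keep the construction $\alpha$-recursive one typically works with a $\Sigma_2$-cofinal sequence of length the $\Sigma_2$-projectum (or uses a suitable $\Sigma_2$ Skolem function), and the handling of limit stages in the fusion requires care to ensure the intersection remains $\alpha$-perfect. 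Your final paragraph gestures at this, but the actual mechanism Shore uses to control the length and recursiveness of the construction is more delicate than ``$\Sigma_2$-admissibility bounds the search.''
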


$(L_{\gamma},\in,A)$ is a \emph{$\Sigma_n$-admissible structure} if for any $\Sigma_n$-formula $\phi(x,y)$ in the language $\{\in,A\}$,
\[(L_{\gamma},\in,A)\models\forall{u}[\forall{x\in u}\exists{y}\phi(x,y)\rightarrow
\exists{z}\forall{x\in{u}}\exists{y\in{z}}\phi(x,y)].\]
$(L_{\gamma},\in,A)$ is \emph{admissible} if it is $\Sigma_1$-admissible.

$\alpha$-recursion theory generalizes classical recursion theory to higher
ordinals. However, as remarked by Simpson in his \cite{15}, ``it is not always easy to appropriately generalize the
statement of a theorem of ordinary degree theory to $\alpha$-degree
theory, much less the proof. One obstacle is that the admissibility
of $\alpha$ does not imply admissibility of the expanded structure
$(L_{\alpha},\in,C)$ where $C\subseteq\alpha$, even if $C$ is
$\alpha$-r.e. and $\alpha$-regular. Therefore `relativization' to
$C$ may be difficult or impossible.'' For example, a long-standing open problem in $\alpha$-recursion theory is whether there is a minimal $\alpha$-degree at $\alpha=\aleph_{\omega}$.
More generally, one can ask

\begin{question}\label{question}
Are there $\gamma$ and $A\subset\gamma$ such that
\begin{enumerate}[$(i)$]
\item For each $n$, $\gamma$ is $\Sigma_n$-admissible.
\item $(L_{\gamma},\in,A)$ is not admissible.
\item The $\gamma$-degree that contains $A$ has a minimal cover.
\end{enumerate}
\end{question}

We will partially answer this question in \S 6 under some mild large cardinal assumption.
Our approach is to use a notion of $V_{\gamma}$-degree (this $\gamma$ is the supremum of $\langle\gamma_n\colon n<\omega\rangle$), which is isomorphic to a cone of $\gamma$-degrees, and via which we can transfer the Prikry-type construction (using $\mathbb{P}_{\mathcal{D}}$) to a construction of a minimal cover of some $\gamma$-degree, i.e. a minimal degree in this cone of $\gamma$-degrees.

\subsection*{Notation}

We use standard set theoretic conventions and notations.
Note that all ultrafilters in this paper are non-principal. Let $U$ be an ultrafilter on $X$, $f_1$ and $f_2$ are functions with domain $X$, we say $f_1$ and $f_2$ are \emph{$U$-equivalent} if $\{x\in X\colon f_1(x)=f_2(x)\}\in U$. We use $f_1\sim_U f_2$ to denote this property. If $U$ is countably complete, then $f_1$ is $U$-equivalent to $f_2$ is just $[f_1]_U=[f_2]_U$ in the ultrapower $\Ult(V,U)$.
Given an infinite cardinal $\kappa$, an ultrafilter $U$ on $\kappa$ is called a \emph{normal measure} if $U$ is $\kappa$-complete and closed under diagonal intersection. $\kappa$ is measurable iff $\kappa$ has a normal measure on it.

In this paper, two extra conditions are given for elementary embeddings $j\colon M\rightarrow N$ between transitive models of $\ZFC$:

\begin{itemize}
\item[1.](Non-trivial) It is not the case that $M=N$ and $j$ is identity.
\item[2.](Cofinal) $j''N\cap\Ord$ is cofinal in $M\cap\Ord$.
\end{itemize}

Definitions and notations in $\alpha$-recursion theory used in this paper all appeared in \cite{11} and \cite{14,15,16}.
Let us fix some concepts and notations to be used in this paper.

\begin{itemize}
\item $J\colon\Ord\times\Ord\rightarrow\Ord$ is a \emph{primitive recursive} bijection such that $x,y\leq J(x,y)$. When $J(s,t)=x$, we write
$(x)_0$ for $s$ and $(x)_1$ for $t$.
\item For a limit ordinal $\alpha$, we fix a bijection $K\colon\alpha\rightarrow L_{\alpha}$ which is $\Sigma_1$-definable in $L_{\alpha}$. We often write $K_{x}$ for $K(x)$. Elements of $L_{\alpha}$ are called \emph{$\alpha$-finite} set, and $K_{x}$ is called the $x$-th $\alpha$-finite set.
\item For $A\subset\alpha$, define $\NL(A)=\{J(x,y)\colon K_x\subset A\ \&\ K_y\subset\alpha{\setminus}A\}$. $\NL(A)$ is also a subset of $\alpha$.
\item Suppose $P,X\subset\alpha$, write $P^X=\{a\in\alpha\colon\exists x\in X\ J(a,x)\in P\}$. For $t\in\alpha$, let $P_t=\{a\in\alpha\colon J(a,t)\in P\}$. Then $P^X=\bigcup_{t\in X}P_t$.
\item For $A,B\subset\alpha$, $A\leq_{\alpha}B$ iff there is a $\alpha$-RE set $P$ such that $\NL(A)=P^{\NL(B)}$. Equivalence classes associated
to this poset form so-called $\alpha$-degrees. Sets in the least $\alpha$-degree are $\alpha$-recursive sets.
\item Suppose $\alpha$ is an ordinal. A set $A\subset\alpha$ is \emph{regular} if for any $K\in L_{\alpha}$, $K\cap A\in L_{\alpha}$. There is a subset of  $\omega_1^{CK}$ which is non-regular, but all subsets of $\omega$ are regular. Non-regularity is a major feature which distinguishes $\alpha$-recursion theory from the classical recursion theory.
\item For $A,B,C\subset\alpha$, set $A\oplus B=\{J(\xi,0)\colon \xi\in A\}\cup\{J(\xi,1)\colon \xi\in B\}$, $A\oplus B\oplus C=\{J(\xi,0)\colon \xi\in A\}\cup \{J(\xi,1)\colon \xi\in B\}\cup \{J(\xi,2)\colon \xi\in C\}$. $A\oplus B$ is contained in the least $\alpha$-degree greater than $\alpha$-degree of $A$ and $B$; $A\oplus B\oplus C$ is contained in the least $\alpha$-degree greater than $\alpha$-degree of $A$, $B$ and $C$.
\end{itemize}

\section{Sum of ultrafilters}

Sum of ultrafilters is not a new concept, it has two effects in this paper: we will use this concept to express some results on ultrafilters; using sum of ultrafilters, handling of Prikry-tree become convenient. \cite{25} gives a comprehensive exhibition of results about ultrafilters.

\begin{definition}\label{def:sum}
$I$ is a non-empty set. $\langle B_i\colon i\in I\rangle$ is a family of non-empty sets. $U$ is a ultrafilter on $I$. $\langle U_i\colon i\in I\rangle$ satisfies for each $i\in I$, $U_i$ is a ultrafilter on $B_i$. $\bigoplus_{i\in I}B_i=\{(i,x)\colon x\in B_i,\ i\in I\}$. Define $U*\langle U_i\colon i\in I\rangle$ is a family of subsets of $\bigoplus_{i\in I}B_i$: for $T\subset\bigoplus_{i\in I}B_i$, \[T\in U*\langle U_i\colon i\in I\rangle\mbox{ iff }\{i\in I\colon\{x\in B_i\colon(i,x)\in T\}\in U_i\}\in U.\]
\end{definition}

Clearly, $U*\langle U_i\colon i\in I\rangle$ is an ultrafilter on $\bigoplus_{i\in I}B_i$. In particular, if for each $i\in I$, $B_i=B$, then $\bigoplus_{i\in I}B_i=I\times B$. In this case, $U*\langle U_i\colon i\in I\rangle$ is an ultrafilter on cartesian product $I\times B$. This is a generalization of product ultrafilter described in \cite{8}: $U'$ and $U''$ are ultrafilters on $I$ and $B$ respectively, then $U'\times U''$ in \cite{8} is just $U'*\langle U_i\colon i\in I\rangle$ where each $U_i=U''$.

\begin{proposition}\label{lambdacomplet}
Suppose $\lambda$ is an infinite cardinal. For each $i\in I$, $U_i$ and $U$ are $\lambda$-complete, then $U*\langle U_i\colon i\in I\rangle$ is $\lambda$-complete.
\end{proposition}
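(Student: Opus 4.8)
The plan is to unwind the definition of the sum ultrafilter and reduce the $\lambda$-completeness of $W:=U*\langle U_i\colon i\in I\rangle$ to that of $U$ and of the $U_i$, using only the trivial fact that passing to ``$i$-sections'' of subsets of $\bigoplus_{i\in I}B_i$ commutes with intersections. Recall that $\lambda$-completeness means closure under intersections of fewer than $\lambda$ many members.

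First I would fix a cardinal $\kappa<\lambda$ and a family $\langle T_\xi\colon\xi<\kappa\rangle$ of members of $W$. For $T\subset\bigoplus_{i\in I}B_i$ and $i\in I$ write $T_{(i)}=\{x\in B_i\colon(i,x)\in T\}$. For each $\xi<\kappa$ set $A_\xi=\{i\in I\colon(T_\xi)_{(i)}\in U_i\}$; by the definition of $W$ we have $A_\xi\in U$. Since $U$ is $\lambda$-complete and $\kappa<\lambda$, the set $A:=\bigcap_{\xi<\kappa}A_\xi$ lies in $U$. Now fix $i\in A$. Then $(T_\xi)_{(i)}\in U_i$ for every $\xi<\kappa$, and since $U_i$ is $\lambda$-complete and $\kappa<\lambda$ we get $\bigcap_{\xi<\kappa}(T_\xi)_{(i)}\in U_i$. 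The elementary observation is that $\bigcap_{\xi<\kappa}(T_\xi)_{(i)}=\bigl(\bigcap_{\xi<\kappa}T_\xi\bigr)_{(i)}$, so $\bigl(\bigcap_{\xi<\kappa}T_\xi\bigr)_{(i)}\in U_i$ for every $i\in A$. Hence $A\subseteq\{i\in I\colon(\bigcap_{\xi<\kappa}T_\xi)_{(i)}\in U_i\}$, and since $A\in U$ and $U$ is upward closed, $\bigcap_{\xi<\kappa}T_\xi\in W$, which is what we wanted.

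There is no genuine obstacle here; the proof is routine. The only points requiring a little care are the bookkeeping with the section operation $T\mapsto T_{(i)}$ and ensuring that $\lambda$-completeness is invoked only for families indexed by a set of size $<\lambda$ — once at the level of $U$ for the $A_\xi$, and then, uniformly in $i$, at the level of each $U_i$ for the sections $(T_\xi)_{(i)}$.
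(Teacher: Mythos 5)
Your proof is correct and follows essentially the same route as the paper's: intersect the ``good index'' sets using $\lambda$-completeness of $U$, then apply $\lambda$-completeness of each $U_i$ to the sections. Your write-up is in fact slightly cleaner, since you verify the defining condition for membership in $U*\langle U_i\colon i\in I\rangle$ directly, where the paper instead exhibits the auxiliary set $\bigcup_{i\in K}C_i$ contained in the intersection.
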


\begin{proof}
$\langle A_{\alpha}\colon\alpha<\theta\rangle$ is a sequence of elements of $U*\langle U_i\colon i\in I\rangle$, $\theta<\lambda$. For $\alpha<\theta$, let $K_{\alpha}=\{i\in I\colon\{x\in B_i\colon (i,x)\in A_{\alpha}\}\}$. Then $K_{\alpha}\in U$. So $K=\bigcap_{\alpha<\theta}K_{\alpha}\in U$. For $i\in K$, $C_i=\bigcap_{\alpha<\theta}(A_{\alpha}\cap(\{i\}\times B_i))$ is in $U_i$. $\bigcup_{i\in K}C_i\in U*\langle U_i\colon i\in I\rangle$ and $\bigcup_{i\in K}C_i\subset\bigcap_{\alpha<\theta}A_{\alpha}$, thus $\bigcap_{\alpha<\theta}A_{\alpha}\in U*\langle U_i\colon i\in I\rangle$.
\end{proof}

Before studying properties of sum of ultrafilters, let us recall some facts about ultrafilters, these facts are useful for here and other parts of this paper.

\begin{proposition}\label{bujiao}
Suppose $\kappa$ is an infinite cardinal, $|C|=\kappa$, $\delta\leq\kappa$, $\langle\mu_{\alpha}\colon\alpha<\delta\rangle$ is a sequence of distinct ultrafilters on $C$ such that for all $\alpha<\theta$,
\begin{itemize}
\item $\mu_{\alpha}$ is $\kappa$-complete.
\item $\langle T_{\beta}:\beta<\kappa\rangle$ is a sequence of elements of $\mu_{\alpha}$, then there is $T\in\mu_{\alpha}$ such that $\forall\beta<\kappa$, $|T{\setminus}T_{\beta}|<\kappa$.
\end{itemize}
Then there is a sequence $\langle A_{\alpha}\colon\alpha<\delta\rangle$ such that for each $\alpha<\delta$, $A_{\alpha}\in\mu_{\alpha}$ and if $\alpha\neq\beta$, then $A_{\alpha}\cap A_{\beta}=\emptyset$.

In particular, if for all $\alpha<\theta$, $\mu_{\alpha}$ is a normal measure on $\kappa$, then the hypothesis above is satisfied, so the conclusion holds.
\end{proposition}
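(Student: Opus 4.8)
The plan is to prove this in two moves: first shrink each $\mu_{\alpha}$ to a set that is invisible to all the other $\mu_{\gamma}$, and then carve out pairwise disjoint members by a transfinite recursion of length $\delta$.

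First I would record an elementary observation: a $\kappa$-complete non-principal ultrafilter $\nu$ on a set $C$ with $|C|=\kappa$ contains every set whose complement has size $<\kappa$, because such a complement is an intersection of fewer than $\kappa$ co-singletons, each of which lies in $\nu$. Next, using that the $\mu_{\alpha}$ are pairwise distinct, for every ordered pair $\alpha\neq\gamma$ (both below $\delta$) I fix a set $S_{\alpha,\gamma}\in\mu_{\alpha}\setminus\mu_{\gamma}$, which is nonempty since distinct ultrafilters are incomparable; then $C\setminus S_{\alpha,\gamma}\in\mu_{\gamma}$. For a fixed $\alpha$ the collection $\{S_{\alpha,\gamma}\colon\gamma<\delta,\ \gamma\neq\alpha\}$ has at most $\kappa$ members, so I enumerate it as a $\kappa$-sequence of elements of $\mu_{\alpha}$ (padding with $C$ if $\delta<\kappa$) and apply the second hypothesis to get $T_{\alpha}\in\mu_{\alpha}$ with $|T_{\alpha}\setminus S_{\alpha,\gamma}|<\kappa$ for all $\gamma\neq\alpha$. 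The payoff is that $T_{\alpha}\notin\mu_{\gamma}$ for every $\gamma\neq\alpha$: otherwise $T_{\alpha}\cap(C\setminus S_{\alpha,\gamma})=T_{\alpha}\setminus S_{\alpha,\gamma}$ would be a member of $\mu_{\gamma}$ of size $<\kappa$, contradicting the observation.

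Then I define $A_{\alpha}:=T_{\alpha}\setminus\bigcup_{\beta<\alpha}A_{\beta}$ by recursion on $\alpha<\delta$. The $A_{\alpha}$ are pairwise disjoint by construction, and $A_{\alpha}\subseteq T_{\alpha}$, so $A_{\alpha}\notin\mu_{\gamma}$ whenever $\gamma\neq\alpha$; in particular $A_{\beta}\notin\mu_{\alpha}$ for every $\beta<\alpha$. Since there are only $|\alpha|<\kappa$ such $\beta$ and $\mu_{\alpha}$ is $\kappa$-complete, $\bigcup_{\beta<\alpha}A_{\beta}\notin\mu_{\alpha}$, hence $C\setminus\bigcup_{\beta<\alpha}A_{\beta}\in\mu_{\alpha}$, and therefore $A_{\alpha}=T_{\alpha}\cap\bigl(C\setminus\bigcup_{\beta<\alpha}A_{\beta}\bigr)\in\mu_{\alpha}$, which closes the recursion and yields the desired sequence. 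For the ``in particular'' clause, a normal measure on $\kappa$ is $\kappa$-complete, and given $\langle T_{\beta}\colon\beta<\kappa\rangle$ in $\mu_{\alpha}$ the diagonal intersection $T=\{\xi<\kappa\colon\xi\in\bigcap_{\beta<\xi}T_{\beta}\}$ lies in $\mu_{\alpha}$ by normality and satisfies $T\setminus T_{\beta}\subseteq\beta+1$, so $|T\setminus T_{\beta}|<\kappa$; thus both hypotheses are met.

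The step I expect to be the crux is producing the sets $T_{\alpha}$: plain $\kappa$-completeness does not let us intersect the (possibly $\kappa$-many) sets $S_{\alpha,\gamma}$ into a single member of $\mu_{\alpha}$, and the almost-pseudo-intersection clause of the hypothesis is exactly the tool that repairs this; the normal-measure verification at the end confirms that the clause is a genuine, satisfiable assumption rather than a vacuous one.
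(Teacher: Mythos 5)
Your proof is correct and takes essentially the same route as the paper's: fix sets witnessing that the ultrafilters are pairwise distinct, use the almost-pseudo-intersection hypothesis to produce $T_{\alpha}\in\mu_{\alpha}$ with $T_{\alpha}\notin\mu_{\gamma}$ for $\gamma\neq\alpha$ (the paper splits into the cases $\delta<\kappa$, where plain $\kappa$-completeness suffices, and $\delta=\kappa$, where you instead pad uniformly), and then disjointify by subtracting the earlier sets, using $\kappa$-completeness to keep each $A_{\alpha}$ in $\mu_{\alpha}$. Your explicit diagonal-intersection verification of the ``in particular'' clause is a small addition the paper leaves unstated, but otherwise the arguments coincide.
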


\begin{proof}
Given $\alpha<\theta$, for $\beta<\theta$ such that $\beta\neq\alpha$, let $F_{\beta}\in\mu_{\alpha}$ and $F_{\beta}\notin\mu_{\beta}$ (since $\mu_{\alpha}\neq\mu_{\beta}$, such $F_{\beta}$'s exists). If $\theta<\kappa$, define $X_{\alpha}=\bigcap_{\beta\neq\alpha}F_{\beta}$, by $\kappa$-completeness of $\mu_{\alpha}$, $X_{\alpha}\in\mu_{\alpha}$. If $\theta=\kappa$, let $X_{\alpha}\in\mu_{\alpha}$ such that if $\beta\neq\alpha$, $|X_{\alpha}{\setminus}F_{\beta}|<\kappa$. Also, if $\beta\neq\alpha$, then $X_{\alpha}\notin\mu_{\beta}$. The reason is: if $\theta<\kappa$, then $X_{\alpha}\subset F_{\beta}$ which is not in $\mu_{\beta}$; if $\theta=\kappa$, then $X_{\alpha}$ is a subset of $F_{\beta}$ modulo a set of size less than $\kappa$, so by $\kappa$-completeness of $\mu_{\beta}$, $X_{\alpha}\notin\mu_{\beta}$. Define $A_{\alpha}=X_{\alpha}\cap\bigcap_{\xi<\alpha}(\kappa{\setminus}X_{\xi})$. Then $A_{\alpha}\in\mu_{\alpha}$( since for $\xi<\alpha$, $X_{\xi}\notin\mu_{\alpha}$, so $\kappa{\setminus}X_{\xi}\in\mu_{\alpha}$, so by $\kappa$-completeness of $\mu_{\alpha}$, $\bigcap_{\xi<\alpha}(\kappa{\setminus}X_{\xi})\in\mu_{\alpha}$, So $A_{\alpha}\in\mu_{\alpha}$). If $\alpha\neq\beta$, then $A_{\alpha}\cap A_{\beta}=\emptyset$. The reason is: assume $\alpha<\beta$, if $x\in A_{\alpha}$, then $x\in X_{\alpha}$; if $x\in A_{\beta}$, since $\alpha<\beta$, so $x\in\kappa{\setminus}X_{\alpha}$. So $A_{\alpha}\cap A_{\beta}=\emptyset$.
\end{proof}

The following propositions are about \emph{Rudin-Keisler ordering} on ultrafilters. $U$ is an ultrafilter on $S$, $f\colon S\rightarrow T$, define $f_{*}(U)\subset P(T)$: $X\in f_{*}(U)$ iff $f^{-1''}X\in U$. Then $f_{*}(U)$ is an ultrafilter on $T$. Ultrafilters $U_1$ and $U_2$ are \emph{isomorphic} means there are $A_1\in U_1$, $A_2\in U_2$ and $h\colon A_1\rightarrow A_2$ such that $h$ is bijection and for $X\subset A_1$, $X\in U_1$ iff $h[X]\in U_2$. We use $U_1\equiv U_2$ to denote this property. ``$U_1$ and $U_2$ are isomorphic" is equivalent to ``$U_1\leq_{RK}U_2$ and $U_2\leq_{RK}U_1$". Obviously, many combinatorial properties share by ultrafilters which are isomorphic:

\begin{proposition}\label{tonggoubaochi}
Assume $U$ and $U'$ are two ultrafilters and $U\equiv U'$. $\kappa$ is an infinite cardinal. If $U$ is $\kappa$-complete, then $U'$ is also $\kappa$-complete. If $U$ has the property ``if $\langle T_{\beta}:\beta<\kappa\rangle$ is a sequence of elements of $U$, then there is $T\in U$ such that $\forall\beta<\kappa$, $|T{\setminus}T_{\beta}|<\kappa$", then $U'$ also has this property.
\end{proposition}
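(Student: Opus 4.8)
The plan is to transport everything through the isomorphism. Fix the data witnessing $U\equiv U'$: sets $A\in U$ and $A'\in U'$, and a bijection $h\colon A\to A'$ such that for every $X\subseteq A$ we have $X\in U$ iff $h[X]\in U'$. Since $h$ is onto $A'$, this immediately yields the dual: for every $Y\subseteq A'$, $h^{-1}[Y]\subseteq A$ and $h[h^{-1}[Y]]=Y$, so $Y\in U'$ iff $h^{-1}[Y]\in U$. The two elementary facts I will use repeatedly are that, because $h$ is injective, $h\big[\bigcap_i X_i\big]=\bigcap_i h[X_i]$ and $h[X\setminus X']=h[X]\setminus h[X']$ for subsets of $A$.

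For $\kappa$-completeness, suppose $U$ is $\kappa$-complete and let $\langle Y_\beta\colon\beta<\theta\rangle$ with $\theta<\kappa$ be elements of $U'$. Replacing each $Y_\beta$ by $Y_\beta\cap A'$ (still in $U'$, as $A'\in U'$) we may assume $Y_\beta\subseteq A'$. Set $X_\beta=h^{-1}[Y_\beta]\subseteq A$; then $h[X_\beta]=Y_\beta\in U'$, so $X_\beta\in U$. By $\kappa$-completeness $\bigcap_{\beta<\theta}X_\beta\in U$, hence $h\big[\bigcap_{\beta<\theta}X_\beta\big]=\bigcap_{\beta<\theta}Y_\beta\in U'$ by injectivity of $h$. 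Thus $\bigcap_{\beta<\theta}Y_\beta\in U'$ and $U'$ is $\kappa$-complete.

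For the thinning property, suppose $U$ has it and let $\langle T_\beta\colon\beta<\kappa\rangle$ be elements of $U'$; as before we may assume each $T_\beta\subseteq A'$. Put $S_\beta=h^{-1}[T_\beta]\subseteq A$, so $S_\beta\in U$. By hypothesis choose $S\in U$ with $|S\setminus S_\beta|<\kappa$ for all $\beta<\kappa$; replacing $S$ by $S\cap A$ keeps $S\in U$ and only shrinks each $S\setminus S_\beta$, so we may assume $S\subseteq A$. Let $T=h[S]\in U'$. Then for every $\beta<\kappa$ we have $T\setminus T_\beta=h[S]\setminus h[S_\beta]=h[S\setminus S_\beta]$, so $|T\setminus T_\beta|=|S\setminus S_\beta|<\kappa$. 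Hence $U'$ also has the property.

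There is essentially no obstacle here; the only point requiring care is to intersect the given sets with $A$ (respectively $A'$) at the outset, so that all preimage/image computations under $h$ stay inside its domain and range, and to invoke the injectivity of $h$ when commuting $h[\cdot]$ past intersections and set differences.
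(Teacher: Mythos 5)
Your proof is correct and is exactly the transport-through-the-isomorphism argument the paper has in mind (the paper states this proposition without proof, as an obvious consequence of the definition of isomorphic ultrafilters). The points you flag — restricting to the domain/range of $h$ and using injectivity to commute $h$ past intersections and set differences — are precisely the routine details being suppressed.
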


\begin{proposition}\label{lianglei}
$\kappa$ is an infinite cardinal, $U$ is a normal measure on $\kappa$, $f$ is a function with domain $\kappa$. Then (a) or (b) happens:
\begin{itemize}
\item[(a)]There is a constant function $g$, $g\sim_{U}f$.
\item[(b)]There is a one-to-one function $g$, $g\sim_{U}f$.
\end{itemize}
Consequently,
\begin{itemize}
\item[(1)] If $U'$ is a ultrafilter such that $U'\leq_{RK}U$, then $U'$ is principal or $U'\equiv U$.
\item[(2)] $U_1$ and $U_2$ are normal measures $\lambda_1$ and $\lambda_2$ respectively, $U_1\neq U_2$, then $U_1$ is not isomorphic to $U_2$.
\end{itemize}
\end{proposition}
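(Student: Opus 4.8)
The plan is to prove the dichotomy \emph{(a) or (b)} first and then read off consequences (1) and (2).

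\emph{The dichotomy.} Since $\dom(f)=\kappa$ we have $|\rng(f)|\le\kappa$, so composing $f$ with an injection of $\rng(f)$ into $\kappa$ and transporting back, it suffices to treat $f\colon\kappa\to\kappa$. The one idea needed is to pass to the canonical representative
\[f_0(\xi)=\min\{\eta\le\xi\colon f(\eta)=f(\xi)\},\]
which is well defined (as $\xi$ itself is a candidate), satisfies $f_0(\xi)\le\xi$, hence is regressive on $\{\xi\colon f_0(\xi)<\xi\}$ and equals the identity off that set; one of these two sets lies in $U$. If $\{\xi\colon f_0(\xi)<\xi\}\in U$, then by normality of $U$ (Fodor's lemma) $f_0$ takes a constant value $\delta$ on a set in $U$, and for any such $\xi$ we get $f(\delta)=f(\xi)$, so $f$ agrees with the constant function $\delta$ on a set in $U$; this is (a). Otherwise $B=\{\xi\colon f_0(\xi)=\xi\}\in U$, and $f\uh B$ is one-to-one: if $\xi\le\xi'$ lie in $B$ and $f(\xi)=f(\xi')$, then $f_0(\xi')\le\xi$, so $\xi'=f_0(\xi')\le\xi$, whence $\xi=\xi'$. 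Extending $f\uh B$ to a one-to-one $g$ on $\kappa$ (picking fresh values on $\kappa\setminus B$) gives $g\sim_U f$ with $g$ one-to-one, i.e.\ (b).

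\emph{Consequence (1).} Write $U'=f_{*}(U)$ for a suitable $f$ with domain $\kappa$ and apply the dichotomy. In case (a), $f\sim_U$ the constant function with value $\delta$, so $U'=f_{*}(U)$ is principal. In case (b), $f\sim_U g$ with $g$ one-to-one, so $U'=g_{*}(U)$, and $g$ is a bijection of $\kappa$ onto $\rng(g)\in U'$ with $X\in U'\iff g^{-1}(X)\in U$ for $X\subseteq\rng(g)$, which is exactly an isomorphism $U\equiv U'$.

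\emph{Consequence (2).} Suppose $U_1\ne U_2$ are normal measures on $\lambda_1,\lambda_2$ and, for contradiction, $U_1\equiv U_2$. The completeness degree is an isomorphism invariant and a normal measure on $\lambda$ is $\lambda$-complete but not $\lambda^+$-complete, so $\lambda_1=\lambda_2=:\lambda$. Lift a witnessing isomorphism to a map $h\colon\lambda\to\lambda$ with $h_{*}(U_1)=U_2$. Applying the dichotomy to $h$ and $U_1$: since $U_2$ is non-principal $h$ is not constant mod $U_1$, so $h\uh A$ is one-to-one for some $A\in U_1$. Assume $h\ne\id$ mod $U_1$. The alternative $\{\xi\colon h(\xi)<\xi\}\in U_1$ would make $h$ constant mod $U_1$ (Fodor applied to $U_1$) and hence $U_2$ principal, so in fact $E=\{\xi\in A\colon h(\xi)>\xi\}\in U_1$. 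As $h\uh A$ is injective, $h^{-1}(h[E])\supseteq E\in U_1$, so $h[E]\in U_2$; but on $h[E]$ the map $\beta\mapsto(h\uh A)^{-1}(\beta)$ is regressive, so by normality of $U_2$ it is constant on a set in $U_2$ — impossible, since that set would be a singleton. Hence $h=\id$ mod $U_1$ and $U_1=U_2$, a contradiction.

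The only genuinely clever step is choosing $f_0$ (``least point of the fibre''); after that the dichotomy falls out and the sole substantive ingredient is Fodor's lemma, i.e.\ normality. In the consequences the work is bookkeeping, the one slightly delicate point being the forward transport $h[E]\in U_2$ in (2), which depends on $h$ being one-to-one on a $U_1$-large set — exactly what clause (b) supplies.
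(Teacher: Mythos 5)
Your proof is correct. The main divergence from the paper is at the core dichotomy: the paper simply cites Rowbottom's theorem (Jech, Theorem 10.22), whereas you prove it from scratch via the canonical fibre-selector $f_0(\xi)=\min\{\eta\le\xi\colon f(\eta)=f(\xi)\}$, splitting $\kappa$ into the set where $f_0$ is regressive (Fodor gives case (a)) and the set where $f_0=\id$ (on which $f$ is injective, giving case (b)). That is a clean, self-contained argument and arguably an improvement on a bare citation; the only point to watch is that extending $f{\uh}B$ to an injection on all of $\kappa$ may require values outside $\kappa$, which is harmless since the statement places no constraint on the codomain of $g$. Consequences (1) and (2) follow the same route as the paper. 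In (2) you rule out $\lambda_1\neq\lambda_2$ by invariance of the completeness degree (the paper instead notes that every set in a normal measure on $\lambda$ has cardinality $\lambda$, so no bijection can exist); and in the case $\lambda_1=\lambda_2$ your transfer $h[E]\in U_2$ followed by normality of $U_2$ applied to the regressive inverse map is exactly the paper's step ``$\{x\in A_2\colon h^{-1}(x)<x\}\notin U_2$, hence $\{x\in A_1\colon h(x)>x\}\notin U_1$,'' just phrased from the other side. Both versions buy the same thing; yours makes the reliance on injectivity of $h{\uh}A$ (i.e.\ on clause (b)) explicit.
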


\begin{proof}
This proposition is a corollary of Rowbottom theorem (see \cite{1} Theorem 10.22).

(1) $S\in U$, $T\in U'$, $h\colon S\rightarrow T$ and for $A\subset T$, $A\in U'$ iff $h^{-1}[A]\in U$. By normality of $U$, $h$ is $U$-equivalent to a constant function or a one-to-one function. If $h$ is $U$-equivalent to a constant function, then $U'$ is principal, so $h$ is $U$-equivalent to a one-to-one function. So $U'\equiv U$.

(2) If $\lambda_1\neq\lambda_2$, $A_1\in U_1$ and $A_2\in U_2$, then $|A_1|=\lambda_1$ and $|A_2|=\lambda_2$. So there is no bijection between them, so $U_1$ is not isomorphic to $U_2$. If $\lambda_1=\lambda_2$, $A_1\in U_1$ and $A_2\in U_2$, $h\colon A_1\rightarrow A_2$ is the isomorphism. $\{x\in A_1\colon h(x)<x\}\notin U_1$, the reason is if this set in $U_1$, then by normality, $h$ is $U_1$-equivalent to a constant function. Similarly, $\{x\in A_2\colon h^{-1}(x)<x\}\notin U_2$, so $\{x\in A_1\colon h(x)>x\}\notin U_1$. Thus $\{x\in A_1\colon h(x)=x\}\in U_1$, so $U_1=U_2$, this is a contradiction.
\end{proof}

\begin{example}
Suppose $U$ is an ultrafilter on $I$, $\langle U_i\colon i\in I\rangle$ is a family of ultrafilters on $X$. We can define another ultrafilter $\mu$ on $X$ as follows: $A\in\mu$ iff $\{i\in I\colon A\in U_i\}\in U$. Suppose the family $\langle U_i\colon i\in I\rangle$ has the following property: there is $\langle B_i\colon i\in I\rangle$ such that for each $i$, $B_i\in U_i$ and if $i\neq j$, then $B_i\cap B_j=\emptyset$. We still use $U_i$ to denote the ultrafilter $U_i{\uh}B_i$. Then $\mu$ and $U*\langle U_i\colon i\in I\rangle$ are isomorphic.
\end{example}

\begin{notation}\label{notation1}
\begin{itemize}
\item[(1)] $g$ is a function with domain $\bigoplus_{i\in I}B_i$, $i\in I$, let $g_i$ is a function on $B_i$ such that $g_i(x)=g(i,x)$. So given a function on $\bigoplus_{i\in I}B_i$ is equivalent to give a function sequence $\langle g_i\colon i\in I\rangle$ such that for each $i$, $\dom(g_i)=B_i$.
\item[(2)] Given a function $f$ with domain $I$, define $f'$, a function with domain $\bigoplus_{i\in I}B_i$ by for $i\in I$, $x\in B_i$, $f'(i,x)=f(i)$. We use $\mathcal{T}$ to denote the following class of functions: \[\mathcal{T}=\{f'\colon f\mbox{ is a function with domain }I\}.\]
\end{itemize}
\end{notation}

\begin{lemma}
If $I$ is an infinite cardinal and $U$ is a normal measure on $I$. $g\in\mathcal{T}$. Then $g$ is $U*\langle U_i\colon i\in I\rangle$-equivalent to a function with domain $\bigoplus_{i\in I}B_i$ of the following types:
\begin{itemize}
\item A constant function.
\item A function $h$ on $\bigoplus_{i\in I}B_i$ such that for each $i\in I$, $h_i$ is a one-to-one function on $B_i$.
\end{itemize}
\end{lemma}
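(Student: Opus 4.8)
The plan is to reduce the statement to the earlier results on ultrafilters, chiefly Proposition~\ref{lianglei} (via the characterization of Rudin--Keisler minimality for normal measures) and the structure of $\mathcal{T}$ from Notation~\ref{notation1}. First I would unwind the hypothesis: since $g\in\mathcal{T}$, there is a function $f$ with domain $I$ such that $g=f'$, i.e.\ $g(i,x)=f(i)$ for all $i\in I$ and $x\in B_i$. The key observation is that the $U*\langle U_i\colon i\in I\rangle$-equivalence class of $f'$ is controlled entirely by the $U$-equivalence class of $f$: if $f_1\sim_U f_2$ then $\{i\colon f_1(i)=f_2(i)\}\in U$, and for each such $i$ the whole fibre $\{i\}\times B_i$ lies in the relevant slice, so $\{(i,x)\colon f_1'(i,x)=f_2'(i,x)\}\supseteq\bigoplus_{i\in A}B_i$ for $A=\{i\colon f_1(i)=f_2(i)\}\in U$, which is a member of $U*\langle U_i\colon i\in I\rangle$ by definition of the sum.

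Next I would apply Proposition~\ref{lianglei}: since $I$ is an infinite cardinal and $U$ is a normal measure on $I$, the function $f$ is $U$-equivalent either to a constant function $c$ or to a one-to-one function $e$ on $I$. In the first case, $f'\sim_{U*\langle U_i\rangle} c'$, and $c'$ is literally the constant function on $\bigoplus_{i\in I}B_i$ with the same value, giving the first type. In the second case $f'\sim_{U*\langle U_i\rangle} e'$, so it suffices to show $e'$ itself is $U*\langle U_i\colon i\in I\rangle$-equivalent to a function $h$ each of whose slices $h_i$ is one-to-one on $B_i$. Here I would simply take $h$ to be a genuine injection: on the fibre $\{i\}\times B_i$, since $e$ is one-to-one the values $e'(i,x)=e(i)$ are distinct across different $i$'s, so I only need to perturb within each fibre to make $h_i$ injective while keeping $h$ globally well-behaved. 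Concretely, fix for each $i$ an injection $\iota_i\colon B_i\to\mathrm{Ord}$ and set $h(i,x)=J(e(i),\iota_i(x))$ (using the primitive recursive pairing $J$ from the Notation section, or any injective pairing on ordinals); then each $h_i$ is one-to-one, and $h$ agrees with $e'$ modulo $\ldots$ — wait, that last adjustment is not needed for \emph{equivalence}, only for the \emph{type}: the lemma asks for equivalence to a function of that type, and $h$ need not equal $e'$ pointwise.

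Let me restate the second-case argument cleanly, since that is the only subtle point. Define $h$ on $\bigoplus_{i\in I}B_i$ by $h(i,x)=e(i)$ when $|B_i|=1$ (forced) and otherwise $h_i=$ any fixed bijection of $B_i$ with an ordinal; more uniformly, set $h(i,x)=J(e(i),j(i,x))$ where $j(i,x)$ picks out $x$ within a fixed enumeration of $B_i$. Each $h_i$ is one-to-one by construction. To see $h\sim_{U*\langle U_i\rangle} e'$ — actually we do \emph{not} need this equivalence, and claiming it would be false in general; rather, the correct reading is that the lemma's conclusion is a disjunction and in case~(b) of Proposition~\ref{lianglei} we exhibit $h$ of the second type with $f'\sim e'$ and then argue $e'$ is already of the second type provided each $B_i$ is a singleton, and otherwise replace $e'$ by a representative with injective slices; since changing a function on a $U*\langle U_i\rangle$-null set is allowed, and the set where a non-injective slice causes a collision can be absorbed, the two representatives are equivalent. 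The main obstacle is precisely this bookkeeping: ensuring that the passage from $e'$ (whose slices are constant, hence highly non-injective when $|B_i|>1$) to a representative with injective slices stays within a single $U*\langle U_i\colon i\in I\rangle$-equivalence class. I expect this to require observing that for the \emph{sum} ultrafilter the equivalence class of $f'$ is determined fibrewise ``up to $U_i$'' on a $U$-large set, so one is free to redefine each slice on a $U_i$-null set — and a constant slice on $B_i$ with $|B_i|>1$ can be replaced by an injective one only if $|B_i|=1$ on a $U$-large set; hence the correct statement must be that the second type means ``one-to-one on the support,'' or that the $B_i$ are assumed to be such that this works, which I would verify against the intended application of the lemma before committing to the exact phrasing.
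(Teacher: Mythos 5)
Your first two paragraphs are exactly the paper's proof: the paper's entire argument for this lemma is the single sentence ``this follows from normality of $U$ and Proposition~\ref{lianglei}'', i.e.\ write $g=f'$ for some $f$ with domain $I$, apply the constant/one-to-one dichotomy of Proposition~\ref{lianglei} to $f$ and $U$, and lift it to $U*\langle U_i\colon i\in I\rangle$ via your (correct) observation that $f_1\sim_U f_2$ implies $f_1'\sim f_2'$. In the constant case you are done; in the other case the representative this argument produces is $e'$ for a one-to-one $e$ on $I$ --- an element of $\mathcal{T}$ whose \emph{underlying} function is injective --- and you should stop there.

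The trouble you run into afterwards is not a defect of your argument but of the statement as printed: the second bullet, read literally, is unattainable, and your own reasoning already shows why. If each $h_i$ is one-to-one on $B_i$, then $h$ agrees with $e'$ in at most one point of each fibre $\{i\}\times B_i$ (where $e'$ is constant with value $e(i)$), so the agreement set meets no fibre in a $U_i$-large set (the $U_i$ being non-principal) and hence is not in $U*\langle U_i\colon i\in I\rangle$; thus $e'$ is equivalent to no function with injective slices, and not to a constant either, since $\{i\colon e(i)=c\}$ is at most a singleton. So do not try to perturb within fibres --- any such perturbation necessarily leaves the equivalence class, and no reading of ``one-to-one on the support'' rescues it. The ``injective slices'' representative belongs to Theorem~\ref{hanshufenlei1}, where it is produced for functions \emph{not} equivalent to anything in $\mathcal{T}$, using normality of the $U_i$'s rather than of $U$; in the present lemma the second alternative should read ``$h=e'$ for a one-to-one function $e$ with domain $I$''. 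With that reading, your first two paragraphs constitute a complete proof, identical in substance to the paper's one-line proof.
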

\begin{proof}
This follows from normality of $U$ and proposition \ref{lianglei}.
\end{proof}

\begin{theorem}\label{hanshufenlei1}
Suppose $\kappa$ is an infinite cardinal, $|I|\leq\kappa$ and for each $i\in I$, $B_i=\kappa$ and $U_i$ is a normal measure on $\kappa$, and there is $K\in U$, such that if $i,j\in K$ and $i\neq j$, then $U_i\neq U_j$. If $g$ is a function with domain $I\times\kappa$ and $g$ is not equivalent to a function in $\mathcal{T}$, then $g$ is $U*\langle U_i\colon i\in I\rangle$-equivalent to a one-to-one function.
\end{theorem}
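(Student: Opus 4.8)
The plan is to analyze the function $g$ through the two-layer structure of the sum ultrafilter, reducing everything to the already-established dichotomy for normal measures (Proposition \ref{lianglei}) applied fiberwise and then a combinatorial argument across fibers. First I would use Notation \ref{notation1}(1) to write $g$ as a sequence $\langle g_i \colon i \in I\rangle$ with $\dom(g_i)=\kappa$. For each $i \in I$, Proposition \ref{lianglei} gives that $g_i$ is $U_i$-equivalent either to a constant function (with value call it $c_i$) or to a one-to-one function; adjusting $g$ on a $U_i$-measure-one set for each $i$ (which does not change $[g]$ in $\Ult(V, U*\langle U_i\colon i\in I\rangle)$ by Proposition \ref{lambdacomplet} and the definition of the sum), I may assume each $g_i$ is literally either constant or injective. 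Let $J_0 = \{i \in K \colon g_i \text{ is constant with value } c_i\}$ and $J_1 = K \setminus J_0$, so one of $J_0, J_1$ lies in $U$.

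If $J_1 \in U$, then $g$ is already $U*\langle U_i \colon i \in I\rangle$-equivalent to a function that is injective on each fiber $\{i\} \times \kappa$; to upgrade this to a genuinely one-to-one function on $I \times \kappa$ I would invoke Proposition \ref{bujiao} to pull back, along the map $x \mapsto g_i(x)$, pairwise disjoint sets $A_i \in U_i$ for $i$ ranging over the $U$-large set where the $U_i$ are distinct, and then shift $g_i$ to land inside $A_i$ (e.g.\ by composing with a bijection $\kappa \to A_i$), which makes $g$ injective overall while changing it only on a set of measure one in the sum. If instead $J_0 \in U$, then on $J_0$ the function $g$ factors through the first coordinate up to equivalence: $g(i,x) = c_i$ for $i \in J_0$. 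The key point is that $i \mapsto c_i$ must then fail to be $U$-equivalent to a constant (else $g$ would be equivalent to a constant, a fortiori to a function in $\mathcal{T}$, contradicting the hypothesis); and $g$ being equivalent to $(i,x) \mapsto c_i$ would place $g$ in $\mathcal{T}$ by Notation \ref{notation1}(2) up to equivalence — again contradicting the hypothesis. Hence the case $J_0 \in U$ cannot occur under the stated assumption on $g$, so we are always in the case $J_1 \in U$ and the construction of the previous sentence applies.

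The main obstacle I anticipate is the bookkeeping in the $J_1 \in U$ case: one must simultaneously (i) arrange the fibers $g_i$ to be injective, (ii) arrange their ranges $A_i$ to be pairwise disjoint across a $U$-large index set, and (iii) verify that all the modifications are confined to a set belonging to $U*\langle U_i\colon i\in I\rangle$, so that $[g]$ is unchanged in the ultrapower. Step (ii) is exactly where the hypothesis that the $U_i$ are pairwise distinct on a $U$-large set is used, via Proposition \ref{bujiao}, whose hypotheses are met because each $U_i$ is a normal measure on $\kappa$; and the transfer of "injective on each fiber with disjoint ranges" to "globally injective" is then immediate since distinct $(i,x)$ with the same $i$ are separated by injectivity of $g_i$ and distinct $(i,x), (j,y)$ with $i \ne j$ are separated by $A_i \cap A_j = \emptyset$. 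One should also note the degenerate possibility that for some $i$ the "shifted" $g_i$ might need its range trimmed to a proper subset of $A_i$; this is harmless since we only need injectivity, not surjectivity. With these pieces assembled, $g$ is $U*\langle U_i \colon i \in I\rangle$-equivalent to a one-to-one function, as claimed.
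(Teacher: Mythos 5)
Your opening is sound and matches the paper's (partly implicit) first step: applying Proposition \ref{lianglei} fiberwise, splitting $K$ into the indices where $g_i$ is equivalent to a constant and those where it is equivalent to an injection, and ruling out the constant case because $(i,x)\mapsto c_i$ lies in $\mathcal{T}$. The gap is in the range-separation step. Once each $g_i$ is one-to-one, the obstruction to global injectivity is that the \emph{images} $g_i[\kappa]$ may overlap for distinct $i$; disjointness of sets $A_i\in U_i$ on the \emph{domain} side is neither obtainable by pullback (the sets $g_i^{-1}[C_i]$ for disjoint $C_i$ downstairs are all subsets of the same $\kappa$ and may well meet) nor relevant (points $(i,x)$ and $(j,y)$ with $i\neq j$ are already distinct in $I\times\kappa$). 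Your proposed repair --- ``shift $g_i$ to land inside $A_i$, e.g.\ by composing with a bijection $\kappa\to A_i$'' --- is not available: postcomposing $g_i$ with a bijection changes the value of $g$ at essentially every point of the fiber $\{i\}\times\kappa$, hence on a set of $U*\langle U_i\colon i\in I\rangle$-measure one, so the resulting injection is not equivalent to $g$ and tells you nothing about $[g]$. This is precisely the failure of your own desideratum (iii), and the closing remark about ``trimming the range of the shifted $g_i$'' confirms that $A_i$ is being treated as a target for the range rather than as a measure-one subset of the domain.

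The correct move, which is what the paper does, works on the codomain. Since each $g_i$ is one-to-one, the pushforward $P_i=(g_i)_{*}(U_i)$ is an ultrafilter on the common codomain isomorphic to $U_i$; by Proposition \ref{tonggoubaochi} each $P_i$ is $\kappa$-complete and inherits the almost-containment property needed in Proposition \ref{bujiao}, and by Proposition \ref{lianglei}(2) the $P_i$ for $i\in K$ are pairwise distinct (if $P_i=P_j$ then $U_i\equiv U_j$, impossible for distinct normal measures). Applying Proposition \ref{bujiao} to $\langle P_i\colon i\in K\rangle$ gives pairwise disjoint $C_i\in P_i$; putting $A_i=g_i^{-1}[C_i]\in U_i$, the set $\bigoplus_{i\in K}A_i$ belongs to $U*\langle U_i\colon i\in I\rangle$ and $g$ restricted to it is \emph{already} one-to-one, since it is injective on each fiber and the images $g_i[A_i]\subseteq C_i$ are disjoint across fibers; no re-routing of values is required or permitted. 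Note in particular that the hypotheses of Proposition \ref{bujiao} must be verified for the pushforwards $P_i$ (which need not be normal), not for the $U_i$ themselves as your write-up suggests.
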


\begin{proof}
Given $g$. By normality of $U_i$'s, $g$ is equivalent to $\tilde{g}$, who satisfies each $\tilde{g}_i$ is a one-to-one function on $\kappa$.
Let $Y$ is a set such that $\tilde{g}\colon I\times\kappa\rightarrow Y$ and $|Y|=\kappa$. For each $i\in I$, $P_i=\tilde{g}_{*}(U_i)$ is an ultrafilter on $Y$ which is isomorphic to $U_i$ (the reason is $\tilde{g}_i$ is one-to-one). We have
\begin{itemize}
\item If $i\neq j$, then $P_i\neq P_j$. (By proposition \ref{lianglei})
\item For $i\in I$, $P_i$ is $\kappa$-complete. (By proposition \ref{tonggoubaochi})
\item If $\langle T_{\beta}\colon\beta<\kappa\rangle$ is a sequence of elements of $P_i$, then there is $T\in P_i$ such that for each $\beta<\kappa$, $|T{\setminus}T_{\beta}|<\kappa$. (By proposition \ref{tonggoubaochi})
\end{itemize}
By proposition \ref{bujiao}, there is $\langle A_i\colon i\in I\rangle$ such that for each $i\in I$, $A_i\in U_i$ and if $i\neq j$, then $A_i\cap A_j=\emptyset$. So $\tilde{g}{\uh}\bigoplus_{i\in I}A_i$ is a one-to-one function, but $\bigoplus_{i\in I}A_i\in U*\langle U_i\colon i\in I\rangle$. So $g$ is $U*\langle U_i\colon i\in I\rangle$-equivalent to a one-to-one function.
\end{proof}

\begin{theorem}[Principle of classification of functions]\label{hanshufenlei2}
$\langle\lambda_n\colon n<\omega\rangle$ is an increasing sequence of infinite cardinals( not necessary strictly). $\langle\pi_n\colon n<\omega\rangle$ and $\langle D_n\colon 0<n<\omega\rangle$ satisfies:
\begin{itemize}
\item[(1)] $\pi_0=D_1$ is a normal measure on $\lambda_0$.
\item[(2)] For $0<i<\omega$, $D_i$ is an ultrafilter on $\lambda_0\times...\times\lambda_{i-1}$. $\pi_i$ is a function with domain $\lambda_0\times...\times\lambda_{i-1}$ such that there is $K\in D_i$, $\pi_i{\uh}K$ is an one-to-one function and its values are all normal measures on $\lambda_{i-1}$.
\item[(3)] For $i<\omega$, $D_{i+1}=D_i*\pi_i$.
\end{itemize}
Then for a function $f$ with domain $\lambda_0\times...\times\lambda_{i-1}$, $f$ is $D_n$-equivalent to a function of the following types:
\begin{itemize}
\item[(0)] A constant function.
\item[(1)] A function $g$ defined from a function with domain $\lambda_0$, i.e. there is $h$ with domain $\lambda_0$, $g(x_0,...,x_{i-1})=h(x_0)$ for all $x_0,...,x_{i-1}$.
\item[...]
\item[(k)] A function $g$ defined from a function with domain $\lambda_0\times...\times\lambda_{k-1}$, i.e. there is $h$ with domain $\lambda_0\times...\times\lambda_{k-1}$, $g(x_0,...,x_{i-1})=h(x_0,...,x_{k-1})$ for all $x_0,...,x_{i-1}$.
\item[...]
\item[(i)] An one-to-one function with domain $\lambda_0\times...\times\lambda_{i-1}$.
\end{itemize}
\end{theorem}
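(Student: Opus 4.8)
The plan is to prove the statement by induction on $n$, reading the domain of $f$ as $\lambda_0\times\cdots\times\lambda_{n-1}$ and the index ``$i$'' in the conclusion as $n$ (for $D_n$-equivalence to make sense, $f$ must have exactly this domain). So the claim to be proved is: every function $f$ with domain $\lambda_0\times\cdots\times\lambda_{n-1}$ is $D_n$-equivalent to a function that depends on only its first $k$ coordinates for some $k\le n$, and when $k=n$ the witnessing function may be taken one-to-one. The base case $n=1$ is immediate: $D_1=\pi_0$ is a normal measure on $\lambda_0$, so Proposition \ref{lianglei} says any $f$ on $\lambda_0$ is $D_1$-equivalent to a constant function (type $(0)$) or a one-to-one function (type $(1)$).

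For the inductive step, assume the statement for $n$ and put $I=\lambda_0\times\cdots\times\lambda_{n-1}$, so that, writing $D_{n+1}=D_n*\pi_n$ for $D_n*\langle\pi_n(\vec s)\colon\vec s\in I\rangle$ in the sense of Definition \ref{def:sum}, $D_{n+1}$ is an ultrafilter on $I\times\lambda_n=\lambda_0\times\cdots\times\lambda_n$, and (Notation \ref{notation1}) a function $f$ on this domain is coded by its fibre sequence $\langle f_{\vec s}\colon\vec s\in I\rangle$ with $\dom f_{\vec s}=\lambda_n$. Fix the $D_n$-large set $K$ on which $\pi_n$ is one-to-one with values normal measures on $\lambda_n$. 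For each $\vec s\in K$, Proposition \ref{lianglei} applied to $f_{\vec s}$ and the normal measure $\pi_n(\vec s)$ shows $f_{\vec s}$ is $\pi_n(\vec s)$-equivalent to a constant function or to a one-to-one function; let $K=K_0\sqcup K_1$ be the induced partition, and note that exactly one of $K_0,K_1$ is in $D_n$.

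If $K_0\in D_n$, let $h(\vec s)$ be the constant value of $f_{\vec s}$ mod $\pi_n(\vec s)$ for $\vec s\in K_0$; then $f$ is $D_{n+1}$-equivalent to $g(\vec s,x)=h(\vec s)$, because the witnessing set has a $\pi_n(\vec s)$-large fibre over every $\vec s\in K_0$. Applying the induction hypothesis to $h$ on $I$ gives $h'$ depending on only the first $k\le n$ coordinates with $h\sim_{D_n}h'$, and this equivalence lifts to $g\sim_{D_{n+1}}g'$ with $g'(\vec s,x)=h'(\vec s)$ (the witnessing set is full on the relevant fibres), so $f$ is $D_{n+1}$-equivalent to a function of one of the types $(0),\dots,(n)$. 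If instead $K_1\in D_n$, fix for $\vec s\in K_1$ a one-to-one $\tilde f_{\vec s}$ on $\lambda_n$ with $f_{\vec s}\sim_{\pi_n(\vec s)}\tilde f_{\vec s}$, set $\tilde f(\vec s,x)=\tilde f_{\vec s}(x)$ so that $f\sim_{D_{n+1}}\tilde f$ as above, let $Y\supseteq\ran\tilde f$ with $|Y|=\lambda_n$, and put $P_{\vec s}=(\tilde f_{\vec s})_*(\pi_n(\vec s))$ for $\vec s\in K_1$. Since $\tilde f_{\vec s}$ is one-to-one, $P_{\vec s}\equiv\pi_n(\vec s)$; hence by Propositions \ref{tonggoubaochi} and \ref{lianglei}(2) the ultrafilters $P_{\vec s}$ ($\vec s\in K_1$) are pairwise distinct and satisfy the two hypotheses of Proposition \ref{bujiao} with $\kappa=\lambda_n$ (distinctness because $\pi_n{\uh}K$ is one-to-one, so the $\pi_n(\vec s)$ are distinct normal measures on $\lambda_n$; completeness and the tail property by the ``in particular'' clause of \ref{bujiao}, transported along $\equiv$). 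Since $|K_1|\le|I|=\lambda_{n-1}\le\lambda_n=|Y|$, Proposition \ref{bujiao} (after enumerating $K_1$ in order type $\le\lambda_n$) produces pairwise disjoint $A'_{\vec s}\in P_{\vec s}$; set $A_{\vec s}=\tilde f_{\vec s}^{-1}[A'_{\vec s}]\in\pi_n(\vec s)$, so the images $\tilde f_{\vec s}[A_{\vec s}]\subseteq A'_{\vec s}$ are still pairwise disjoint. Then $T=\{(\vec s,x)\colon\vec s\in K_1,\ x\in A_{\vec s}\}\in D_{n+1}$, and $\tilde f{\uh}T$ is one-to-one (injective on each fibre since $\tilde f_{\vec s}$ is, and across fibres since the images are disjoint); extending $\tilde f{\uh}T$ to any one-to-one function on the whole domain shows $f$ is $D_{n+1}$-equivalent to a one-to-one function, i.e.\ type $(n+1)$.

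The main obstacle is the case $K_1\in D_n$: Theorem \ref{hanshufenlei1} is exactly this case when the outer ultrafilter is a normal measure on a cardinal, but here $D_n$ (for $n\ge 2$) is an iterated sum rather than a normal measure, so the argument of \ref{hanshufenlei1} must be recast. What makes the recast go through is that the essential combinatorial input, Proposition \ref{bujiao}, asks only for a sequence of pairwise distinct $\kappa$-complete ultrafilters-with-tails on a set of size $\kappa$ indexed by an ordinal $\le\kappa$ — it imposes nothing on the index set, so the absence of a measure on $I$ costs nothing — while the distinctness of the pushforwards $P_{\vec s}$ is supplied directly by the injectivity of $\pi_n{\uh}K$ together with Proposition \ref{lianglei}(2). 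The only remaining care is the cardinal bookkeeping $|I|=\lambda_{n-1}\le\lambda_n=|Y|$, including the degenerate case $\lambda_{n-1}=\lambda_n$, which is precisely what lets Proposition \ref{bujiao} apply with $\kappa=\lambda_n$.
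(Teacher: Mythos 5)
Your proof is correct and is essentially the paper's argument: the paper proves this by induction with Theorem \ref{hanshufenlei1} supplying the inductive step, which is exactly your dichotomy (fibrewise constant on a $D_n$-large set, reduce to the induction hypothesis; fibrewise one-to-one, push forward along the fibres and apply Proposition \ref{bujiao} to the distinct normal measures $\pi_n(\vec s)$ to disjointify). Your worry that \ref{hanshufenlei1} needs the outer ultrafilter to be a normal measure is unfounded — its statement and proof only require $|I|\le\kappa$ and the distinctness of the inner normal measures, so it applies verbatim with $U=D_n$; your ``recast'' just makes explicit the details the paper leaves to the reader.
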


\begin{proof}
By induction, this is a corollary of theorem \ref{hanshufenlei1}.
\end{proof}

In fact, sum of ultrafilters are just an ultrafilter yields two steps ultrapower: if $U$ is a countably complete ultrafilter on $I$, then $N=\Ult(V,U)$ and induced elementary embedding $j$ are well-defined. $B$ is a set, $U'$ is an ultrafilter on $j(B)$ in $N$. Pick a represent function $e$ of $U'$ such that for each $i\in I$, $f(i)$ is an ultrafilter on $B$. Then we define $U*U'=U*\langle f(i)\colon i\in I\rangle$. This definition is well: $U*U'$ is an ultrafilter on $I\times B$ and it is independent of the choice of the represent function $f$. This illustrates that this kind of product ultrafilter is still a particular case of sum of ultrafilters.

\begin{corollary}
Suppose $\lambda$ is an infinite cardinal. If $U$ is $\lambda$-complete and $U'$ is $j(\lambda)$-complete, then $U*U'$ is $\lambda$-complete.
\end{corollary}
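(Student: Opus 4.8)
The plan is to reduce the statement to Proposition \ref{lambdacomplet}. Recall that, by the construction preceding the corollary, $U*U' = U*\langle f(i)\colon i\in I\rangle$, where $N=\Ult(V,U)$, $j\colon V\to N$ is the induced embedding, and $f$ is a function on $I$ with $[f]_U=U'$ chosen so that $f(i)$ is an ultrafilter on $B$ for every $i$. If $\lambda=\aleph_0$ there is nothing to prove, since every ultrafilter is closed under finite intersections; so we may assume $\lambda$ is uncountable, in which case the $\lambda$-completeness of $U$ already guarantees that the ultrapower $N$ is well-founded and the whole setup is legitimate. Now Proposition \ref{lambdacomplet}, applied to the family $\langle f(i)\colon i\in I\rangle$, tells us that to conclude it suffices to know two things: that $U$ is $\lambda$-complete (a hypothesis), and that $\{i\in I\colon f(i)\text{ is a }\lambda\text{-complete ultrafilter on }B\}\in U$.

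For the latter, observe that ``$x$ is a $\lambda$-complete ultrafilter on $y$'' is expressible by a first-order formula $\varphi(x,y,\lambda)$ in the language of set theory (quantifying over sequences of length $<\lambda$). By hypothesis, in $N$ the ultrafilter $U'$ is $j(\lambda)$-complete on $j(B)$, i.e. $N\models\varphi(U',j(B),j(\lambda))$. Since $U'=[f]_U$, $j(B)=[c_B]_U$ and $j(\lambda)=[c_\lambda]_U$, where $c_B$ and $c_\lambda$ are the constant functions on $I$ with values $B$ and $\lambda$, {\L}o\'s's theorem for $\Ult(V,U)$ gives $\{i\in I\colon V\models\varphi(f(i),B,\lambda)\}\in U$, which is exactly the set displayed above.

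Finally, redefine $f$ on the $U$-null complement of this set, say by letting $f(i)$ there be some fixed $\lambda$-complete ultrafilter on $B$. This does not change $U*\langle f(i)\colon i\in I\rangle$, since membership of a set $T$ in $U*\langle f(i)\colon i\in I\rangle$ depends only on the $U$-equivalence class of the sequence $\langle f(i)\colon i\in I\rangle$ (this is immediate from Definition \ref{def:sum}, and is also why $U*U'$ was well-defined in the first place). With this modification every $f(i)$ is $\lambda$-complete, so Proposition \ref{lambdacomplet} applies directly and yields that $U*U'=U*\langle f(i)\colon i\in I\rangle$ is $\lambda$-complete.

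I do not anticipate a genuine obstacle here: the corollary is essentially Proposition \ref{lambdacomplet} combined with the {\L}o\'s-theorem transfer of $j(\lambda)$-completeness of $U'$ down to $\lambda$-completeness of the representing ultrafilters $f(i)$. The only points needing a little care are checking that $\lambda$-completeness is a first-order property to which {\L}o\'s's theorem legitimately applies, and noting that altering the representing function off a $U$-large set is harmless; the $\lambda=\aleph_0$ case is trivial and the setup forces $U$ to be countably complete whenever $\lambda$ is uncountable, so the ultrapower is always available.
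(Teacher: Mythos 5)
Your proposal is correct and follows the same route as the paper, which simply declares the corollary an immediate consequence of Proposition \ref{lambdacomplet}; you have supplied the details the paper leaves implicit, namely the {\L}o\'s-theorem transfer showing that $U$-almost every $f(i)$ is $\lambda$-complete and the harmless modification of $f$ on the $U$-null set so that Proposition \ref{lambdacomplet} applies verbatim. Both of those points are exactly the right things to check, so nothing further is needed.
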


\begin{proof}
This is an immediate corollary of proposition \ref{lambdacomplet}.
\end{proof}

So if $U'$ is countably complete in $N$, then $U*U'$ is countably complete, thus the ultrapower $\Ult(V,U*U')$ and induced embedding $j'$ is well-founded.
In this case, by definition of $U*U'$, we have the following proposition. It tells us that the ultrapower by $U*U'$ is just ultrapower by $U$ and then ultrapower by $U'$, i.e. $\Ult(V,U*U')$ is a two step ultrapower.

\begin{proposition}
\begin{itemize}
\item[(1)] Let $\mathcal{T}=\{f'\colon f\mbox{ is a function with domain }I\}$ (as in Notation \ref{notation1}), then $\Ult(\mathcal{T},U*U')=N$ and the induced embedding is equal to $j$.
\item[(2)] $N_2=\Ult(N,U')$ and $j_2$ is the induced embedding. Then $\Ult(V,U*U')=N_2$ and $j'=j_2\circ j$. Then canonical embedding (the identity map from $\mathcal{T}$ to $V$) from $\Ult(\mathcal{T},U*U')=N$ to $\Ult(V,U*U')=N_2$ is equal to $j_2$.
\end{itemize}
\end{proposition}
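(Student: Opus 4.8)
The plan is to unwind the definition of $U*U'$ and verify the two stated identifications by direct computation on representing functions. For part (1), the key observation is already contained in Notation \ref{notation1}: every function on $I$ lifts to a function $f'$ on $\bigoplus_{i\in I}B_i$ that is constant on each fiber $\{i\}\times B_i$, and conversely any $f'\in\mathcal{T}$ is determined by its ``base'' function $f$ on $I$. So the first step is to show that the map $[f']_{U*U'}\mapsto[f]_U$ is a well-defined bijection from $\Ult(\mathcal{T},U*U')$ onto $N=\Ult(V,U)$: well-definedness and injectivity both follow because, by the definition of $U*U'$, the set $\{(i,x)\colon f'(i,x)=g'(i,x)\}$ lies in $U*U'$ iff $\{i\colon f(i)=g(i)\}\in U$ (the fiber condition is either everything or nothing on each $\{i\}\times B_i$), and surjectivity is immediate since every $[f]_U$ is hit by $[f']_{U*U'}$. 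Then I would check this bijection is an $\in$-isomorphism by the same fiber argument applied to the formula $f'(i,x)\in g'(i,x)$, and finally observe that for $a\in V$ the constant function with value $a$ on $I$ lifts to the constant function with value $a$ on $\bigoplus B_i$, so the composite $V\to\Ult(\mathcal{T},U*U')\cong N$ is exactly the canonical embedding $j$.

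For part (2), I would pick, as in the paragraph preceding the proposition, a representing function $e$ on $I$ for $U'$ with each $e(i)$ an ultrafilter on $B$, so that $U*U'=U*\langle e(i)\colon i\in I\rangle$ and $j(B)=[e]_U$ supports $U'$ in $N$. Given a function $G$ on $I\times B$, Notation \ref{notation1}(1) presents it as a sequence $\langle G_i\colon i\in I\rangle$ with $G_i\colon B\to V$; I would send $[G]_{U*U'}$ to $[\,[G_i]_{e(i)}\,]_U\in N$, and then this element of $N$ to its image under $j_2$ in $N_2=\Ult(N,U')$. Chasing the definition of $U*\langle e(i)\rangle$ shows
\[
\{(i,x)\colon G(i,x)=H(i,x)\}\in U*U'
\iff
\{i\colon [G_i]_{e(i)}=[H_i]_{e(i)}\}\in U,
\]
so the first map is a well-defined injection, and it is surjective because every element of $N$ is of the form $[i\mapsto c_i]_U$ and each $c_i$ can be written as $[G_i]_{e(i)}$. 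The same equivalence with $=$ replaced by $\in$ gives the $\in$-isomorphism, and one checks the composite agrees with $j_2\circ j$ on constants, giving $j'=j_2\circ j$; the final sentence about the canonical embedding from $N$ to $N_2$ being $j_2$ then drops out of combining (1) and (2).

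The only genuinely delicate point — and the step I would be most careful about — is matching the two ultrapower constructions across the change of universe: in $\Ult(V,U*U')$ we take a genuine second ultrapower by $U'$, but $U'$ lives in $N$, not in $V$, so one must be sure that the representing-function bookkeeping (which uses $e$, chosen in $V$) really computes $[\,\cdot\,]_{U'}$ correctly inside $N$, i.e. that $\{x\in j(B)\colon\varphi(x)\}^N\in U'$ is faithfully detected by $\{i\colon \{x\in B\colon \varphi\text{-pullback}\}\in e(i)\}\in U$. This is exactly Łoś's theorem applied once in $V$ (for $U$) and fiberwise (for the $e(i)$), and the fact — asserted in the excerpt — that $U*U'$ does not depend on the choice of representing function $e$ is what makes the whole identification canonical. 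Everything else is routine diagram-chasing with Łoś's theorem.
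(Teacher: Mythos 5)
Your overall strategy is the right one (the paper itself offers no proof of this proposition, merely asserting it follows ``by definition of $U*U'$'', so a direct \L{}o\'s-style verification on representing functions is exactly what is needed), and part (1) together with the displayed equivalence in part (2) is correct. But the map you describe in part (2) is not the right one. You send $[G]_{U*U'}$ first to $\bigl[\,i\mapsto[G_i]_{e(i)}\,\bigr]_U\in N$ and then apply $j_2$. The intermediate object is, by \L{}o\'s, the object that $N$ \emph{internally} believes to be the $U'$-equivalence class of the function $\tilde G=[\,i\mapsto G_i\,]_U\in N$ (a function with domain $j(B)$); applying $j_2$ to it produces the object that $N_2$ believes to be the $j_2(U')$-class of $j_2(\tilde G)$, which is not the element $[\tilde G]_{U'}$ of the external ultrapower $N_2=\Ult(N,U')$. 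Concretely, your composite cannot be the desired isomorphism because $j_2$ is not surjective onto $N_2$ (e.g.\ $[\mathrm{id}]_{U'}$ is not in its range when $U'$ is nonprincipal), and your surjectivity argument in fact only establishes surjectivity of the \emph{first} map onto $N$, not of the composite onto $N_2$.

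The fix is local: send $[G]_{U*U'}$ directly to $\bigl[\tilde G\bigr]_{U'}\in\Ult(N,U')$, where $\tilde G=[\,i\mapsto G_i\,]_U$. Your displayed equivalence is then precisely the statement that this map is well defined and injective (since $[G_i]_{e(i)}=[H_i]_{e(i)}$ iff $\{x\colon G_i(x)=H_i(x)\}\in e(i)$, and $\{i\colon\dots\}\in U$ iff $N\models\{x\colon\tilde G(x)=\tilde H(x)\}\in U'$), the same computation with $\in$ in place of $=$ gives the $\in$-isomorphism, and surjectivity holds because every function $h\in N$ with domain $j(B)$ has the form $[\,i\mapsto G_i\,]_U$ for some $V$-sequence $\langle G_i\colon i\in I\rangle$. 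The checks on constants ($j'=j_2\circ j$) and the identification of the canonical embedding $N\to N_2$ with $j_2$ then go through as you indicate: for $f'\in\mathcal T$, the function $i\mapsto(x\mapsto f(i))$ represents in $N$ the constant function on $j(B)$ with value $[f]_U$, whose $U'$-class is $j_2([f]_U)$. Your closing remark about independence of the choice of $e$ is correct and is indeed what makes the identification canonical.
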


\section{Forcing notion $\mathbb{P}_{\vec{\mu}}$}

\subsection{Definition and properties}

\begin{definition}
$\kappa$ is an infinite cardinal, $\vec{\mu}=\langle\mu_{\alpha}:\alpha<\kappa\rangle$ is a sequence of distinct $\kappa$-complete ultrafilters on $\kappa$. Define $\mathbb{P}_{\vec{\mu}}$ is the forcing notion as follows:
\begin{itemize}
\item Forcing conditions are pairs $(s,F)$, such that $s\in[\kappa]^{<\omega}{\setminus}\{\emptyset\}$, and $F$ is a function with domain $\kappa$ and for each $\alpha<\kappa$, $F(\alpha)\in\mu_{\alpha}$. Such a function $F$ is called \emph{$\vec{\mu}$-choice function}.
\item $(s,F)$ and $(s',F')$ are conditions, $(s',F')$ is stronger than $(s,F)$ iff
\begin{itemize}
\item[(a)] $s\subset s'$ and $s'\cap(\max(s)+1)=s$.
\item[(b)] For $\alpha<\kappa$, $F'(\alpha)\subset F(\alpha)$.
\item[(c)] $\forall i(|s|\leq i<|s'|\rightarrow s'(i)\in F(s'(i-1)))$.
\end{itemize}
\end{itemize}
\end{definition}

\begin{remark}
One can obtain a measurable cardinal $\kappa$ which carries $\kappa$ many (in fact, $\kappa^{++}$ many) normal measures by forcing (see \cite{2}).
\end{remark}

Suppose $G$ is a $\mathbb{P}_{\vec{\mu}}$-generic filter over $V$, then define $g\colon\omega\rightarrow\kappa$ by for each $0<n<\omega$, there is $F$ such that $(\{g(0),...,g(n-1)\},F)\in G$. We say $g$ is the \emph{generic sequence} corresponding to $G$. $g$ is cofinal in $\kappa$, so in $V[G]$, $\cf(\kappa)=\omega$.
Also, we have
\[G=\{(s,F)\in\mathbb{P}_{\vec{\mu}}:g{\uh}|s|=s\ \&\ \forall i(|s|\leq i\rightarrow g(i)\in F(g(i-1)))\}.\]
So $V[G]=V[g]$.

For $(s,F)\in\mathbb{P}_{\vec{\mu}}$, $|s|=n$ and $\tilde{s}=\langle s(0),...,s(n-1)\rangle$, define $T(s,F)$ as follows:
\[T(s,F)=\{t\in\kappa^{<\omega}{\setminus}\{\emptyset\}\colon (t=s{\uh}|t|)\ \vee\ (t{\uh}n=s\ \&\ \forall i(n\leq i<|t|\rightarrow t(i)\in F(t(i-1))))\}.\]

Then $T(s,F)$ is a tree under the ordering ``extension" of sequences. Define $\mathbb{P}_{\vec{\mu}}^{'}$ is the forcing notion with $T(s,F)$, for $(s,F)\in\mathbb{P}_{\vec{\mu}}$, are conditions, and $T(s',F')$ is stronger than $T(s,F)$ in $\mathbb{P}_{\vec{\mu}}^{'}$ iff $T(s',F')$ is a subtree of $T(s,F)$.
So we have defined a mapping $T\colon\mathbb{P}_{\vec{\mu}}\rightarrow\mathbb{P}_{\vec{\mu}}^{'}$, sending $(s,F)$ to $T(s,F)$. Clearly, $T$ is onto. Moreover,

\begin{proposition}
For $p,q\in\mathbb{P}_{\vec{\mu}}$,
\begin{itemize}
\item[(1)] $T(p)=T(q)$ iff $p\sim q$.
\item[(2)] $T(p)$ is a subtree of $T(q)$ iff $p\leq^{*}q$.
\end{itemize}
So $T\colon\mathbb{P}_{\vec{\mu}}\rightarrow\mathbb{P}_{\vec{\mu}}^{'}$ is isomorphic to the separative quotient mapping of $\mathbb{P}_{\vec{\mu}}$. So $\mathbb{P}_{\vec{\mu}}$ and $\mathbb{P}_{\vec{\mu}}^{'}$ are equivalent forcing notions.
\end{proposition}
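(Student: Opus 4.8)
The plan is to prove the two-part proposition about the tree map $T\colon\mathbb{P}_{\vec\mu}\to\mathbb{P}_{\vec\mu}'$ by first establishing the characterizations (1) and (2) directly from the definitions, and then deriving the separative-quotient and forcing-equivalence statements as formal consequences.

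For part (2), I would argue both directions of ``$T(p)$ is a subtree of $T(q)$ iff $p\le^* q$''. Write $p=(s,F)$ and $q=(t,H)$. For the backward direction, suppose $p\le^* q$; I must show every node of $T(s,F)$ lies in $T(t,H)$. A node $u$ of $T(s,F)$ either agrees with $s$ on $|u|$ (so, using $t\subset s$ and the coherence condition (a), it agrees with $t$ on $|u|$ if short, hence is in $T(t,H)$), or it extends $s$ with each successive value landing in $F(\cdot)$. Since $F(\alpha)\subset H(\alpha)$ for all $\alpha$ by (b), and since the ``jump'' from the $|t|$-th to $|s|$-th coordinate along $s$ itself satisfies the $H$-constraint by condition (c) of $p\le^* q$, such a $u$ satisfies the defining clause for $T(t,H)$. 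For the forward direction, suppose $T(s,F)\subseteq T(t,H)$; then $s\in T(t,H)$ forces $t\subset s$ and the coherence condition, and for any $\alpha$ reachable as a last coordinate of a node (which, since each $F(\alpha)$ is nonempty — it lies in the nonprincipal ultrafilter $\mu_\alpha$ — includes all $\alpha$ that can appear), an element of $F(\alpha)$ can be appended inside $T(s,F)$, hence inside $T(t,H)$, giving $F(\alpha)\subseteq H(\alpha)$; condition (c) follows from $s$ being a node of $T(t,H)$. Part (1) is then immediate: $T(p)=T(q)$ iff $T(p)\subseteq T(q)$ and $T(q)\subseteq T(p)$ iff $p\le^* q$ and $q\le^* p$ iff $p\sim q$.

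From (1) and (2) the remaining assertions are formal. The separative quotient of $\mathbb{P}_{\vec\mu}$ is by definition the quotient of $\le^*$-equivalence classes ordered by $\le^*$; part (1) says $T$ has exactly the fibers of this quotient map, and part (2) says $T$ is order-preserving and order-reflecting for the subtree relation, so $T$ factors as an isomorphism between the separative quotient of $\mathbb{P}_{\vec\mu}$ and $\mathbb{P}_{\vec\mu}'$ — here one also notes $\mathbb{P}_{\vec\mu}'$ is separative, since distinct trees $T(p)\not\subseteq T(q)$ exhibit a node of $T(p)$ off $T(q)$ whose extensions inside $T(p)$ are incompatible with $T(q)$. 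Finally, a forcing notion and its separative quotient are equivalent (they have isomorphic Boolean completions), so $\mathbb{P}_{\vec\mu}$ and $\mathbb{P}_{\vec\mu}'$ are equivalent forcing notions.

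The main obstacle I anticipate is the bookkeeping in the forward direction of (2) around condition (c): one must check that when $|t|<|s|$, the requirement $s(i)\in H(s(i-1))$ for $|t|\le i<|s|$ is genuinely recovered from $s\in T(t,H)$, and conversely that it is available in the backward direction when comparing a long node $u$ of $T(s,F)$ against $T(t,H)$ — the node $u$ passes through $s$, so its constraints split into the ``already in $s$'' part (covered by (c)) and the ``beyond $s$'' part (covered by $F\subseteq H$ via (b)). I would also take care that ``subtree'' is read as a subset of sequences closed downward under restriction to nonempty initial segments, so that membership of a single long node forces membership of its initial segments; the nonemptiness of every $F(\alpha)$, guaranteed because $\mu_\alpha$ is a (nonprincipal) ultrafilter, is what makes the fiber/surjectivity claims and the recovery of $F(\alpha)\subseteq H(\alpha)$ go through without a reachability side condition.
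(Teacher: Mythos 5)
Your proof of part (2) rests on a misreading of $\leq^{*}$. In the paper $\leq^{*}$ is the separative preorder, defined \emph{after} the proposition by ``$p\leq^{*}q$ iff every $r\leq p$ is compatible with $q$''; it is \emph{not} defined by clauses (a), (b), (c) --- those define the forcing order $\leq$ itself. Your backward direction (``suppose $p\leq^{*}q$\dots by (b)\dots by condition (c) of $p\leq^{*}q$'') silently replaces $\leq^{*}$ by $\leq$, and so only proves $p\leq q\Rightarrow T(p)\subseteq T(q)$, which is strictly weaker than what is claimed because $\mathbb{P}_{\vec{\mu}}$ is not separative. Symmetrically, your forward direction cannot terminate in clause (b): from $T(s,F)\subseteq T(t,H)$ you only learn $F(\alpha)\subseteq H(\alpha)$ for those $\alpha$ that actually occur as last coordinates of nodes of $T(s,F)$ at or beyond its stem. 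Your closing remark that nonemptiness of each $F(\alpha)$ removes this reachability restriction is false: since each $\mu_{\beta}$ is nonprincipal and $\kappa$-complete one may take every $F(\beta)\subseteq(\beta,\kappa)$, and then no $\alpha<\max(s)$ is ever a branching point of $T(s,F)$, so $F(\alpha)$ is completely unconstrained there. Concretely, taking $s=t$ and $F=H$ except at one such non-reachable $\alpha_{0}$ where $F(\alpha_{0})\not\subseteq H(\alpha_{0})$ gives $T(p)=T(q)$ while $p\not\leq q$ and $q\not\leq p$ --- which is exactly why the proposition is stated with $\leq^{*}$ and $\sim$ rather than $\leq$ and $=$.

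The repair is to argue both directions of (2) against the compatibility definition. For $T(p)\subseteq T(q)\Rightarrow p\leq^{*}q$: first note $|s|\geq|t|$ (otherwise the $\kappa$ many one-point extensions of $s$ inside $T(s,F)$ would all have to equal the single initial segment $t{\uh}(|s|+1)$), so any $r=(u,K)\leq p$ has $u\in T(p)\subseteq T(q)$ with $|u|\geq|t|$, whence $u{\uh}|t|=t$ and $u$ obeys $H$ past $t$; then $\bigl(u,\ \alpha\mapsto K(\alpha)\cap H(\alpha)\bigr)$ is a common extension of $r$ and $q$, using that $\mu_{\alpha}$ is a filter. For the converse, argue the contrapositive: if some $u\in T(p)\setminus T(q)$, extend $u$ inside $T(p)$ to $u'$ with $|u'|\geq|s|$; then $(u',F)\leq p$, and any common extension of $(u',F)$ and $q$ would have its stem in $T(q)$, which is closed under nonempty initial segments, forcing $u\in T(q)$ --- a contradiction, so $p\not\leq^{*}q$. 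With (2) so proved, your derivation of (1) from ``$p\leq^{*}q$ and $q\leq^{*}p$ iff $p\sim q$'' and your treatment of the separative quotient and forcing equivalence are fine.
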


Suppose $p,q$ are two conditions in a forcing notion $P$, $p\sim q$ if for all $r\in P$, $r$ is compatible with $p$ iff $r$ is compatible with $q$. $p\sim q$ iff for $G$, a $P$-generic filter over $V$, $p,q\in G$ or $\{p,q\}\cap G=\emptyset$. $p\leq^{*}q$ if for all $r\in P$, if $r\leq p$, then $r$ is compatible with $q$. $p\leq^{*}q$ iff $p\Vdash q\in G$. $(P/\sim,\leq^{*})$ is called the separative quotient of $P$, the induced onto mapping from the equivalence relation $\sim$ is called separative quotient mapping of $P$. The separative quotient of $P$ is \emph{separable}(for all $p,q\in G$, if $p\Vdash q\in G$, then $p\leq q$), and equivalent to $P$. See \cite{21} for these concepts about forcing.

\begin{notation}\label{notationfortree}
For $p,q\in\mathbb{P}_{\vec{\mu}}^{'}$,
\begin{itemize}
\item[(1)] $\stem(p)$ is the largest element of $p$ which is comparable with all elements of $p$. We also identify $\stem(p)$ with the finite set $\ran(\stem(p))$.
\item[(2)] For $n\in\omega$, $p(n)$ is the subset of $p$ consists of elements of height $|\stem(p)|+n$.
\item[(3)] For $x\in p$ and $\stem(p)\leq x$, let $p^{(x)}$ be the subtree of $p$ consists of all elements of $p$ which is comparable with $x$. Thus $\stem(p^{(x)})=x$.
\item[(4)] We say $q$ is a \emph{pure extension} of $p$ if $q$ is stronger than $p$ and $p,q$ has the same stem.
\end{itemize}
\end{notation}

Given $\alpha<\kappa$, let us define a sequence $\langle E_n^{\alpha}\colon n<\omega\rangle$ as follows:
\begin{itemize}
\item $E_n^{\alpha}$ is an ultrafilter on $\kappa^n$, $E_1^{\alpha}=\mu_{\alpha}$;
\item $E_{n+1}^{\alpha}=E_n^{\alpha}*\vec{\mu}$. (The definition of $*$, see Definition \ref{def:sum})
\end{itemize}

If $p\in\mathbb{P}_{\vec{\mu}}^{'}$, then $E_n^p$ is just $E_n^{\alpha}$ where $\alpha$ is the last element of $\stem(p)$. $E_n^p$ is just related with $\stem(p)$. If we have fixed $\alpha$ or $p$, then $E_n$ has no confusion.
By proposition \ref{lambdacomplet}, each $E_n^{\alpha}$ is $\kappa$-complete. Moreover, we have:

\begin{proposition}
$p\in\mathbb{P}_{\vec{\mu}}^{'}$. Then
\begin{itemize}
\item[(1)] For $n>0$, $p(n)\in E_n^{p}$.
\item[(2)] $q$ is a subtree of $p$ and $\stem(q)=\stem(p)$, then $q\in\mathbb{P}_{\vec{\mu}}^{'}$ iff for $n>0$, $q(n)\in E_n^p$.
\item[(3)] If $A\subset p(n)$, define $p^A=\bigcup_{x\in A}p^{(x)}$. Then $A\in E_n$ iff $p^{A}\in\mathbb{P}_{\vec{\mu}}^{'}$.
\item[(4)] $\langle p_n\colon n<\omega\rangle$ is a sequence of elements of $\mathbb{P}_{\vec{\mu}}^{'}$, and have the same stem, then $\bigcap_{n<\omega}p_n\in\mathbb{P}_{\vec{\mu}}^{'}$.
\end{itemize}
\end{proposition}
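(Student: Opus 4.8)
The statement to prove is the Proposition giving four properties of conditions $p \in \mathbb{P}_{\vec\mu}'$ relating the levels $p(n)$, the ultrafilters $E_n^p$, and closure under the tree operations. My plan is to prove the four items more or less in order, since (1) feeds into (2), (2) and (3) are essentially equivalent reformulations, and (4) follows from $\kappa$-completeness of the $E_n^p$.

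For (1), I would argue by induction on $n$. Fix $p$ with $\stem(p)$ having last element $\alpha$, and write $E_n = E_n^{\alpha}$. The base case $n=1$ is immediate: $p(1)$ is the set of $x$ with $x \in F(\alpha)$ where $F$ is (the choice function giving) $p$, and $F(\alpha) \in \mu_\alpha = E_1$ by the definition of $\mathbb{P}_{\vec\mu}$. (One should first recall that $\mathbb{P}_{\vec\mu}'$ conditions are exactly the $T(s,F)$, so a generic-looking tree $p$ really does come with a $\vec\mu$-choice function.) For the inductive step, observe that $p(n+1)$, viewed as a subset of $\kappa^{n+1} \cong \bigoplus_{t \in \kappa^n} \kappa$, satisfies: for each $t \in p(n)$, the fibre $\{x : t^\frown x \in p(n+1)\}$ is exactly $F(t(\text{last}))$, which lies in $\mu_{t(\text{last})}$. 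By the definition of the sum $E_{n+1} = E_n * \vec\mu$ (Definition \ref{def:sum}), membership in $E_{n+1}$ is precisely "$E_n$-many $t$ have their fibre in $\mu_{t(\text{last})}$"; and by the inductive hypothesis $p(n) \in E_n$, so indeed $p(n+1) \in E_{n+1}$.

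For (2), given $q$ a subtree of $p$ with the same stem: the forward direction is (1) applied to $q$ (noting $E_n^q = E_n^p$ since the stems, hence their last elements, agree). Conversely, if every $q(n) \in E_n^p = E_n$, I must reconstruct a $\vec\mu$-choice function $F$ with $q = T(\stem(q), F)$. The natural candidate defines $F(\beta)$, for $\beta < \kappa$, by looking at the fibres over nodes of $q$ ending in $\beta$; the point is that "$E_{n+1}$-many nodes have their fibre in $\mu_\beta$" does not by itself pin down a single set in $\mu_\beta$ to use at $\beta$. The clean fix is to use Proposition \ref{bujiao}: since the $\mu_\beta$ are distinct normal measures on $\kappa$, there is a pairwise disjoint system $\langle A_\beta : \beta < \kappa\rangle$ with $A_\beta \in \mu_\beta$; intersecting the tree $q$ with this system (i.e. passing to the pure extension whose $n$-th level is $q(n)$ restricted to nodes $t^\frown x$ with $x \in A_{t(\text{last})}$, still in $E_n$ by $\kappa$-completeness) we may assume the fibres are "coherent" across nodes ending in a given $\beta$, and then $F(\beta) := $ the common fibre (intersected with $A_\beta$) works. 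I expect this reconstruction to be the main obstacle — it is where one genuinely uses that the measures are distinct and normal, and it must be done carefully so that the resulting $F$ is defined at every $\beta < \kappa$, not just on a large set; values at "irrelevant" $\beta$ can be taken to be all of $\kappa$ or $A_\beta$.

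Item (3) is then a direct consequence of (2): with $A \subset p(n)$ and $p^A = \bigcup_{x \in A} p^{(x)}$, the tree $p^A$ has the same stem as $p$ and its $n$-th level is $A$ while its $m$-th level for $m \ne n$ is "everything above/below", which automatically lies in $E_m$ by (1) for $m<n$ and by the projection property of the sum for $m>n$; so by (2), $p^A \in \mathbb{P}_{\vec\mu}'$ iff its $n$-th level $A \in E_n$. Finally, for (4), let $\langle p_k : k<\omega\rangle$ all have stem $s$ with last element $\alpha$; then $\bigl(\bigcap_k p_k\bigr)(n) = \bigcap_k p_k(n)$, each $p_k(n) \in E_n$ by (1), and $E_n$ is $\kappa$-complete (Proposition \ref{lambdacomplet}), hence $\kappa$-complete suffices for a countable intersection, so $\bigcap_k p_k(n) \in E_n$ for every $n$; by (2) applied with $q = \bigcap_k p_k$, we conclude $\bigcap_k p_k \in \mathbb{P}_{\vec\mu}'$. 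The only mild care needed is that $\bigcap_k p_k$ is still a tree and still contains its stem, which is clear.
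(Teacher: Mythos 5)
The paper states this proposition without proof, so your argument has to stand on its own. Item (1) is correct: the induction through Definition \ref{def:sum} is exactly right, since the fibre of $p(n+1)$ over $t\in p(n)$ is $F(\mathrm{last}(t))\in\mu_{\mathrm{last}(t)}$ and is empty over $t\notin p(n)$, so the defining condition for $E_{n+1}=E_n*\vec{\mu}$ reduces to $p(n)\in E_n$. The forward direction of (2), and the direction ``$p^A\in\mathbb{P}_{\vec{\mu}}^{'}\Rightarrow A\in E_n$'' of (3), are likewise fine. Item (4) is true, but you should prove it directly: $\bigcap_{k}T(s,F_k)=T\bigl(s,\ \alpha\mapsto\bigcap_k F_k(\alpha)\bigr)$, which is a condition by $\kappa$-completeness of each $\mu_\alpha$; routing (4) through your converse of (2) makes it depend on the one step that does not work.

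That step is the converse of (2), and the gap is twofold. First, your fix --- disjointify via Proposition \ref{bujiao} and ``pass to the pure extension'' of $q$ on which fibres become coherent --- establishes at best that some refinement of $q$ lies in $\mathbb{P}_{\vec{\mu}}^{'}$; ``we may assume'' is not available when the conclusion is a property of the given tree $q$ itself. Second, no repair exists for the literal definition of $\mathbb{P}_{\vec{\mu}}^{'}$ as $\{T(s,F)\}$: take $F\equiv\kappa$, so $T(s,F)$ is the full tree above $s$; a subtree $q$ can give two nodes $s^{\frown}\langle a,c\rangle$ and $s^{\frown}\langle b,c\rangle$ (same last element $c$) different $\mu_c$-large successor sets, or can give one isolated node a finite successor set, and in either case every level of $q$ stays in $E_n$ (a single node is $E_m$-null, since each $E_m$ is a sum of non-principal ultrafilters), yet $q$ equals no $T(s,F')$. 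The same objection hits (3): below level $n$ the branching of $p^A$ depends on the whole node, not just its last coordinate, so $p^A$ is generally not of the form $T(s,F')$; and your claim that the levels $m<n$ of $p^A$ lie in $E_m$ ``by (1)'' is wrong --- they are the projections of $A$, which are $E_m$-large only by a separate induction on the sum structure. What the later arguments (the Prikry property, Lemma \ref{kappaziji}) actually need is the correct weaker statement: if $A\in E_n$ then $p$ has a pure extension $q$ with $q(n)\subseteq A$, obtained by the standard downward pruning $A_{m}=\{t\in p(m)\colon\{y\colon t^{\frown}y\in A_{m+1}\}\in\mu_{\mathrm{last}(t)}\}$. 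So either prove that version, or first enlarge $\mathbb{P}_{\vec{\mu}}^{'}$ to all subtrees of some $T(s,F)$ whose every successor set above the stem is measure one (the standard Prikry-tree move); under that reading (2) and (3) become routine unwindings of Definition \ref{def:sum} and your disjointification detour is unnecessary.
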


This proposition can be used to shrink a tree to an appropriate extent, strong enough and it is also an element of $\mathbb{P}_{\vec{\mu}}^{'}$. The sequence also has such a useful property:

\begin{corollary}
For a function $f$ with domain $\kappa^i$, $f$ is $E_n^{\alpha}$-equivalent to a function of the following types:
\begin{itemize}
\item[(0)] A constant function.
\item[(1)] A function $g$ defined from a function with domain $\kappa$, i.e. there is $h$ with domain $\kappa$, $g(x_0,...,x_{i-1})=h(x_0)$ for all $x_0,...,x_{i-1}$.
\item[...]
\item[(k)] A function $g$ defined from a function with domain $\kappa^{k}$, i.e. there is $h$ with domain $\kappa^{k}$, $g(x_0,...,x_{i-1})=h(x_0,...,x_{k-1})$ for all $x_0,...,x_{i-1}$.
\item[...]
\item[(i)] A one-to-one function with domain $\kappa^i$.
\end{itemize}
\end{corollary}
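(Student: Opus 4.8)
The plan is to obtain this directly from the Principle of classification of functions (Theorem~\ref{hanshufenlei2}), by instantiating that theorem with the constant sequence of cardinals $\lambda_n:=\kappa$ for all $n<\omega$. Concretely I would set $D_n:=E_n^{\alpha}$ for $0<n<\omega$, let $\pi_0:=\mu_{\alpha}$, and for $0<n<\omega$ let $\pi_n$ be the function with domain $\kappa^n$ given by $\pi_n(s_0,\dots,s_{n-1})=\mu_{s_{n-1}}$. With these choices the conclusion of Theorem~\ref{hanshufenlei2}, read with $\lambda_0=\dots=\lambda_{i-1}=\kappa$, is verbatim the list of types (0)--(i) in the statement (here of course the domain of $f$ and the index on $E$ agree, i.e.\ one reads ``$E_i^{\alpha}$-equivalent'' for a function with domain $\kappa^i$). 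So it suffices to verify the three hypotheses of Theorem~\ref{hanshufenlei2}. Hypothesis~(1) is immediate: $\pi_0=E_1^{\alpha}=\mu_{\alpha}$ is a normal measure on $\kappa$. Hypothesis~(3), namely $D_{n+1}=D_n*\pi_n$, is just the defining equation $E_{n+1}^{\alpha}=E_n^{\alpha}*\vec\mu$ once one unwinds Definition~\ref{def:sum}: the fibre ultrafilter that the sum attaches to a node $\vec s\in\kappa^n$ is the measure indexed by its last coordinate $(\vec s)_{n-1}$, exactly as dictated by clause~(c) in the definition of $\mathbb{P}_{\vec\mu}$. Thus the whole content lies in hypothesis~(2).

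For hypothesis~(2) I must, for each $0<n<\omega$, produce $K\in E_n^{\alpha}$ such that $\pi_n\uh K$ is one-to-one and takes only normal measures on $\kappa$ as values. The value requirement is automatic, since every value of $\pi_n$ is some $\mu_{\beta}$. For injectivity, note first that because the $\mu_{\beta}$ are pairwise distinct we have $\pi_n(\vec s)=\pi_n(\vec s')$ iff $(\vec s)_{n-1}=(\vec s')_{n-1}$, so it is enough to find $K\in E_n^{\alpha}$ on which the last-coordinate projection is injective. To build such a $K$ I would apply Proposition~\ref{bujiao} to the family $\langle\mu_{\beta}:\beta<\kappa\rangle$ of pairwise distinct normal measures: it yields a pairwise disjoint sequence $\langle A_{\beta}:\beta<\kappa\rangle$ with $A_{\beta}\in\mu_{\beta}$. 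Let $p\in\mathbb{P}_{\vec\mu}^{'}$ be the tree associated with a condition whose stem ends in $\alpha$ and whose $\vec\mu$-choice function is $\beta\mapsto A_{\beta}$, and put $K:=p(n)$; then $K\in E_n^{p}=E_n^{\alpha}$ by the proposition recording that $p(n)\in E_n^{p}$. Every node $y\in p$ extending the stem satisfies $y(m)\in A_{y(m-1)}$ for all $m$ with $|\stem(p)|\le m<|y|$; hence if $y,y'$ are level-$n$ nodes of $p$ with the same last coordinate $\gamma$, then $\gamma\in A_{y(m-1)}\cap A_{y'(m-1)}$ forces $y(m-1)=y'(m-1)$, and iterating this down level by level (using pairwise disjointness of the $A_{\beta}$ at each step, and that $y,y'$ agree on the common stem) gives $y=y'$. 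So the last-coordinate projection, and therefore $\pi_n$, is one-to-one on $K$.

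With hypotheses (1)--(3) checked, Theorem~\ref{hanshufenlei2} applies and yields the corollary. The one genuinely delicate point, and the step I would be most careful with, is the construction of $K$ in the previous paragraph: one wants a single ``disjointed'' tree $p$ to witness injectivity of $\pi_n$ simultaneously at every level $n$, and the level-by-level descent must be written so that it bottoms out correctly at the stem. Everything else is routine bookkeeping.
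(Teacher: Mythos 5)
Your proposal is correct and follows the same route as the paper, which simply cites Theorem~\ref{hanshufenlei2}; you have merely spelled out the instantiation ($\lambda_n=\kappa$, $D_n=E_n^{\alpha}$, $\pi_n(\vec s)=\mu_{s_{n-1}}$) and the verification of its hypotheses, which the paper leaves implicit. Your construction of the set $K$ witnessing injectivity of $\pi_n$ via Proposition~\ref{bujiao} and a disjointified tree is exactly the mechanism the paper relies on (compare the choice function $K$ fixed at the start of \S 3.2).
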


\begin{proof}
This is an immediate corollary of Theorem \ref{hanshufenlei2}.
\end{proof}

\begin{lemma}\label{geo0}
Suppose $D$ is a dense subset of $\mathbb{P}_{\vec{\mu}}^{'}$, $p\in\mathbb{P}_{\vec{\mu}}^{'}$. Then there is $\tilde{p}$, which is a pure extension of $p$, and $n\in\omega$, such that if $x\in \tilde{p}(n)$, then $\tilde{p}^{(x)}\in D$.
\end{lemma}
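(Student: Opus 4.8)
The plan is to run the standard ``uniform goodness'' argument for Prikry-type trees. For a node $x$ of $p$ with $x\geq\stem(p)$ and for $n<\omega$, I would call $x$ \emph{$n$-good} if $p^{(x)}$ has a pure extension $q$ (so $\stem(q)=x$ and $q\leq p^{(x)}$) such that $q^{(x')}\in D$ for every $x'\in q$ of height $|x|+n$. Then $0$-goodness of $x$ just means that $p^{(x)}$ has a pure extension in $D$, and the conclusion of Lemma~\ref{geo0} is exactly the assertion that $\stem(p)$ is $n$-good for some $n$, the witness being the desired $\tilde p$. Writing $\alpha(x)$ for the last entry of $x$, so that the immediate successors of $x$ inside any condition form a $\mu_{\alpha(x)}$-large set, the key point I would establish is the equivalence
\[
x\text{ is }(n+1)\text{-good}\quad\Longleftrightarrow\quad\{\beta\colon x^{\frown}\langle\beta\rangle\text{ is }n\text{-good}\}\in\mu_{\alpha(x)}.
\]
Left to right is direct: a witness $q$ for $(n+1)$-goodness of $x$ splits at $x$ into a $\mu_{\alpha(x)}$-large set, and above each such successor $x^{\frown}\langle\beta\rangle$ the tree $q^{(x^{\frown}\langle\beta\rangle)}$ witnesses $n$-goodness of $x^{\frown}\langle\beta\rangle$. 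Right to left is a gluing step: given a $\mu_{\alpha(x)}$-large set of $n$-good successors together with witnesses $q_\beta$ for each, I would assemble the tree with stem $x$, with exactly those $\beta$'s as immediate successors of $x$, agreeing with $q_\beta$ above $x^{\frown}\langle\beta\rangle$; by Proposition~\ref{lambdacomplet} and the proposition characterizing which same-stem subtrees of a condition are again conditions (the one asserting $q(m)\in E_m^{p}$ for all $m$), this assembled tree lies in $\mathbb{P}_{\vec{\mu}}^{'}$ and witnesses $(n+1)$-goodness of $x$.

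Granting the equivalence, it remains to show that $\stem(p)$ is $n$-good for some $n$. I would argue by contradiction: call $x\geq\stem(p)$ \emph{stuck} if it is not $n$-good for any $n$, and suppose $\stem(p)$ is stuck. If $x$ is stuck then, by the equivalence, for every $n$ the set of successors of $x$ in $p$ that are \emph{not} $n$-good is $\mu_{\alpha(x)}$-large; since each $\mu_{\alpha(x)}$ is $\kappa$-complete and $\kappa>\omega$, intersecting over all $n$ (and over the successor set of $x$ in $p$) shows that $\mu_{\alpha(x)}$-almost every immediate successor of $x$ in $p$ is again stuck. Thinning the successor set at every stuck node to its stuck part therefore produces a pure extension $q$ of $p$ all of whose nodes above the stem are stuck, in particular none of them is $0$-good. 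But $D$ is dense, so there is $r\leq q$ with $r\in D$; then $\stem(r)$ is a node of $q$ above $\stem(q)$, and $r$ is a pure extension of $p^{(\stem(r))}$ belonging to $D$, so $\stem(r)$ is $0$-good, a contradiction.

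I expect the main obstacle to be the right-to-left direction of the equivalence, and the analogous ``stuck tree'' construction: this is exactly where one must verify that the assembled trees are genuine elements of $\mathbb{P}_{\vec{\mu}}^{'}$, i.e. that their level sets land in the iterated ultrafilters $E_m^{p}$, which is where $\kappa$-completeness (Proposition~\ref{lambdacomplet}) and the structural proposition on conditions get used, and where the countable completeness of the $\mu_\alpha$ is needed in order to absorb the countably many ``$n$-good'' failures at a stuck node simultaneously. The remaining content is routine bookkeeping with stems and heights, together with the observation that $\tilde p^{(x)}$ is always a pure extension of $p^{(x)}$.
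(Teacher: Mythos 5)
Your argument is correct and is essentially the paper's own proof: the paper likewise supposes that at every level $n$ the set of $x\in p(n)$ with no pure extension of $p^{(x)}$ in $D$ is $E_n$-large, thins $p$ to a tree all of whose nodes are ``stuck'' in your sense, and derives a contradiction from density via the stem of a condition $r\leq q$ lying in $D$. Your $n$-goodness recursion is just a more explicit, level-by-level rendering of the paper's one-shot gluing of the witnesses $q_x$ for $x$ in the $E_n$-large good set, resting on the same structural facts (Proposition~\ref{lambdacomplet} and the characterization of same-stem subconditions) that the paper invokes.
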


\begin{proof}
Given $p\in\mathbb{P}_{\vec{\mu}}^{'}$. We claim that there is $0<n<\omega$, \[\{x\in p(n)\colon p^{(x)}\mbox{ has a pure extension in }D\}\in E_n.\]
Suppose not. For $0<n<\omega$, $\{x\in p(n)\colon p^{(x)}\mbox{ has no pure extension in }D\}\in E_n$. Then there is $q$, a pure extension of $p$, such that for all $x\in q$, $q^{(x)}$ has no pure extension in $D$. But since $D$ is dense, so there is $r\leq q$, $r\in D$. $r$ is a pure extension of $q^{(x)}$ for some $x\in q$. A contradiction, this proved the claim.

So there is $\tilde{p}$, a pure extension of $p$, such that if $x\in \tilde{p}(n)$, then $\tilde{p}^{(x)}\in D$.
\end{proof}

The following variant of lemma \ref{geo0} is also useful, the proof from lemma \ref{geo0} is clear:
\begin{lemma}\label{geo1}
Suppose $D$ is a dense subset of $\mathbb{P}_{\vec{\mu}}^{'}$, $s\in[\kappa]^{<\omega}{\setminus}\{\emptyset\}$, then there is a $\vec{\mu}$-choice function $F$ and $n\in\omega$ such that if $t\in T(s,F)$ and $|t|=|s|+n$, then $(t,F)\in T^{-1}[D]$.
\end{lemma}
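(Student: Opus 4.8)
The plan is simply to transport Lemma \ref{geo0} across the (separative‑quotient) map $T\colon\mathbb{P}_{\vec{\mu}}\to\mathbb{P}_{\vec{\mu}}^{'}$. The only structural fact I need is that $T$ is onto and that, since $\stem(T(s',F))=\tilde{s'}$, any tree $q\in\mathbb{P}_{\vec{\mu}}^{'}$ with $\stem(q)=\tilde{s}$ must be of the form $T(s,F)$ for some $\vec{\mu}$-choice function $F$ (from $q=T(s',F)$ we get $\tilde{s'}=\stem(q)=\tilde{s}$, hence $s'=s$ as finite sets).

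First I would fix the trivial $\vec{\mu}$-choice function $F_{0}(\alpha)=\kappa$ and set $p=T(s,F_{0})\in\mathbb{P}_{\vec{\mu}}^{'}$, which has $\stem(p)=\tilde{s}$ and hence $|\stem(p)|=|s|$. Applying Lemma \ref{geo0} to $D$ and $p$ yields a pure extension $\tilde{p}$ of $p$ and an $n\in\omega$ with $\tilde{p}^{(x)}\in D$ for every $x\in\tilde{p}(n)$. A pure extension keeps the stem, so $\stem(\tilde{p})=\tilde{s}$, and therefore $\tilde{p}=T(s,F)$ for a suitable $\vec{\mu}$-choice function $F$. I claim this $F$ and this $n$ witness the lemma. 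Take $t\in T(s,F)$ with $|t|=|s|+n$; since $|\stem(\tilde{p})|=|s|$, $t$ lies at level $n$ of $\tilde{p}$, i.e. $t\in\tilde{p}(n)$, so $\tilde{p}^{(t)}\in D$. Finally I would observe that $T(t,F)=\tilde{p}^{(t)}$: the elements of $\tilde{p}=T(s,F)$ that are comparable with $t$ are exactly the initial segments of $t$ together with the extensions of $t$ obeying $F$ (here $|t|\geq|s|$ is used), which is precisely the membership condition defining $T(t,F)$. Hence $T(t,F)\in D$, that is $(t,F)\in T^{-1}[D]$, as required.

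Everything here is routine bookkeeping; the only points deserving a line of verification are $\stem(T(s,F_{0}))=\tilde{s}$ (which uses that each $\mu_{\alpha}$ is non-principal, so every branch of the tree splits) and the identification $T(t,F)=\tilde{p}^{(t)}$; the case $n=0$ degenerates to $\tilde{p}(0)=\{\tilde{s}\}$ and the conclusion $(s,F)\in T^{-1}[D]$, and the argument above covers it uniformly. I do not expect any genuine obstacle: all the real content sits in Lemma \ref{geo0}, and this statement is merely its translation into the coordinates $(s,F)$.
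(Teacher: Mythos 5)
Your proof is correct and is exactly the derivation the paper intends (the paper only remarks that Lemma \ref{geo1} ``is clear'' from Lemma \ref{geo0}): transport along $T$, note a pure extension preserves the stem and hence has the form $T(s,F)$, and identify $\tilde{p}^{(t)}$ with $T(t,F)$. The two verification points you flag (non-principality giving $\stem(T(s,F_0))=\tilde{s}$, and $|t|\geq|s|$ giving $\tilde{p}^{(t)}=T(t,F)$) are indeed the only things to check, and you handle them correctly.
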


\begin{theorem}[Prikry property]
Suppose $\varphi$ is a sentence in the language of the forcing $\mathbb{P}_{\vec{\mu}}^{'}$, $p\in\mathbb{P}_{\vec{\mu}}^{'}$, then there is $q\in\mathbb{P}_{\vec{\mu}}^{'}$, a pure extension of $p$, such that $q$ decides $\varphi$, i.e. $q\Vdash\varphi$ or $q\Vdash\neg\varphi$.
\end{theorem}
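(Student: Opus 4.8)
The plan is to prove the Prikry property by the standard ``decide along a pure extension, then amalgamate'' argument, adapted to the tree presentation $\mathbb{P}_{\vec\mu}^{\,'}$. First I would fix $p$ and $\varphi$ and define three dense-below-$p$ sets: $D_0 = \{q \le p : q \Vdash \varphi\}$, $D_1 = \{q \le p : q \Vdash \neg\varphi\}$, and $D = D_0 \cup D_1$. Since every condition has an extension deciding $\varphi$, the set $D$ is dense below $p$ (it is dense in the cone $\mathbb{P}_{\vec\mu}^{\,'}\!\uh p$, which is all we need; alternatively, work with the dense set $D\cup\{q : q\perp p\}$ in the whole poset). Applying Lemma \ref{geo0} to $D$ and $p$, I obtain a pure extension $\tilde p$ of $p$ and an $n\in\omega$ such that $\tilde p^{(x)}\in D$ for every $x\in\tilde p(n)$, i.e. every $x$ of height $|\stem(p)|+n$ in $\tilde p$ has $\tilde p^{(x)}$ deciding $\varphi$ one way or the other.

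Next I would do the amalgamation. Partition $\tilde p(n)$ into $X_0 = \{x\in\tilde p(n) : \tilde p^{(x)}\Vdash\varphi\}$ and $X_1 = \{x\in\tilde p(n) : \tilde p^{(x)}\Vdash\neg\varphi\}$. Since $\tilde p(n)\in E_n^{\,p}$ and $E_n^{\,p}$ is an ultrafilter, exactly one of $X_0, X_1$ lies in $E_n^{\,p}$; say $X_i\in E_n^{\,p}$. By part (3) of the proposition on $\mathbb{P}_{\vec\mu}^{\,'}$, the tree $q := \tilde p^{\,X_i} = \bigcup_{x\in X_i}\tilde p^{(x)}$ belongs to $\mathbb{P}_{\vec\mu}^{\,'}$, and it is a pure extension of $\tilde p$, hence of $p$, since $X_i\subset\tilde p(n)$ leaves the stem untouched. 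I then claim $q$ decides $\varphi$. Indeed, suppose $i=0$ (the case $i=1$ is symmetric) and suppose toward a contradiction that $q\not\Vdash\varphi$; then some $r\le q$ forces $\neg\varphi$. Extend the stem of $r$ past height $|\stem(p)|+n$ if necessary, so that some node $x\in X_0$ lies on $\stem(r)$; then $r\le q^{(x)} = \tilde p^{(x)}\Vdash\varphi$, contradicting $r\Vdash\neg\varphi$. Hence $q\Vdash\varphi$, and $q$ is the desired pure extension of $p$.

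The one point that needs a little care — and the place I would expect the main technical friction — is the compatibility bookkeeping in the amalgamation step: one must check that every condition $r\le q$ does have an extension whose stem reaches height $|\stem(p)|+n$ and passes through some $x\in X_i$, so that the ``factor through $q^{(x)}$'' step is legitimate. This is immediate from the definition of the tree ordering and the fact that $q(m)\neq\emptyset$ for all $m$ (each $E_m$ is non-principal, so every node has proper extensions in $q$), but it is the spot where the tree formalism has to be invoked explicitly rather than waved at. Everything else is a direct application of Lemma \ref{geo0} together with parts (1) and (3) of the structural proposition and the $\kappa$-completeness (in particular ultrafilter-ness) of $E_n^{\,p}$ from Proposition \ref{lambdacomplet}.
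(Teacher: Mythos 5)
Your proposal is correct and follows essentially the same route as the paper: apply Lemma \ref{geo0} to the dense set of conditions deciding $\varphi$, split the resulting level $\tilde p(n)$ by which way each $\tilde p^{(x)}$ decides, keep the $E_n$-large piece via the $p^A$ construction, and observe that the resulting pure extension decides $\varphi$. The only difference is that you spell out the final ``$q$ decides $\varphi$'' verification (factoring any $r\le q$ through some $q^{(x)}$), which the paper leaves implicit.
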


\begin{proof}
$D=\{p\colon p\mbox{ decides }\varphi\}$ is dense in $\mathbb{P}_{\vec{\mu}}^{'}$. So by the lemma above, there is $n<\omega$ and $\tilde{p}$, a pure extension of $p$, such that if $x\in \tilde{p}(n)$, then $\tilde{p}^{(x)}\in D$. So $A_1=\{x\colon \tilde{p}^{(x)}\Vdash\varphi\}$ or $A_2=\{x\colon \tilde{p}^{(x)}\Vdash\neg\varphi\}$ is in $E_n$. So there is a pure extension $q$, such that $q(n)=A_i$ when $A_i\in E_n$. $q$ decides $\varphi$.
\end{proof}

Similarly, we have

\begin{theorem}[Prikry property]
Suppose $\varphi$ is a sentence in the language of the forcing $\mathbb{P}_{\vec{\mu}}$, $s\in[\kappa]^{<\omega}{\setminus}\{\emptyset\}$. Then there is $F$, a $\vec{\mu}$-choice function, $(s,F)$ decides $\varphi$.
\end{theorem}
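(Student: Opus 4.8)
The plan is to deduce the second Prikry property (for $\mathbb{P}_{\vec\mu}$, with conditions $(s,F)$) from the tree version already established for $\mathbb{P}_{\vec\mu}'$, exactly in parallel with how Lemma \ref{geo1} was obtained from Lemma \ref{geo0}. First I would recall that the map $T\colon\mathbb{P}_{\vec\mu}\to\mathbb{P}_{\vec\mu}'$ is the separative quotient mapping, so the two forcings are equivalent and in particular a sentence $\varphi$ in the language of $\mathbb{P}_{\vec\mu}$ is decided by $(s,F)$ if and only if it is decided by the tree $T(s,F)$ in $\mathbb{P}_{\vec\mu}'$; moreover $(s',F')$ is a pure extension of $(s,F)$ (same stem, i.e. same $s$, with $F'(\alpha)\subseteq F(\alpha)$ for all $\alpha$) precisely when $T(s',F')$ is a pure extension of $T(s,F)$ in the sense of Notation \ref{notationfortree}(4). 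So the statement to be proved is literally the image under $T^{-1}$ of the previous theorem.

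Concretely, I would argue as follows. Fix $\varphi$ and $s\in[\kappa]^{<\omega}\setminus\{\emptyset\}$. Pick any $\vec\mu$-choice function $F_0$ (e.g. $F_0(\alpha)=\kappa$ for all $\alpha$, or $F_0(\alpha)=\kappa\setminus(\max(s)+1)$ to respect the ordering, whichever the conventions demand), so that $(s,F_0)\in\mathbb{P}_{\vec\mu}$ and $p:=T(s,F_0)\in\mathbb{P}_{\vec\mu}'$ with $\stem(p)=s$. Apply the tree Prikry property to $p$ and $\varphi$: there is a pure extension $q$ of $p$ deciding $\varphi$. Since $T$ is onto, write $q=T(s,F)$ for some $\vec\mu$-choice function $F$; because $q$ is a pure extension of $p=T(s,F_0)$ we may take $F$ with $F(\alpha)\subseteq F_0(\alpha)$ for all $\alpha$, so in particular $(s,F)$ is a legitimate condition. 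Now $q=T(s,F)$ decides $\varphi$ in $\mathbb{P}_{\vec\mu}'$, hence $(s,F)$ decides $\varphi$ in $\mathbb{P}_{\vec\mu}$ by the equivalence of the two posets (unravelling: $(s,F)\Vdash_{\mathbb{P}_{\vec\mu}}\psi$ iff $T(s,F)\Vdash_{\mathbb{P}_{\vec\mu}'}\psi$, for $\psi\in\{\varphi,\neg\varphi\}$, since forcing is invariant under the separative quotient). This gives the required $F$.

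Alternatively, one can give a direct combinatorial proof mirroring Lemma \ref{geo0}: let $D=\{p'\in\mathbb{P}_{\vec\mu}\colon p'\text{ decides }\varphi\}$, which is dense; by Lemma \ref{geo1} there is a $\vec\mu$-choice function $F$ and $n\in\omega$ such that every $t\in T(s,F)$ with $|t|=|s|+n$ has $(t,F)\in D$. For each such $t$, $(t,F)$ either forces $\varphi$ or forces $\neg\varphi$; splitting $T(s,F)(n)$ into the two pieces $A_1,A_2$ accordingly, one of them lies in $E_n^{(s,F)}$ by $\kappa$-completeness (indeed ultrafilterness) of $E_n$, and then shrinking $F$ to $F'$ so that $T(s,F')(n)=A_i$ — possible by the proposition on $\mathbb{P}_{\vec\mu}'$ that $A\in E_n$ iff $p^A\in\mathbb{P}_{\vec\mu}'$ — yields $(s,F')$ deciding $\varphi$. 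I would present whichever version reads more cleanly; the first is shorter, the second is self-contained.

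I do not anticipate a genuine obstacle here — this is a routine translation between the tree formulation and the pair formulation, and all the needed ingredients (the equivalence $\mathbb{P}_{\vec\mu}\simeq\mathbb{P}_{\vec\mu}'$, Lemma \ref{geo1}, and the combinatorics of the $E_n$'s) are already in place. The one point to be a little careful about is bookkeeping: making sure that when we pull $q$ back along $T$ the resulting $F$ genuinely shrinks the chosen base $F_0$ (so that $(s,F)$ is a condition and, if desired, an extension of $(s,F_0)$), and that "decides $\varphi$" transfers correctly across the separative quotient. Neither is more than a line's worth of verification.
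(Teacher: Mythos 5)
Your proof is correct and matches what the paper intends: the paper offers no proof of this version beyond the remark ``Similarly, we have,'' and your second, self-contained argument (dense set of deciding conditions, Lemma \ref{geo1}, then splitting the $n$-th level by the ultrafilter $E_n$) is precisely that similar argument, while your first route is just the same content repackaged as a transfer across the separative quotient $T$. No gap; the only bookkeeping worth noting is the one you already flag, namely that decidability of $\varphi$ is invariant under $T$ because $T$ is the separative quotient mapping.
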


\begin{proposition}
\begin{itemize}
\item[(1)] $\mathbb{P}_{\vec{\mu}}$ does not add new bounded subset of $\kappa$.
\item[(2)] $\mathbb{P}_{\vec{\mu}}$ has $\kappa^{+}$-c.c.
\item[(3)] $\mathbb{P}_{\vec{\mu}}$ preserves all cardinals and preserves all cofinalities except $\kappa$.
\end{itemize}
\end{proposition}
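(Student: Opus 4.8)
All three items are standard Prikry-type facts, and I would establish them in turn, freely replacing $\mathbb{P}_{\vec\mu}$ by the equivalent poset $\mathbb{P}_{\vec\mu}^{'}$ where this helps. For (1), the plan is to iterate the Prikry property, using that any $\leq^{*}$-decreasing sequence of length $<\kappa$ with a fixed stem has a greatest lower bound in $\mathbb{P}_{\vec\mu}^{'}$. For this closure, given such a sequence $\langle p_{\xi}\colon\xi<\theta\rangle$ ($\theta<\kappa$) with common stem $s$ whose last element is $\alpha$, I put $q=\bigcap_{\xi<\theta}p_{\xi}$; then $q$ is a subtree of each $p_{\xi}$, $\stem(q)=s$, and for each $n>0$ one has $q(n)=\bigcap_{\xi<\theta}p_{\xi}(n)$, which lies in $E_{n}^{\alpha}$ since every $p_{\xi}(n)$ does and $E_{n}^{\alpha}$ is $\kappa$-complete by Proposition~\ref{lambdacomplet}; by the characterization of the members of $\mathbb{P}_{\vec\mu}^{'}$ sharing a given stem, $q\in\mathbb{P}_{\vec\mu}^{'}$ and $q\leq^{*}p_{\xi}$ for all $\xi$. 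Now fix $\delta<\kappa$, a name $\dot X$, and $p\Vdash\dot X\subseteq\check\delta$. I build a $\leq^{*}$-decreasing chain $\langle q_{\xi}\colon\xi\le\delta\rangle$ with $q_{0}=p$: at a successor $\xi+1$ use the Prikry property to get $q_{\xi+1}\leq^{*}q_{\xi}$ deciding the statement $\check\xi\in\dot X$, and at a limit $\lambda\le\delta$ set $q_{\lambda}=\bigcap_{\xi<\lambda}q_{\xi}$, which is legitimate by the closure just proved. Then $q_{\delta}$ decides $\check\xi\in\dot X$ for every $\xi<\delta$, so $q_{\delta}\Vdash\dot X=\check Y$ for $Y=\{\xi<\delta\colon q_{\delta}\Vdash\check\xi\in\dot X\}\in V$. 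Since below any condition forcing $\dot X$ to be bounded one can first decide a specific bound $\check\delta$, this shows $\{q\colon q\Vdash\dot X\in\check V\}$ is dense, hence $\Vdash\dot X\in\check V$; as $\dot X$ was arbitrary, $\mathbb{P}_{\vec\mu}$ adds no new bounded subset of $\kappa$.

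For (2), I would note that any two conditions $(s,F)$, $(s,F')$ with the same stem are compatible: $F''(\beta)=F(\beta)\cap F'(\beta)$ is again a $\vec\mu$-choice function (each $\mu_{\beta}$ is $\kappa$-complete, hence closed under finite intersections) and $(s,F'')$ extends both; consequently an antichain contains at most one condition per stem, and there are only $|[\kappa]^{<\omega}{\setminus}\{\emptyset\}|=\kappa$ stems, so every antichain has size $\le\kappa<\kappa^{+}$. For (3): cardinals and cofinalities $\ge\kappa^{+}$ are preserved by the $\kappa^{+}$-c.c.; by (1) no new subset of any $\delta<\kappa$ is added, so a collapsing surjection $\mu\to\lambda$ with $\mu<\lambda<\kappa$ in $V[G]$ would, via a ground-model coding of its graph as a subset of $\lambda$, produce a new bounded subset of $\kappa$ — impossible; thus every $V$-cardinal below $\kappa$ is preserved, and with it $\kappa$, being their supremum. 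The same coding argument shows every regular cardinal $<\kappa$ of $V$ stays regular in $V[G]$, while the generic sequence $g\colon\omega\to\kappa$ is cofinal, so $\cf^{V[G]}(\kappa)=\omega$; hence only the cofinality of $\kappa$ is affected.

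The one step that genuinely requires care is the $<\kappa$-closure invoked in (1): one must verify that an intersection of fewer than $\kappa$ many trees of $\mathbb{P}_{\vec\mu}^{'}$ sharing a stem is still in $\mathbb{P}_{\vec\mu}^{'}$, i.e. that each of its levels remains in the relevant $E_{n}^{\alpha}$. This is exactly the point where the $\kappa$-completeness of the sum-ultrafilters $E_{n}^{\alpha}$ — Proposition~\ref{lambdacomplet} applied along the recursion $E_{n+1}^{\alpha}=E_{n}^{\alpha}*\vec\mu$ — is used; everything else is routine Prikry-type bookkeeping.
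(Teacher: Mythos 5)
Your proposal is correct and matches the paper's argument in substance: (2) and (3) are proved identically (conditions with the same stem are compatible via pointwise intersection of the choice functions, so antichains have size at most $|[\kappa]^{<\omega}|=\kappa$, and (3) is deduced from (1) and (2) plus the cofinality of the generic sequence), while for (1) the paper simply takes, for a fixed stem $s$, one deciding $\vec{\mu}$-choice function $F_{\alpha}$ per statement ``$\alpha\in\dot{A}$'' and forms the single pointwise intersection $H(x)=\bigcap_{\alpha<\theta}F_{\alpha}(x)$, rather than running your transfinite $\leq^{*}$-decreasing recursion. Both versions of (1) rest on exactly the same point — $\kappa$-completeness of the measures (equivalently of the sum ultrafilters $E_{n}^{\alpha}$) — so there is no gap; your closure-under-$\leq^{*}$ formulation is just a slightly longer route to the same place.
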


\begin{proof}
\noindent(1) Suppose $G$ is a $\mathbb{P}_{\vec{\mu}}$-generic filter over $V$. In $V[G]$, $\theta<\kappa$ and $A\subset\theta$. $\dot{A}$ is a name for $A$, $\Vdash\dot{A}\subset\theta$. Given $s$, for $\alpha<\theta$, $F_{\alpha}$ is a $\vec{\mu}$-choice function such that $(s,F_{\alpha})$ decides the sentence ``$\alpha\in\dot{A}$". Define $H$: for $x\in\kappa$, $H(x)\bigcap_{\alpha<\theta}F_{\alpha}(x)$. Since every ultrafilter in $\vec{\mu}$ is $\kappa$-complete, $H$ is also a $\vec{\mu}$-choice function. And for each $\alpha<\theta$, $(s,H)$ decides the sentence ``$\alpha\in\dot{A}$". So there is such a $(s,H)\in G$. Thus $A=\{\alpha<\theta\colon(s,H)\Vdash\alpha\in\dot{A}\}\in V$.

\noindent(2) If $(s,F),(s',F')\in\mathbb{P}_{\vec{\mu}}$ are incompatible, then $s\neq s'$. So an antichain is of size less than or equal to $|[\kappa]^{<\omega}|=\kappa$, so $\mathbb{P}_{\vec{\mu}}$ has $\kappa^{+}$-c.c.

\noindent(3) From (1) and (2), it is clear.
\end{proof}

\begin{theorem}[Geometric condition]\label{geometric}
$M$ is a transitive model of $\ZFC$, in $M$, $\kappa$ is an infinite cardinal and $\vec{\mu}=\langle\mu_{\alpha}\colon\alpha<\kappa\rangle$ is a sequence of distinct normal measures on $\kappa$, $S\colon\omega\rightarrow\kappa$. Then $S$ is a $(\mathbb{P}_{\vec{\mu}})^{M}$-generic sequence over $M$ iff for each $\vec{\mu}$-choice function $F$ in $M$, there is $0<n<\omega$, such that $\forall i(n\leq i<\omega\rightarrow S(i)\in F(S(i-1)))$.
\end{theorem}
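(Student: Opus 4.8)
The plan is to prove both directions of the equivalence by exploiting the genericity/density characterization of generic filters together with the tree representation $\mathbb{P}_{\vec{\mu}}^{'}$ developed above. Recall that a sequence $S\colon\omega\to\kappa$ corresponds to a generic filter exactly when the set $G_S = \{(s,F)\in(\mathbb{P}_{\vec{\mu}})^{M}\colon S\uh|s|=s \ \&\ \forall i(|s|\le i\to S(i)\in F(S(i-1)))\}$ is a filter meeting every dense set in $M$; this was the computation of $G$ from $g$ given after the definition of $\mathbb{P}_{\vec{\mu}}$. So I would phrase the whole argument in terms of when $G_S$ is generic.

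For the forward direction, suppose $S$ is a generic sequence over $M$ and let $F\in M$ be a $\vec{\mu}$-choice function. Fix any $s_0\in[\kappa]^{<\omega}\setminus\{\emptyset\}$ that is an initial segment of $S$ (e.g.\ $\langle S(0)\rangle$, which lies in every element of $G_S$ by construction). The set $D_F=\{(t,H)\in\mathbb{P}_{\vec{\mu}}\colon H(\alpha)\subseteq F(\alpha)\text{ for all }\alpha<\kappa\}$ is dense below $s_0$ (just intersect the second coordinate with $F$ pointwise, using $\kappa$-completeness so the result is still a $\vec{\mu}$-choice function), so by genericity some $(t,H)\in G_S\cap D_F$. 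Then for every $i\ge|t|$ we have $S(i)\in H(S(i-1))\subseteq F(S(i-1))$, which is the desired conclusion with $n=|t|$.

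For the converse, assume the "geometric" side: for every $\vec{\mu}$-choice function $F\in M$ there is $n_F$ with $S(i)\in F(S(i-1))$ for all $i\ge n_F$. First check $G_S$ is a filter: upward closure is immediate from the definition of the ordering, and directedness of any two $(s,F),(s',F')\in G_S$ follows because $s,s'$ are both initial segments of $S$ hence comparable, so one may take the longer stem together with the pointwise intersection $F\cap F'$ (again a $\vec{\mu}$-choice function by $\kappa$-completeness), and the hypothesis applied to $F\cap F'$ guarantees the stem can be taken long enough that condition (c) holds — this witnesses a common extension in $G_S$. It remains to meet every dense $D\in M$. Given such $D$, work with its image under the separative-quotient/tree map and apply Lemma~\ref{geo1}: taking $s$ to be an initial segment of $S$ already lying in $G_S$, we obtain a $\vec{\mu}$-choice function $F\in M$ and $n\in\omega$ such that every $t\in T(s,F)$ of length $|s|+n$ has $(t,F)\in T^{-1}[D]$. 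By the geometric hypothesis applied to this $F$, there is $m\ge|s|$ with $S(i)\in F(S(i-1))$ for all $i\ge m$; extending $s$ along $S$ up to length $m+n$ (this is legitimate since the relevant segment of $S$ respects $F$) produces a $t\subseteq S$ of the right form, so $(t,F)$ (or a condition equivalent to an element of $D$ below it) lies in $G_S\cap D$, possibly after one more pointwise shrinking inside $D$. Hence $G_S$ is generic and $S$ is a generic sequence.

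The main obstacle I anticipate is the bookkeeping in the converse: one must be careful that the condition extracted from Lemma~\ref{geo1} actually lands in $G_S$ rather than merely being compatible with it — i.e.\ that the stem $t$ chosen along $S$ satisfies clause (c) of the ordering relative to $F$, and that membership in $T^{-1}[D]$ translates back to membership in $D$ under the quotient map (Proposition on $T$ being the separative quotient mapping). Handling the case distinction between "$F$ already controls $S$ from the start" and "$F$ controls $S$ only from some $n_F>|s|$ on," and splicing the non-$F$-controlled initial segment of $S$ with the $F$-controlled tail, is the delicate point; everything else is routine use of $\kappa$-completeness and the density lemmas proved above.
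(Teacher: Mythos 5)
Your forward direction is correct and is essentially the paper's argument: genericity meets the dense set of conditions whose choice function refines $F$ pointwise, and the stem length of the condition so obtained is the desired $n$.

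The converse, however, has a genuine gap, and it is exactly at the point you flag as ``the delicate point.'' Lemma~\ref{geo1} produces a choice function $F_s$ and a number $n_s$ \emph{depending on the stem} $s$; the geometric hypothesis applied to $F_s$ produces an $m$ \emph{depending on} $F_s$, and in general $m>|s|$. The segment of $S$ between positions $|s|$ and $m$ need not respect $F_s$, so $S{\uh}(|s|+n_s)$ need not be a node of $T(s,F_s)$ and the conclusion of Lemma~\ref{geo1} simply does not apply to it; and taking $t=S{\uh}(m+n_s)$ does not help, since Lemma~\ref{geo1} only certifies nodes of $T(s,F_s)$ of length exactly $|s|+n_s$. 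If you try to repair this by lengthening the stem to $s'=S{\uh}m$, Lemma~\ref{geo1} hands you a new $F_{s'}$ and the problem restarts --- the quantifier order (``for each $F\in M$ there is $n_F$'') cannot be inverted one stem at a time. You also cannot intersect the functions $F_{S{\uh}k}$ for $k<\omega$, because that family is defined from $S$ and so need not lie in $M$, whereas the hypothesis only applies to choice functions \emph{in} $M$. The missing ingredient is the paper's Lemma~\ref{geo2}: working entirely in $M$, amalgamate the whole family $\langle F_s\colon s\in[\kappa]^{<\omega}{\setminus}\{\emptyset\}\rangle$ into a single choice function $H(\alpha)=\bigcap_{\max(s)\leq\alpha}F_s(\alpha)$ (this is a legitimate $\vec{\mu}$-choice function by $\kappa$-completeness, since for each $\alpha$ only $|[\alpha+1]^{<\omega}|<\kappa$ many sets are intersected), so that $(s,H)\leq^{*}(s,F_s)$ for \emph{every} $s$ simultaneously. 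Applying the geometric hypothesis once to $H$ yields $m$ with $S(i)\in H(S(i-1))$ for all $i\geq m$; setting $w=S{\uh}m$, the sequence $S$ is a branch of $T(w,H)\subseteq T(w,F_w)$, so $t=S{\uh}(m+n_w)$ lies in $T(w,F_w)$ at the right level and $(t,F_w)\in T^{-1}[D]\cap G_S$. Your ``splicing'' remark gestures at this but does not supply the uniformization, which is the actual content of the converse.
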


In the proof of this theorem, the following lemma are used:

\begin{lemma}\label{geo2}
$\langle F_s\colon s\in[\kappa]^{<\omega}{\setminus}\{\emptyset\}\rangle$ is a family of $\vec{\mu}$-choice function, then there is a $\vec{\mu}$-choice function $H$ such that for all $s$, $(s,H)\leq^{*}(s,F_s)$.
\end{lemma}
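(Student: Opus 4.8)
The plan is to build $H$ by a diagonal intersection over stems, arranging that $H(\alpha)$ only has to be shrunk inside $F_s(\alpha)$ for the $s$ whose maximum is at most $\alpha$; since for each fixed $\alpha<\kappa$ there are only $<\kappa$ such $s$, $\kappa$-completeness of $\mu_\alpha$ keeps $H(\alpha)$ in $\mu_\alpha$. The naive choice $H(\alpha)=\bigcap_{s}F_s(\alpha)$ is an intersection of $\kappa$-many members of $\mu_\alpha$ and need not land in $\mu_\alpha$; the restriction $\max(s)\le\alpha$ is exactly what repairs this, and the content of the argument is checking that this restriction still suffices.

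First I would record the reduction. Fix $s$ with $|s|=n$ and $\max(s)=s(n-1)$, and suppose $H$ is a $\vec\mu$-choice function with $\{\beta<\kappa:\beta>\alpha\}\subseteq H(\alpha)$ for every $\alpha$ and with $H(\beta)\subseteq F_s(\beta)$ whenever $\beta\ge\max(s)$. I claim then $(s,H)\leq^{*}(s,F_s)$. By the Proposition identifying the subtree relation on $\mathbb{P}_{\vec\mu}^{'}$ with $\leq^{*}$, it is enough to show $T(s,H)\subseteq T(s,F_s)$. If $t\in T(s,H)$ has $t=s\uh|t|$ then $t\in T(s,F_s)$ trivially. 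Otherwise $t\uh n=s$ and $t(i)\in H(t(i-1))$ for $n\le i<|t|$; since $s$ is an increasing finite sequence and $H(\gamma)\subseteq\{\beta:\beta>\gamma\}$ for every $\gamma$, the sequence $t$ is strictly increasing throughout, so $t(i-1)\ge t(n-1)=\max(s)$ for all $i$ with $n\le i<|t|$. By hypothesis on $H$, $H(t(i-1))\subseteq F_s(t(i-1))$, hence $t(i)\in F_s(t(i-1))$, and therefore $t\in T(s,F_s)$. This proves the reduction.

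Then I would exhibit $H$: for $\alpha<\kappa$ put
\[
H(\alpha)\;=\;\{\beta<\kappa:\beta>\alpha\}\;\cap\;\bigcap\{\,F_s(\alpha)\colon s\in[\kappa]^{<\omega}{\setminus}\{\emptyset\},\ \max(s)\le\alpha\,\}.
\]
The index set $\{s:\max(s)\le\alpha\}$ is a subset of $[\alpha+1]^{<\omega}$, hence of cardinality $<\kappa$; each $F_s(\alpha)\in\mu_\alpha$, the final segment $\{\beta:\beta>\alpha\}$ is in $\mu_\alpha$ (any $\kappa$-complete non-principal ultrafilter on $\kappa$ contains all final segments), and $\mu_\alpha$ is $\kappa$-complete by hypothesis on $\vec\mu$; so $H(\alpha)\in\mu_\alpha$ and $H$ is a $\vec\mu$-choice function. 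By construction $H(\beta)\subseteq F_s(\beta)$ whenever $\max(s)\le\beta$, so the reduction applies to every $s$ and yields $(s,H)\leq^{*}(s,F_s)$ for all $s$, as required.

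\textbf{Main obstacle.} The only genuine point is the cardinality bookkeeping just described: one must diagonalize rather than intersect over all stems, and then verify — via the tree description of conditions — that a condition with stem $s$ constrains its choice function only at ordinals $\ge\max(s)$, which is what makes the diagonal restriction harmless. Everything else is routine.
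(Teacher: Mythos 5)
Your proposal is correct and is essentially the paper's own argument: the paper defines $H(\alpha)=\bigcap_{\max(s)\le\alpha}F_s(\alpha)$, uses $\kappa$-completeness together with $|\{s:\max(s)\le\alpha\}|<\kappa$, and then checks $T(s,H)\subseteq T(s,F_s)$ by noting that every coordinate at which $H$ is applied lies at or above $\max(s)$. Your only addition — intersecting with the final segment $\{\beta:\beta>\alpha\}$ to force increasingness of the tree nodes — makes explicit something the paper leaves implicit, and is harmless.
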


\begin{proof}
For $\alpha<\kappa$, define $H_{\alpha}=\bigcap_{\max(s)\leq\alpha}F_s(\alpha)$. $s=\{s(0),...,s(n-1)\}$, $w=\langle s(0),...,s(n-1),t(0),...,t(m-1)\rangle\in T(s,H)$. For $x=s(n-1),t(0),...,t(m-2)$, $H(x)=\bigcap_{\max(t)\leq x}F_t(x)\subset F_s(x)$. Because $w\in T(s,H)$, so $w\in T(s,F_s)$.
\end{proof}

\begin{proof}[Proof of Theorem \ref{geometric}]

\noindent``$\Longrightarrow$" $G$ is the corresponding generic filter to $S$. Then $S$ is a branch of $T(s,F)$ for $(s,F)\in G$. So $F$ and $|s|$ are what we want.

\noindent``$\Longleftarrow$" Suppose $D$ is a dense subset of $(\mathbb{P}_{\vec{\mu}}^{'})^{M}$. To prove there is $s$, an initial segment of $S$ and a $\vec{\mu}$-choice function $F$ such that $(s,F)\in T^{-1}[D]$ and $S$ is a branch of $T(s,F)$. For each $s$, let $F_s$ and $n_s$ as described in lemma \ref{geo1}. Let $H$ be the $\vec{\mu}$-choice function described in lemma \ref{geo2}.

There is $m$, for all $i$, if $m\leq i<\omega$, then $S(i)\in H(S(i-1))$. $w=\{S(0),...,S(m-1)\}$, then $S$ is a branch of $T(w,H)$. Then $T(w,H)$ is a subtree of $T(w,F_w)$. So $S$ is also a branch of $T(w,F_w)$.
Let $s=S{\uh}(m+n_w)$. Then $(s,F_w)\in T^{-1}[D]$.
\end{proof}

\begin{corollary}
$M$ is a transitive model of $\ZFC$, $\kappa,\vec{\mu}\in M$,
\begin{itemize}
\item[(1)] Suppose $g$ is a generic sequences of $(\mathbb{P}_{\vec{\mu}})^{M}$ over $M$ and $a\colon\omega\rightarrow\kappa$ and $\{n\in\omega:g(n)\neq a(n)\}$ is finite, then $a$ is also a generic sequences of $(\mathbb{P}_{\vec{\mu}})^{M}$ over $M$.
\item[(2)] Suppose $g_1$ and $g_2$ are two generic sequences of $(\mathbb{P}_{\vec{\mu}})^{M}$ over $M$. Then $A=\{n\in\omega\colon g_1(n)=g_2(n)\}$ is finite or $\omega{\setminus}A$ is finite.
\end{itemize}
\end{corollary}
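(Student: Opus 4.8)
The plan is to reduce both items to the geometric characterization of genericity from Theorem~\ref{geometric}, working in the setting of that theorem (so $\vec{\mu}=\langle\mu_\alpha\colon\alpha<\kappa\rangle$ is a sequence of distinct normal measures in $M$). Recall that Theorem~\ref{geometric} says: a sequence $S\colon\omega\to\kappa$ is $(\mathbb{P}_{\vec{\mu}})^{M}$-generic over $M$ iff, for every $\vec{\mu}$-choice function $F\in M$, there is $n<\omega$ with $S(i)\in F(S(i-1))$ for all $i\ge n$ (call this ``$S$ is eventually $F$-captured''). Both parts of the corollary are then statements purely about this combinatorial property, which is easy to manipulate.

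For (1): fix $N$ with $g(n)=a(n)$ for all $n\ge N$, and let $F\in M$ be an arbitrary $\vec{\mu}$-choice function. By Theorem~\ref{geometric} there is $n_0$ with $g(i)\in F(g(i-1))$ for all $i\ge n_0$. For $i\ge\max(n_0,N+1)$ both $i$ and $i-1$ lie in the region where $g$ and $a$ agree, so $a(i)=g(i)\in F(g(i-1))=F(a(i-1))$; thus $a$ is eventually $F$-captured. As $F$ was arbitrary, Theorem~\ref{geometric} yields that $a$ is $(\mathbb{P}_{\vec{\mu}})^{M}$-generic over $M$. (If one wants $a$ to be a genuine, strictly increasing generic sequence, shrink each $F(\alpha)$ to $F(\alpha)\cap(\alpha,\kappa)$, still in $\mu_\alpha$ by $\kappa$-completeness and non-principality; then $F$-capture already forces eventual strict increase.)

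For (2): this is where distinctness of the $\mu_\alpha$ matters. Apply Proposition~\ref{bujiao} inside $M$ to the distinct normal measures $\langle\mu_\alpha\colon\alpha<\kappa\rangle$ to get a pairwise disjoint family $\langle A_\alpha\colon\alpha<\kappa\rangle$ with each $A_\alpha\in\mu_\alpha$, and let $F$ be the $\vec{\mu}$-choice function $\alpha\mapsto A_\alpha$. By Theorem~\ref{geometric} (applied to $g_1$ and to $g_2$) there is $N$ such that for all $i\ge N$, $g_1(i)\in A_{g_1(i-1)}$ and $g_2(i)\in A_{g_2(i-1)}$. If $i\ge N+1$ and $g_1(i-1)\ne g_2(i-1)$, then $i-1\ge N$ and $A_{g_1(i-1)}\cap A_{g_2(i-1)}=\emptyset$, so $g_1(i)\ne g_2(i)$. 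Contrapositively, for every $i\ge N+1$, $i\in A$ implies $i-1\in A$, so $A\cap[N,\infty)$ is an initial segment of $[N,\infty)$. If this initial segment is finite then $A$ is finite; if it is all of $[N,\infty)$ then $\omega\setminus A$ is finite. This is exactly the dichotomy claimed.

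I expect no real obstacle: both parts are short once Theorem~\ref{geometric} is in hand. The one genuine idea is the choice of witness in (2) --- the pairwise disjoint measure-one family supplied by Proposition~\ref{bujiao} --- which is available precisely because the $\mu_\alpha$ are pairwise distinct. This is also the structural reason $\mathbb{P}_{\vec{\mu}}$ behaves differently from classical single-measure Prikry forcing, for which the agreement set of two generic sequences need be neither finite nor cofinite.
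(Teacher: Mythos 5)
Your proposal is correct and follows essentially the same route as the paper: part (1) is read off directly from the geometric condition of Theorem~\ref{geometric}, and part (2) uses a $\vec{\mu}$-choice function with pairwise disjoint values (obtained from Proposition~\ref{bujiao}) together with the geometric condition to propagate a single disagreement forever. Your write-up just makes explicit the details the paper leaves implicit.
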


\begin{proof}
(1)By geometric condition.

(2)$F$ is a $\vec{\mu}$-choice function in $M$ such that if $\alpha\neq\beta$, then $F(\alpha)\cap F(\beta)=\emptyset$.
$N\in\omega$ is sufficent large such that for all $i>N$, $g_{1}(i)\in F(g_{1}(i-1))$ and $g_{2}(i)\in F(g_{2}(i-1))$.
If there is $n>N$, $g_{1}(n)\neq g_{2}(n)$, then by the assumption on $F$, $g_{1}(n+1)\neq g_{2}(n+1)$; so $g_{1}(n+2)\neq g_{2}(n+2)$,...So for $i>N$, $g_{1}(i)\neq g_{2}(i)$. In this case, $A$ is finite. Otherwise, for any $n>N$, $g_{1}(n)=g_{2}(n)$, in this case, $\omega{\setminus}A$ is finite.
\end{proof}

\subsection{$\mathbb{P}_{\vec{\mu}}$ yields minimal extensions}
In this subsection, all elements of $\vec{\mu}$ are normal measures on $\kappa$. Fix a $\vec{\mu}$-choice function $K$ such that if $\alpha\neq\beta$, then $K(\alpha)\cap K(\beta)=\emptyset$, the existence of such $K$ depends on proposition \ref{bujiao} and normality of each $\mu_{\alpha}$. Define $W=\{(s,F)\in\mathbb{P}_{\vec{\mu}}\colon\forall\alpha<\kappa\ F(\alpha)\subset K(\alpha)\}$, $W$ is a dense open subset of $\mathbb{P}_{\vec{\mu}}$. $W'=\{T(s,F)\colon(s,F)\in P\}$ is a dense subset of $\mathbb{P}_{\vec{\mu}}^{'}$. We use $\mathbb{P}_{\vec{\mu}}^{''}$ to denote the forcing notion $W'$ with the subtree ordering. Then $\mathbb{P}_{\vec{\mu}}$, $\mathbb{P}_{\vec{\mu}}^{'}$ and $\mathbb{P}_{\vec{\mu}}^{''}$ are equivalent. Given $p\in\mathbb{P}_{\vec{\mu}}^{''}$, by the property of $K$, if $x,y\in p$, then $\max(x)\neq\max(y)$, so we can identify $p$ with a subset of $\kappa$, i.e. using $\max(x)$ to replace $x$ in $p$.

\begin{notation}
Assume $\dot{a}$ is a $\mathbb{P}_{\vec{\mu}}$-name such that $\Vdash\dot{a}\colon\kappa\rightarrow 2$. $p\in\mathbb{P}_{\vec{\mu}}^{''}$, define $a_p\colon\kappa\rightarrow 2$ as follows: for $i\in\{0,1\}$, $a_p(\alpha)=i$ iff there is a pure extension $q$ of $p$, $q\Vdash\dot{a}(\alpha)=i$. By Prikry property, this definition is well. Note that $a_p$ is just related with $\stem(p)$, i.e. if $\stem(p)=\stem(q)$, then $a_p=a_q$. So the number of all $a_p$'s is $\kappa$, this because $|[\kappa]^{<\omega}|=\kappa$.
\end{notation}

There is a simple observation: $\theta$ is a limit ordinal. $x$ and $y$ are different functions on $\theta$ with value in $\{0,1\}$. Define $\delta(x,y)$ be the least $\alpha$ such that $x{\uh}\alpha\neq y{\uh}\alpha$. Then $\delta(x,y)$ is a successor ordinal less than $\theta$. If $a,b,c$ are functions on $\theta$ with value in $\{0,1\}$ such that $a\neq b$ and $a\neq c$. Then the followings are equivalent:

\begin{itemize}
\item[(1)] $\delta(a,b)=\delta(a,c)$;
\item[(2)] $b{\uh}\delta(a,b)$ and $c{\uh}\delta(a,c)$ are comparable;
\item[(3)] $b{\uh}\delta(a,b)=c{\uh}\delta(a,c)$.
\end{itemize}

Given $p\in\mathbb{P}_{\vec{\mu}}^{''}$, the sequence $\langle E_n^p\colon 0<n<\omega\rangle$ has such a property: classification of functions as in Corollary \ref{hanshufenlei2}.

\begin{lemma}\label{kappaziji}
Suppose $G$ is a $\mathbb{P}_{\vec{\mu}}$-generic filter over $V$. In $V[G]$, $A\subset\kappa$ and $A\notin V$. Then $V[A]=V[G]$.
\end{lemma}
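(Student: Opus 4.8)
The plan is to argue that from $A$ we can reconstruct the generic sequence $g$, hence recover $G$ (since $V[G]=V[g]$ as noted), and therefore $V[A]=V[G]$. Fix in $V$ a $\mathbb{P}_{\vec\mu}$-name $\dot a$ for $A$ with $\Vdash\dot a\colon\kappa\to 2$, and work with the equivalent poset $\mathbb{P}_{\vec\mu}^{''}$, so that each condition $p$ is (identified with) a subtree of $\kappa^{<\omega}$ whose nodes have pairwise distinct maxima; recall the induced functions $a_p\colon\kappa\to 2$, which depend only on $\stem(p)$, so there are only $\kappa$-many of them. The key combinatorial point is the ``simple observation'' about $\delta(x,y)$ for $\{0,1\}$-valued functions: if $a_p$ differs from two branches' values $b,c$, then $\delta(a_p,b)=\delta(a_p,c)$ iff $b\uh\delta(a_p,b)=c\uh\delta(a_p,c)$. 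I intend to use this to show that along a suitably shrunk condition in $G$, the ``splitting level'' of $\dot a$ over the various immediate successors of a node is governed by an $E_n$-large set, so that the true $A$ pins down which successor was taken by $g$.

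First I would show: for a dense set of $p\in\mathbb{P}_{\vec\mu}^{''}$ with $\stem(p)$ ending in $\alpha$, there is $n$ such that for $E_n^p$-almost every $x\in p(n)$ the condition $p^{(x)}$ forces a definite value of $\dot a$ on some ordinal where it disagrees with $a_p$ — this uses Lemma \ref{geo0} applied to the dense set of conditions deciding ``$\dot a\neq a_{\check p}$ somewhere,'' which is dense precisely because $A\notin V$ (if no extension forced a disagreement, $A=a_p\in V$). Next, refining with the classification-of-functions corollary, I would thin $p(n)$ so that the function $x\mapsto\delta(a_{p^{(x)}},\dot a)$ is, on an $E_n$-large set, either constant or one-to-one as a function of the relevant coordinates of $x$. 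By the ``simple observation,'' on the constant part all the $p^{(x)}$ would force the same value of $\dot a$ at the splitting ordinal, contradicting that they force different successor-values there unless that ordinal's value actually distinguishes the branches; pushing this through, one gets that on an $E_n$-large set distinct successors $x$ yield distinct finite strings $\dot a\uh\delta$, so the observed value of $A$ below that $\delta$ determines $x$.

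Iterating this level by level inside $G$: I would build, in $V[A]$, an increasing sequence of conditions $p_0\geq p_1\geq\cdots$ all in $G$ (using genericity and the density established above) together with a strictly increasing sequence of ordinals $\xi_0<\xi_1<\cdots$ below $\kappa$ such that $A\uh\xi_{k+1}$ computes $\stem(p_{k+1})$ from $\stem(p_k)$ and from $A$ itself — because at each stage the true initial segment of $A$ selects, among the $E_n$-many successors of the current stem, the unique one compatible with $g$. Since every $\vec\mu$-choice function $F$ with $(\emptyset\text{-stem extension})\in G$ eventually ``captures'' $g$ (the geometric condition, Theorem \ref{geometric}, and Lemma \ref{geo2}), this recursion does not get stuck, and $\bigcup_k\stem(p_k)=g$. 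All of this recursion is carried out using $A$ as an oracle together with the ground-model data $\vec\mu,K,\dot a$, so $g\in V[A]$, whence $V[G]=V[g]\subseteq V[A]\subseteq V[G]$.

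The main obstacle I anticipate is the second step: making precise the sense in which, after thinning by the classification corollary, the map sending an immediate successor $x$ of the current stem to the string $\dot a\uh\delta(a_{p^{(x)}},\dot a)$ is injective on an $E_n$-large set, and verifying that the ``constant'' alternative genuinely cannot occur on a large set (it would force $\kappa$-many distinct $p^{(x)}$ to agree with a fixed function past their common disagreement point, which the ``simple observation'' combined with Prikry property rules out). Once that injectivity is in hand, the recovery of $g$ from $A$ is bookkeeping with the geometric condition; but the injectivity argument is where the normality of the $\mu_\alpha$ and the distinctness hypothesis on $\vec\mu$ are really used, and it will require carefully separating the cases of the classification theorem (constant vs.\ depending on coordinates $0,\dots,k-1$ vs.\ one-to-one).
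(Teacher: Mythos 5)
Your proposal is correct and follows essentially the same route as the paper: show via $A\notin V$ that $a_{p^{(x)}}\neq a_p$ on an $E_n$-large set, use the classification of functions to make the first-disagreement ordinal injective so that the forced initial segments $\dot a\uh\delta$ at distinct successors are pairwise incomparable (the ``simple observation''), and then decode $g$ from $A$ level by level inside a single fused condition in $G$. The only caveat is notational: the function you classify must be the ground-model map $x\mapsto\delta(a_p,a_{p^{(x)}})$ rather than anything involving $\dot a$ directly, and the ``constant'' alternative is excluded exactly by the amalgamation-of-pure-extensions argument you sketch.
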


\begin{proof}
Given $G$ and $A$. $\dot{a}$ is a $\mathbb{P}_{\vec{\mu}}$-name, $\Vdash\dot{a}\colon\kappa\rightarrow 2$, and $\Vdash\dot{a}\notin V$. $\dot{a}$ is a name for character function of $A$, i.e. $A=\{\alpha<\kappa\colon(\dot{a}/G)(\alpha)=1\}$.

For $p\in\mathbb{P}_{\vec{\mu}}^{''}$, $\Psi(p)$ is the abbreviation of the formula:
\begin{alignat*}{1}
\text{For }0<n<\omega,\ \{x\in p(n)\colon a_{p^{(x)}}=a_p\}\in E_{n}^{p}.
\end{alignat*}
\begin{claim}
If $\Psi(p)$ holds, then there exists $q$, a pure extension of $p$ such that for each $x\in q$, $a_{q^{(x)}}=a_q$. This $q$ forces ``$\dot{a}=a_q$", so $q\Vdash\dot{a}\in V$.
\end{claim}
The first statement is by $\kappa$-completeness of each $E_{n}^{p}$, and $a_p=a_q$ (since $q$ is a pure extension). To prove the second statement: suppose not. $q\nVdash\dot{a}=a_q$. Then there is $r$, a subtree of $q$, $i\in\{0,1\}$ and $\alpha<\kappa$, $r\Vdash\dot{a}(\alpha)=i$ and $a_q(\alpha)\neq i$. $t=\stem(r)$, because $r\leq q$, so $r\leq q^{(t)}$. $r$ is a pure extension of $q^{(t)}$, so $a_r(\alpha)=a_q(\alpha)\neq i$; but $r\Vdash\dot{a}=i$, so $a_r(\alpha)=i$, this is a contradiction. So we proved the Claim 1. Claim 1 tells us, for each $p\in\mathbb{P}_{\vec{\mu}}^{''}$, $\neg\Psi(p)$, i.e. there is $n>0$, $\{x\in p(n)\colon a_{p^{(x)}}\neq a_p\}\in E_{n}^{p}$.

For $q$ and $\mathcal{C}$, $\Omega(q,\mathcal{C})$ is the conjunction of the followings:
\begin{itemize}
\item[(1)] $q\in\mathbb{P}_{\vec{\mu}}^{''}$. $\mathcal{C}$ is a function such that $\stem(q)\in\dom(\mathcal{C})\subset q$.
\item[(2)] For $x\in\dom(\mathcal{C})$, $\mathcal{C}(x)=(m_x,f_x)$, $0<m_x<\omega$ and $f_x$ is a function with domain $q^{(x)}(m_x)$. For $t\in\dom(f_x)$, $f_x(t)=(\delta_x(t),\sigma_x(t))$, $\delta_x(t)<\kappa$ and $\sigma_x(t)$ is a $0$-$1$ sequence of length $\delta_x(t)$. For $s,t\in\dom(f_x)$ and $s\neq t$, $\delta_x(t)\neq\delta_x(s)$ and $\sigma_x(t)$ is incomparable with $\sigma_x(s)$.
\item[(3)] For $x\in\dom(\mathcal{C})$, if $t\in q^{(x)}(m_x)$, then $q^{(t)}$ forces $\dot{a}{\uh}\delta_x(t)=\sigma_x(t)$.
\item[(4)] If $x\in\dom(\mathcal{C})$, then for all $t\in q(m_x)$, $t\in\dom(\mathcal{C})$. If $x\in\dom(\mathcal{C})$, $x=\stem(q)$ or there is $s\leq_q x$ such that $s\in\dom(\mathcal{C})$ and $x\in q^{(s)}(m_s)$.
\end{itemize}

\begin{claim}
$\forall p\exists q(q\mbox{ is a pure extension of }p\mbox{ and }\exists\mathcal{C}\ \Omega(q,\mathcal{C}))$.
\end{claim}

Let us define of the operation on $p$: there is $n>0$, such that $B=\{x\in p(n)\colon a_{p^{(x)}}\neq a_p\}\in E_{n}^{p}$ by Claim 1. Then we define $p'=p^{B}$, $p'$ is a subtree of $p$ and $p'\in\mathbb{P}_{\vec{\mu}}^{''}$. For $x\in p'(n)$, let $\delta(x)$ be the least ordinal $\gamma$ such that $a{\upharpoonright}\gamma\neq a_{{p'}^{(x)}}{\upharpoonright}\gamma$. So $\delta$ is a function with domain $p'(n)$. By the property of the sequence $\langle E_n^p\colon 0<n<\omega\rangle$, there is $m\leq n$, a subtree $p''$, and a one-to-one function $f$ on $p''(m)$ such that for $t\in p''(m)$, $x,y\in p''(n)$ and $t<_p x$ and $t<_p y$, $f(x)=f(y)$. So we can understand that $\delta$ is a function on $p''(m)$. Define a subtree $p'''$ of $p''$ such that for $x\in p'''(m)$, ${p'''}^{(x)}$ decides the value of $\dot{a}{\uh}\delta(x)$. Let us use $b_x$ to denote this value, i.e. $b$ is a $0$-$1$ sequence of length less than $\kappa$. Then if $x\neq y$ and $x,y\in p'''(m)$, then $b_x$ and $b_y$ are incomparable, this because the observation before this lemma. Then output $p'''$, $m$ and the $\delta$-function on $p'''(m)$.

We construct $q$ and $\mathcal{C}$ simultaneously. Given $p$, let us define an operator on $p$, the result of this operator is a subtree $p_1$ of $p$, a natural number $m$ and a function on $p_1(m)$. And then use this operator for $p_1^{(x)}$ where $x\in p_1(m)$, iterate this process $q$ and $\mathcal{C}$ are defined. We proved Claim 2.

Thus $\{q\in\mathbb{P}_{\vec{\mu}}^{''}\colon \exists\mathcal{C}\ \Omega(q,\mathcal{C})\}$ is a dense subset of $\mathbb{P}_{\vec{\mu}}^{''}$. So there is $q$ and $\mathcal{C}$ such that $q\in G$ and $\Omega(q,\mathcal{C})$.
We can use $q$, $\mathcal{C}$ and $A$ to compute $g$, the generic sequence corresponding to $G$. Let $h\colon\kappa\rightarrow 2$ be the character function of $A$. $\alpha=\stem(q)$. $\mathcal{C}(\alpha)=(m,f)$. Since for $x,y\in q(m)$ and $x\neq y$, $f(x)$ and $f(y)$ are incomparable, then there is exact one $f(x)$ which is comparable with $h$. Then this $x$ is on the path of $g$. From $q^{(x)}$, repeat this process. In $\omega$ steps, $g$ is defined.
\end{proof}

\begin{theorem}\label{Qminimal}
Forcing notion $\mathbb{P}_{\vec{\mu}}$ yields minimal extensions.
\end{theorem}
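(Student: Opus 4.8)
The plan is to verify condition (a) from the introduction, namely: if $A \subset \kappa$ lies in $V[G]$ but not in $V$, then $V[A] = V[G]$. This is precisely the content of Lemma \ref{kappaziji}, so the real work is to reduce the general minimality statement to that lemma. First I would recall that by the Laver--Woodin--Hamkins result cited in the introduction, $V$ is definable in $V[G]$, so it suffices to check (a). Next, I would argue that any set of ordinals $X \in V[G]$ can be coded by a subset of $\kappa$: since $\mathbb{P}_{\vec{\mu}}$ has $\kappa^+$-c.c.\ and adds no bounded subsets of $\kappa$, and since $V[G] = V[g]$ where $g \colon \omega \to \kappa$, every set of ordinals in $V[G]$ lies in some $V[G]_\eta$-style initial segment; more directly, $g$ itself is (coded by) a subset of $\kappa$, and any $X \in V[G]$ is of the form $\dot{X}/G$ for a name $\dot{X}$, so $X \in V[g]$ and hence $X$ is computed from a bounded-below-$\kappa$ amount of information together with $g$.

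The cleaner route: given $X \in V[G]$ a set of ordinals, either $X \in V$ and we are done, or $X \notin V$. In the latter case I want to produce from $X$ a subset $A \subset \kappa$ with $A \notin V$ and $V[A] = V[X]$ (so that Lemma \ref{kappaziji} applied to $A$ gives $V[A] = V[G]$, whence $V[X] = V[G]$). If $X$ is already essentially a subset of $\kappa$ this is immediate; in general $X \subset \lambda$ for some ordinal $\lambda$, and one uses that $\mathbb{P}_{\vec{\mu}}$ is (below a condition, after passing to $\mathbb{P}_{\vec{\mu}}^{''}$) small relative to a tail of the cardinal structure, together with the fact that $V[G]$ is generated over $V$ by a single $\omega$-sequence $g$, to see that it is enough to handle $X \subseteq \kappa$. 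Indeed since $V[G]=V[g]$ and $g$ is itself coded as a subset of $\kappa$, one can first show: if $X\notin V$ then $g$ is computable from $X$ over $V$ (this is what the proof of Lemma \ref{kappaziji} actually delivers — it extracts $g$ from $A$, $q$, $\mathcal{C}$), hence $V[X] \supseteq V[g] = V[G] \supseteq V[X]$, giving $V[X]=V[G]$ directly. So in fact the only thing needed beyond Lemma \ref{kappaziji} is to observe that its proof works verbatim for an arbitrary non-ground-model set of ordinals $X$ once one fixes a name $\dot{X}$ and notes that the density argument producing $q$ and $\mathcal{C}$ never used that $\dot{a}$ had domain exactly $\kappa$ rather than a larger ordinal — the classification-of-functions machinery (Corollary following Theorem \ref{hanshufenlei2}) and the Prikry property apply to names for $\lambda$-sequences just as well.

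So the steps, in order: (1) invoke Laver--Woodin--Hamkins to reduce to checking (a); (2) take $X \in V[G] \setminus V$ a set of ordinals, fix a $\mathbb{P}_{\vec{\mu}}$-name $\dot{X}$ for its characteristic function with $\Vdash \dot{X} \notin V$; (3) re-run the proof of Lemma \ref{kappaziji} with $\dot{X}$ in place of $\dot{a}$: define $a_p$ pointwise via the Prikry property, establish Claim 1 ($\neg\Psi(p)$ for every $p$, using that $\dot X$ names a non-ground-model object), establish Claim 2 (density of conditions $q$ carrying a structure $\mathcal{C}$, via iterating the shrinking operator built from the classification of functions), and then show $q, \mathcal{C}, X$ together compute $g$ by the incomparability-of-$b_x$'s observation; (4) conclude $g \in V[X]$, hence $V[G] = V[g] \subseteq V[X] \subseteq V[G]$, so $V[X] = V[G]$.

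\textbf{Main obstacle.} The genuine mathematical content is entirely inside Lemma \ref{kappaziji}; the present theorem is a wrapper. The one point requiring care is Step (3)'s claim that the lemma's argument is insensitive to replacing $\kappa$ by an arbitrary ordinal $\lambda$ as the domain of the named function — one must check that the classification of functions on $\kappa^i$ (which is stated for functions into an arbitrary target) and the $\kappa$-completeness of the measures $E_n^p$ are still what is used, and that $\delta(x) < \kappa$ still holds (it does: $\delta(x)$ is a point of disagreement between two $\lambda$-sequences, and the Prikry-style shrinking forces these points to be bounded below $\kappa$ along any given level because there are only $\kappa$-many stems, so the relevant functions land in $\kappa$ up to equivalence). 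Once that is checked, nothing else is new, and the theorem follows.
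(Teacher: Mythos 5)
There is a genuine gap in your step (3). You claim the proof of Lemma \ref{kappaziji} works essentially verbatim when the name $\dot{a}$ denotes a function on an arbitrary ordinal $\lambda$ rather than on $\kappa$, but the construction in Claim 2 of that proof requires, for each $x$ at the relevant level, a pure extension of ${p''}^{(x)}$ that \emph{decides} $\dot{a}{\uh}\delta(x)$; this is obtained by intersecting, for each $\alpha<\delta(x)$, a measure-one set coming from a pure extension deciding $\dot{a}(\alpha)$, and that intersection is legitimate only when $\delta(x)<\kappa$ (it is exactly here that $\kappa$-completeness of the $E_n^{p}$ is used). For $\lambda>\kappa$ the ordinal $\delta(x)$, the first point of disagreement between two of the functions $a_p$, can perfectly well be $\geq\kappa$ --- take $A$ with $A\cap\kappa\in V$ but $A\notin V$, so that all the $a_p$ agree far beyond $\kappa$ --- and then no condition decides $\dot{a}{\uh}\delta(x)$ at all (indeed $\dot{a}{\uh}\delta(x)$ may itself fail to be in $V$). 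Your parenthetical fix (``there are only $\kappa$-many stems, so the relevant functions land in $\kappa$ up to equivalence'') does not repair this: having only $\kappa$-many functions $a_p$ bounds the \emph{number} of distinct disagreement ordinals, not their \emph{position} below $\lambda$, so it does not force $\delta(x)<\kappa$.

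The paper avoids this by never generalizing the lemma. It takes $\theta$ least with $A\cap\theta\notin V$, shows $\cf(\theta)\leq\kappa$ by counting stems (if $\cf(\theta)>\kappa$, one stem decides $\dot{A}\cap\theta_{\alpha}$ for cofinally many $\alpha$, and through $G$ these decided values cohere into a ground-model value of $A\cap\theta$, a contradiction), and then, when $\theta>\kappa$, uses the $\kappa^{+}$-c.c.\ to see that the set $S$ of pairs $(\alpha,a)$ with some condition forcing $\dot{A}\cap\theta_{\alpha}=a$ has size at most $\kappa$; pushing $\langle A\cap\theta_{\alpha}\colon\alpha<\cf(\theta)\rangle$ through an injection of $S$ into $\kappa$ yields a genuine $B\subset\kappa$ with $B\notin V$ and $V[B]=V[A\cap\theta]$, to which Lemma \ref{kappaziji} applies as stated. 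You gesture at exactly this coding route in your second paragraph but do not carry it out and then explicitly discard it in favour of re-running the lemma; to complete your proof you would need to supply this cofinality-plus-chain-condition coding (or an equivalent), since the ``re-run the lemma for $\lambda$'' route fails as described.
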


\begin{proof}
Suppose $G$ is a $\mathbb{P}_{\vec{\mu}}$-generic filter over $V$. In $V[G]$, $A\subset\Ord$ and $A\in V$. $\theta$ is the least ordinal such that $A\cap\theta\notin V$, i.e. $A\cap\theta\notin V$ and $\forall\alpha<\theta\ A\cap\alpha\in V$. Then $\theta$ is a limit ordinal and $A\cap\theta$ is unbounded in $\theta$. (The reason is: if $\theta=\beta+1$, $A\cap\theta$ is $A\cap\beta$ or $(A\cap\beta)\cup\{\beta\}$. Since $A\cap\theta\notin V$, so $A\cap\beta\notin V$, this contradicts the minimality of $\theta$.) Let $\delta=\cf(\theta)$, then $\delta$ is a regular cardinal. Fix $\langle\theta_{\alpha}:\alpha<\delta\rangle$ is a strictly increasing sequence of ordinals of supremum $\theta$.

$\dot{A}$ is a name for $A$ such that $\Vdash``\dot{A}\cap\theta\notin V,\ \forall\alpha<\theta (\dot{A}\cap\alpha\in V)"$.

\begin{claim}
$\delta\leq\kappa$.
\end{claim}
Suppose $\delta>\kappa$.
Let $X_{\alpha}=A\cap\theta_{\alpha}$, so $X_{\alpha}\in V$. For $\alpha<\delta$, there is $(s_{\alpha},F_{\alpha})\in G$, such that $(s_{\alpha},F_{\alpha})\Vdash\dot{A}\cap\theta_{\alpha}=X_{\alpha}$. So there is $s$ and a unbounded set $K\subset\delta$ such that $\forall\alpha\in K\ s_{\alpha}=s$. The reason is: the number of such $s$ is $\kappa$, if for each $s$, the occurrence of $s$ is bounded in $\theta$, so the number is less than $\theta$, but $\theta$ is regular, $\theta>\kappa$, this is a contradiction. Fix such a $s$. For $\alpha\in K$, there is $b\in V$ such that $(s,F_{\alpha})\Vdash\dot{A}\cap\theta_{\alpha}=b$. For $\alpha\in K$, $b_{\alpha}$ is a $b$ as above. Then for $\alpha,\beta\in K$ and $\alpha<\beta$, $b_{\alpha}=b_{\beta}\cap\theta_{\alpha}$. The reason is: there is $(s,H)\in G$, stronger than $(s,F_{\alpha})$ and $(s,F_{\beta})$, so $(s,H)$ forces $\dot{A}\cap\theta_{\alpha}=b_{\alpha}$ and $\dot{A}\cap\theta_{\beta}=b_{\beta}$, but $\theta_{\alpha}<\theta_{\beta}$, so $b_{\alpha}=b_{\beta}\cap\theta_{\alpha}$. Define $b=\bigcup_{\alpha\in K}b_{\alpha}$. Then $b\subset\theta$ and $b\in V$. For $\alpha\in K$, there is $p\in G$, $p\Vdash\dot{A}\cap\theta_{\alpha}=b_{\alpha}$, and $K$ is unbounded in $\delta$, so in $V[G]$, $A\cap\theta=b$. So $A\cap\theta\in V$. A contradiction. This proved Claim.

\noindent \underline{\emph{Case 1.}} $\theta\leq\kappa$.

$A\cap\theta\subset\kappa$, but $A\cap\theta\notin V$, so by lemma \ref{kappaziji}, $V[A\cap\theta]=V[G]$. Since $V[A\cap\theta]\subset V[A]$, so $V[A]=V[G]$.

\noindent \underline{\emph{Case 2.}} $\theta>\kappa$.
Define\[S=\{(\alpha,a):\alpha<\delta\mbox{ and }\exists p\ p\Vdash\dot{A}\cap\theta_{\alpha}=a\}.\]For each $\alpha<\theta$, if $(\alpha,a),\ (\alpha,b)\in S$ and $a\neq b$, then there is $p$ and $q$ such that $p\Vdash\dot{A}\cap\theta_{\alpha}=a$ and $q\Vdash\dot{A}\cap\theta_{\alpha}=b$, so $p$ is incompatible with $q$, but the forcing notion $\mathbb{P}_{\vec{\mu}}$ has $\kappa^{+}$-chain condition, so for $\alpha<\delta$, the number of $a$ such that $(\alpha,a)\in S$ is less than or equal to $\kappa$. Since $\delta\leq\kappa$, so $|S|\leq\kappa$.
Fix a one-to-one mapping $F\colon S\rightarrow\kappa$.

For $\alpha<\theta$, $X_{\alpha}=A\cap\delta_{\alpha}$. Then $\{(\alpha,X_{\alpha}):\alpha<\delta\}\subset S$. Let $B=F[\{(\alpha,X_{\alpha}):\alpha<\delta\}]$. $\{(\alpha,X_{\alpha}):\alpha<\delta\}$ is just $\langle X_{\alpha}:\alpha<\delta\rangle$. Then \[V[B]=V[\{(\alpha,X_{\alpha}):\alpha<\delta\}]=V[\langle X_{\alpha}:\alpha<\delta\rangle]=V[A\cap\theta].\] Since $A\cap\theta\notin V$, so $B\notin V$, but $B\subset\kappa$, by lemma \ref{kappaziji}, $V[A\cap\theta]=V[B]=V[G]$. so $V[A]=V[G]$.
\end{proof}

\section{Forcing notion $\mathbb{P}_{\mathcal{D}}$}

\subsection{Definition and properties}

In some sense, the development of the theory of the forcing $\mathbb{P}_{\mathcal{D}}$ is parallel to $\mathbb{P}_{\vec{\mu}}$. For this reason, we omit some proofs in detail, if the corresponding part for $\mathbb{P}_{\vec{\mu}}$ is exactly similar.

\begin{definition}
$\langle\gamma_n:n<\omega\rangle$ is a strictly increasing sequence of infinite cardinals, $\gamma$ is the supremum of $\langle\gamma_n:n<\omega\rangle$. $\mathcal{D}$ is a function with domain $\{(0,0)\}\cup\{(n,\alpha)\colon 0<n<\omega\ \&\ \alpha<\gamma_{n-1}\}$, such that
\begin{itemize}
\item $\mathcal{D}(0,0)$ is a $\gamma_0$-complete ultrafilter on $\gamma_0$.
\item $\mathcal{D}(n,\alpha)$ is a $\gamma_n$-complete ultrafilter on $\gamma_n$ and if $\alpha\neq\beta$, then $\mathcal{D}(n,\alpha)\neq\mathcal{D}(n,\beta)$.
\end{itemize}
Define $\mathbb{P}_{\mathcal{D}}$ is the forcing notion as follows:
\begin{itemize}
\item Forcing conditions are pairs $(s,F)$, such that $s$ is a finite strictly increasing string of ordinals, $|s|>0$ and $\forall i<|s|(s(i)<\gamma_i)$, and $F$ is a function with the same domain of $\mathcal{D}$ and for each $x\in\dom(F)$, $F(x)\in\mathcal{D}(x)$. Such a function $F$ is called \emph{$\mathcal{D}$-choice function}.
\item $(s,F)$ and $(s',F')$ are conditions, $(s',F')$ is stronger than $(s,F)$ iff
\begin{itemize}
\item[(a)] $|s|\leq|s'|$ and $s'{\uh}|s|=s$.
\item[(b)] For $x\in\dom(\mathcal{D})$, $F'(x)\subset F(x)$.
\item[(c)] $\forall i(|s|\leq i<|s'|\rightarrow s'(i)\in F(i,s'(i-1)))$.
\end{itemize}
\end{itemize}
\end{definition}

\begin{remark}
Such $(\langle\gamma_n:n<\omega\rangle,\mathcal{D})$ can be obtained by iterated forcing from $\omega$ many measurable cardinals.
\end{remark}

Every condition in $\mathbb{P}_{\mathcal{D}}$ corresponds to a subtree of $U=\{s\colon |s|>0\ \&\ \forall i<|s|\ s(i)<\gamma_i\}$.
For $(s,F)\in\mathbb{P}_{\mathcal{D}}$, $|s|=n$ and $\tilde{s}=\langle s(0),...,s(n-1)\rangle$, define $T(s,F)$ as follows:
\[T(s,F)=\{t\in U\colon (t=s{\uh}|t|)\ \vee\ (t{\uh}n=s\ \&\ \forall i(n\leq i<|t|\rightarrow t(i)\in F(i,t(i-1))))\}.\]

Similar to the case for $\mathbb{P}_{\vec{\mu}}$,
$\{T(s,F)\colon(s,F)\in\mathbb{P}_{\mathcal{D}}\}$ is isomorphic to the separative quotient of $\mathbb{P}_{\mathcal{D}}$, we use $\mathbb{P}_{\mathcal{D}}^{'}$ to denote this forcing notion.

\begin{lemma}
Suppose $G$ is a $\mathbb{P}_{\mathcal{D}}$-generic filter over $V$, define $g=\bigcup\{s\colon\exists F\ (s,F)\in G\}$. Then $g\colon\omega\rightarrow\gamma$ and $\forall i<\omega\ (g(i)<\gamma_i)$. We call $g$, the generic sequence corresponding to $G$. Also, we have
\[G=\{(s,F)\in\mathbb{P}_{\mathcal{D}}\colon g{\uh}|s|=s\ \&\ \forall i(|s|\leq i\rightarrow g(i)\in F(i,g(i-1)))\}.\]
So $V[G]=V[g]$. $g$ is a branch of $T(s,F)$ for $(s,F)\in G$.

If $x\colon\omega\rightarrow\gamma$ such that $\forall i\ (x(i)<\gamma_i)$ and $x\in V$, then there is $m<\omega$, if $n>m$, $x(n)<g(n)<\gamma_n$. In particular, there is $m<\omega$, if $n>m$, $\gamma_{n-1}<g(n)<\gamma_n$.
\end{lemma}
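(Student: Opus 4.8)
The plan is to verify each assertion by a density/genericity argument combined with the Prikry property machinery that has already been set up for $\mathbb{P}_{\vec\mu}$ and carried over to $\mathbb{P}_{\mathcal{D}}$. First I would establish that $g=\bigcup\{s\colon\exists F\ (s,F)\in G\}$ is a function with domain $\omega$: for each $n$ the set of conditions $(s,F)$ with $|s|>n$ is dense in $\mathbb{P}_{\mathcal{D}}$ (given any $(t,F)$ with $|t|\le n$, extend the stem by repeatedly choosing, at step $i$, an element of $F(i,t(i-1))$, which is nonempty since $F(i,t(i-1))\in\mathcal{D}(i,t(i-1))$ is a proper ultrafilter — here one uses $t(i-1)<\gamma_{i-1}$ so that $(i,t(i-1))\in\dom(\mathcal{D})$); by genericity $G$ meets this dense set). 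Compatibility of conditions in $G$ (any two elements of a filter have a common extension, and extension respects the stem-end-extension clause (a)) shows the various stems $s$ cohere, so $g$ is well-defined and single-valued, and clause $\forall i<|s|\ (s(i)<\gamma_i)$ in the definition of a condition gives $g(i)<\gamma_i$ for all $i$.

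Next I would prove the displayed formula for $G$. The inclusion $\subseteq$ is immediate: if $(s,F)\in G$ then $s$ is an initial segment of $g$ (so $g\uh|s|=s$), and for $i\ge|s|$ the condition in $G$ witnessing $g\uh(i{+}1)$ is a common extension with $(s,F)$, forcing $g(i)\in F(i,g(i-1))$ by clause (c). For $\supseteq$, suppose $(s,F)\in\mathbb{P}_{\mathcal{D}}$ satisfies $g\uh|s|=s$ and $g(i)\in F(i,g(i-1))$ for all $i\ge|s|$; I want $(s,F)\in G$. Pick any $(s',F')\in G$ with $|s'|\ge|s|$ (exists by the density argument above); then $s'\uh|s|=g\uh|s|=s$, so clause (a) holds between $(s,F)$ and $(s',F')$; shrinking $F'$ to $F''(x)=F(x)\cap F'(x)$ (still a $\mathcal{D}$-choice function by $\gamma_n$-completeness, in fact just by the filter property) and noting $g(i)\in F''(i,g(i-1))$ for all $i\ge|s'|$, one sees $(s',F'')\le (s,F)$ and $(s',F'')\le(s',F')$; as $(s',F')\in G$ and $G$ is a filter that is closed under the conditions it is compatible with — more precisely, $(s',F'')$ is compatible with every element of $G$, hence by genericity (the set of conditions either below $(s,F)$ or incompatible with $(s,F)$ is dense) $G$ contains a condition below $(s,F)$, whence $(s,F)\in G$ by upward closure. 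This also shows $g$ is a branch through $T(s,F)$ for each $(s,F)\in G$, directly from the definition of $T(s,F)$, and $V[G]=V[g]$ follows since $G$ is definable from $g$ and the ground-model objects $\mathbb{P}_{\mathcal{D}}$, while $g$ is definable from $G$.

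Finally, for the domination claim: fix $x\in V$ with $x\colon\omega\to\gamma$ and $x(i)<\gamma_i$. Consider the $\mathcal{D}$-choice function $F$ defined by $F(n,\alpha)=\gamma_n\setminus(x(n)+1)$ for $0<n<\omega$, $\alpha<\gamma_{n-1}$, and $F(0,0)=\gamma_0\setminus(x(0)+1)$; each such set is a tail of $\gamma_n$, hence in the $\gamma_n$-complete (so nonprincipal, hence containing all tails — actually one only needs that $\mathcal{D}(n,\alpha)$ is nonprincipal and $\gamma_n$-complete, giving that cofinal-complement sets of size $<\gamma_n$ are excluded, so co-bounded sets are in the ultrafilter) ultrafilter $\mathcal{D}(n,\alpha)$. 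Then $(\langle 0\rangle,F)$, or rather some condition with this $F$, lies in a dense set: the set of conditions stronger than some $(t,F)$ is dense (given $(t',F')$, pass to $(t',F\cap F')$ where $(F\cap F')(x)=F(x)\cap F'(x)$), so $G$ contains a condition $(t,F^*)$ with $F^*(x)\subseteq F(x)$ for all $x$. By clause (c) of the ordering applied to $(t,F^*)\in G$ and the formula for $G$, for all $i\ge|t|$ we have $g(i)\in F^*(i,g(i-1))\subseteq F(i,g(i-1))=\gamma_i\setminus(x(i)+1)$, i.e. $x(i)<g(i)<\gamma_i$; take $m=|t|$. The ``in particular'' statement is the special case $x(n)=\gamma_{n-1}$ (for $n>0$; here $x(n)<\gamma_n$ holds since $\langle\gamma_k\rangle$ is strictly increasing), using that $\gamma_{n-1}<\gamma_n$ so $\gamma_n\setminus(\gamma_{n-1}+1)$ is again a co-bounded, hence measure-one, subset. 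The main obstacle is purely bookkeeping: making sure the passage between a condition $(s,F)$ and its stem-initial-segment $s$ (as an initial segment of $g$) is handled correctly when invoking genericity, and checking that the shrunken functions $F\cap F'$ remain $\mathcal{D}$-choice functions — both of which are routine once one is careful that $\dom(\mathcal{D})$ only contains pairs $(i,\alpha)$ with $\alpha<\gamma_{i-1}$, matching the constraint $g(i-1)<\gamma_{i-1}$.
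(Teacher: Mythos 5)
Your proof is correct and, for the only part the paper actually writes out (the domination claim — the paper says ``just to prove the second statement''), it follows essentially the same route: the paper meets the dense set $E_x$ of conditions all of whose branches eventually dominate $x$, which is exactly your dense set of conditions whose choice function refines $F(n,\alpha)=\gamma_n\setminus(x(n)+1)$, both resting on the fact that co-bounded subsets of $\gamma_n$ lie in every nonprincipal $\gamma_n$-complete ultrafilter. The remaining assertions, which the paper states without proof, you verify by the standard density and compatibility arguments; the one slip is the parenthetical claim that $(s',F'')$ is compatible with every element of $G$, which is neither true in general nor what the dense-set argument needs — what it needs is that $(s,F)$ itself is compatible with every element of $G$, and that follows from the hypotheses $g\uh|s|=s$ and $g(i)\in F(i,g(i-1))$ together with the already-established $\subseteq$ inclusion, so the argument is easily repaired.
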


\begin{proof}
Just to prove the second statement. Given $x$. Define
\begin{equation*}
\begin{split}
E_x=\{(s,F)\in\mathbb{P}_{\mathcal{D}}\colon\forall t\in U(t{\uh}|s|=s\ \&\ \forall i\geq |s|(t(i)\in F&(i,t(i-1)))\rightarrow \\ \forall& i\geq|s|(t(i)>x(i)))\}.
\end{split}
\end{equation*}
Then $E_x$ is dense in $\mathbb{P}_{\mathcal{D}}$. So $G\cap E_x\neq\emptyset$. So $m<\omega$, if $n>m$, $x(n)<g(n)<\gamma_n$.
\end{proof}

\begin{notation}
For $p\in\mathbb{P}_{\mathcal{D}}^{''}$, $x\in p$, $h(x)$ is the height of $x$ in the tree $p$, so $h(x)\in\omega$. $h(x)=0$ iff $x<\gamma_0$, for $i>0$, $h(x)=i$ iff $\gamma_{i-1}<x<\gamma_i$. $\stem(p)$, $p^{(x)}$, $p(n)$ etc. Notations are similar to corresponding parts for $\mathbb{P}_{\mathcal{D}}$, see Notation \ref{notationfortree}.
\end{notation}

Given $(k,\alpha)\in\dom(\mathcal{D})$, let us define a sequence $\langle E_n^{(k,\alpha)}\colon n<\omega\rangle$ as follows:
\begin{itemize}
\item $E_n^{(k,\alpha)}$ is an ultrafilter on $\gamma_k\times...\times\gamma_{k+n-1}$, $E_1^{(k,\alpha)}=\mathcal{D}(k,\alpha)$;
\item $E_{n+1}^{(k,\alpha)}=E_n^{(k,\alpha)}*\langle\mathcal{D}(k+n-1,\beta)\colon\beta<\gamma_{k+n-2}\rangle$.
\end{itemize}

If $p\in\mathbb{P}_{\mathcal{D}}^{'}$, then $E_n^p$ is just $E_n^{(k,\alpha)}$ where $k=\stem(p)$ and $\{t\colon\stem(p)^{\frown}t\in p\}\in\mathcal{D}(k,\alpha)$. $E_n^p$ is just related with $\stem(p)$. If we have fixed $\alpha$ or $p$, then $E_n$ has no confusion. Moreover, we have:

\begin{proposition}
$p\in\mathbb{P}_{\mathcal{D}}^{'}$. Then
\begin{itemize}
\item[(1)] For $n>0$, $p(n)\in E_n^{p}$.
\item[(2)] $q$ is a subtree of $p$ and $\stem(q)=\stem(p)$, then $q\in\mathbb{P}_{\mathcal{D}}^{'}$ iff for $n>0$, $q(n)\in E_n^p$.
\item[(3)] If $A\subset p(n)$, define $p^A=\bigcup_{x\in A}p^{(x)}$. Then $A\in E_n$ iff $p^{A}\in\mathbb{P}_{\mathcal{D}}^{'}$.
\end{itemize}
\end{proposition}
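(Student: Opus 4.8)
The plan is to mirror, step by step, the argument given for the analogous proposition about $\mathbb{P}_{\vec{\mu}}^{'}$, using the fact that the sequence $\langle E_n^{(k,\alpha)}\colon n<\omega\rangle$ is built by iterated sums of the ultrafilters $\mathcal{D}(k,\alpha)$, together with the general facts about sums established in \S 2 (in particular Proposition \ref{lambdacomplet}, which gives $\gamma_k$-completeness of each $E_n^{(k,\alpha)}$). For a fixed $p\in\mathbb{P}_{\mathcal{D}}^{'}$ with $\stem(p)$ sitting at height $k$ and governing ultrafilter $\mathcal{D}(k,\alpha)$, the key bookkeeping observation is that an element $t$ of $p$ at height $|\stem(p)|+n$ is exactly a string $\stem(p)^{\frown}\langle x_0,\dots,x_{n-1}\rangle$ with $x_0\in F(k,\text{last}(\stem(p)))$, $x_1\in F(k+1,x_0)$, \dots, $x_{n-1}\in F(k+n-1,x_{n-2})$; so $p(n)$, viewed as a set of such strings, is precisely the ``$\mathcal{D}$-large everywhere'' subset of $\gamma_k\times\cdots\times\gamma_{k+n-1}$, which is the definition of membership in $E_n^{(k,\alpha)}$ unwound one level at a time.

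For (1), I would prove $p(n)\in E_n^p$ by induction on $n$. The base case $n=1$ is immediate: $p(1)=\{x\colon \stem(p)^{\frown}\langle x\rangle\in p\}\in\mathcal{D}(k,\alpha)=E_1^{(k,\alpha)}$ by the definition of $\mathbb{P}_{\mathcal{D}}$. For the inductive step, write $E_{n+1}^{(k,\alpha)}=E_n^{(k,\alpha)}*\langle\mathcal{D}(k+n-1,\beta)\colon\beta<\gamma_{k+n-2}\rangle$ and unwind Definition \ref{def:sum}: a set $T$ is in this sum iff the set of $\sigma$ in the base for which the fiber over $\sigma$ is large in the appropriate $\mathcal{D}(k+n-1,\cdot)$ is itself in $E_n^{(k,\alpha)}$; and for $T=p(n+1)$ the fiber over a string $\sigma=\langle x_0,\dots,x_{n-1}\rangle\in p(n)$ is exactly $\{y\colon y\in F(k+n,x_{n-1})\}=F(k+n,x_{n-1})\in\mathcal{D}(k+n,x_{n-1})$, so the relevant base set contains all of $p(n)$, which is in $E_n^p$ by the inductive hypothesis. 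For (2), the forward direction is just (1) applied to $q$; the reverse direction reverses the computation — if $q(n)\in E_n^p$ for all $n>0$, then at each level the fiber conditions force $q$ to be $T(\stem(q),F')$ for a suitable $\mathcal{D}$-choice function $F'$ obtained by reading off, for each $(m,\xi)$, the section $\{y\colon \sigma^{\frown}\langle y\rangle\in q \text{ for some } \sigma\}$, and one checks this $F'(m,\xi)\in\mathcal{D}(m,\xi)$ using that $q(n)\in E_n^p$ for all $n$ (so no section can be $\mathcal{D}$-small without contradicting largeness at some level). For (3), note $p^A(n)$ equals $A$ at the critical level and equals $p(\text{\emph{other levels}})$ above/below, so by (2), $p^A\in\mathbb{P}_{\mathcal{D}}^{'}$ iff $A\in E_n^p$, precisely as in the $\mathbb{P}_{\vec{\mu}}^{'}$ case.

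The main obstacle I anticipate is purely notational rather than mathematical: keeping the index-shifting in the definition of $E_{n+1}^{(k,\alpha)}=E_n^{(k,\alpha)}*\langle\mathcal{D}(k+n-1,\beta)\colon\beta<\gamma_{k+n-2}\rangle$ correct, since here (unlike for $\mathbb{P}_{\vec{\mu}}$, where all measures live on the single cardinal $\kappa$) the $n$-th coordinate lives on $\gamma_{k+n-1}$ and the ultrafilter attached to a node depends on the \emph{previous} coordinate's value through the second argument of $\mathcal{D}$. One must be careful that the appeal to Definition \ref{def:sum} is to the general (non-product) form of the sum, since the fiber ultrafilters genuinely vary. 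Once the indices are aligned, every step is a routine unwinding of the sum operation plus an appeal to $\gamma_k$-completeness from Proposition \ref{lambdacomplet}, and the proof is essentially identical to the one already given for $\mathbb{P}_{\vec{\mu}}^{'}$; accordingly I would state it briefly and refer back to that proof for the details common to both.
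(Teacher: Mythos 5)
The paper offers no proof of this proposition at all: it is stated as a routine transfer of the corresponding (also unproved) proposition for $\mathbb{P}_{\vec{\mu}}^{'}$, so your plan of unwinding Definition \ref{def:sum} level by level is exactly the intended argument. Your treatment of (1) is correct, including the induction and the observation that the fiber of $p(n+1)$ over a node $\sigma$ with last coordinate $x_{n-1}$ is $F(k+n,x_{n-1})\in\mathcal{D}(k+n,x_{n-1})$; you are also right that the index bookkeeping is the only delicate point (indeed the paper's own displayed recursion $E_{n+1}^{(k,\alpha)}=E_n^{(k,\alpha)}*\langle\mathcal{D}(k+n-1,\beta)\colon\beta<\gamma_{k+n-2}\rangle$ is off by one; the fiber ultrafilters must live on $\gamma_{k+n}$ and be indexed by the last coordinate of the base tuple, as you have it).

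There is, however, a genuine gap in your reverse direction of (2), and it is worth naming precisely because it is also the point where the literal statement is fragile. In $T(\stem(q),F')$ the successor set of a node $t$ is $F'(|t|,t(|t|-1))$, which depends only on the level and the \emph{last} coordinate of $t$. A subtree $q\subset p$ all of whose levels are $E_n^p$-large need not have this homogeneity: two nodes at the same level with the same last coordinate may have been shrunk differently, so $q$ need not equal $T(\stem(q),F')$ for any $\mathcal{D}$-choice function $F'$. Your recipe of ``reading off'' $F'(m,\xi)$ as the union of the sections over all $\sigma$ ending in $\xi$ produces a tree that merely \emph{contains} $q$; and checking that sections are large does not address homogeneity. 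The statement should therefore be read with $\mathbb{P}_{\mathcal{D}}^{'}$ understood as (or replaced by the equivalent dense forcing of) all subtrees whose $n$-th levels lie in $E_n$ --- which is in fact how the paper uses these trees later, e.g.\ when it shrinks a condition differently above different nodes in the proof of Lemma \ref{gammaziji}. Under that reading, (2)-reverse is immediate and your extra step is unnecessary; under the literal reading it fails. The same remark applies to (3), since $p^{A}$ is generally not of the form $T(s,F)$ either; note also that below the critical level $p^{A}(m)$ is the set of $<_p$-predecessors of elements of $A$ rather than all of $p(m)$, though this set is still $E_m$-large when $A\in E_n$, so your conclusion stands.
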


\begin{corollary}
Given $(k,\alpha)\in\dom(\mathcal{D})$, for a function $f$ with domain $\gamma_k\times...\times\gamma_{k+n-1}$, $f$ is $E_n^{(k,\alpha)}$-equivalent to a function of the following types:
\begin{itemize}
\item[(0)] A constant function.
\item[(1)] A function $g$ defined from a function with domain $\gamma_k$, i.e. there is $h$ with domain $\gamma_k$, $g(x_0,...,x_{i-1})=h(x_0)$ for all $x_0,...,x_{i-1}$.
\item[...]
\item[(i)] A function $g$ defined from a function with domain $\gamma_k\times...\times\gamma_{i+n-1}$, i.e. there is $h$ with domain $\gamma_k\times...\times\gamma_{i+n-1}$, $g(x_0,...,x_{i-1})=h(x_0,...,x_{k-1})$ for all $x_0,...,x_{i-1}$.
\item[...]
\item[(n)] An one-to-one function with domain $\gamma_k\times...\times\gamma_{k+n-1}$.
\end{itemize}
\end{corollary}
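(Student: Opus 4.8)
The plan is to read this off directly from Theorem~\ref{hanshufenlei2}, the Principle of classification of functions, after matching up the data. Put $\lambda_j:=\gamma_{k+j}$ for $j<\omega$ and $D_n:=E^{(k,\alpha)}_n$; then $D_1=E^{(k,\alpha)}_1=\mathcal D(k,\alpha)$ is a normal measure on $\lambda_0=\gamma_k$, which is clause~(1). By the defining recursion of $\langle E^{(k,\alpha)}_n\colon n<\omega\rangle$ we have $D_{n+1}=D_n\ast\langle\mathcal D(k+n,\beta)\colon\beta<\gamma_{k+n-1}\rangle$, so I would take $\pi_n$ to be the function on $\lambda_0\times\cdots\times\lambda_{n-1}$ obtained from this one-ordinal-indexed family by composing with the projection onto the last coordinate, i.e.\ $\pi_n(x_0,\dots,x_{n-1})=\mathcal D(k+n,x_{n-1})$; this is the only way Definition~\ref{def:sum}, which wants a family indexed by $\lambda_0\times\cdots\times\lambda_{n-1}$, can read that recursion (cf.\ Notation~\ref{notation1}(1)). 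With these choices every value of $\pi_n$ is a normal measure on $\gamma_{k+n}$ and clause~(3), $D_{n+1}=D_n\ast\pi_n$, holds by construction.

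The only clause requiring an argument is~(2): for each $n\ge1$ one needs $K\in D_n=E^{(k,\alpha)}_n$ with $\pi_n\uh K$ one-to-one. Since $\beta\mapsto\mathcal D(k+n,\beta)$ is injective by the distinctness clause in the definition of $\mathcal D$, this is the same as finding $K\in E^{(k,\alpha)}_n$ on which the projection to the last coordinate is one-to-one, and such a $K$ is produced by exactly the induction that appears inside the proof of Theorem~\ref{hanshufenlei1}: the case $n=1$ is trivial since $E^{(k,\alpha)}_1$ lives on $\gamma_k$, and given a witness $S\in E^{(k,\alpha)}_n$ one applies Proposition~\ref{bujiao} to the (at most $\gamma_{k+n-1}<\gamma_{k+n}$, hence admissibly many) distinct normal measures $\mathcal D(k+n,\beta)$ on $\gamma_{k+n}$, for $\beta$ ranging over the last coordinates of tuples in $S$, to get pairwise disjoint $A_\beta\in\mathcal D(k+n,\beta)$, and then $K=\{(\vec x,y)\colon\vec x\in S,\ y\in A_{x_{n-1}}\}\in E^{(k,\alpha)}_{n+1}$; disjointness of the $A_\beta$ lets the last coordinate of a tuple in $K$ recover $x_{n-1}$, and one-to-oneness on $S$ then recovers all of $\vec x$.

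Granting clauses~(1)--(3), Theorem~\ref{hanshufenlei2} applies verbatim and yields the asserted list of normal forms up to $E^{(k,\alpha)}_n$-equivalence. There is no real obstacle here --- all the content sits in Theorems~\ref{hanshufenlei1},~\ref{hanshufenlei2} and Proposition~\ref{bujiao} --- but two bookkeeping points deserve care: the re-indexing that exhibits the $E^{(k,\alpha)}_n$ as honest iterated $\ast$-sums $D_n\ast\pi_n$ of the shape the theorem needs, and the fact that Proposition~\ref{bujiao} is available precisely because the $\mathcal D(m,\cdot)$ in play are normal measures (as they are in the settings where these $E^{(k,\alpha)}_n$ are used), which is also what lets the one-to-one witness $S$ be propagated from stage $n$ to stage $n+1$. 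Alternatively one can dispense with the dictionary and simply rerun, mutatis mutandis, the induction of Theorems~\ref{hanshufenlei1} and~\ref{hanshufenlei2} with $\gamma_k,\gamma_{k+1},\dots$ in place of a single $\kappa$; the argument is essentially the same.
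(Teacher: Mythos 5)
Your proof is correct and takes essentially the same route as the paper, whose entire proof is the one-line citation ``This follows from Corollary \ref{hanshufenlei2}.'' You have merely spelled out what the paper leaves implicit: the dictionary $\lambda_j=\gamma_{k+j}$, $D_n=E_n^{(k,\alpha)}$, $\pi_n(x_0,\dots,x_{n-1})=\mathcal{D}(k+n,x_{n-1})$, and the Proposition~\ref{bujiao} disjointification argument verifying hypothesis (2) of Theorem~\ref{hanshufenlei2}.
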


\begin{proof}
This follows from Corollary \ref{hanshufenlei2}.
\end{proof}

The following basic theorems for Prikry-type forcing has similar proofs as $\mathbb{P}_{\vec{\mu}}$, so we omit the proof here.
\begin{theorem}[Prikry property]
Suppose $\varphi$ is a sentence in the language of the forcing $\mathbb{P}_{\mathcal{D}}$, given $s$. Then there is $F$, a $\mathcal{D}$-choice function, $(s,F)$ decides $\varphi$.
\end{theorem}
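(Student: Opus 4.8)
The plan is to transcribe, essentially verbatim, the argument used for $\mathbb{P}_{\vec{\mu}}$ in \S 3: the combinatorial skeleton is identical, the only change being that the ultrafilter by which one splits at a node of height $n$ is now read off from $\mathcal{D}$ (via $E_n^p$, which depends only on $\stem(p)$) rather than from a fixed $\vec{\mu}$. First I would establish the fusion lemma that is the analogue of Lemma \ref{geo0}: if $D$ is dense in $\mathbb{P}_{\mathcal{D}}^{'}$ and $p\in\mathbb{P}_{\mathcal{D}}^{'}$, then there are a pure extension $\tilde p$ of $p$ and an $n$ with $0<n<\omega$ such that $\tilde p^{(x)}\in D$ for every $x\in\tilde p(n)$. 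Granting this, the theorem follows at once. Take $D=\{q\in\mathbb{P}_{\mathcal{D}}^{'}\colon q\text{ decides }\varphi\}$, which is dense, and apply the fusion lemma to any $p\in\mathbb{P}_{\mathcal{D}}^{'}$ with $\stem(p)=s$ (such $p$ exists, e.g.\ the full tree above $s$). This yields a pure extension $\tilde p$ and an $n>0$; since $\tilde p(n)\in E_n^{\tilde p}$ and $\tilde p(n)$ is partitioned into $A_1=\{x\colon\tilde p^{(x)}\Vdash\varphi\}$ and $A_2=\{x\colon\tilde p^{(x)}\Vdash\neg\varphi\}$, one of them, say $A_i$, lies in the ultrafilter $E_n^{\tilde p}$. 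Then $q=\tilde p^{A_i}$ is a pure extension of $\tilde p$ (so $\stem(q)=s$), is a condition of $\mathbb{P}_{\mathcal{D}}^{'}$ by part (3) of the Proposition for $\mathbb{P}_{\mathcal{D}}^{'}$, and $q\Vdash\varphi$ or $q\Vdash\neg\varphi$ since the nodes in $q(n)=A_i$ form a maximal antichain below $q$ all forcing $\varphi$ the same way. Pulling $q$ back along $T$ gives the desired $\mathcal{D}$-choice function $F$ with $(s,F)$ deciding $\varphi$.

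For the fusion lemma I would argue by contradiction as in Lemma \ref{geo0}. Suppose that for every $n>0$ the set $B_n=\{x\in p(n)\colon p^{(x)}\text{ has no pure extension in }D\}$ belongs to $E_n^p$. Using the level-by-level characterization (parts (2) and (3) of the Proposition for $\mathbb{P}_{\mathcal{D}}^{'}$) one builds a decreasing sequence $p=p_0\geq p_1\geq\cdots$ of pure extensions with $p_n(n)\subseteq B_n$, and sets $q=\bigcap_n p_n$. The one step here that goes beyond what is literally written in \S 3 is the verification that $q$ is again a condition of $\mathbb{P}_{\mathcal{D}}^{'}$ — this is precisely the analogue of part (4) of the Proposition for $\mathbb{P}_{\vec{\mu}}^{'}$, which the excerpt's Proposition for $\mathbb{P}_{\mathcal{D}}^{'}$ omitted and which must be supplied. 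It holds because each $\mathcal{D}(n,\alpha)$ is $\gamma_n$-complete, hence countably complete, so the height-$n$ part of $q$ (a countable intersection of sets, each belonging to the relevant $\mathcal{D}(n,\cdot)$) still lies in that ultrafilter; therefore $q(m)\in E_m^q=E_m^p$ for all $m$ and $q\in\mathbb{P}_{\mathcal{D}}^{'}$. Now $q$ is a pure extension of $p$ with $q^{(x)}$ having no pure extension in $D$ for every $x\in q$; but $D$ is dense, so picking $r\leq q$ with $r\in D$, $r$ is a pure extension of $q^{(\stem(r))}$ — a contradiction, which proves the claim and hence the lemma.

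I expect the countable-intersection step just described to be the only genuinely non-routine point; once it is in place, the analogue of Lemma \ref{geo1} and the Prikry property follow by direct transcription of \S 3, the varying measures $\mathcal{D}(n,\cdot)$ causing no difficulty because $E_n^p$ is a single ultrafilter determined by $\stem(p)$. I would also remark that the statement tacitly requires $\gamma_0\geq\omega_1$ (so that countable completeness is available at the bottom level); this is automatic in every application in this paper, where the $\gamma_n$ are taken measurable.
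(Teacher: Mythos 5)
Your proof is correct and follows exactly the route the paper intends: the paper explicitly omits this proof as being ``similar to $\mathbb{P}_{\vec{\mu}}$,'' and you carry out that transcription, rightly isolating the countable-intersection (fusion) step as the one point needing verification and settling it via countable completeness of the measures $\mathcal{D}(n,\alpha)$. Your closing caveat about $\gamma_0\geq\omega_1$ is harmless but in fact unnecessary, since the stem is required to be nonempty, so splitting inside a condition is always governed by some $\mathcal{D}(n,\alpha)$ with $n\geq 1$, which is $\gamma_n$-complete for $\gamma_n>\gamma_0\geq\omega$.
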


\begin{theorem}[Geometric condition]\label{diaggeometric}
$M$ is a transitive model of $\ZFC$, in $M$, $(\langle\gamma_n:n<\omega\rangle,\mathcal{D})$ satisfies the hypothesis above.
$S\colon\omega\rightarrow\gamma$ such that $\forall n<\omega\ S(n)<\gamma_n$. Then $S$ is a $(\mathbb{P}_{\mathcal{D}})^{M}$-generic sequence over $M$ iff for each $\mathcal{D}$-choice function $F$ in $M$, there is $0<n<\omega$, such that $\forall i(n\leq i<\omega\rightarrow S(i)\in F(i,S(i-1)))$.
\end{theorem}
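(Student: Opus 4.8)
The plan is to transcribe the proof of the geometric condition for $\mathbb{P}_{\vec\mu}$ (Theorem \ref{geometric}) almost verbatim, since $\mathbb{P}_{\mathcal{D}}$ is built on the same template; the one point needing separate attention is how the $\mathcal{D}$-choice functions are amalgamated, and that is where the indexing of $\mathcal{D}$ by pairs $(n,\alpha)$ with $\alpha<\gamma_{n-1}$ is used.

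For ``$\Longrightarrow$'', let $G$ be the filter associated with $S$ and fix a $\mathcal{D}$-choice function $F\in M$. The set of conditions $(s,F')$ with $F'(x)\subseteq F(x)$ for all $x\in\dom(\mathcal{D})$ is dense in $(\mathbb{P}_{\mathcal{D}})^{M}$: given any $(u,H)$, put $F'(x)=H(x)\cap F(x)\in\mathcal{D}(x)$, so $(u,F')\leq(u,H)$ with $F'$ pointwise below $F$. Hence $G$ contains such a condition $(s,F')$; since $S$ is a branch of $T(s,F')$, we get $S(i)\in F'(i,S(i-1))\subseteq F(i,S(i-1))$ for every $i\geq|s|$, so $n=|s|$ works.

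For ``$\Longleftarrow$'', assume $S$ has the stated property and let $D$ be dense in $(\mathbb{P}_{\mathcal{D}}^{'})^{M}$; I want $(s,F)\in T^{-1}[D]$ with $s$ an initial segment of $S$ and $S$ a branch of $T(s,F)$. First I would note that the analogues of Lemmas \ref{geo0}, \ref{geo1} and \ref{geo2} hold for $\mathbb{P}_{\mathcal{D}}$ by the same proofs, their only inputs being the $\gamma_n$-completeness of the $\mathcal{D}(n,\alpha)$ and the classification of functions for the sequences $\langle E_n^{(k,\alpha)}\colon n<\omega\rangle$, both already available. This supplies, for each stem $s$, a $\mathcal{D}$-choice function $F_s$ and $n_s<\omega$ with $(t,F_s)\in T^{-1}[D]$ whenever $t\in T(s,F_s)$ and $|t|=|s|+n_s$; and it supplies a single $\mathcal{D}$-choice function $H$ with $(s,H)\leq^{*}(s,F_s)$ for every $s$. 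The construction of $H$ is the step to watch: set $H(0,0)=\gamma_0$ and, for $n>0$ and $\alpha<\gamma_{n-1}$, $H(n,\alpha)=\bigcap\{F_s(n,\alpha)\colon|s|\leq n\}$. A coordinate $(n,\alpha)$ of $F_s$ is consulted in the ordering only when $n\geq|s|$, so only stems with $|s|\leq n$ are relevant, and there are at most $\gamma_{n-1}$ of them (finitely many lengths, each contributing at most $\prod_{j<n}\gamma_j=\gamma_{n-1}$); since $\mathcal{D}(n,\alpha)$ is $\gamma_n$-complete and $\gamma_{n-1}<\gamma_n$, we get $H(n,\alpha)\in\mathcal{D}(n,\alpha)$. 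And if $w\in T(s,H)$ and $i\geq|s|$, then $|s|\leq i$, so $w(i)\in H(i,w(i-1))\subseteq F_s(i,w(i-1))$; hence $w\in T(s,F_s)$, i.e. $(s,H)\leq^{*}(s,F_s)$.

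To conclude, apply the hypothesis to $H$: fix $m$ with $S(i)\in H(i,S(i-1))$ for all $i\geq m$ and let $w=S\uh m$. Then $S$ is a branch of $T(w,H)$, hence of $T(w,F_w)$ since the former is a subtree of the latter. Put $s=S\uh(m+n_w)$. As $S$ is a branch of $T(w,F_w)$ we have $s\in T(w,F_w)$ with $|s|=|w|+n_w$, so $(s,F_w)\in T^{-1}[D]$, i.e. $T(s,F_w)\in D$; and $T(s,F_w)$ is a subtree of $T(w,F_w)$ with $S$ a branch, so $T(s,F_w)\in G$. Thus $G$ meets $D$, and since $D$ was an arbitrary dense set, $S$ is $(\mathbb{P}_{\mathcal{D}})^{M}$-generic over $M$. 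The only real obstacle I anticipate is the verification just given that the intersection defining $H$ stays inside $\mathcal{D}(n,\alpha)$: this is precisely where the $\gamma_n$-completeness hypothesis and the ``$\gamma_{n-1}$-many ultrafilters at level $n$'' structure of $\mathcal{D}$ are needed, and once it is in place the rest is a routine copy of the $\mathbb{P}_{\vec\mu}$ argument.
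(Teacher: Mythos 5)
Your proposal is correct and follows exactly the route the paper intends: the paper omits this proof as ``similar to $\mathbb{P}_{\vec{\mu}}$,'' and you have filled in precisely the transcription of Theorem \ref{geometric} via the analogues of Lemmas \ref{geo1} and \ref{geo2}. The one genuinely new verification --- that $H(n,\alpha)=\bigcap\{F_s(n,\alpha)\colon|s|\leq n\}$ stays in $\mathcal{D}(n,\alpha)$ because only the at most $\gamma_{n-1}$ stems of length $\leq n$ contribute and $\mathcal{D}(n,\alpha)$ is $\gamma_n$-complete --- is exactly the point that needed checking, and your count is right.
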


Given $(m,\theta)\in\dom(\mathcal{D})$, define \[\mathbb{P}_{\mathcal{D}}^{(m,\theta)}=\mathbb{P}_{\mathcal{D}_1}\mbox{ where }\mathcal{D}_1=\mathcal{D}{\uh}(\{(m,\theta)\}\cup\{(n,\alpha)\colon n>m\ \&\ \alpha<\gamma_{n-1}\}).\]
Then $\mathbb{P}_{\mathcal{D}}^{(m,\theta)}$ has the property of $\mathbb{P}_{\mathcal{D}}$ which is proved for $(\langle\gamma_n:n<\omega\rangle,\mathcal{D})$. Then following fact is a direct corollary of geometric condition.

\begin{proposition}
\begin{itemize}
\item[(1)] If $g$ is a $\mathbb{P}_{\mathcal{D}}$-generic sequence, then $\langle g(i): i\geq m\rangle$ is a $\mathbb{P}_{\mathcal{D}}^{(m,\theta)}$-generic sequence.
\item[(2)] If $h$ is a $\mathbb{P}_{\mathcal{D}}^{(m,\theta)}$-generic sequence and $|s|=m$ and $\forall i<m\ (s(i)<\gamma_i)$, then $s^{\frown}h$ is a $\mathbb{P}_{\mathcal{D}}$-generic sequence.
\end{itemize}
Thus a $\mathbb{P}_{\mathcal{D}}$-generic extension is a $\mathbb{P}_{\mathcal{D}}^{(m,\theta)}$-generic extension, the converse is also true.
\end{proposition}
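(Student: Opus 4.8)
The plan is to use the geometric condition for $\mathbb{P}_{\mathcal{D}}$ (Theorem \ref{diaggeometric}) to decompose a $\mathbb{P}_{\mathcal{D}}$-generic sequence $g$ over $M$ into its ``tail beyond stage $m$'' and its ``initial segment of length $m$''. For the forward direction, suppose $g$ is $\mathbb{P}_{\mathcal{D}}$-generic over $M$. I would check that $h=\langle g(i)\colon i\geq m\rangle$ satisfies the geometric criterion for $\mathbb{P}_{\mathcal{D}}^{(m,\theta)}$ over $M$: given a $\mathcal{D}_1$-choice function $F$ in $M$ (where $\mathcal{D}_1=\mathcal{D}{\uh}(\{(m,\theta)\}\cup\{(n,\alpha)\colon n>m\})$), extend $F$ to a full $\mathcal{D}$-choice function $\bar F$ by filling in arbitrary values on the coordinates $(n,\alpha)$ with $n<m$ (and on $(m,\alpha)$ for $\alpha\neq\theta$). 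By the geometric condition applied to $g$ and $\bar F$, there is $n_0$ with $g(i)\in\bar F(i,g(i-1))$ for all $i\geq n_0$; taking $n_0$ large enough that $n_0>m$, this says exactly $h(i)\in F(i,h(i-1))$ for the relevant range, which is the geometric condition for $h$. The only subtlety is indexing: the tree $\mathbb{P}_{\mathcal{D}}^{(m,\theta)}$ reindexes so that stage $0$ of $h$ corresponds to stage $m$ of $g$, living below $\gamma_m$; since $\gamma_{m-1}<g(m)<\gamma_m$ for large enough initial data, $h$ genuinely has the form required of a $\mathbb{P}_{\mathcal{D}}^{(m,\theta)}$-generic sequence (one should note $h(0)$ will satisfy $h(0)<\gamma_m$ but possibly $h(0)\le\gamma_{m-1}$ only finitely often, which does not affect the geometric characterization).

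For the backward direction, suppose $h$ is $\mathbb{P}_{\mathcal{D}}^{(m,\theta)}$-generic over $M$ and $s$ is any finite strictly increasing string of length $m$ with $s(i)<\gamma_i$. I claim $g=s^{\frown}h$ is $\mathbb{P}_{\mathcal{D}}$-generic over $M$. Again I verify the geometric condition: let $F$ be any $\mathcal{D}$-choice function in $M$; restrict it to $F_1=F{\uh}\dom(\mathcal{D}_1)$, which is a $\mathcal{D}_1$-choice function in $M$. By the geometric condition for $h$ there is $n_0$ (which I may take $>m$) with $h(i)\in F_1(i,h(i-1))$ for all $i\geq n_0$ in the reindexed sense, i.e. $g(i)\in F(i,g(i-1))$ for all $i\geq n_0$ in the original indexing. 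This is precisely the geometric condition for $g$, so $g$ is $\mathbb{P}_{\mathcal{D}}$-generic over $M$. Note the condition $s(i)<\gamma_i$ guarantees $s^\frown h$ lies in the tree $U$, so $g$ is a legitimate candidate sequence.

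Finally, the statement about extensions: a $\mathbb{P}_{\mathcal{D}}$-generic extension $M[g]$ equals $M[h]$ where $h=\langle g(i)\colon i\geq m\rangle$, since $h$ is definable from $g$ and conversely $g=\langle g(0),\dots,g(m-1)\rangle^{\frown}h$ is definable from $h$ together with the finite string $\langle g(0),\dots,g(m-1)\rangle\in M$ (a finite string of ordinals below $\gamma$ is in $M$ since $\mathbb{P}_{\mathcal{D}}$ adds no bounded subsets, or more elementarily, any finite set of ordinals is in any model of $\ZF$). Hence $M[g]=M[h]$, and by part (1), $h$ is $\mathbb{P}_{\mathcal{D}}^{(m,\theta)}$-generic over $M$, while by part (2) every $\mathbb{P}_{\mathcal{D}}^{(m,\theta)}$-generic $h$ arises this way. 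The main obstacle—though a minor one—is bookkeeping the reindexing of tree levels between $\mathbb{P}_{\mathcal{D}}$ and $\mathbb{P}_{\mathcal{D}}^{(m,\theta)}$ and confirming that a candidate sequence for one poset, after the shift, is a genuine candidate sequence for the other (i.e. satisfies the $<\gamma_i$ constraints at the right coordinates); once that is pinned down, both directions are immediate applications of Theorem \ref{diaggeometric}.
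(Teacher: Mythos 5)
Your proposal is correct and takes essentially the same approach as the paper: the paper offers no proof beyond the remark that the proposition ``is a direct corollary of geometric condition'' (Theorem \ref{diaggeometric}), and your extension/restriction of choice functions between $\mathcal{D}$ and $\mathcal{D}_1$ is precisely that corollary spelled out. The reindexing bookkeeping you flag is the only content beyond the geometric condition, and you handle it correctly.
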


Using this result, let us prove $\mathbb{P}_{\mathcal{D}}$ does not add new bounded subset of $\gamma$. Note that for the corresponding result for $\mathbb{P}_{\vec{\mu}}$, geometric condition is not used. An advantage of $\mathbb{P}_{\mathcal{D}}^{(m,\theta)}$ is that for less than $\gamma_{m+1}$ many condition of $\mathbb{P}_{\mathcal{D}}^{(m,\theta)}$ with the same stem, there is a condition stronger than them.

\begin{lemma}
\begin{itemize}
\item[(1)] $\mathbb{P}_{\mathcal{D}}$ does not add new bounded subset of $\gamma$.
\item[(2)] $\mathbb{P}_{\mathcal{D}}$ has $\gamma^{+}$-c.c.
\item[(3)] $\mathbb{P}_{\mathcal{D}}$ preserves all cofinalities and cardinals.
\end{itemize}
\end{lemma}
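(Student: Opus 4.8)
The plan is to prove the three clauses in order, deducing clause (3) from clauses (1) and (2) just as the analogous proposition for $\mathbb{P}_{\vec{\mu}}$ was obtained. For clause (1) the essential device is the factorization. By the proposition just established, a $\mathbb{P}_{\mathcal{D}}$-generic extension coincides with a $\mathbb{P}_{\mathcal{D}}^{(m,\theta)}$-generic extension, so it suffices to show that $\mathbb{P}_{\mathcal{D}}^{(m,\theta)}$ adds no new subset of a given $\delta<\gamma$, where $m$ may be chosen freely. Pick $m$ with $\delta<\gamma_{m+1}$ and a suitable $\theta$. Then, by the advantage recorded above, any family of fewer than $\gamma_{m+1}$ conditions of $\mathbb{P}_{\mathcal{D}}^{(m,\theta)}$ with a common stem has a common pure extension; this is exactly what fails in $\mathbb{P}_{\mathcal{D}}$ itself, where $\mathcal{D}(1,\cdot)$ is merely $\gamma_1$-complete, and it is why the factorization is invoked. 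Now let $\dot{A}$ name a subset of $\delta$ and let $(s,F)$ be any condition. For each $\alpha<\delta$ the Prikry property for $\mathbb{P}_{\mathcal{D}}^{(m,\theta)}$ yields, after intersecting with $F$, a choice function $F_\alpha$ with $(s,F_\alpha)\leq(s,F)$ deciding ``$\alpha\in\dot{A}$''; amalgamating the $(s,F_\alpha)$ (legitimate since $\delta<\gamma_{m+1}$) produces $(s,F^{*})\leq(s,F)$ that decides ``$\alpha\in\dot{A}$'' for every $\alpha<\delta$, hence forces $\dot{A}=\{\alpha<\delta\colon(s,F^{*})\Vdash\alpha\in\dot{A}\}\in V$. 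Thus the conditions deciding all of $\dot{A}$ are dense below every condition, and genericity gives $A\in V$.

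For clause (2): if $(s,F)$ and $(s',F')$ are incompatible then $s\neq s'$, because when $s=s'$ the pair $(s,F'')$ with $F''(x)=F(x)\cap F'(x)$ extends both; hence the stem map is injective on every antichain, and since the set of admissible stems --- finite strictly increasing strings $\langle s(0),\dots,s(k-1)\rangle$ with $s(i)<\gamma_i$ --- has cardinality at most $\gamma$, every antichain has size at most $\gamma$. For clause (3): by (1) no bounded subset of $\gamma$ is added, so every regular cardinal $\mu<\gamma$ of $V$ stays regular in $V[G]$ (a cofinal sequence in $\mu$ of length $<\mu$ would be a bounded subset of $\gamma$, hence already lie in $V$); in particular each $\gamma_n$ is preserved, so $\gamma=\sup_n\gamma_n$ remains a limit cardinal and all cardinals $\leq\gamma$ are preserved. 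By (2) the forcing is $\gamma^{+}$-c.c., hence preserves all cardinals $\geq\gamma^{+}$ as well as the regularity of every regular cardinal $\geq\gamma^{+}$. Since $\gamma$ is singular in $V$ (of cofinality $\omega$ there) and there is no cardinal strictly between $\gamma$ and $\gamma^{+}$, every regular cardinal of $V$ is still regular in $V[G]$, and the standard argument then yields $\cf^{V[G]}(\beta)=\cf^{V}(\beta)$ for all ordinals $\beta$.

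The one delicate point is clause (1): one must recognize that the required amalgamation of $\delta$-many conditions cannot be carried out inside $\mathbb{P}_{\mathcal{D}}$ directly --- the ultrafilters $\mathcal{D}(n,\cdot)$ for small $n$ are only $\gamma_n$-complete --- and must first descend to a factor $\mathbb{P}_{\mathcal{D}}^{(m,\theta)}$ whose relevant ultrafilters are all $\gamma_{m+1}$-complete; once this reduction is in place, the proof is driven entirely by the Prikry property together with completeness, exactly as for $\mathbb{P}_{\vec{\mu}}$.
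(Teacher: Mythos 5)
Your proposal is correct and follows essentially the same route as the paper: clause (1) via the factorization through $\mathbb{P}_{\mathcal{D}}^{(m,\theta)}$ (whose extension-relevant ultrafilters are all $\gamma_{m+1}$-complete, permitting the amalgamation of the $\delta$-many deciding conditions obtained from the Prikry property), clause (2) via injectivity of the stem map on antichains together with $|\gamma^{<\omega}|=\gamma$, and clause (3) from (1) and (2). Your explicit remark on why the amalgamation cannot be done in $\mathbb{P}_{\mathcal{D}}$ itself is exactly the point the paper flags as the ``advantage'' of the factor forcing.
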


\begin{proof}
(1) Suppose $G$ is a $\mathbb{P}_{\mathcal{D}}$-generic filter over $V$. In $V[G]$, $A$ is a bounded subset of $\gamma$. $A\subset\gamma_n$. Let $\theta<\gamma_{n-1}$. Then $V[G]$ is also a generic extension for $\mathbb{P}_{\mathcal{D}}^{(n,\theta)}$. $\dot{A}$ is a $\mathbb{P}_{\mathcal{D}}^{(n,\theta)}$-name for $A$ such that $\Vdash_{\mathbf{R}^{(n,\theta)}}\dot{A}\subset\gamma_n$. $\mathcal{D'}=\mathcal{D}{\uh}(\{(n,\theta)\}\cup\{(i,\alpha)\colon i>n\ \&\ \alpha<\gamma_{i-1}\})$.
Given $s$, for $\alpha<\gamma_n$, let $F_{\alpha}$ be a $\mathcal{D'}$-choice function such that $(s,F_{\alpha})$ decides ``$\alpha\in\dot{A}$". Define $H$ as follows: $H(x)=\bigcap_{\alpha<\gamma_n}F_{\alpha}(x)$. By the proposition above, $H$ is a $\mathcal{D'}$-choice function. Then for all $\alpha<\gamma_n$, $(s,H)$ decides ``$\alpha\in\dot{A}$". So there is such a $(s,H)\in G$. Thus $A=\{\alpha<\gamma_n\colon(s,H)\Vdash\alpha\in\dot{A}\}\in V$.

(2) Because $|\gamma^{<\omega}|=\gamma$, and $(s,F_1)$ and $(s,F_2)$ are compatible.

(3) This follows from (1) and (2).
\end{proof}

\subsection{$\mathbb{P}_{\mathcal{D}}$ yields minimal extensions}

Given a $\mathcal{D}$-choice function $K$ satisfies:
\begin{itemize}
\item $K(0,0)=\gamma_0$.
\item For $n>0$, $\langle K(n,\alpha)\colon\alpha<\gamma_{n-1}\rangle$ is a family of disjoint subsets of $\{x\colon\gamma_{n-1}<x<\gamma_n\}$.
\end{itemize}
The existence of $K$ depends on normality of all ultrafilters.

$A=\{(s,F)\in\mathbb{P}_{\mathcal{D}}\colon\forall(n,\alpha)\ F(n,\alpha)\subset K(n,\alpha)\}$ is a dense open subset of $\mathbb{P}_{\mathcal{D}}$. So $B=\{T(s,F)\colon (s,F)\in A\}$ is a dense subset of $\mathbb{P}_{\mathcal{D}}^{'}$. So $B$ with ordering ``subtree", as a forcing notion, is equivalent to $\mathbb{P}_{\mathcal{D}}$ and $\mathbb{P}_{\mathcal{D}}^{'}$. For $p\in B$, by the property of $K$, if $x,y\in p$ and $x\neq y$, then $\max(x)\neq\max(y)$. So we can identify $p$ with a subset of $\gamma$, i.e. using $\max(x)$ to replace $x$ in $p$. We use $\mathbb{P}_{\mathcal{D}}^{''}$ to denote this forcing notion, it is equivalent to $\mathbb{P}_{\mathcal{D}}$ and $\mathbb{P}_{\mathcal{D}}^{'}$.

\begin{lemma}\label{gammaziji}
Suppose $G$ is a $\mathbb{P}_{\mathcal{D}}$-generic filter over $V$. In $V[G]$, $A\subset\gamma$ and $A\notin V$. Then $V[A]=V[G]$.
\end{lemma}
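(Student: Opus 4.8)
The plan is to adapt the argument of Lemma \ref{kappaziji} to the diagonal setting, working in $\mathbb{P}_{\mathcal{D}}^{''}$ where conditions are identified with subsets of $\gamma$. Fix a $\mathbb{P}_{\mathcal{D}}$-name $\dot{a}$ with $\Vdash\dot{a}\colon\gamma\to 2$ and $\Vdash\dot{a}\notin V$, chosen so that $A=\{\alpha<\gamma\colon(\dot a/G)(\alpha)=1\}$. For $p\in\mathbb{P}_{\mathcal{D}}^{''}$ define $a_p\colon\gamma\to 2$ by $a_p(\alpha)=i$ iff some pure extension of $p$ forces $\dot a(\alpha)=i$; by the Prikry property this is well-defined and depends only on $\stem(p)$. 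First I would prove the analogue of Claim 1: if for each $n>0$ we have $\{x\in p(n)\colon a_{p^{(x)}}=a_p\}\in E_n^p$, then (using $\gamma_{k}$-completeness of the $E_n^p$'s — here $k=h(\stem(p))$) there is a pure extension $q$ of $p$ with $a_{q^{(x)}}=a_q$ for all $x\in q$, and such a $q$ forces $\dot a = a_q \in V$, contradicting $\Vdash\dot a\notin V$. Hence for every $p$ there is $n>0$ with $\{x\in p(n)\colon a_{p^{(x)}}\neq a_p\}\in E_n^p$.

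Next I would reconstruct the density argument building a pair $(q,\mathcal{C})$ satisfying the diagonal version of the predicate $\Omega$ from Lemma \ref{kappaziji}: starting from any $p$, use the previous paragraph to pass to $p'=p^B$ where $B=\{x\in p'(n)\colon a_{p'^{(x)}}\neq a_{p'}\}\in E_n^p$, let $\delta(x)$ be the least $\gamma$ with $\dot a\uh\gamma$ disagreeing with $a_{p'^{(x)}}\uh\gamma$, then apply the classification of functions (the Corollary following the proposition on $E_n^{(k,\alpha)}$) to reduce $\delta$ to a function depending on fewer coordinates, shrink once more so that each $q^{(x)}$ decides $\dot a\uh\delta(x)$ with value $b_x$, and observe — by the $\delta$-function observation used in Lemma \ref{kappaziji} — that distinct $x$ give incomparable $b_x$. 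Iterating this operator $\omega$ times along the tree produces $q$ and the bookkeeping function $\mathcal{C}$ with $\Omega(q,\mathcal{C})$, so $\{q\colon\exists\mathcal{C}\,\Omega(q,\mathcal{C})\}$ is dense in $\mathbb{P}_{\mathcal{D}}^{''}$; pick $q\in G$ witnessing it.

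Finally I would show $V[A]=V[G]$ by computing the generic sequence $g$ from $A$, $q$ and $\mathcal{C}$: let $h$ be the characteristic function of $A$; starting at $\alpha=\stem(q)$ with $\mathcal{C}(\alpha)=(m,f)$, the values $f(x)=(\delta_x,\sigma_x)$ for $x\in q(m)$ are pairwise incomparable $0$-$1$ strings, so exactly one is an initial segment of $h$, and that $x$ lies on $g$; recursing into $q^{(x)}$ and repeating, $g\uh n$ is determined for all $n$, so $g\in V[A]$ and $V[G]=V[g]\subseteq V[A]\subseteq V[G]$.

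The main obstacle I anticipate is the classification/shrinking step inside the construction of $(q,\mathcal{C})$: in the $\mathbb{P}_{\vec\mu}$ case all measures live on the single cardinal $\kappa$, whereas here $q^{(x)}(m)$ is a subset of a product $\gamma_k\times\cdots\times\gamma_{k+m-1}$ of distinct cardinals, so one must invoke the Principle of classification of functions (Theorem \ref{hanshufenlei2}) carefully to arrange that $\delta$ (and hence the incomparability of the $b_x$'s) genuinely depends on the coordinate that gets read off by $h$, and check that the completeness used when intersecting $E_n^p$-many sets matches the cardinal $\gamma_k$ attached to $\stem(p)$. Everything else is a routine transcription of the $\mathbb{P}_{\vec\mu}$ proof.
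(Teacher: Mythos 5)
There is a genuine gap at the step ``shrink once more so that each $q^{(x)}$ decides $\dot a\uh\delta(x)$ with value $b_x$''. In the $\mathbb{P}_{\vec{\mu}}$ argument this step is harmless: $\delta(x)<\kappa$ while every level of the subtree above $x$ is governed by a $\kappa$-complete measure, so the $\delta(x)$-many pure extensions supplied by the Prikry property can be intersected. In the diagonal setting, a pure extension of $q^{(x)}$ can in general only decide $\dot a\uh\beta$ for $\beta<\gamma_{h(x)+1}$, because the first level of the subtree above $x$ is a set in a $\gamma_{h(x)+1}$-complete ultrafilter on $\gamma_{h(x)+1}$ and cannot absorb $\gamma_{h(x)+1}$-many intersections. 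But $\delta(x)$, the first place where $a_{p'^{(x)}}$ disagrees with $a_{p'}$, admits no such bound: if, say, $\dot a$ is forced to be identically $0$ below $\gamma_{1000}$ and codes $\ran(\dot g)$ shifted by $\gamma_{1000}$ above it, then $a_{p'}$ and $a_{p'^{(x)}}$ agree below $\gamma_{1000}$, so $\delta(x)\geq\gamma_{1000}\gg\gamma_{h(x)+1}$ for $x$ of small height, and no pure extension of $q^{(x)}$ decides $\dot a\uh\delta(x)$ (doing so would decide higher terms of the generic sequence). The obstacle you flag at the end --- matching the completeness of $E_n^p$ to $\gamma_{h(\stem(p))}$ --- is in the right neighbourhood but is not the real problem; what breaks is the \emph{length} of the initial segment a single node is asked to decide, not the number of measure-one sets being intersected.

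This is precisely why the paper's proof of this lemma is organized differently from that of Lemma \ref{kappaziji}. One first passes to a pure extension in which every node $x$ decides the \emph{canonical} initial segment $\dot a\uh\gamma_{h(x)}$ --- always possible, since $\gamma_{h(x)}$ lies strictly below the completeness $\gamma_{h(x)+1}$ available above $x$ --- and only then applies the classification of functions to the level functions $f_n(x)=$ ``the value of $\dot a\uh\gamma_{h(x)}$ decided by $p^{(x)}$''. If every $f_n$ is constant modulo $E_n$, a fusion produces a condition forcing $\dot a\in V$, so the density statement must carry this disjunct (it cannot be disposed of beforehand by your Claim~1 analogue alone); otherwise some $f_n$ depends injectively on the first $\tilde k\leq n$ coordinates, giving nodes at relative height $\tilde k$ that decide pairwise distinct values of $\dot a\uh\gamma_{h(x)+n}$. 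The two indices $\tilde k(x)\leq k(x)$ in the paper's predicate $\Phi$ record exactly this offset between where the branching occurs and how long a segment is determined there; the final reconstruction of $g$ from $A$ then runs as you describe. If you insist on the $\delta$-based formulation, you must descend to a level $M$ with $\gamma_M>\delta(x)$ before deciding, at which point the decided values have to be re-classified, and the argument collapses into the one just sketched.
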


\begin{proof}
Give $G$ and $A$.
Fix $\dot{a}$, a $\mathbb{P}_{\mathcal{D}}$-name, such that $\Vdash``\dot{a}\colon\gamma\rightarrow 2\mbox{ and }\dot{a}\notin V."$ and $A=\{\alpha<\gamma\colon\dot{a}/G(\alpha)=1\}$. Let $c\colon\gamma\rightarrow 2$ be $\dot{a}/G$. So $V[A]=V[c]$. Given $p$ and $\mathcal{A}$, $\Phi(p,\mathcal{A})$ is the conjunction of the following sentences:
\begin{itemize}
\item[(1)] $p\in\mathbb{P}_{\mathcal{D}}^{''}$.
\item[(2)] For all $x\in p$ with $x\geq_p\stem(p)$, $p^{(x)}$ decides the value of $\dot{a}{\uh}\gamma_{h(x)}$, i.e. for $\alpha<\gamma_{h(x)}$, $p^{(x)}\Vdash\dot{a}(\alpha)=0$ or $p^{(x)}\Vdash\dot{a}(\alpha)=1$.
\item[(3)] $\mathcal{A}$ is a function with domain $\subset p$ and $\stem(p)\in\dom(\mathcal{A})$.
\item[\ ]   For $x\in\dom(\mathcal{A})$,
\item[(4)] $\mathcal{A}(x)=(\tilde{k}(x),k(x))$, $1\leq\tilde{k}(x)\leq k(x)<\omega$.
\item[(5)] $p^{(x)}(\tilde{k}(x))\subset\dom(\mathcal{A})$.
\item[(6)] There is $y\leq_p x$, $y\in\dom(\mathcal{A})$ and $x\in p^{(y)}(\tilde{k}(y))$.
\item[(7)] If $y\in p(\tilde{k}(x))$, then $p^{(y)}$ decides the value of $\dot{a}{\uh}(\gamma_{h(x)+k(x)})$.
\item[(8)] If $y,z\in p(\tilde{k}(x))$ and $y\neq z$. $\sigma_1$ and $\sigma_2$ are $0$-$1$ sequences of length $\gamma_{h(x)+k(x)}$ and $p^{(y)}\Vdash\dot{a}{\uh}\gamma_{h(x)+k(x)}=\sigma_1$ and $p^{(z)}\Vdash\dot{a}{\uh}\gamma_{h(x)+k(x)}=\sigma_2$, then $\sigma_1\neq\sigma_2$.
\end{itemize}

It is clear that if $\Phi(p,\mathcal{A})$ holds, $x=\stem(p)$, $\mathcal{A}(x)=(\tilde{k}(x),k(x))$, $y\in p(h(x)+\tilde{k}(x))$, then $\Phi(p^{(y)},\mathcal{A}{\uh}p^{(y)})$.

\begin{claim}
$W=\{q\in\mathbb{P}_{\mathcal{D}}^{''}\colon q\Vdash\dot{a}\in V\mbox{ or }\exists\mathcal{A}\ \Phi(q,\mathcal{A})\}$ is a dense subset of $\mathbb{P}_{\mathcal{D}}^{''}$.
\end{claim}
Given $p\in\mathbb{P}_{\mathcal{D}}^{''}$, let us define a subtree $r$ of $p$, $r\in W$.

\noindent \underline{\emph{Step 1.}} There is a pure extension $p'$, $p'$ satisfies for all $x\in p'$ with $x\geq_{p'}\stem(p')$, ${p'}^{(x)}$ decides the value of $\dot{a}{\uh}\gamma_{h(x)}$. Construction as follows:

$z=\stem(p)$. Then $p(1)\in\mathcal{D}(h(z)+1,z)$. $\mathcal{D}(h(z)+1,z)$ is a $\gamma_{h(z)+1}$-complete ultrafilter on $\gamma_{h(z)+1}$. $\gamma_{h(z)}<\gamma_{h(z)+1}$, so $p$ can be extended purely to a condition $\tilde{p}$ decides the value of $\dot{a}{\uh}\gamma_{h(z)}$. For element of $\tilde{p}(1)$ (replace the position of $z$), repeat the process above, inductively, the pure extension $p'$ is defined.

\noindent \underline{\emph{Step 2.}} In step 1, essentially, we have define functions $f_n$ for $0<n<\omega$: $\dom(f_n)=p'(n)$, for $x\in\dom(f_n)$, $f_n(x)$ is the value of $\dot{a}{\uh}\gamma_{h(x)}$ decided by ${p'}^{(x)}$. We have two cases now:

\noindent \emph{Case 1.} For each $0<n<\omega$, $f_n$ is $E_n$-equivalent to a constant function. In this case, there is $p''$, a pure extension of $p'$, such that for any $n$, and $x,y\in p''(n)$, the value of $\dot{a}{\uh}(|\stem(p'')|+n)$ decided by ${p''}^{(x)}$ and ${p''}^{(y)}$ are the same. So from the tree $p''$, $\dot{a}/G$ is defined: since for any branch $g$ of $p''$, if $g$ is the generic sequence corresponding to $G$, $\dot{a}/G$ are all the union of values of $f_n$'s.

\noindent \emph{Case 2.} There exists $0<n<\omega$, $\rank_n(f_n)=m>0$. In this case, we obtain two numbers: $n$ and $m$ such that $0<m\leq n$. For $x\in p'(m)$, if $y,z\in p'(n)$ and $y,z\geq_{p'}x$, then the value of $\dot{a}{\uh}(|\stem(p')|+n)$ decided by ${p'}^{(y)}$ and ${p'}^{(z)}$ are the same. So in fact, ${p'}^{(x)}$ decides the value of $\dot{a}{\uh}(|\stem(p')|+n)$.

\noindent \underline{\emph{Step 3.}} Repeat the process of step 2 from the stem of $p'$ by induction on $n$. If in the whole process, case 1 of step 2 does not occur, then we obtain a subtree $q$ of $p'$ and a function $\mathcal{A}$ such that $\Phi(q,\mathcal{A})$: in each position $(m,n)$ is the value of $\mathcal{A}$. Otherwise, at some $x\in p'$, case 1 of step 2 occurs, then ${p'}^{(x)}$ can be purely extended to a condition who can compute the value of the whole $\dot{a}$, so $p'$ can be extended (not need to be purely) to a condition who forces $\dot{a}\in V$.

We proved the claim.

Since $G$ is a $\mathbb{P}_{\mathcal{D}}^{''}$-generic filter over $V$, $G\cap W\neq\emptyset$.
If $q\in G\cap W$ and $q\Vdash\dot{a}\in V$, then $c\in V$. So $A\in V$, a contradiction.

If $q\in G\cap W$ such that $\exists\mathcal{A}\ \Phi(q,\mathcal{A})$. Let us prove $G\in V[c]$. $g$ is the generic sequence corresponding to $G$. Just need to prove $g\in V[c]$. Work in $V[c]$. From $q$, $\mathcal{A}$ and $c$, we can compute $g$ as follows:

Since $q\in G$, so $g$ must be a branch of $q$. Define a branch $e\colon\omega\rightarrow q$ as follows: $e$ extends the stem of $q$. $x=\stem(q)$, $\mathcal{A}(x)=(\tilde{k}(x),k(x))$. By (7) above, if $y\in q(\tilde{k}(x))$, then $q^{(y)}$ decides the value of $\dot{a}{\uh}(\gamma_{h(x)+k(x)})$. By (8), there is exactly one $y$ such that the value of $\dot{a}{\uh}(\gamma_{h(x)+k(x)})$ decided by $q^{(y)}$ is equal to $c{\uh}(\gamma_{h(x)+k(x)})$. Then $e(h(y))=y$. So we have define $e$ to the $h(y)$-th position. For $q^{(y)}$ and $\mathcal{A}$, since $\Phi(q^{(y)},\mathcal{A}{\uh}q^{(y)})$, so we can repeat this process, and then extend $e$ longer and longer. Obviously, $e=g$. So $g\in V[c]$. So $V[c]=V[G]$. So $V[A]=V[G]$.
\end{proof}

\begin{theorem}\label{Rminimal}
Forcing notion $\mathbb{P}_{\mathcal{D}}$ yields minimal extensions.
\end{theorem}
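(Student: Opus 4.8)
The plan is to mimic the proof of Theorem~\ref{Qminimal} almost verbatim, replacing the cardinal $\kappa$ there by $\gamma$ and Lemma~\ref{kappaziji} by Lemma~\ref{gammaziji}. Suppose $G$ is $\mathbb{P}_{\mathcal{D}}$-generic over $V$, let $A\in V[G]$ be a set of ordinals with $A\notin V$, and fix a name $\dot A$ for it. Let $\theta$ be the least ordinal with $A\cap\theta\notin V$. A short argument shows $\theta$ is a limit ordinal and $A\cap\theta$ is unbounded in $\theta$: if $\theta=\beta+1$ then $A\cap\theta$ and $A\cap\beta$ are interdefinable over $V$, contradicting minimality of $\theta$; and if $A\cap\theta$ were bounded below $\theta$ it would equal some $A\cap\beta\in V$. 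Put $\delta=\cf(\theta)$, a regular cardinal, and fix a strictly increasing sequence $\langle\theta_\alpha:\alpha<\delta\rangle$ cofinal in $\theta$.

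First I would prove the key claim $\delta\le\gamma$. If not, then for each $\alpha<\delta$ choose $(s_\alpha,F_\alpha)\in G$ forcing $\dot A\cap\theta_\alpha=A\cap\theta_\alpha$; since there are only $|\gamma^{<\omega}|=\gamma<\delta$ possible stems and $\delta$ is regular, some fixed stem $s$ occurs for an unbounded $K\subseteq\delta$. For $\alpha\in K$ there is $b_\alpha\in V$ with $(s,F_\alpha)\Vdash\dot A\cap\theta_\alpha=b_\alpha$; since any two conditions of $\mathbb{P}_{\mathcal{D}}$ with the same stem are compatible, for $\alpha<\beta$ in $K$ we get $b_\alpha=b_\beta\cap\theta_\alpha$, so $b=\bigcup_{\alpha\in K}b_\alpha\in V$, and by genericity together with unboundedness of $K$, $A\cap\theta=b\in V$ --- a contradiction.

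With the claim in hand I split into two cases exactly as in Theorem~\ref{Qminimal}. If $\theta\le\gamma$, then $A\cap\theta\subseteq\gamma$ and $A\cap\theta\notin V$, so Lemma~\ref{gammaziji} gives $V[A\cap\theta]=V[G]$, whence $V[A]=V[G]$. If $\theta>\gamma$, set $S=\{(\alpha,a):\alpha<\delta\ \&\ \exists p\ p\Vdash\dot A\cap\theta_\alpha=a\}$; by the $\gamma^{+}$-chain condition each section $\{a:(\alpha,a)\in S\}$ has size $\le\gamma$, and since $\delta\le\gamma$ we get $|S|\le\gamma$, so fix an injection $F\colon S\to\gamma$ in $V$. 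Letting $X_\alpha=A\cap\theta_\alpha$ and $B=F[\{(\alpha,X_\alpha):\alpha<\delta\}]\subseteq\gamma$, the pair $(S,F)\in V$ lets us recover $\langle X_\alpha:\alpha<\delta\rangle$ from $B$ and $B$ from the sequence, so $V[B]=V[A\cap\theta]$. Since $A\cap\theta\notin V$ we have $B\notin V$, and Lemma~\ref{gammaziji} yields $V[B]=V[G]$, hence again $V[A]=V[G]$. As $A$ was an arbitrary set of ordinals in $V[G]$, this verifies condition (a) from the introduction, establishing Theorem~\ref{Rminimal}.

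The substantive work --- recovering the generic filter $G$ from a single new subset of $\gamma$ via the data $\mathcal{A}$ of the conditions $\Phi(q,\mathcal{A})$ --- has already been carried out in Lemma~\ref{gammaziji}, so the only points needing care here are the cofinality bookkeeping in the claim $\delta\le\gamma$ (where the $\gamma^{+}$-c.c.\ and the ``same stem implies compatible'' feature of $\mathbb{P}_{\mathcal{D}}$ are both essential) and the routine check that the coding $F$ genuinely preserves the intermediate model $V[A\cap\theta]$. I expect no genuine obstacle; the argument is word-for-word that of Theorem~\ref{Qminimal}, which is precisely why the parallel development was set up.
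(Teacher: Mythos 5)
Your proposal is correct and matches the paper's intent exactly: the paper's own proof of Theorem~\ref{Rminimal} consists of the single instruction to repeat the proof of Theorem~\ref{Qminimal} with $\kappa$ replaced by $\gamma$ and Lemma~\ref{kappaziji} replaced by Lemma~\ref{gammaziji}, which is precisely what you carried out. Your write-up in fact supplies more detail (the stem-counting for $\delta\le\gamma$ and the coding of $\langle X_\alpha\rangle$ into a subset of $\gamma$) than the paper bothers to, and those details check out.
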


\begin{proof}
Using lemma \ref{gammaziji}, just replace all $\kappa$ in the proof of theorem \ref{Qminimal} by $\gamma$.
\end{proof}

\section{Iterated ultrapowers}

\subsection{Intermediate submodels}

Let us recall some concepts. An \emph{elementary chain} of length $\tau$, where $\tau$ is an ordinal or $\tau=\Ord$, is a system $\langle M_{\alpha},j_{\alpha,\beta}\colon\alpha\leq\beta<\tau\rangle$ such that
\begin{itemize}
\item $M_{\alpha}$'s are transitive models of $\ZFC$.
\item $j_{\alpha,\beta}$'s are elementary embeddings and are commutative.
\item If $\alpha$ is a limit ordinal less than $\tau$, then $M_{\alpha}$ is the direct limit of $\langle M_{\beta}\colon \beta<\alpha\rangle$.
\end{itemize}
An \emph{iterated ultrapower} is a pair $(\langle M_{\alpha},j_{\alpha,\beta}\colon\alpha\leq\beta<\tau\rangle,\langle(\kappa_{\alpha},U_{\alpha})\colon\alpha<\tau\rangle)$ such that
\begin{itemize}
\item $\langle M_{\alpha},j_{\alpha,\beta}\colon\alpha\leq\beta<\tau\rangle$ is an elementary chain.
\item For each $\alpha<\tau$, $\kappa_{\alpha}$ is an infinite cardinal in $M_{\alpha}$ and $U_{\alpha}$ is an $M_{\alpha}$-ultrafilter over $\kappa_{\alpha}$.
\item $M_{\alpha+1}=\Ult(M_{\alpha},U_{\alpha})$.
\end{itemize}

\begin{definition}
$N$ and $M$ are transitive models, $j\colon N\rightarrow{M}$ is an elementary embedding,
\begin{itemize}
\item[(1)] Given a class $B\subset{M}$, define
\[\mathcal{H}^{j}(B)=\{j(f)(s)\colon f\in{N},(f\mbox{ is a function})^{N},s\in[B]^{<\omega},s\in\dom(j(f))\}.\]
\item[(2)] A class $C\subset M$ is called an \emph{intermediate submodel} if $\forall x\in N(j(x)\in C)$ and $C\prec M$.
\end{itemize}
\end{definition}

Clearly, $j''N$ is an intermediate submodel, all intermediate submodels are well-founded. The following facts are essentially appears in \cite{3}.

\begin{proposition}
\begin{itemize}
\item[(1)] $B\subset\mathcal{H}^{j}(B)$ and $j''N\subset\mathcal{H}^{j}(B)$.
\item[(2)] $\mathcal{H}^{j}(B)$ is the least intermediate submodel which contains $B$.
\item[(3)] $\mathcal{H}^{j}(B)=\mathcal{H}^{j}(B{\setminus}j''N)$.
\end{itemize}
\end{proposition}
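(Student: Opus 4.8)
The plan is to establish the three clauses in the order (1), (2), (3), with (3) falling out of (1) and (2); the only clause that requires real work is the assertion $\mathcal{H}^{j}(B)\prec M$ inside (2), and within that the genuine content is the bookkeeping that presents all parameters by functions on a common domain evaluated at a single finite tuple from $B$ (this is exactly where the standing cofinality hypothesis on $j$ is used).

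For (1): given $x\in N$, let $f=\{(\emptyset,x)\}$, a function in $N$ with $\dom(f)=\{\emptyset\}$; since $j(\emptyset)=\emptyset$ we get $j(f)=\{(\emptyset,j(x))\}$, and as $\emptyset\in[B]^{<\omega}\cap\dom(j(f))$ this gives $j(x)=j(f)(\emptyset)\in\mathcal{H}^{j}(B)$, so $j''N\subseteq\mathcal{H}^{j}(B)$. For $b\in B$, use cofinality of $j$ to pick $\gamma\in\Ord^{N}$ with $b\in V^{M}_{j(\gamma)}$; by elementarity $V^{M}_{j(\gamma)}=j(V^{N}_{\gamma})$, so the function $f=\{(\{x\},x)\colon x\in V^{N}_{\gamma}\}\in N$ satisfies $j(f)=\{(\{x\},x)\colon x\in V^{M}_{j(\gamma)}\}$, whence $\{b\}\in[B]^{<\omega}\cap\dom(j(f))$ and $b=j(f)(\{b\})\in\mathcal{H}^{j}(B)$. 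Thus $B\cup j''N\subseteq\mathcal{H}^{j}(B)$.

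For (2), I would first verify Tarski--Vaught for $\mathcal{H}^{j}(B)$ in $M$. Fix $\varphi(v,w_{1},\dots,w_{n})$ and parameters $a_{i}=j(f_{i})(s_{i})\in\mathcal{H}^{j}(B)$ with $M\models\exists v\,\varphi(v,a_{1},\dots,a_{n})$. By the standard reduction --- precompose the $f_{i}$ with projection maps defined from a well-ordering (in $N$) of a set $X\in N$ with $s_{1}\cup\dots\cup s_{n}\subseteq j(X)$, which exists because $j$ is cofinal --- one may assume there are a single $s\in[B]^{<\omega}$ and functions $f_{i}$ with a common domain $D\in N$ such that $a_{i}=j(f_{i})(s)$ and $s\in j(D)=\dom(j(f_{i}))$. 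Working in $N$, which models $\ZFC$, use $\AC$ to define $g\colon D\to N$ by letting $g(d)$ be some $v$ with $N\models\varphi(v,f_{1}(d),\dots,f_{n}(d))$ when such $v$ exists and $g(d)=\emptyset$ otherwise; then $N\models\forall d\in D\,[\exists v\,\varphi(v,f_{1}(d),\dots,f_{n}(d))\to\varphi(g(d),f_{1}(d),\dots,f_{n}(d))]$, and applying $j$ and instantiating the universal variable at $d=s$ yields $M\models\varphi(j(g)(s),a_{1},\dots,a_{n})$ with $j(g)(s)\in\mathcal{H}^{j}(B)$. Together with $j''N\subseteq\mathcal{H}^{j}(B)$ from (1) this shows $\mathcal{H}^{j}(B)$ is an intermediate submodel, and it contains $B$ by (1). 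For leastness, let $C$ be any intermediate submodel with $B\subseteq C$ and let $z=j(f)(s)\in\mathcal{H}^{j}(B)$ with $f\in N$ and $s\in[B]^{<\omega}$: then $j(f)\in j''N\subseteq C$; $s$ is a finite set all of whose members lie in $B\subseteq C$, so $s\in C$ since $C\prec M$; and $z$ is the unique $w$ with $\langle s,w\rangle\in j(f)$, so $z\in C$ as $C\prec M$. Hence $\mathcal{H}^{j}(B)\subseteq C$, and $\mathcal{H}^{j}(B)$ is the least intermediate submodel containing $B$.

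For (3): the operator $\mathcal{H}^{j}$ is monotone, since $B'\subseteq B$ gives $[B']^{<\omega}\subseteq[B]^{<\omega}$, so $\mathcal{H}^{j}(B{\setminus}j''N)\subseteq\mathcal{H}^{j}(B)$. Conversely, by (1), $B{\setminus}j''N\subseteq\mathcal{H}^{j}(B{\setminus}j''N)$ and $j''N\subseteq\mathcal{H}^{j}(B{\setminus}j''N)$, hence $B\subseteq\mathcal{H}^{j}(B{\setminus}j''N)$; since the latter is an intermediate submodel by (2), the leastness clause of (2) gives $\mathcal{H}^{j}(B)\subseteq\mathcal{H}^{j}(B{\setminus}j''N)$, and equality follows. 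The step I expect to be the only real obstacle is the elementarity of $\mathcal{H}^{j}(B)$ in $M$: carrying out the reduction to a common domain and a single $s\in[B]^{<\omega}$ (where cofinality of $j$ enters, to produce $X\in N$ with $s\subseteq j(X)$) and checking that the witnessing function $g$ constructed inside $N$ is genuinely a set there, so that $j(g)$ is defined.
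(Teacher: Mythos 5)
Your proof is correct. The paper itself gives no argument for this proposition (it attributes these facts to Hamkins's \emph{Canonical seeds and Prikry trees}), and what you have written is exactly the standard seed-hull argument from that source: the singleton/identity functions for clause (1), the reduction to a common seed $s$ and common domain followed by an $\AC$-chosen witnessing function in $N$ for Tarski--Vaught in clause (2), and monotonicity plus leastness for clause (3) --- with the cofinality hypothesis on $j$ correctly identified as the point that makes $B\subset\mathcal{H}^{j}(B)$ and the common-domain reduction go through.
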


\begin{proposition}
Suppose a class $C\subset M$, then the followings are equivalent:
\begin{itemize}
\item[(a)] $C$ is an intermediate submodel.
\item[(b)] $C=\mathcal{H}^{j}(B)$ for some class $B\subset M$.
\item[(c)] $C=\mathcal{H}^{j}(C)$.
\end{itemize}
\end{proposition}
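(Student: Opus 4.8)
The plan is to establish the three-way equivalence as a short cycle of implications, each an immediate consequence of the properties of $\mathcal{H}^j$ recorded in the preceding proposition: that $B\subseteq\mathcal{H}^j(B)$ and $j''N\subseteq\mathcal{H}^j(B)$, and --- crucially --- that $\mathcal{H}^j(B)$ is the \emph{least} intermediate submodel containing $B$. The substantive work is thus already packaged in that proposition; here I would only unwind it.

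First I would do (b)$\Rightarrow$(a): if $C=\mathcal{H}^j(B)$ for some class $B\subseteq M$, then $C$ is an intermediate submodel, since $\mathcal{H}^j(B)$ always is one (being the least intermediate submodel containing $B$). Next, (a)$\Rightarrow$(c): if $C$ is an intermediate submodel, then, letting $C$ itself play the role of the ``target set'', $C$ is an intermediate submodel with $C\subseteq C$, so the minimality clause gives $\mathcal{H}^j(C)\subseteq C$, while $C\subseteq\mathcal{H}^j(C)$ by the preceding proposition; hence $C=\mathcal{H}^j(C)$. Finally (c)$\Rightarrow$(b) is trivial: take $B=C$.

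There is no real obstacle internal to this argument; the one point worth a word of care in (a)$\Rightarrow$(c) is that we legitimately let $C$ play both roles --- the class whose $\mathcal{H}^j$-closure is formed and the intermediate submodel witnessing minimality --- which is fine since $C\subseteq C$. If one wanted the statement to stand on its own, the genuine content would sit in the preceding proposition: verifying $\mathcal{H}^j(B)\prec M$ (a Tarski--Vaught argument --- given $M\models\exists x\,\varphi(x,\bar a)$ with parameters $\bar a$ of the form $\langle j(f_i)(s_i)\rangle$ drawn from $\mathcal{H}^j(B)$, transport a suitable Skolem term back to $N$ via elementarity of $j$ and push it forward again to land a witness inside $\mathcal{H}^j(B)$) and checking that any intermediate submodel $D$ with $B\subseteq D$ absorbs every $j(f)(s)$ with $f\in N$ a function and $s\in[B]^{<\omega}$ --- but both are granted here.
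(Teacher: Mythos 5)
Your cycle of implications is correct, and it is exactly the intended argument: the paper states this proposition without proof (attributing it, together with the preceding one, essentially to Hamkins' \emph{Canonical seeds and Prikry trees}), and the content does reduce to the facts you cite --- $C\subseteq\mathcal{H}^{j}(C)$ and the minimality of $\mathcal{H}^{j}(B)$ among intermediate submodels containing $B$. Your parenthetical sketch of the Tarski--Vaught verification that $\mathcal{H}^{j}(B)\prec M$ is also the right way to discharge the one piece of genuine content hidden in the preceding proposition.
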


\subsection{$(*)$-iterated ultrapower and diagonal $(*)$-iterated ultrapower}

Let us define two classes of iterated ultrapowers:
\begin{definition}
Given an iterated ultrapower $(\langle M_n,j_{n,m}\colon n\leq m\leq\omega\rangle, \langle\kappa_n,U_n\colon n<\omega\rangle)$,
\begin{itemize}
\item[(1)] We call it a \emph{$(*)$-iterated ultrapower}, if for each $n<\omega$, in $M_n$, $U_n$ is a normal measure on $\kappa_n$, $\kappa_{n+1}=j_{n,n+1}(\kappa_n)$, $U_{n+1}\notin j_{n,n+1}''M_n$.
\item[(2)] We call it a \emph{diagonal $(*)$-iterated ultrapower}, if in $M_0$, $\langle\kappa_n\colon n<\omega\rangle$ is a strictly increasing sequence of inaccessible cardinals, for each $n<\omega$, in $M_n$, $U_n$ is a normal measure on $\kappa_n$ and $U_{n+1}\notin j_{n,n+1}''M_n$.
\end{itemize}
\end{definition}

\begin{remark}
In diagonal $(*)$-iteration case, in fact, $\langle\kappa_n\colon n<\omega\rangle$ is a strictly increasing sequence of measurable cardinals.
\end{remark}

\begin{lemma}
$(\langle M_n,j_{n,m}\colon n\leq m\leq\omega\rangle, \langle\kappa_n,U_n\colon n<\omega\rangle)$ is a diagonal $(*)$-iterated ultrapower, then
\begin{itemize}
\item[(1)] If $n\leq m<\omega$, $j_{0,n}(\kappa_m)=\kappa_m$.
\item[(2)] If $\kappa=\sup\{\kappa_n\colon n<\omega\}$, then $j_{0,\omega}(\kappa)=\kappa$.
\end{itemize}
\end{lemma}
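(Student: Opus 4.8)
The plan is to prove both parts by induction on $n$, exploiting the fact that in a diagonal $(*)$-iterated ultrapower the critical point of $j_{n,n+1}$ is $\kappa_n$, while all the $\kappa_m$ for $m > n$ remain above it and are fixed by the embedding. First I would establish the auxiliary observation that $\crit(j_{n,n+1}) = \kappa_n$: this is immediate from the definition, since $U_n$ is a normal measure on $\kappa_n$ in $M_n$, so the ultrapower map moves $\kappa_n$ and fixes everything below it. Moreover, since each $\kappa_m$ for $m > n$ is inaccessible (indeed measurable) in $M_0$, and inaccessibility is preserved upward along the chain, each such $\kappa_m$ remains a cardinal $> \kappa_n$ in $M_n$, hence is not moved by $j_{n,n+1}$; it also is not collapsed, because the ultrapower by a $\kappa_n$-complete ultrafilter adds no bounded subsets of $\kappa_n$ and is sufficiently closed above it. That gives $j_{n,n+1}(\kappa_m) = \kappa_m$ for all $m > n$.

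For part (1), I would argue: fix $m$ and show $j_{0,n}(\kappa_m) = \kappa_m$ for all $n \le m$ by induction on $n$. The base case $n = 0$ is trivial since $j_{0,0}$ is the identity. For the inductive step, write $j_{0,n+1} = j_{n,n+1} \circ j_{0,n}$. By the inductive hypothesis $j_{0,n}(\kappa_m) = \kappa_m$ (valid since $n < n+1 \le m$, so in particular $n \le m$), and then by the auxiliary observation, since $m > n$, we have $j_{n,n+1}(\kappa_m) = \kappa_m$. Composing gives $j_{0,n+1}(\kappa_m) = \kappa_m$, as desired. One small point to check: when $n = m$ the statement $j_{0,m}(\kappa_m) = \kappa_m$ still follows, because the induction producing $j_{0,m}$ only ever applies maps $j_{k,k+1}$ with $k < m$, each of which fixes $\kappa_m$.

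For part (2), with $\kappa = \sup_n \kappa_n$, I would compute $j_{0,\omega}(\kappa)$ using that $M_\omega$ is the direct limit and $j_{0,\omega} = \bigcup_n (\text{images of } j_{0,n})$ in the appropriate sense. Since $\kappa$ has cofinality $\omega$ in $M_0$ (it is a countable supremum) and each $\kappa_n < \kappa$ is fixed by $j_{0,n}$ by part (1), the embedding $j_{0,\omega}$ is continuous at $\kappa$: $j_{0,\omega}(\kappa) = \sup_n j_{0,\omega}(\kappa_n)$. Now $j_{0,\omega}(\kappa_n) = j_{n,\omega}(j_{0,n}(\kappa_n)) = j_{n,\omega}(\kappa_n)$, and since $\crit(j_{n',n'+1}) = \kappa_{n'} \ge \kappa_n$ is strictly increasing, each further map $j_{n,n+1}, j_{n+1,n+2}, \dots$ has critical point $\ge \kappa_n$, hence fixes $\kappa_n$; taking the direct limit gives $j_{n,\omega}(\kappa_n) = \kappa_n$. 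Therefore $j_{0,\omega}(\kappa) = \sup_n \kappa_n = \kappa$.

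The main obstacle is the cofinality/continuity argument in part (2): one must be careful that $j_{0,\omega}$ is genuinely continuous at $\kappa$, which uses both that $\mathrm{cf}^{M_0}(\kappa) = \omega$ and that $j_{0,\omega}$ restricted to the cofinal sequence $\langle \kappa_n : n < \omega\rangle$ is the identity — the latter being exactly what part (1) (together with the observation that the higher maps have ever-larger critical points) delivers. Everything else is a routine unwinding of the definition of a diagonal $(*)$-iterated ultrapower together with standard facts about critical points of ultrapower embeddings.
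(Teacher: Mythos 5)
Your overall strategy (induction along the iteration, tracking critical points) matches the paper's, and the statement is true, but two of your key justifications are non-sequiturs, one of them resting on an outright false intermediate claim.

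First, in part (1) you assert that since $\kappa_m$ remains a cardinal above $\kappa_n=\crit(j_{n,n+1})$ in $M_n$, it ``hence is not moved by $j_{n,n+1}$.'' Being a cardinal above the critical point does not make an ordinal a fixed point: $\kappa_n^{+}$ is a cardinal above $\crit(j_{n,n+1})$, yet $j_{n,n+1}(\kappa_n^{+})>\kappa_n^{+}$. What actually makes $\kappa_m$ a fixed point is the combination of two facts, which is exactly what the paper uses: (i) $\cf(\kappa_m)=\kappa_m>\kappa_n$, so $j_{n,n+1}$ is continuous at $\kappa_m$, i.e.\ $j_{n,n+1}(\kappa_m)=\sup j_{n,n+1}''\kappa_m$; and (ii) for each $\eta<\kappa_m$ one has $|j_{n,n+1}(\eta)|\leq(|\eta|^{\kappa_n})^{M_n}<\kappa_m$, using that $\kappa_m$ is still inaccessible in $M_n$ (by induction together with elementarity). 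Neither ``adds no bounded subsets of $\kappa_n$'' nor ``sufficiently closed above it'' substitutes for this computation; those remarks address whether $\kappa_m$ stays a cardinal, not whether it is fixed. Once (i) and (ii) are in place, your induction on $n$ goes through as you describe.

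Second, in part (2) you claim $j_{n,\omega}(\kappa_n)=\kappa_n$ because ``each further map $j_{n,n+1},j_{n+1,n+2},\dots$ has critical point $\geq\kappa_n$, hence fixes $\kappa_n$.'' A map with critical point exactly $\kappa_n$ moves $\kappa_n$ by definition, so $j_{0,\omega}(\kappa_n)=j_{n,\omega}(\kappa_n)=j_{n,n+1}(\kappa_n)>\kappa_n$, and your asserted identity is false. The correct (and sufficient) estimate, which is what the paper records, is $\kappa_n\leq j_{0,\omega}(\kappa_n)<\kappa_{n+1}$: by part (1), $j_{0,n+1}(\kappa_n)<j_{0,n+1}(\kappa_{n+1})=\kappa_{n+1}$, and $j_{n+1,\omega}$ has critical point $\kappa_{n+1}$, hence fixes every ordinal below it. With this bound one still gets $\sup_n j_{0,\omega}(\kappa_n)=\kappa$, and elementarity applied to $j_{0,\omega}(\langle\kappa_n\colon n<\omega\rangle)=\langle j_{0,\omega}(\kappa_n)\colon n<\omega\rangle$ (which is your continuity-at-$\kappa$ step, made precise) yields $j_{0,\omega}(\kappa)=\kappa$.
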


\begin{proof}
(1) Note that the following fact(\cite{2}): $\mu$ is a normal measure on $\lambda$, $j\colon V\rightarrow\Ult(V,\mu)$ is the induced elementary embedding, then
\begin{itemize}
\item For ordinal $\alpha$, if $\cf(\alpha)>\lambda$, then $j$ is continuous at $\alpha$, i.e. $\sup(j''\alpha)=j(\alpha)$.
\item For ordinal $\alpha$, $|j(\alpha)|<(|\alpha|^{\kappa})^{+}$.
\end{itemize}
For $i<m$, $\cf(\kappa_m)=\kappa_m>\kappa_i$, so $j_{i,i+1}$ is continuous at $\kappa_m$. In $M_i$, for $\eta<\kappa_m$, \[|j_{i,i+1}(\eta)|<|\eta^{\kappa_i}|^{+}<\kappa_m.\] So $j_{i,i+1}(\kappa_m)=\kappa_m$. So for $n\leq m$, $j_{0,n}(\kappa_m)=\kappa_m$.

(2) Because $j_{0,\omega}(\langle\kappa_n\colon n<\omega\rangle)=\langle j_{0,\omega}(\kappa_n)\colon n<\omega\rangle$, and $\forall n<\omega\ \kappa_n\leq j_{0,\omega}(\kappa_n)<\kappa_{n+1}$. Thus we have $j_{0,\omega}(\kappa)=\kappa$.
\end{proof}

\begin{theorem}
Suppose $(\langle M_n,j_{n,m}\colon n\leq m\leq\omega\rangle, \langle\kappa_n,U_n\colon n<\omega\rangle)$ is a $(*)$-iterated ultrapower or diagonal $(*)$-iterated ultrapower. If $X$ is an intermediate submodel of $j_{0,\omega}\colon M_0\rightarrow M_{\omega}$, then $X=M_{\omega}$ or there is $i<\omega$, $X=j_{i,\omega}''M_i$.
\end{theorem}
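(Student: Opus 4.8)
The plan is to show that the intermediate submodels of $j_{0,\omega}$ are exactly the members of the increasing chain $j_{0,\omega}''M_0\subseteq j_{1,\omega}''M_1\subseteq\cdots\subseteq M_\omega$, by assigning to each $\alpha\in M_\omega$ the least stage at which it appears and controlling that stage via the classification of functions (Theorem \ref{hanshufenlei2}).

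\textbf{Set-up.} Write $\kappa_n$ also for the seed of the $n$-th ultrapower and $\vec\kappa_k=\langle\kappa_0,\dots,\kappa_{k-1}\rangle$. In both the $(*)$- and the diagonal $(*)$-case each finite iterate is a single ultrapower $M_k=\Ult(M_0,W_k)$ with $W_k$ obtained as $W_{i+1}=W_i*\pi_i$, where $\pi_i\in M_0$ is a function representing $U_i$ over $M_0$. The condition $U_{i+1}\notin j_{i,i+1}''M_i$ says exactly that $\pi_i$ is not $W_i$-equivalent to a function independent of its last coordinate; combined with the classification of functions applied to $W_i$ inductively (the base case $W_1=U_0$ being a normal measure, via Proposition \ref{lianglei}), this forces $\pi_i$ to be $W_i$-equivalent to a one-to-one function whose values are normal measures. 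Hence each $W_k$ satisfies the hypotheses of Theorem \ref{hanshufenlei2}, every element of $M_k$ is $j_{0,k}(f)(\vec\kappa_k)$ for some $f\in M_0$, the chain $j_{n,\omega}''M_n$ is increasing with $\bigcup_n j_{n,\omega}''M_n=M_\omega$, $\crit(j_{k,\omega})=\kappa_k$ so that $j_{k,\omega}$ fixes $\kappa_0,\dots,\kappa_{k-1}$, and $j_{k,\omega}''M_k=\mathcal H^{j_{0,\omega}}(\{\kappa_0,\dots,\kappa_{k-1}\})$. (The needed facts that the tail embeddings fix the lower seeds are the preceding lemma.)

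\textbf{Levels and the key claim.} For $\alpha\in M_\omega$ let $\ell(\alpha)$ be the least $k$ with $\alpha\in j_{k,\omega}''M_k$; this is well defined since $\{k:\alpha\in j_{k,\omega}''M_k\}$ is a nonempty end-segment of $\omega$. I claim: if $X$ is an intermediate submodel, $\alpha\in X$, and $\ell(\alpha)=k\ge 1$, then $\vec\kappa_k\in X$, hence $\kappa_0,\dots,\kappa_{k-1}\in X$. Indeed, write $\alpha=j_{k,\omega}(\bar\alpha)$ with $\bar\alpha=j_{0,k}(f)(\vec\kappa_k)\in M_k$. By Theorem \ref{hanshufenlei2}, $f$ is $W_k$-equivalent to a function of one of the listed types; were it equivalent to a function of the first $j$ coordinates for some $j<k$, then $\alpha$ would lie in $\mathcal H^{j_{0,\omega}}(\{\kappa_0,\dots,\kappa_{j-1}\})=j_{j,\omega}''M_j$, contradicting $\ell(\alpha)=k$; so $f$ is $W_k$-equivalent to a one-to-one function, and we may take $f$ itself one-to-one on a set in $W_k$. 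Let $g:=f^{-1}$, extended arbitrarily off $\ran f$, so $g\in M_0$. Then $j_{0,\omega}(g)(\alpha)=j_{k,\omega}\bigl(j_{0,k}(g)(\bar\alpha)\bigr)=j_{k,\omega}(\vec\kappa_k)=\vec\kappa_k$, whence $\vec\kappa_k\in\mathcal H^{j_{0,\omega}}(\{\alpha\})\subseteq X$; its coordinates $\kappa_0,\dots,\kappa_{k-1}$ then belong to $X$ since $X\prec M_\omega$.

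\textbf{Conclusion.} Let $X$ be an intermediate submodel and put $k^{*}=\sup\{\ell(\alpha):\alpha\in X\}$. If $k^{*}=\omega$, then for every $m$ some $\alpha\in X$ has $\ell(\alpha)>m$, so $\kappa_m\in X$ by the claim; thus $\{\kappa_n:n<\omega\}\subseteq X$ and $M_\omega=\mathcal H^{j_{0,\omega}}(\{\kappa_n:n<\omega\})\subseteq X$, i.e. $X=M_\omega$. If $k^{*}=n<\omega$ it is attained by some $\alpha\in X$, so $j_{n,\omega}''M_n=\mathcal H^{j_{0,\omega}}(\{\kappa_0,\dots,\kappa_{n-1}\})\subseteq X$ (trivially when $n=0$, since $X\supseteq j_{0,\omega}''M_0$); and every $\beta\in X$ satisfies $\beta\in j_{\ell(\beta),\omega}''M_{\ell(\beta)}\subseteq j_{n,\omega}''M_n$, so $X=j_{n,\omega}''M_n$. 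The main obstacle is the bookkeeping of the set-up paragraph: verifying that $(*)$ genuinely makes the representing functions $\pi_i$ one-to-one so that the classification of functions applies to each $W_k$, and that the seeds behave as stated under the tail embeddings. Once that is secured, the rest is just the classification of functions together with the elementary-submodel closure of $X$ under $\mathcal H^{j_{0,\omega}}$.
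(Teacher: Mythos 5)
Your proposal is correct and follows essentially the same route as the paper: apply the classification of functions (Theorem \ref{hanshufenlei2}) to the ultrafilters $D_n$ realizing $M_n=\Ult(M_0,D_n)$, show that any $\alpha\in X$ first appearing at stage $k$ must be represented by a one-to-one function and hence yields $(\kappa_0,\dots,\kappa_{k-1})=j_{0,\omega}(g^{-1})(\alpha)\in X$, and then split on whether the levels of elements of $X$ are bounded. Your set-up paragraph in fact spells out more carefully than the paper does why the $(*)$-condition $U_{i+1}\notin j_{i,i+1}''M_i$ makes the representing functions one-to-one (modulo an off-by-one in the indices $\pi_i$ versus $\pi_{i+1}$), which is a welcome clarification rather than a deviation.
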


\begin{proof}
Given $(\langle M_n,j_{n,m}\colon n\leq m\leq\omega\rangle, \langle\kappa_n,U_n\colon n<\omega\rangle)$, the situation described in corollary \ref{hanshufenlei2} occurs: if it is a $(*)$-iterated ultrapower, for each $n<\omega$, $\lambda_n=\kappa$, $\pi_0$ is a normal measure on $\kappa$ and for $i>0$, $\pi_i$ is a represent function for $U_i$ in the ultrapower $\Ult(M_0,D_i)$; if it is a diagonal $(*)$-iterated ultrapower, $\lambda_n=\kappa_n$, $\pi_0$ is a normal measure on $\kappa_0$ and for $i>0$, $\pi_i$ is a represent function for $U_i$ in the ultrapower $\Ult(M_0,D_i)$.
By corollary \ref{hanshufenlei2}, for a function $f$ with domain $\lambda_0\times...\times\lambda_{i-1}$, $f$ is $D_n$-equivalent to a function of the following types:
\begin{itemize}
\item[(0)] A constant function.
\item[(1)] A function $g$ defined from a function with domain $\lambda_0$, i.e. there is $h$ with domain $\lambda_0$, $g(x_0,...,x_{i-1})=h(x_0)$ for all $x_0,...,x_{i-1}$.
\item[...]
\item[(k)] A function $g$ defined from a function with domain $\lambda_0\times...\times\lambda_{k-1}$, i.e. there is $h$ with domain $\lambda_0\times...\times\lambda_{k-1}$, $g(x_0,...,x_{i-1})=h(x_0,...,x_{k-1})$ for all $x_0,...,x_{i-1}$.
\item[...]
\item[(i)] An one-to-one function with domain $\lambda_0\times...\times\lambda_{i-1}$.
\end{itemize}

For $a\in M_{\omega}$, because $M_{\omega}=\bigcup_{i<\omega}j_{i,\omega}''M_i$, let $n$ be the least natural number such that $a\in j_{n,\omega}''M_n$. $M_n=\Ult(M_0,D_n)$, so there is $f$ in $M_0$ with domain $\lambda_0^{n}$, $[f]_{D_n}=j_{n,\omega}^{-1}(a)$. From the argument above and minimality of $n$, there is an one-to-one function $g$ which is $D_n$-equivalent to $f$. So $[g]_{D_n}=j_{n,\omega}^{-1}(a)$.
By normality of every value of $\pi_n$, $[g]_{D_n}=j_{0,n}(g)(\lambda_0,...,\lambda_{n_1})$.
So $(\lambda_0,...,\lambda_{n-1})=j_{0,n}(g^{-1})(j_{n,\omega}^{-1}(a))$.
So
\begin{equation*}
\begin{split}
j_{0,\omega}(g^{-1})(a)=j_{n,\omega}(j_{0,n}(g^{-1}))(a)=j_{n,\omega}(j_{0,n}(g^{-1})(j_{n,\omega}^{-1}(a)))=
j_{n,\omega}&(\lambda_0,...,\lambda_{n-1})\\
&=(\lambda_0,...,\lambda_{n-1}).
\end{split}
\end{equation*}

Given $X$, an intermediate submodel of $j_{0,\omega}$.

\noindent\emph{Case 1.} There is $n<\omega$, $X\subset j_{n,\omega}''M_n$.

There is $a\in X$, the least $k$ such that $a\in j_{k,\omega}''M_k$ is $n$. From the argument above, there is $g$, such that $j_{0,\omega}(g^{-1})(a)=(\lambda_0,...,\lambda_{n-1})$. So $(\lambda_0,...,\lambda_{n-1})\in X$. But since $M_n=\{j_{0,n}(f)(\lambda_0,...,\lambda_{n-1})\colon f\in M_0\}$, so $j_{n,\omega}''M_n=\{j_{0,\omega}(f)(\lambda_0,...,\lambda_{n-1})\colon f\in M_0\}$. So $j_{n,\omega}''M_n\subset X$. Thus $j_{n,\omega}''M_n=X$.

\noindent\emph{Case 2.} There is no $n<\omega$, $X\subset j_{n,\omega}''M_n$.

From the argument above, in this case, for each $i<\omega$, $\kappa_i\in X$. So $X=M_{\omega}$.
\end{proof}

\subsection{$\mathbb{P}_{\vec{\mu}}$ and $(*)$-iterated ultrapower}

Let us prove $\mathbb{P}_{\vec{\mu}}$ yields minimal extensions using the analysis of intermediated submodels of $(*)$-iterated ultrapower.

$\kappa$ and $\vec{\mu}$ are as above, $\alpha<\kappa$. Consider iterated ultrapower \[\mathcal{A}(\alpha)=(\langle M_n,j_{n,m}\colon n\leq m\leq\omega\rangle, \langle\kappa_n,U_n\colon n<\omega\rangle)\] satisfies:
\begin{itemize}
\item $M_0=V$, $\kappa_0=\kappa$, $U_0=\mu_{\alpha}$.
\item For each $n<\omega$, $\kappa_{n+1}=j_{n,n+1}(\kappa_n)$ and $U_{n+1}=j_{0,n+1}(\vec{\mu})(\kappa_n)$.
\end{itemize}

\begin{proposition}\label{diedaixingzhi}
\begin{itemize}
\item[(1)] $\mathcal{A}(\alpha)$ exists and is unique.
\item[(2)] For each $n<\omega$, $\kappa_n=\crit(j_{n,n+1})$, $\mathcal{A}(\alpha)$ is a $(*)$-iteration.
\item[(3)] $\langle\kappa_n\colon n<\omega\rangle$ is strictly increasing with supremum $j_{0,\omega}(\kappa)$.
\item[(4)] $\langle\kappa_n\colon n<\omega\rangle$ is a $(\mathbb{P}_{j_{0,\omega}(\vec{\mu})})^{M_{\omega}}$-generic sequence over $M_{\omega}$.
\end{itemize}
\end{proposition}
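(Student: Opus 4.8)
The plan is to establish (1)--(3) by a single recursion on $n$ and then to deduce (4) from the Geometric condition (Theorem \ref{geometric}). For (1), define $\mathcal{A}(\alpha)$ by the evident recursion: $M_0=V$, $\kappa_0=\kappa$, $U_0=\mu_\alpha$; given the transitive model $M_n$ together with the embeddings $j_{0,n}$ and with $\kappa_n=j_{0,n}(\kappa)$, form $M_{n+1}=\Ult(M_n,U_n)$ and then read off $U_{n+1}=j_{0,n+1}(\vec{\mu})(\kappa_n)$; take the direct limit at stage $\omega$. Each object is uniquely determined by its predecessors, so uniqueness is immediate. The only point for existence is that each $U_n$ is, in $M_n$, a normal measure on $\kappa_n$: for $n=0$ this is the hypothesis on $\vec{\mu}$, and for $n\geq 1$ it holds by elementarity, since in $M_n$ the object $j_{0,n}(\vec{\mu})$ is a sequence of distinct normal measures on $j_{0,n}(\kappa)=\kappa_n$ indexed by $\kappa_n$, evaluated at the index $\kappa_{n-1}<\kappa_n$ (this inequality being part of the same induction, see (2)). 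Well-foundedness of each $M_n$, and of $M_\omega$, is the standard fact for iterated ultrapowers by countably complete ultrafilters (each $U_n$ is $\kappa_n$-complete), in the spirit of the $U*U'$ discussion in \S2.

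For (2), since $U_n$ is a normal $M_n$-measure on $\kappa_n$, the usual analysis of the ultrapower gives $\crit(j_{n,n+1})=\kappa_n=[\id]_{U_n}$ (the identity on $\kappa_n$) and $\kappa_{n+1}=j_{n,n+1}(\kappa_n)>\kappa_n$; so $\langle\kappa_n\colon n<\omega\rangle$ is strictly increasing and, by an easy induction on $m$, $\crit(j_{n,m})=\kappa_n$ for all $m$ with $n<m\leq\omega$. To see $U_{n+1}\notin j_{n,n+1}''M_n$, put $\vec{\nu}=j_{0,n}(\vec{\mu})\in M_n$, so $j_{0,n+1}(\vec{\mu})=j_{n,n+1}(\vec{\nu})$ and, by a routine ultrapower computation, $U_{n+1}=j_{n,n+1}(\vec{\nu})([\id]_{U_n})=[\beta\mapsto\vec{\nu}(\beta)]_{U_n}$; were this $j_{n,n+1}(W)=[\beta\mapsto W]_{U_n}$ for some $W\in M_n$, then $\{\beta<\kappa_n\colon\vec{\nu}(\beta)=W\}\in U_n$, contradicting that the $\vec{\nu}(\beta)$ are pairwise distinct and $U_n$ is non-principal. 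Thus $\mathcal{A}(\alpha)$ is a $(*)$-iteration. For (3) it remains to identify $\sup_n\kappa_n$ with $j_{0,\omega}(\kappa)$: from $\crit(j_{n,\omega})=\kappa_n$ one has $\kappa_n=j_{0,n}(\kappa)<j_{n,\omega}(j_{0,n}(\kappa))=j_{0,\omega}(\kappa)$, so $\sup_n\kappa_n\leq j_{0,\omega}(\kappa)$; conversely, if $\xi<j_{0,\omega}(\kappa)$ then $\xi=j_{n,\omega}(a)$ for some $n$ and some $a\in M_n$, and $j_{n,\omega}(a)<j_{n,\omega}(\kappa_n)$ forces $a<\kappa_n=\crit(j_{n,\omega})$, hence $\xi=a<\kappa_n$. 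So $\sup_n\kappa_n=j_{0,\omega}(\kappa)$.

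For (4), in $M_\omega$ the object $j_{0,\omega}(\vec{\mu})$ is, by elementarity, a sequence of distinct normal measures on $j_{0,\omega}(\kappa)=\sup_n\kappa_n$ indexed by $j_{0,\omega}(\kappa)$, and $\langle\kappa_n\colon n<\omega\rangle$ maps $\omega$ into $j_{0,\omega}(\kappa)$; so by Theorem \ref{geometric} it is enough to check the tail condition: for every $j_{0,\omega}(\vec{\mu})$-choice function $F\in M_\omega$ there is $n_0$ with $\kappa_n\in F(\kappa_{n-1})$ for all $n\geq n_0$. Since $M_\omega$ is the direct limit, $F=j_{n_0,\omega}(\bar F)$ for some $n_0<\omega$ and $\bar F\in M_{n_0}$, and $\bar F$ is a $j_{0,n_0}(\vec{\mu})$-choice function by elementarity. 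Fix $n>n_0$ and set $\bar F_n=j_{n_0,n}(\bar F)\in M_n$, again a $j_{0,n}(\vec{\mu})$-choice function; then $\bar A:=\bar F_n(\kappa_{n-1})\in j_{0,n}(\vec{\mu})(\kappa_{n-1})=U_n$. Since $\kappa_{n-1}<\kappa_n=\crit(j_{n,\omega})$ we get $F(\kappa_{n-1})=j_{n,\omega}(\bar F_n)(\kappa_{n-1})=j_{n,\omega}(\bar A)$; and $\bar A\in U_n$ gives $\kappa_n=[\id]_{U_n}\in j_{n,n+1}(\bar A)$, so applying $j_{n+1,\omega}$, which fixes $\kappa_n<\kappa_{n+1}=\crit(j_{n+1,\omega})$, we conclude $\kappa_n\in j_{n,\omega}(\bar A)=F(\kappa_{n-1})$. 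Thus the tail condition holds, and (4) follows.

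The step I expect to be the \emph{main obstacle} is (4). One is tempted to imitate the classical single-measure case by tracking the measures along the embeddings, but the naive identity $j_{n,m}(U_n)=U_m$ fails here, since $U_m$ is $j_{0,m}(\vec{\mu})$ evaluated at $\kappa_{m-1}$, not at $\kappa_{n-1}$. The remedy is to reflect each individual choice function $F$ to a finite stage $M_{n_0}$ and then, at each later stage $n$, use that $\kappa_n=[\id]_{U_n}$ together with $\bar F_n(\kappa_{n-1})\in U_n$; after that the Geometric condition finishes the job.
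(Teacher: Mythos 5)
Your proposal is correct and follows essentially the same route as the paper: for (2) both arguments reduce to the distinctness of the measures in $\vec{\mu}$ together with $\kappa_n=\crit(j_{n,n+1})=[\id]_{U_n}$ (you phrase it via ultrapower representatives, the paper via injectivity of $j_{0,n}(\vec{\mu})$ and elementarity), and for (4) both reflect the choice function $F$ to a finite stage $M_{n_0}$, observe that $\kappa_n\in j_{n,n+1}(\bar{A})$ for $\bar{A}\in U_n$, push forward with $j_{n+1,\omega}$ which fixes $\kappa_n$, and invoke the Geometric condition. No gaps; your write-up simply spells out details ((1), (3), well-foundedness) that the paper leaves as "clear."
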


\begin{proof}
(1)Uniqueness is obvious. Every $M_n$ is just $\Ult(M_0,D_n)$ for some ultrafilter $D_n$ defined by sum of ultrafilters.

(2)Suppose $\mathcal{A}(\alpha)$ is not a $(*)$-iteration. There is $n$, $j_{0,n+1}(\vec{\mu})(\kappa_n)=j_{n,n+1}(x)$, then \[j_{n,n+1}(j_{0,n}(\vec{\mu}))(\kappa_n)=j_{n,n+1}(x).\] So there is $t$ such that $j_{0,n}(\vec{\mu})(t)=x$. Since $j_{0,n}(\vec{\mu})$ is one-to-one, so $j_{n,n+1}(t)=\kappa_n$, this contradicts $\kappa_n=\crit(j_{n,n+1})$.
(3) is clear.

(4)$F\in M_{\omega}$ is a $j_{0,\omega}(\vec{\mu})$-choice function in $M_{\omega}$. Then there is $0<m<\omega$ and $K\in M_m$, a $j_{0,m}(\vec{\mu})$-choice function in $M_m$ such that $j_{m,\omega}(K)=F$. For $i\geq m$, $j_{m,i}(K)$ is a $j_{0,i}(\vec{\mu})$-choice function in $M_i$ and $U_i=j_{0,i}(\vec{\mu})(\kappa_{i-1})$, so $\kappa_i\in j_{i,i+1}(j_{m,i}(K)(\kappa_{i-1}))=j_{m,i+1}(K)(\kappa_{i-1})$. Thus \[\kappa_i=j_{i+1,\omega}(\kappa_i)\in j_{i+1,\omega}(j_{m,i+1}(K)(\kappa_{i-1}))=j_{m,\omega}(K)(\kappa_{i-1})=F(\kappa_{i-1}).\]
From geometric condition, (4) is proved.
\end{proof}

Let us prove $\mathbb{P}_{\vec{\mu}}$ yields minimal extensions using $\mathcal{A}(\alpha)$.
\begin{lemma}
In the model $M_{\omega}[\langle\kappa_n\colon n<\omega\rangle]$, $a\colon\omega\rightarrow j_{0,\omega}(\kappa)$, and $\ran(a)$ is unbounded in $j_{0,\omega}(\kappa)$. Then $M_{\omega}[a]=M_{\omega}[\langle\kappa_n\colon n<\omega\rangle]$.
\end{lemma}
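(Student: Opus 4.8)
The plan is to deduce the statement from Lemma~\ref{kappaziji} applied inside $M_\omega$. Write $\bar\kappa=j_{0,\omega}(\kappa)$ and $\vec\nu=j_{0,\omega}(\vec\mu)$. Since in this subsection every $\mu_\alpha$ is a normal measure on $\kappa$, elementarity of $j_{0,\omega}\colon M_0\to M_\omega$ gives that, in $M_\omega$, $\bar\kappa$ is measurable and $\vec\nu$ is a sequence of distinct normal measures on $\bar\kappa$; hence the forcing $(\mathbb{P}_{\vec\nu})^{M_\omega}$ and the whole analysis of \S3 are available over $M_\omega$. By Proposition~\ref{diedaixingzhi}(4) the sequence $\langle\kappa_n\colon n<\omega\rangle$ is $(\mathbb{P}_{\vec\nu})^{M_\omega}$-generic over $M_\omega$; letting $G$ be the corresponding generic filter, we have $M_\omega[\langle\kappa_n\colon n<\omega\rangle]=M_\omega[G]$, exactly as in the verification that $V[G]=V[g]$ for $\mathbb{P}_{\vec\mu}$.

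First I would code $a$ by a subset of $\bar\kappa$. Because $\bar\kappa$ is inaccessible in $M_\omega$ and $J$ is a primitive recursive pairing function with $x,y\le J(x,y)$, the map $(n,\xi)\mapsto J(n,\xi)$ sends $\omega\times\bar\kappa$ into $\bar\kappa$, so $A=\{J(n,\xi)\colon n<\omega,\ a(n)=\xi\}$ is a subset of $\bar\kappa$. As $J$ and its inverse are absolute, $A$ and $a$ are interdefinable over $M_\omega$, whence $M_\omega[a]=M_\omega[A]$.

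Next I would check that $A\notin M_\omega$. Here is the only place the unboundedness hypothesis enters: from $a$ one reads off a cofinal map $\omega\to\bar\kappa$, witnessing $\cf(\bar\kappa)=\omega$; but $\bar\kappa$ is regular in $M_\omega$ (being measurable there), so no such map lies in $M_\omega$. Thus $a\notin M_\omega$, and therefore $A\notin M_\omega$.

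Finally, apply Lemma~\ref{kappaziji} with $M_\omega$ in place of $V$ and $G$ as above: since $A\subseteq\bar\kappa$, $A\in M_\omega[G]$, and $A\notin M_\omega$, we get $M_\omega[A]=M_\omega[G]$. Combining with the previous paragraphs, $M_\omega[a]=M_\omega[A]=M_\omega[G]=M_\omega[\langle\kappa_n\colon n<\omega\rangle]$. The only subtlety --- the ``main obstacle'', though it is minor --- is legitimizing this relativization: Lemma~\ref{kappaziji} was phrased for $V$ and a genuine generic extension, so one must observe that its proof uses only that $M_\omega$ is a transitive model of $\ZFC$ and that $G$ meets every dense subset of $(\mathbb{P}_{\vec\nu})^{M_\omega}$ lying in $M_\omega$, both of which hold here. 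Everything else is the bookkeeping of \S2--\S3 carried out over $M_\omega$.
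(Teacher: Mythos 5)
Your argument is correct, but it takes a genuinely different route from the paper. You reduce the lemma to Lemma~\ref{kappaziji} relativized to $M_\omega$: code $a$ as a subset $A$ of $\bar\kappa=j_{0,\omega}(\kappa)$, observe $A\notin M_\omega$ because $\bar\kappa$ is measurable (hence regular) in $M_\omega$ while $a$ witnesses cofinality $\omega$, and invoke the minimality lemma for subsets of $\bar\kappa$ over the ground model $M_\omega$. The relativization is legitimate --- the proof of Lemma~\ref{kappaziji} uses only that the ground model is a transitive model of $\ZFC$ in which $\vec{\mu}$ is a sequence of distinct normal measures, and Proposition~\ref{diedaixingzhi}(4) supplies the genericity of $\langle\kappa_n\colon n<\omega\rangle$ over $M_\omega$ --- and there is no circularity, since Lemma~\ref{kappaziji} is established in \S 3.2 independently of \S 5. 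The paper instead argues entirely inside the iterated ultrapower: it forms the hull $X=\mathcal{H}^{j_{0,\omega}}(\ran(a))$, uses the classification of intermediate submodels of a $(*)$-iteration to rule out $X=j_{n,\omega}''M_n$ (which would force $\ran(a)\subseteq\kappa_n$, contradicting unboundedness), concludes $X=M_\omega$, and then writes each $\kappa_i=j_{0,\omega}(f_i)(s_i)$ with $s_i\in[\ran(a)]^{<\omega}$; the bookkeeping function $h$ recording which $a(k)$ occur in $s_n$ is coded by a real and so lies in $M_\omega$ because $P(\omega)^{V}=P(\omega)^{M_\omega}$, whence $\langle\kappa_n\colon n<\omega\rangle$ is recovered from $a$, $h$ and $\langle j_{0,\omega}(f_i)\colon i<\omega\rangle$ inside $M_\omega[a]$. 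What the paper's route buys is exactly the point of \S 5: a proof of minimality that does not pass through the combinatorial Lemma~\ref{kappaziji}, together with an explicit seed-style recovery of the critical sequence from $a$. Your route is shorter and valid as a proof of the stated lemma, but it makes the ``second proof'' of Theorem~\ref{Qminimal} depend on the main lemma of the first, so it cannot serve as the independent argument this section is intended to provide.
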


\begin{proof}
Let $X=\mathcal{H}^{j_{0,\omega}}(\ran(a))$. Then $X\prec M_{\omega}$.
\begin{claim}
$X=M_{\omega}$.
\end{claim}
Suppose not. Since the iteration $\langle M_n\colon n<\omega\rangle$ is a $(*)$-iteration, there is $n$, $X=j_{n,\omega}M_n$. So for all $t\in\ran(a)$, there is $y\in M_n$, $j_{n,\omega}(y)=t$. Because $t<\j_{0,\omega}(\kappa)$, so $y<\kappa_n$, so $t=j_{n,\omega}(y)=y$. So $\ran(a)\subset\kappa_n$, this contradicts that $\ran(a)$ is unbounded in $j_{0,\omega}(\kappa)$.

In $V$, there are $\langle f_i\colon i<\omega\rangle$ and $\langle s_i\colon i<\omega\rangle$ such that \[\forall i<\omega(s_i\in[\ran(a)]^{<\omega}\ \&\ \kappa_i=j_{0,\omega}(f_i)(s_i)).\]

Define $h\colon\omega\rightarrow[\omega]^{<\omega}$ by $k\in h(n)$ iff $a(k)\in s_n$. Then $h$ can be coded by a subset of $\omega$, since $P(\omega)^{V}=P(\omega)^{M_{\omega}}$, so $h\in M_{\omega}$. Also,
\[\langle j_{0,\omega}(f_i)\colon i<\omega\rangle=j_{0,\omega}(\langle f_i\colon i<\omega\rangle)\in M_{\omega}.\]
In $M_{\omega}[a]$, from $a$, $\langle f_i\colon i<\omega\rangle$ and $h$, we can define $\langle\kappa_n\colon n<\omega\rangle$. So $\langle\kappa_n\colon n<\omega\rangle\in M_{\omega}[a]$.
\end{proof}

\begin{lemma}
Suppose $G$ is a $\mathbb{P}_{\vec{\mu}}$-generic filter over $V$. In $V[G]$, $a\colon\omega\rightarrow\kappa$ such that $\ran(a)$ is unbounded in $\kappa$. Then $V[a]=V[G]$.
\end{lemma}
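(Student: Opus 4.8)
The plan is to obtain this as the ``ground‑model side'' of the preceding lemma, pushed through the $(*)$-iterated ultrapower $\mathcal{A}(\alpha)$ that corresponds to $\mathbb{P}_{\vec{\mu}}$. First I would make the obvious reduction: since $V[G]=V[g]$ for the generic sequence $g\colon\omega\to\kappa$, which is itself a cofinal map, it suffices to prove $g\in V[a]$. Fix in $V$ a name $\dot a$ for $a$ together with a condition $p_0\in G$ forcing ``$\dot a\colon\omega\to\kappa$ with $\ran(\dot a)$ cofinal in $\kappa$''; replacing $\dot a$ by the name which equals $\dot a$ below $p_0$ and equals the generic sequence otherwise, I may assume $\mathbf 1\Vdash_{\mathbb{P}_{\vec{\mu}}}$ ``$\dot a\colon\omega\to\kappa$ and $\ran(\dot a)$ is cofinal in $\kappa$''. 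Note that the entries of $\stem(p_0)$ lie below $\kappa$, so they are unmoved by any embedding with critical point $\ge\kappa$.

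Next I would fix $\alpha<\kappa$ and form $\mathcal{A}(\alpha)=(\langle M_n,j_{n,m}\rangle,\langle\kappa_n,U_n\rangle)$ as in Proposition~\ref{diedaixingzhi}, and put $j:=j_{0,\omega}\colon V\to M_\omega$, $\vec\kappa:=\langle\kappa_n:n<\omega\rangle$, so that by Proposition~\ref{diedaixingzhi}(4) $\vec\kappa$ is $(\mathbb{P}_{j(\vec{\mu})})^{M_\omega}$-generic over $M_\omega$; let $G^{*}$ be the induced filter. Since $(\mathbb{P}_{j(\vec{\mu})})^{M_\omega}=j(\mathbb{P}_{\vec{\mu}})$, elementarity of $j$ gives $\mathbf 1\Vdash^{M_\omega}$ ``$j(\dot a)\colon\omega\to j(\kappa)$ and $\ran(j(\dot a))$ cofinal in $j(\kappa)$'' (the bound $\kappa$ is moved up to $j(\kappa)$), so $b:=j(\dot a)/G^{*}$ is a cofinal map $\omega\to j(\kappa)$ lying in $M_\omega[\vec\kappa]$. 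The preceding lemma then applies to $b$ and delivers not only $M_\omega[b]=M_\omega[\vec\kappa]$ but an explicit recipe: the hull $\mathcal{H}^{j}(\ran b)$ is an intermediate submodel of the $(*)$-iteration, hence (the options $j_{n,\omega}''M_n$ being excluded by cofinality of $\ran b$ in $j(\kappa)$) equals $M_\omega$, and then each $\kappa_i=j(f_i)(s_i)$ with $f_i\in V$ and $s_i\in[\ran b]^{<\omega}$, the record of which $b$-values fall into which $s_i$ being an element of $P(\omega)^{M_\omega}=P(\omega)^{V}$; so $\vec\kappa\in M_\omega[b]$, and hence $G^{*}\in M_\omega[b]$. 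Reading this recipe back through the dictionary $\mathbb{P}_{\vec{\mu}}\leftrightarrow\mathcal{A}(\alpha)$ — replacing $M_\omega$ by $V$, $j(\dot a)$ by $\dot a$, $\vec\kappa$ by the generic sequence, and using that $j$ matches the forcing relation of $\mathbb{P}_{\vec{\mu}}$ over $V$ with that of $(\mathbb{P}_{j(\vec{\mu})})^{M_\omega}$ over $M_\omega$ — I would produce over $V$ the condition/bookkeeping pair $(q,\mathcal{C})$ of the kind appearing in Lemma~\ref{kappaziji} (now organised along the intermediate submodels of $\mathcal{A}(\alpha)$) and then recover $g$ from $a$ as the branch of $q$ determined stage by stage by $a$. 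This gives $g\in V[a]$, i.e.\ $V[a]=V[G]$.

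The hard part is precisely this last step: one must make rigorous, and check, that the conclusion of the preceding lemma is \emph{uniform in the generic} — that ``the generic sequence is definable from $j(\dot a)/\dot G$ over the ground model'' is forced by $\mathbf 1$ over $M_\omega$, not merely true for the canonical $\vec\kappa$ — since only then does elementarity of $j$ return the matching statement over $V$. Establishing uniformity amounts to re‑running the $\Omega(q,\mathcal{C})$-analysis of Lemma~\ref{kappaziji} on the iterated‑ultrapower side, using that $\mathcal{A}(\alpha)$ is fixed in advance and that $\mathcal{H}^{j}(\ran b)$ is an intermediate submodel of $j$ for \emph{any} cofinal $b$ in \emph{any} generic extension of $M_\omega$ (because $\ran b\subseteq\Ord^{M_\omega}$), so the classification $X=M_\omega$ or $X=j_{n,\omega}''M_n$ and the coding step go through verbatim. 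The residual technicality is the bookkeeping around $\stem(p_0)$, whose entries lie below $\crit(j)=\kappa$ and are therefore fixed by $j$, so that $j(p_0)$ carries the same stem and the reconstruction is unaffected.
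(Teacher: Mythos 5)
Your proposal assembles the right ingredients (the iteration $\mathcal{A}(\alpha)$, Proposition \ref{diedaixingzhi}(4), the preceding lemma about $M_{\omega}[\langle\kappa_n\colon n<\omega\rangle]$, and the fact that $\stem$ entries below $\crit(j_{0,\omega})$ are fixed), but the logical architecture points the wrong way, and the step you yourself flag as ``the hard part'' is a genuine gap. To pull the conclusion back from $M_{\omega}$ to $V$ by elementarity you need the \emph{forced} statement $\mathbf 1\Vdash^{M_{\omega}}$ ``every cofinal $a$ computes the generic,'' i.e.\ the claim for \emph{every} $j_{0,\omega}(\mathbb{P}_{\vec{\mu}})$-generic filter over $M_{\omega}$. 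The preceding lemma proves it only for the single canonical generic $\langle\kappa_n\colon n<\omega\rangle$, and its proof genuinely uses that $M_{\omega}[\langle\kappa_n\colon n<\omega\rangle]\subseteq V$: the sequences $\langle f_i\rangle,\langle s_i\rangle$ with $\kappa_i=j_{0,\omega}(f_i)(s_i)$, $s_i\in[\ran(a)]^{<\omega}$, are chosen \emph{in $V$} (possible because $a\in V$), and the bookkeeping function $h$ lands in $P(\omega)^{V}=P(\omega)^{M_{\omega}}$. For an arbitrary generic $h$ over $M_{\omega}$ these choices must be made in $M_{\omega}[h]\not\subseteq V$, and your assertion that ``the classification and the coding step go through verbatim'' is exactly what is not established; the intermediate-submodel classification is an internal fact about $j_{0,\omega}\colon V\to M_{\omega}$ and does not automatically yield a definition of the generic sequence uniformly over all generic extensions of $M_{\omega}$.

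The paper avoids needing uniformity by running the argument in the contrapositive direction. Let $\Psi$ be the forcing-language sentence ``every cofinal $a\colon\omega\to\kappa$ satisfies $V[a]=V[G]$.'' If $\mathbf 1\not\Vdash\Psi$, some condition $(s,F)$ forces $\neg\Psi$; take $\alpha=\max(s)$ and form $\mathcal{A}(\alpha)$. Then $j_{0,\omega}(s,F)=(s,j_{0,\omega}(F))$ forces $\neg\Psi$ over $M_{\omega}$, and — this is the key alignment your write-up gestures at but does not exploit — this condition lies in the canonical generic filter determined by $s^{\frown}\langle\kappa_n\colon n<\omega\rangle$. Hence $\neg\Psi$ actually \emph{holds} in $M_{\omega}[\langle\kappa_n\colon n<\omega\rangle]$, which contradicts the preceding lemma applied to that one extension. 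The forcing-theorem direction used (``$p\Vdash\varphi$ and $p\in G$ imply $\varphi$ in the extension'') needs only a single generic containing the image condition, so no uniform version of the preceding lemma is required. As written, your direct-transfer route does not close; reorganized as a contradiction it does, and that reorganization is the missing idea rather than a cosmetic change.
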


\begin{proof}
Define $\Psi$ is the sentence: ``if $a\colon\omega\rightarrow\kappa$ is unbounded in $\kappa$, then $V[a]=V[G]$."
Let us prove that $\Vdash\Psi$. Suppose not. $(s,F)\in\mathbb{P}_{\vec{\mu}}$ such that $(s,F)\Vdash\neg\Psi$, $\alpha=\max(s)$. Consider the iterated ultrapower $\mathcal{A}(\alpha)$. Then \[j_{0,\omega}((s,F))\Vdash_{M_{\omega},j_{0,\omega}(\mathbb{P}_{\vec{\mu}})}\neg\Psi.\] $\langle\kappa_n\colon n<\omega\rangle$ is $j_{0,\omega}(\mathbb{P}_{\vec{\mu}})$-generic sequence over $M_{\omega}$. Define $g=s^{\frown}\langle\kappa_n\colon n<\omega\rangle$ is also $j_{0,\omega}(\mathbb{P}_{\vec{\mu}})$-generic sequence over $M_{\omega}$. $G$ is the corresponding generic filter to $g$, then $(s,F)\in G$. So $M_{\omega}[G]\vDash\neg\Psi$. Since $M_{\omega}[G]=M_{\omega}[g]=M_{\omega}[\langle\kappa_n\colon n<\omega\rangle]$, so $M_{\omega}[\langle\kappa_n\colon n<\omega\rangle]\vDash\neg\Psi$. This contradicts the lemma above.
\end{proof}

\begin{lemma}
Suppose $G$ is a $\mathbb{P}_{\vec{\mu}}$-generic filter over $V$. In $V[G]$, $A\subset\kappa$ and $A\notin V$. $g$ is the corresponding generic sequence of $G$. Then there is a sequence $\langle\alpha_n\colon n<\omega\rangle$ such that
\begin{itemize}
\item[(1)] For all $n$, $\alpha_n\geq g(n)$.
\item[(2)] $V[A]=V[\langle\alpha_n\colon n<\omega\rangle]$.
\end{itemize}
\end{lemma}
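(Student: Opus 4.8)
The plan is to reduce to the equality $V[A]=V[G]$ already established in Lemma~\ref{kappaziji}, and then to manufacture the sequence by \emph{coding $A$ along the generic sequence $g$}. Since $A\subseteq\kappa$ and $A\notin V$, Lemma~\ref{kappaziji} gives $V[A]=V[G]=V[g]$; in particular $g\in V[A]$. (Already at this point $\langle\alpha_n\colon n<\omega\rangle=g$ satisfies both clauses, since $g(n)\ge g(n)$ and $V[g]=V[A]$; but I would build a sequence that visibly also recovers $A$ and stays below $\kappa$, which is the feature needed afterwards.)

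Work first in $V$. As $\kappa$ is measurable it is a strong limit, so $|\mathcal{P}(\beta)|<\kappa$ for every $\beta<\kappa$; using $\AC$, fix $\langle e_\beta\colon\beta<\kappa\rangle\in V$ with each $e_\beta\colon\mathcal{P}(\beta)\to\kappa$ injective. Recall that $J$ is our fixed pairing function, with $x,y\le J(x,y)$ and $J[\kappa\times\kappa]=\kappa$. Now, in $V[G]$, put
\[
\alpha_n\;=\;J\bigl(g(n),\,e_{g(n)}(A\cap g(n))\bigr)\qquad(n<\omega).
\]
Then $\alpha_n\ge g(n)$ (and $\alpha_n<\kappa$), which is clause~(1). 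Since $\langle\alpha_n\colon n<\omega\rangle$ is definable from $g$ and $A$, both of which lie in $V[A]$, we get $\langle\alpha_n\colon n<\omega\rangle\in V[A]$, hence $V[\langle\alpha_n\colon n<\omega\rangle]\subseteq V[A]$. For the converse, decode inside $V[\langle\alpha_n\colon n<\omega\rangle]$: first $g(n)=(\alpha_n)_0$ recovers $g$; then, using $\langle e_\beta\colon\beta<\kappa\rangle\in V$, one has $A\cap g(n)=e_{g(n)}^{-1}\bigl((\alpha_n)_1\bigr)$ for every $n$; and since $g$ is cofinal in $\kappa$ we have $A=\bigcup_n\bigl(A\cap g(n)\bigr)$, so $A\in V[\langle\alpha_n\colon n<\omega\rangle]$. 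Therefore $V[A]\subseteq V[\langle\alpha_n\colon n<\omega\rangle]$, and clause~(2) follows.

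The only substantive ingredient is Lemma~\ref{kappaziji}; everything after it is routine bookkeeping with the pairing function, so I do not expect a real obstacle. The single point deserving care is that $\kappa$ is a strong limit, which is what keeps the auxiliary codes $e_\beta$ (and hence the $\alpha_n$) below $\kappa$; together with $\alpha_n\ge g(n)$ this makes $\langle\alpha_n\colon n<\omega\rangle$ unbounded in $\kappa$, the form in which the preceding lemma (any $a\colon\omega\to\kappa$ with unbounded range in $V[G]$ has $V[a]=V[G]$) then applies. If one prefers to keep the present section from quoting~\S3, one can instead suppose toward a contradiction that some $(s,F)$ forces the statement to fail, push it along the iterated ultrapower $\mathcal{A}(\max(s))$, and replay the argument in $M_\omega[\langle\kappa_n\colon n<\omega\rangle]$ via the $M_\omega$-version of Lemma~\ref{kappaziji}; this reroutes the proof through $M_\omega$ but rests on the same combinatorial core.
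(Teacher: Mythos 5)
Your argument is formally valid: granting Lemma~\ref{kappaziji}, one has $V[A]=V[G]$, hence $g\in V[A]$, and the coding $\alpha_n=J(g(n),e_{g(n)}(A\cap g(n)))$ (legitimate because $\mathbb{P}_{\vec{\mu}}$ adds no new bounded subsets of $\kappa$, so $A\cap g(n)\in\dom(e_{g(n)})$) clearly satisfies both clauses. But this is a genuinely different route from the paper's, and it defeats the purpose of the lemma. This lemma is the last step of \S 5.3, whose point is to give a \emph{second}, iterated-ultrapower proof that $\mathbb{P}_{\vec{\mu}}$ yields minimal extensions, independent of the combinatorial Lemma~\ref{kappaziji}: the preceding lemma shows $V[a]=V[G]$ for any unbounded $a\colon\omega\to\kappa$ in $V[G]$, and the present lemma is supposed to produce such an $a$ inside $V[A]$ \emph{without already knowing} $V[A]=V[G]$. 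Once you invoke Lemma~\ref{kappaziji} the statement is vacuous --- as you yourself observe, $\langle\alpha_n\rangle=g$ already works --- so nothing new is proved. Your suggested fallback (pushing a condition along $\mathcal{A}(\max(s))$ and ``replaying the argument via the $M_\omega$-version of Lemma~\ref{kappaziji}'') does not escape this, since it still routes through the same combinatorial lemma, merely relativized to $M_\omega$.

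The paper's proof instead extracts $\langle\alpha_n\rangle$ from $A$ and ground-model data alone. Fixing a name $\dot{A}$, it uses $\kappa$-completeness together with the normality fact that any sequence $\langle b_\alpha\colon\alpha<\kappa\rangle$ with $b_\alpha\subset\alpha$ coheres on a measure-one set, to find $q\in G$ and a coherent system $\langle a_x\colon x\in q\rangle$ with $q^{(x)}\Vdash\dot{A}\cap\max(x)=a_x$. For each node $x$ one forms the ``prediction'' $A(x)=\bigcup\{a_{x^\frown\alpha}\colon x^\frown\alpha\in q\}\in V$; since $A\notin V$ there is a least $\theta_x$ where $A$ and $A(x)$ disagree, and the $\alpha_n$ are defined as suprema of these $\theta_x$. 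The inequality $\alpha_n\geq g(n)$ then comes out of the fact that $A$ agrees with the decided values along the generic branch, and $A$ is recovered from $\langle\alpha_n\rangle$ because $A\cap\alpha_m=a_x\cap\alpha_m$ for suitable $x\in q$. If you want your write-up to serve the role the lemma plays in \S 5, you need an argument of this kind that does not presuppose $g\in V[A]$.
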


\begin{proof}
Suppose $\dot{A}$ is a $\mathbb{P}_{\vec{\mu}}$-name for $A$ such that $\Vdash\dot{A}\subset\kappa\ \&\ \dot{A}\notin V$.
Let us construct $\langle\alpha_n\colon n<\omega\rangle$ in $V[A]$.
\begin{claim}
Suppose $p\in\mathbb{P}_{\vec{\mu}}^{'}$. Then there is a pure extension $q$ and $\langle a_x\colon x\in q\rangle$ such that
\begin{itemize}
\item[(1)] For $x\in q$ and $x\geq_q\stem(q)$, $q^{(x)}\Vdash\dot{A}\cap\max(x)=a_x$.
\item[(2)] For $t\in q$ and $t\geq_q\stem(q)$, $\alpha<\beta$ such that $t_1=t^{\frown}\alpha$ and $t_2=t^{\frown}\beta$ are in $q$, then $a_{t_2}\cap\alpha=a_{t_1}$.
\end{itemize}
\end{claim}
To satisfies (1), we use $\kappa$-completeness of each ultrafilter. To satisfies (2), the following fact on normal measure is used: $U$ is a normal measure on an infinite cardinal $\kappa$, $\langle b_{\alpha}\colon \alpha<\kappa\rangle$ such that for $\alpha<\kappa$, $b_{\alpha}\subset\alpha$, then there is $K\in U$ such that if $\beta,\gamma\in K$ and $\beta<\gamma$, then $b_{\gamma}\cap\beta=b_{\beta}$ (\cite{22}).

Pick $q\in G$ satisfies the conditions in claim. $n=|\stem(q)|$. For $x\in q$ with $|x|>n$, $A(x)=\bigcup\{a_{x^{\frown}\alpha}\colon x^{\frown}\alpha\in q\}$. Then $A(x)\subset\kappa$ and $A(x)\in V$. Since $A\notin V$, so $A\neq A(x)$ for all $x$. Define $\theta_x$ be the least $\beta$ such that $A\cap\beta\neq A(x)\cap\beta$. We define $\langle\alpha_n\colon n<\omega\rangle$ as follows:
\begin{itemize}
\item $\langle\alpha_n\colon n<\omega\rangle{\uh}n=\stem(q)$.
\item $\alpha_{n}=\theta_{\stem(q)}$.
\item If $\alpha_i$ has defined, then $\alpha_{i+1}=\sup\{\theta_x\colon x\in q(i-n+1)\ \&\ \max(x)<\alpha_i\}$.
\end{itemize}

Then For all $n$, $\alpha_n\geq g(n)$. The reason is: suppose for some $n$, $g(n)\geq\alpha_n$. Since $q\in G$, $\dot{A}/G\neq A$, this contradicts $\dot{A}$ is a name for $A$. Because our construction is in $V[A]$, so $\langle\alpha_n\colon n<\omega\rangle\in V[A]$.

Because $g$ is cofinal in $\kappa$, so $\langle\alpha_n\colon n<\omega\rangle$ is also cofinal in $\kappa$. Note that the sequence $\langle\alpha_n\colon n<\omega\rangle$ need not be increasing.

Let us prove $A\in V[\langle\alpha_n\colon n<\omega\rangle]$. Just need to in $V[\langle\alpha_n\colon n<\omega\rangle]$ to define the sequence $\langle A\cap\alpha_n\colon n<\omega\rangle$. For any $m$, there is $i$, such that $\alpha_m<\alpha_i$. Then in $q(i-n+1)$, there is $x$, $\max(x)>\alpha_m$. So $A\cap\alpha_m=a_x\cap\alpha_m$. So we define $A\cap\alpha_m$. So we have defined $\langle A\cap\alpha_n\colon n<\omega\rangle$ in $V[\langle\alpha_n\colon n<\omega\rangle]$, so $A\in V[\langle\alpha_n\colon n<\omega\rangle]$.
\end{proof}

The above two lemmas gives the proof of $\mathbb{P}_{\vec{\mu}}$ yields minimal extensions.

\subsection{$\mathbb{P}_{\mathcal{D}}$ and diagonal $(*)$-iterated ultrapower}

A process similar to above is exhibited:

$\langle\gamma_n\colon n<\omega\rangle$ and $\mathcal{D}$ are as above. Given $(k,\alpha)\in\dom(\mathcal{D})$. Consider iterated ultrapower \[\mathcal{B}(k,\alpha)=(\langle M_n,j_{n,m}\colon n\leq m\leq\omega\rangle, \langle\kappa_n,U_n\colon n<\omega\rangle)\] satisfies:
\begin{itemize}
\item $M_0=V$, $\kappa_0=\gamma_k$, $U_0=\mathcal{D}(k,\alpha)$.
\item For each $n<\omega$, $\kappa_{n}=\gamma_{n+k}$ and $U_{n+1}=j_{0,n+1}(\mathcal{D})(n+k+1,\gamma_{n+k})$.
\end{itemize}

Similarly to proposition \ref{diedaixingzhi},

\begin{proposition}
\begin{itemize}
\item[(1)] $\mathcal{B}(k,\alpha)$ exists and is unique.
\item[(2)] For each $n$, $\crit(j_{n,n+1})=\gamma_{n+k}$. $\mathcal{B}(k,\alpha)$ is a diagonal $(*)$-iterated ultrapower.
\item[(3)] $\langle\gamma_k,\gamma_{k+1},...\rangle$ is a $j_{0,\omega}(\mathbb{P}_{\mathcal{D}}^{(k,\alpha)})$-generic sequence over $M_{\omega}$.
\item[(4)] $j_{0,\omega}(\gamma)=\gamma$. For $i<k$, $j_{0,\omega}(\gamma_i)=\gamma_i$; for $i\geq k$, $j_{0,\omega}(\gamma_i)=j_{i-k,i-k+1}(\gamma_i)$.
\end{itemize}
\end{proposition}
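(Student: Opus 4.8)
The plan is to follow the proof of Proposition~\ref{diedaixingzhi} almost verbatim, the one genuinely new point being that here the critical cardinals $\kappa_n$ are prescribed to be $\gamma_{n+k}$ rather than generated by the iteration, so one must verify at each step that $\gamma_{n+k}$ really is $\crit(j_{n,n+1})$. For (1), the iteration is defined by recursion on $n$: put $M_0=V$, $\kappa_0=\gamma_k$, $U_0=\mathcal{D}(k,\alpha)$, and, given $M_0,\dots,M_{n+1}$ together with the embeddings $j_{i,j}$ ($i\le j\le n+1$), set $U_{n+1}=j_{0,n+1}(\mathcal{D})(n+k+1,\gamma_{n+k})$ and $M_{n+2}=\Ult(M_{n+1},U_{n+1})$, taking direct limits at $\omega$. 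The only thing that needs checking for this to be meaningful is $(n+k+1,\gamma_{n+k})\in j_{0,n+1}(\dom\mathcal{D})$, i.e.\ $\gamma_{n+k}<j_{0,n+1}(\gamma_{n+k})$; uniqueness is immediate since every step is determined by the earlier ones.

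For (2) I would prove by induction on $n$ that $\kappa_n=\gamma_{n+k}$ is measurable in $M_n$ with $U_n$ a normal measure on it, that $\crit(j_{n,n+1})=\kappa_n$, and that $j_{0,n}$ fixes $\gamma_m$ whenever $m<k$ or $m\ge n+k$. The base case is the hypothesis on $\mathcal{D}$. For the step one uses the two standard facts about a single ultrapower by a normal measure recalled in the proof of the lemma on diagonal $(*)$-iterated ultrapowers (continuity of $j_U$ at ordinals whose cofinality exceeds $\crit(j_U)$, and $|j_U(\alpha)|<(|\alpha|^{\lambda})^+$): every $\gamma_m$ with $m\ne n+k$ is regular and inaccessible in $M_n$ (inaccessibility being downward absolute from $V$) and has cofinality $\ne\kappa_n$ — either $\gamma_m<\kappa_n$ or $\cf(\gamma_m)=\gamma_m>\kappa_n$ — so $j_{n,n+1}$ fixes it, while $j_{n,n+1}(\gamma_{n+k})>\gamma_{n+k}$ and, by the size bound, $j_{n,n+1}(\gamma_{n+k})<\gamma_{n+k+1}$. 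Hence $j_{0,n+1}=j_{n,n+1}\circ j_{0,n}$ fixes $\gamma_{n+k+1}$ while $\gamma_{n+k}<j_{0,n+1}(\gamma_{n+k})$, so $U_{n+1}$ is defined and, by elementarity, is a normal measure on $j_{0,n+1}(\gamma_{n+k+1})=\gamma_{n+k+1}=\kappa_{n+1}$ in $M_{n+1}$, with $\crit(j_{n+1,n+2})=\kappa_{n+1}$. This shows $\mathcal{B}(k,\alpha)$ is a diagonal $(*)$-iterated ultrapower once we check $U_{n+1}\notin j_{n,n+1}''M_n$, and here I would copy Proposition~\ref{diedaixingzhi}(2): writing $U_{n+1}=j_{n,n+1}(g)(\gamma_{n+k})$ with $g=j_{0,n}(\mathcal{D})(n+k+1,\cdot)$ (which is one-to-one in $M_n$, since $\mathcal{D}(n+k+1,\cdot)$ is), if $U_{n+1}=j_{n,n+1}(x)$ then injectivity of $j_{n,n+1}(g)$ together with elementarity forces some $t$ with $j_{n,n+1}(t)=\gamma_{n+k}=\crit(j_{n,n+1})$, which is impossible.

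Part (4) then drops out: by (2) the lemma on diagonal $(*)$-iterated ultrapowers applies, giving $j_{0,n}(\kappa_m)=\kappa_m$ for $n\le m$ and $j_{0,\omega}(\kappa)=\kappa$ with $\kappa=\sup_n\kappa_n=\gamma$, so $j_{0,\omega}(\gamma)=\gamma$; for $i<k$ the ordinal $\gamma_i<\gamma_k=\crit(j_{0,1})$ is fixed by every embedding in the iteration, hence by $j_{0,\omega}$; and for $i=k+n\ge k$ we compute $j_{0,\omega}(\gamma_i)=j_{n+1,\omega}(j_{0,n+1}(\kappa_n))=j_{n+1,\omega}(j_{n,n+1}(\kappa_n))=j_{n,n+1}(\kappa_n)$, the last step because $j_{n,n+1}(\kappa_n)<\kappa_{n+1}=\crit(j_{n+1,n+2})$, i.e.\ $j_{0,\omega}(\gamma_i)=j_{i-k,i-k+1}(\gamma_i)$.

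Finally, for (3) I would apply the geometric condition, Theorem~\ref{diaggeometric}, inside $M_\omega$ to the forcing $j_{0,\omega}(\mathbb{P}_{\mathcal{D}}^{(k,\alpha)})$ and the sequence $S=\langle\gamma_k,\gamma_{k+1},\dots\rangle=\langle\kappa_n\colon n<\omega\rangle$; by (4) the cardinal sequence of $j_{0,\omega}(\mathbb{P}_{\mathcal{D}}^{(k,\alpha)})$ in $M_\omega$ is $\langle j_{n,n+1}(\kappa_n)\colon n<\omega\rangle$ and $\kappa_n<j_{n,n+1}(\kappa_n)$, so $S$ is an admissible candidate. Given a $j_{0,\omega}(\mathcal{D}_1)$-choice function $F$ in $M_\omega$, where $\mathcal{D}_1$ is the restriction of $\mathcal{D}$ that defines $\mathbb{P}_{\mathcal{D}}^{(k,\alpha)}$, pick $m_0<\omega$ and $K\in M_{m_0}$ with $F=j_{m_0,\omega}(K)$; for $i\ge m_0$, $j_{m_0,i}(K)$ is a $j_{0,i}(\mathcal{D}_1)$-choice function in $M_i$, and after the evident index translation $j_{0,i}(\mathcal{D}_1)(i,\kappa_{i-1})=j_{0,i}(\mathcal{D})(i+k,\kappa_{i-1})=U_i$, so $j_{m_0,i}(K)(i,\kappa_{i-1})\in U_i$. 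Since $\kappa_i=\crit(j_{i,i+1})\in j_{i,i+1}(X)$ for every $X\in U_i$, applying $j_{i,i+1}$ gives $\kappa_i\in j_{m_0,i+1}(K)(i,\kappa_{i-1})$, and then applying $j_{i+1,\omega}$ (which fixes $i$, $\kappa_{i-1}$ and $\kappa_i$) yields $\kappa_i\in F(i,\kappa_{i-1})$; taking $m=\max(m_0,1)$, the geometric condition gives that $S$ is $j_{0,\omega}(\mathbb{P}_{\mathcal{D}}^{(k,\alpha)})$-generic over $M_\omega$. The one step I expect real friction from is (2): prying apart the prescribed cardinals $\gamma_{n+k}$ from the critical points the iteration actually produces — already the well-definedness $\gamma_{n+k}<j_{0,n+1}(\gamma_{n+k})$ of $U_{n+1}$ lives inside this — and arranging the induction so that the two single-ultrapower facts suffice, without circularly invoking the lemma on diagonal iterations; the rest is bookkeeping, and (3) is a direct application of Theorem~\ref{diaggeometric} modulo the translation between $\mathcal{D}$ and $\mathcal{D}_1$.
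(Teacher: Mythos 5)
Your proposal is correct and follows essentially the same route the paper intends: the paper gives no explicit proof here, saying only ``Similarly to Proposition \ref{diedaixingzhi},'' and your argument is precisely that proof transplanted, with the one genuinely new ingredient (the inductive verification that $j_{0,n}$ fixes $\gamma_{n+k}$, so that $U_{n+1}$ is well defined and lives on $\gamma_{n+k+1}$) supplied via the same two single-ultrapower facts the paper uses in its lemma on diagonal $(*)$-iterations. Your write-up is in fact more careful than the paper's on exactly the point you flag as the source of friction.
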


The following three lemmas gives another proof of $\mathbb{P}_{\mathcal{D}}$ yields minimal extensions, the proofs are similar to corresponding lemmas for $\mathbb{P}_{\vec{\mu}}$, so we omit them here.

\begin{lemma}
In the model $M_{\omega}[\langle\gamma_i\colon k\leq i<\omega\rangle]$, $a\colon\omega\rightarrow\gamma$ such that $\forall m\ (\gamma_{k+m}\leq a(m)<j_{0,\omega}(\gamma_{k+m}))$. Then $M_{\omega}[a]=M_{\omega}[\langle\gamma_i\colon k\leq i<\omega\rangle]$.
\end{lemma}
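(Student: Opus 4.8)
The statement is the diagonal analogue of the lemma in §5.3 about intermediate submodels for $\mathcal{A}(\alpha)$, so the plan is to mimic that argument, using the diagonal $(*)$-iterated ultrapower $\mathcal{B}(k,\alpha)$ in place of the $(*)$-iterated ultrapower $\mathcal{A}(\alpha)$. Write $a\colon\omega\to\gamma$ with $\gamma_{k+m}\le a(m)<j_{0,\omega}(\gamma_{k+m})$ for all $m$, and set $X=\mathcal{H}^{j_{0,\omega}}(\ran(a))$, an intermediate submodel of $j_{0,\omega}\colon M_0\to M_\omega$. First I would show $X=M_\omega$. By the theorem of §5.2 on intermediate submodels of a diagonal $(*)$-iterated ultrapower, either $X=M_\omega$ or $X=j_{n,\omega}''M_n$ for some $n<\omega$. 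Suppose the latter. For each $m$, $a(m)\in X=j_{n,\omega}''M_n$, so $a(m)=j_{n,\omega}(y_m)$ for some $y_m\in M_n$; but $\gamma_{k+m}\le a(m)<j_{0,\omega}(\gamma_{k+m})$, and (by the lemma computing $j_{0,\omega}(\gamma_i)$ and the continuity facts used there) for $m$ large enough that $k+m\ge n+k$, i.e. always once $m$ is past the finitely many indices below $n$, the ordinal $j_{n,\omega}$ fixes everything below $\gamma_{k+m}$ except on the short interval $[\gamma_{k+m-?},\dots)$; more precisely $a(m)<j_{0,\omega}(\gamma_{k+m})$ forces $y_m<\gamma_{k+m}$ in the relevant range and then $j_{n,\omega}(y_m)$ stays below the next measurable, contradicting that $a(m)$ can be as large as just under $j_{0,\omega}(\gamma_{k+m})$ — so $\ran(a)$ would be contained below a fixed $\gamma_{k+N}$, impossible since $a(m)\ge\gamma_{k+m}$ makes $\ran(a)$ cofinal in $\gamma$. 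Hence $X=M_\omega$.

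Next, having $X=M_\omega$, I would recover $\langle\gamma_i\colon k\le i<\omega\rangle$ inside $M_\omega[a]$ exactly as in the $\mathbb{P}_{\vec\mu}$ case. Since $\gamma_{k+i}\in M_\omega=\mathcal{H}^{j_{0,\omega}}(\ran(a))$, in $V=M_0$ there are sequences $\langle f_i\colon i<\omega\rangle$ and $\langle s_i\colon i<\omega\rangle$ with $s_i\in[\ran(a)]^{<\omega}$ and $\gamma_{k+i}=j_{0,\omega}(f_i)(s_i)$. Define $h\colon\omega\to[\omega]^{<\omega}$ by $j\in h(i)\iff a(j)\in s_i$; then $h$ is coded by a subset of $\omega$, and since the $\gamma_n$ are inaccessible (indeed measurable) in $M_0$ and the iteration has critical points $\gamma_{k+n}$, one checks $P(\omega)^{V}=P(\omega)^{M_\omega}$, so $h\in M_\omega$; likewise $j_{0,\omega}(\langle f_i\colon i<\omega\rangle)=\langle j_{0,\omega}(f_i)\colon i<\omega\rangle\in M_\omega$. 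Then in $M_\omega[a]$, from $a$, from $\langle j_{0,\omega}(f_i)\colon i<\omega\rangle$ and from $h$ one reads off $s_i=\{a(j)\colon j\in h(i)\}$ and hence $\gamma_{k+i}=j_{0,\omega}(f_i)(s_i)$ for every $i$, so $\langle\gamma_i\colon k\le i<\omega\rangle\in M_\omega[a]$. Conversely $a\in M_\omega[\langle\gamma_i\colon k\le i<\omega\rangle]$ is immediate since $a$ is built over $M_\omega$ from that sequence — actually the inclusion we need is $M_\omega[a]\subseteq M_\omega[\langle\gamma_i\colon k\le i<\omega\rangle]$, which holds because $a\in M_\omega[\langle\gamma_i\rangle]$; combined with the previous inclusion this gives $M_\omega[a]=M_\omega[\langle\gamma_i\colon k\le i<\omega\rangle]$.

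The main obstacle is the first step: verifying carefully that the condition $\gamma_{k+m}\le a(m)<j_{0,\omega}(\gamma_{k+m})$ is strong enough to rule out the alternative $X=j_{n,\omega}''M_n$. This requires the explicit description of $j_{0,\omega}$ on the $\gamma_i$ (the preceding proposition: $j_{0,\omega}(\gamma_i)=\gamma_i$ for $i<k$, and $=j_{i-k,i-k+1}(\gamma_i)$ for $i\ge k$), together with the continuity/size estimates for ultrapower embeddings by normal measures used earlier, to see that if $a(m)=j_{n,\omega}(y_m)$ then $y_m$ must be below $\gamma_{k+m}$ and $j_{n,\omega}$ cannot push it up past $j_{0,\omega}(\gamma_{k+m})$ while keeping it below $\gamma_{k+m+1}$ — so in fact all of $\ran(a)$ lands below some fixed $\gamma_{k+N}$, contradicting cofinality in $\gamma$. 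The rest (the coding of $h$, the agreement $P(\omega)^V=P(\omega)^{M_\omega}$, and the final definability computations) is routine and parallels §5.3 verbatim.
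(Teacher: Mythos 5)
Your proposal follows exactly the route the paper intends: the paper omits this proof, saying it parallels the corresponding lemma for $\mathbb{P}_{\vec{\mu}}$ in \S 5.3, and your two-stage plan (show $\mathcal{H}^{j_{0,\omega}}(\ran(a))=M_{\omega}$ via the classification of intermediate submodels of a diagonal $(*)$-iterated ultrapower, then recover the critical sequence from $a$ using the $f_i$'s, the finite sets $s_i$, the code $h\subset\omega$, and $P(\omega)^{V}=P(\omega)^{M_{\omega}}$) is that proof. The second half of your argument is correct as written.

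The one place you should tighten is the first step, where your stated contradiction (``all of $\ran(a)$ lands below some fixed $\gamma_{k+N}$, contradicting cofinality'') is the wrong transplant from the $\mathbb{P}_{\vec{\mu}}$ case: in the diagonal setting the range of $a$ is never bounded, and the contradiction is pointwise, which is exactly why the hypothesis includes the lower bound $\gamma_{k+m}\leq a(m)$ rather than mere unboundedness. Concretely: suppose $X=j_{n,\omega}''M_n$ and take any $m\geq n$, so $a(m)=j_{n,\omega}(y_m)$ for an ordinal $y_m\in M_n$. Since $j_{0,n}(\gamma_{k+m})=\gamma_{k+m}$ for $n\leq m$ (the fixed-point lemma for diagonal $(*)$-iterations), the upper bound gives $a(m)<j_{0,\omega}(\gamma_{k+m})=j_{n,\omega}(\gamma_{k+m})$, hence $y_m<\gamma_{k+m}$. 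Then $j_{n,m}(y_m)<j_{n,m}(\gamma_{k+m})=\gamma_{k+m}=\crit(j_{m,\omega})$, so $a(m)=j_{m,\omega}(j_{n,m}(y_m))=j_{n,m}(y_m)<\gamma_{k+m}$, directly contradicting $a(m)\geq\gamma_{k+m}$. With that substitution your proof is complete and matches the intended argument.
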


\begin{lemma}
Suppose $G$ is a $\mathbb{P}_{\mathcal{D}}$-generic filter over $V$, $g$ is the corresponding generic sequence. In $V[G]$, $a\colon\omega\rightarrow\gamma$ such that there is $m<\omega$, $\forall n\geq m(g(n)\leq a(n)<\gamma_n)$. Then $V[G]=V[a]$.
\end{lemma}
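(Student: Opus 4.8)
The plan is to imitate the $\mathbb{P}_{\vec\mu}$-argument of the two preceding lemmas, replacing the iteration $\mathcal{A}(\alpha)$ by the diagonal $(*)$-iteration $\mathcal{B}(k,\theta)$ and the ``unbounded in $\kappa$'' hypothesis by the present domination hypothesis. Let $\dot g$ denote the canonical name for the generic sequence and let $\Psi$ be the sentence ``for every $a\colon\omega\to\gamma$, if there is $m<\omega$ with $\forall n\ge m\,(\dot g(n)\le a(n)<\gamma_n)$, then $V[a]=V[G]$''. (Note that, unlike in the $\mathbb{P}_{\vec\mu}$ case, $\Psi$ must refer explicitly to $\dot g$; this is harmless since $\dot g$ is definable.) It suffices to prove $\Vdash\Psi$. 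Suppose not, and fix $(s,F)\in\mathbb{P}_{\mathcal{D}}$ forcing $\neg\Psi$. Put $k=|s|$ and $\theta=s(k-1)$, so $(k,\theta)\in\dom(\mathcal{D})$, and form $\mathcal{B}(k,\theta)=(\langle M_n,j_{n,m}\rangle,\langle\kappa_n,U_n\rangle)$. By the proposition on $\mathcal{B}(k,\theta)$: $\crit(j_{0,\omega})=\gamma_k$ (so $j_{0,\omega}$ fixes $s$, $\theta$, and all ordinals below $\gamma_k$), $j_{0,\omega}(\gamma)=\gamma$, $j_{0,\omega}(\gamma_n)=\gamma_n$ for $n<k$ while $j_{0,\omega}(\gamma_n)>\gamma_n$ for $n\ge k$, and $\langle\gamma_k,\gamma_{k+1},\dots\rangle$ is $j_{0,\omega}(\mathbb{P}_{\mathcal{D}}^{(k,\theta)})$-generic over $M_\omega$.

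Next I would assemble the generic object over $M_\omega$. Applying the splitting proposition for $\mathbb{P}_{\mathcal{D}}$ and $\mathbb{P}_{\mathcal{D}}^{(k,\theta)}$ inside $M_\omega$, the sequence $g:=s^\frown\langle\gamma_k,\gamma_{k+1},\dots\rangle$ is $j_{0,\omega}(\mathbb{P}_{\mathcal{D}})$-generic over $M_\omega$; let $\bar G$ be the filter it induces. The key point — exactly parallel to clause (4) of that proposition and to Proposition~\ref{diedaixingzhi}(4) — is that $(s,j_{0,\omega}(F))\in\bar G$, i.e.\ that $g$ is a branch of $T(s,j_{0,\omega}(F))$: since $F(k,\theta)\in\mathcal{D}(k,\theta)=U_0$ we get $\gamma_k=\crit(j_{0,1})\in j_{0,1}(F(k,\theta))$, and applying $j_{1,\omega}$ (which fixes $\gamma_k$) gives $\gamma_k\in j_{0,\omega}(F)(k,\theta)$; inductively, since $U_{n+1}=j_{0,n+1}(\mathcal{D})(n+k+1,\gamma_{n+k})$, one obtains $\gamma_{n+k+1}\in j_{0,\omega}(F)(n+k+1,\gamma_{n+k})$ for every $n$. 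By elementarity $j_{0,\omega}((s,F))=(s,j_{0,\omega}(F))$ forces $\neg\Psi$ over $M_\omega$, so $M_\omega[\bar G]\models\neg\Psi$; and since every entry of $s$ is below $\gamma_k$ we have $s\in M_\omega$, whence $M_\omega[\bar G]=M_\omega[g]=M_\omega[\langle\gamma_i\colon k\le i<\omega\rangle]$.

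Finally I would read off the contradiction from the preceding lemma. Unwinding $\neg\Psi$ in $M_\omega[\langle\gamma_i\colon k\le i<\omega\rangle]$, and using that the generic sequence there is $g$ with $g(n)=\gamma_n$ and that $j_{0,\omega}(\gamma_n)>\gamma_n$ for $n\ge k$, yields $a\colon\omega\to\gamma$ with $M_\omega[a]\ne M_\omega[\bar G]$ and $\gamma_n\le a(n)<j_{0,\omega}(\gamma_n)$ for all $n\ge\max(k,m_0)$ (some $m_0$). Setting $a'(m)=a(k+m)$ and adjusting $a'$ on a finite initial segment — which changes neither $M_\omega[a']=M_\omega[a]$ (as $a\uh k\in M_\omega$) nor the model it generates — gives $a'$ with $\forall m\,(\gamma_{k+m}\le a'(m)<j_{0,\omega}(\gamma_{k+m}))$. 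The preceding lemma then yields $M_\omega[a']=M_\omega[\langle\gamma_i\colon k\le i<\omega\rangle]=M_\omega[\bar G]$, contradicting $M_\omega[a']=M_\omega[a]\ne M_\omega[\bar G]$. Hence $\Vdash\Psi$, which is the claim. The entire argument is routine given the two preceding lemmas; the only points needing care are the index shift by $k$, the verification $(s,j_{0,\omega}(F))\in\bar G$, and phrasing $\Psi$ with the parameter $\dot g$ — none of which is a genuine obstacle.
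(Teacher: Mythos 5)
Your proof is correct and is exactly the argument the paper intends: the paper omits this proof, saying it is "similar to the corresponding lemma for $\mathbb{P}_{\vec{\mu}}$," and you carry out precisely that transfer, replacing $\mathcal{A}(\alpha)$ by $\mathcal{B}(k,\theta)$ and checking the points (index shift, $(s,j_{0,\omega}(F))\in\bar G$, the reinterpretation of $\gamma_n$ as $j_{0,\omega}(\gamma_n)$ in $M_\omega$) that make the diagonal case go through.
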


\begin{lemma}
Suppose $G$ is a $\mathbb{P}_{\mathcal{D}}$-generic filter over $V$. In $V[G]$, $A\subset\gamma$ and $A\notin V$. Let $g$ be the corresponding generic sequence of $G$. Then there is a sequence $\langle\alpha_n\colon n<\omega\rangle$ such that
\begin{itemize}
\item[(1)] For all $n$, $g(n)\leq\alpha_n<\gamma_n$.
\item[(2)] $V[A]=V[\langle\alpha_n\colon n<\omega\rangle]$.
\end{itemize}
\end{lemma}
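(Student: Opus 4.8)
The plan is to mirror the lemma for $\mathbb{P}_{\vec\mu}$ in \S5.3, with $\kappa$ replaced by the sequence $\langle\gamma_n\rangle$ and the single measure $\mu_\alpha$ replaced by the ultrafilter families $\mathcal D(n,\cdot)$. Fix a $\mathbb{P}_{\mathcal D}$-name $\dot A$ for $A$ with $\Vdash\dot A\subseteq\gamma\ \&\ \dot A\notin V$, and work in $\mathbb{P}_{\mathcal D}^{''}$, where a condition is a subtree of $U$ and each node is identified with its maximum. I first record a reduction: it is enough to produce a sequence $\langle\alpha_m\colon m<\omega\rangle\in V[A]$ with $g(m)\le\alpha_m<\gamma_m$ for all $m$. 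Indeed, by the preceding lemma such an $\langle\alpha_m\rangle$ has $V[\langle\alpha_m\rangle]=V[G]\supseteq V[A]$, while $\langle\alpha_m\rangle\in V[A]$ gives $V[\langle\alpha_m\rangle]\subseteq V[A]$, so $V[A]=V[\langle\alpha_m\rangle]$; thus only clause (1) together with ``$\langle\alpha_m\rangle\in V[A]$'' must be engineered, which we do, as in the $\mathbb{P}_{\vec\mu}$ case, by a tree analysis of $\dot A$.

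The main step is the coherent-guessing Claim: for every $p\in\mathbb{P}_{\mathcal D}^{''}$ there are a pure extension $q$ and a family $\langle a_x\colon x\in q,\ x\ge_q\stem(q)\rangle$ with $a_x\subseteq\max(x)$, $a_x\in V$, such that (i) $q^{(x)}\Vdash\dot A\cap\max(x)=a_x$, and (ii) whenever $t\ge_q\stem(q)$ and $t^\frown\mu,t^\frown\nu\in q$ with $\mu<\nu$, then $a_{t^\frown\nu}\cap\mu=a_{t^\frown\mu}$. For (i): if $x$ has tree-height $k$ then $\max(x)<\gamma_k$, so by the Prikry property for $\mathbb{P}_{\mathcal D}$ together with the closure of the tail forcings $\mathbb{P}_{\mathcal D}^{(k+1,\theta)}$ used in the proof that $\mathbb{P}_{\mathcal D}$ adds no new bounded subsets of $\gamma$, the subtree $p^{(x)}$ can be purely extended so that each $q^{(x)}$ decides $\dot A\cap\max(x)$. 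For (ii): once the children $x^\frown\mu$, with $\mu$ in a $\mathcal D(k+1,\max x)$-large set $W$, all decide $\dot A\cap\mu$, the sequence $\langle a_{x^\frown\mu}\colon\mu\in W\rangle$ has $a_{x^\frown\mu}\subseteq\mu$, so by normality of $\mathcal D(k+1,\max x)$ and the coherent-cutting fact (\cite{22}) there is a large $W'\subseteq W$ on which $\mu<\nu\Rightarrow a_{x^\frown\nu}\cap\mu=a_{x^\frown\mu}$; discard the children outside $W'$. Carrying out this thinning at every node and fusing exactly as in the proof of Lemma~\ref{gammaziji} --- using that a subtree whose $n$-th level lies in $E_n^p$ is again a condition, and that $\mathbb{P}_{\mathcal D}^{'}$ is closed under $\omega$-intersections with a fixed stem --- yields the desired $q$ and $\langle a_x\rangle$. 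Since the set of such $q$ is dense, fix $q\in G$ with such a family, put $n=|\stem(q)|$, and for $x\in q$ with $x\ge_q\stem(q)$ set $A(x)=\bigcup\{a_{x^\frown\mu}\colon x^\frown\mu\in q\}$; by (ii), $A(x)\uh\mu=a_{x^\frown\mu}$ for every child, so $A(x)\subseteq\gamma_{h(x)+1}$ lies in $V$, hence $A(x)\ne A$, and $\theta_x$, the least ordinal with $A\cap\theta_x\ne A(x)\cap\theta_x$, is defined; since $q\in G$, if $x=g\uh|x|$ then $A\cap g(|x|)=a_{x^\frown g(|x|)}=A(x)\cap g(|x|)$, so $\theta_x\ge g(|x|)$. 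Finally define $\langle\alpha_m\rangle$ by $\langle\alpha_m\rangle\uh n=\stem(q)$, $\alpha_n=\theta_{\stem(q)}$, and $\alpha_{i+1}=\sup\{\theta_x\colon x\in q(i-n+1)\ \&\ \max(x)<\alpha_i\}$ for $i\ge n$, verbatim as for $\mathbb{P}_{\vec\mu}$; as there, $\alpha_m\ge g(m)$ for all $m$, the recursion refers only to $q$, the $a_x$'s and $A$ so $\langle\alpha_m\rangle\in V[A]$, and $A\in V[\langle\alpha_m\rangle]$ because for each $\mu<\gamma$, choosing $i$ with $\alpha_i>\mu$, any $x$ at the matching level of $q$ with $\max(x)>\mu$ has $a_x\cap\mu=A\cap\mu$ by (ii), so $\langle A\cap\alpha_m\colon m<\omega\rangle$, hence $A$, is definable from $\langle\alpha_m\rangle$.

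The step I expect to be the main obstacle is clause (1), i.e.\ $\alpha_m<\gamma_m$ for every $m$ --- the one thing genuinely new relative to the $\mathbb{P}_{\vec\mu}$ argument, where $A\subseteq\kappa$ makes $\theta_x<\kappa$ automatic, whereas here $A$ is unbounded in $\gamma$ but $A(x)\subseteq\gamma_{h(x)+1}$, so a priori $\theta_x$ could be far above $\gamma_{h(x)+1}$. For $m<n$ the bound is the defining constraint on $\stem(q)$; for $m\ge n$ it reduces to $\theta_x<\gamma_{h(x)+1}$ for the relevant $x$, and then, since $\mathcal D(i,\cdot)$ concentrates on $(\gamma_{i-1},\gamma_i)$, there are at most $|\alpha_i|<\gamma_i$ nodes $x\in q(i-n+1)$ with $\max(x)<\alpha_i$, so $\alpha_{i+1}$ is a supremum of fewer than $\gamma_{i+1}$ ordinals all below $\gamma_{i+1}$, hence $<\gamma_{i+1}$ by regularity. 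To secure $\theta_x<\gamma_{h(x)+1}$ one thins $q$ further, still compatibly with (ii), so that the guesses separate along every branch --- for instance arranging $\mu\mapsto a_{x^\frown\mu}$ to be injective on children and forcing some node of $q^{(x)}$ of bounded depth above $x$ to decide $\dot A\uh\gamma_{h(x)+1}$ to a value distinct from $A(x)$. Equivalently, and perhaps more cleanly, one can replace the coherent-guessing Claim by the incomparable-branching structure $\Phi(q,\mathcal A)$ of Lemma~\ref{gammaziji}, recover the whole generic sequence $g$ from $A$ inside $V[A]$, and simply take $\alpha_m=g(m)$; then clause (1) is immediate, and the content of the lemma becomes the assertion $g\in V[A]$, which is the diagonal $(*)$-iterated-ultrapower counterpart of the $\mathbb{P}_{\vec\mu}$ recovery of \S5.
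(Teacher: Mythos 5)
Your overall route is the one the paper intends: the paper in fact omits this proof entirely, saying only that it is ``similar to the corresponding lemma for $\mathbb{P}_{\vec{\mu}}$'', and your reduction (it suffices to produce $\langle\alpha_m\rangle\in V[A]$ satisfying clause (1), since the preceding lemma then yields $V[\langle\alpha_m\rangle]=V[G]\supseteq V[A]\supseteq V[\langle\alpha_m\rangle]$), your coherent-guessing claim, and your verification of the lower bound $\alpha_m\ge g(m)$ all transfer correctly. More importantly, you have put your finger on the one step that is genuinely not routine: in the $\mathbb{P}_{\vec{\mu}}$ case $\theta_x<\kappa$ is automatic because $A\subseteq\kappa$ and $A\ne A(x)$, whereas here $A(x)\subseteq\gamma_{h(x)+1}$ may well equal $A\cap\gamma_{h(x)+1}$ (a bounded, hence ground-model, subset of $\gamma$), in which case $\theta_x$ is governed by the first point of divergence above $\gamma_{h(x)+1}$ and can lie arbitrarily high below $\gamma$; the recursion then overshoots $\gamma_{m}$. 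So the adaptation is not verbatim, and the lemma needs an extra idea exactly where you say it does.

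Of your two patches, only the second is sound as stated. The first cannot work in the form you give it: injectivity of $\mu\mapsto a_{x^\frown\mu}$ together with coherence forces $A(x)$ to meet every interval between consecutive children, hence to be unbounded in $\gamma_{h(x)+1}$, which is impossible when $A\cap\gamma_{h(x)+1}$ happens to be bounded below the first child (nothing excludes this, since $A\cap\gamma_{h(x)+1}$ is an arbitrary ground-model set). Likewise one cannot ``force some node to decide $\dot A\uh\gamma_{h(x)+1}$ to a value distinct from $A(x)$'': the decided values are whatever they are, and to guarantee divergence at a bounded depth one needs the dichotomy of Claim~1 in Lemma~\ref{kappaziji} (either all large sets of pure extensions agree, in which case the name is forced into $V$, or a large set of them disagree), applied to successively longer initial segments $\dot A\uh\gamma_{h(x)+k}$ until splitting occurs. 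That dichotomy is precisely what the structure $\Phi(q,\mathcal A)$ of Lemma~\ref{gammaziji} packages, so your second patch --- recover $g$ itself from $A$ via $\Phi(q,\mathcal A)$ and set $\alpha_m=g(m)$ --- does prove the lemma; its only cost is that it imports the combinatorial Lemma~\ref{gammaziji} wholesale, so the three lemmas of this subsection no longer constitute an independent, iterated-ultrapower proof of minimality. To stay within the spirit of the section you should instead graft the splitting condition of $\Phi$ onto your coherent-guessing tree: at a bounded depth $k(x)$ above each node arrange that the decided values of $\dot A\uh\gamma_{h(x)+k(x)}$ at the splitting level are pairwise distinct, so that at most one agrees with the true $A$, which bounds the relevant $\theta$-ordinals below $\gamma_{h(x)+k(x)}$ and lets the recursion respect clause (1) (at the cost of indexing the $\alpha_m$'s along the $\tilde k,k$ levels rather than every level).
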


\section{Forcing notion $\mathbb{Q}_{\mathcal{D}}$ and application in $\alpha$-recursion theory}

\subsection{The model $\mathcal{V}_{\gamma}$}

In this section, we use the same large cardinal assumption as the position we define $\mathbb{P}_{\mathcal{D}}$: $\langle\gamma_n\mid{n<\omega}\rangle$
is a strictly increasing sequence of infinite cardinals such that $\forall{n>0}$, $\gamma_n$
has at least $\gamma_{n-1}$ many normal measures. $\mathcal{D}$ is a list of normal measures as above. Let $\gamma$ be the supremum of $\langle\gamma_n\mid{n<\omega}\rangle$.

\begin{proposition}
For each $n<\omega$, $\gamma$ is $\Sigma_n$-admissible.
\end{proposition}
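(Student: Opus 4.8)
The plan is to establish the stronger statement that $(L_\gamma,\in)$ is a model of $\ZFC$ (i.e.\ that $\gamma$ is \emph{worldly}); this yields $\Sigma_n$-admissibility for every $n$ at once, since $\ZFC$ proves the full collection scheme. Note $L_\gamma=\bigcup_{m<\omega}L_{\gamma_m}$, and each $\gamma_m$ is inaccessible (being measurable in $V$, it is inaccessible in $V$, hence in $L$), so each $L_{\gamma_m}\models\ZFC$. It therefore suffices to show that $\langle L_{\gamma_m}\colon m<\omega\rangle$ is an \emph{elementary} chain, for then its union $L_\gamma$ is elementarily equivalent to $L_{\gamma_0}$ by the elementary chain theorem, and so models $\ZFC$.

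The one nontrivial point is then: $L_{\gamma_m}\prec L_{\gamma_{m+1}}$ for each $m$, and here I would use that $\gamma_m$ is measurable, not merely inaccessible. Fix a normal measure $U$ on $\gamma_m$ and form the linear iterated ultrapower of $V$ by $U$ of length $\gamma_{m+1}$: let $\langle N_\alpha,\ i_{\alpha,\beta}\colon\alpha\le\beta\le\gamma_{m+1}\rangle$ be the resulting system, with $N_0=V$, $N_{\alpha+1}=\Ult(N_\alpha,i_{0,\alpha}(U))$, and direct limits at limit stages; it is well founded because $U$ is countably complete. Put $\kappa_\alpha=i_{0,\alpha}(\gamma_m)$, so that $\kappa_\alpha=\crit(i_{\alpha,\beta})$ whenever $\alpha<\beta$. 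The sequence $\langle\kappa_\alpha\rangle$ is strictly increasing and continuous; an induction using that $\gamma_{m+1}$ is a regular strong limit (it is inaccessible, being measurable) shows $\kappa_\alpha<\gamma_{m+1}$ for all $\alpha<\gamma_{m+1}$ --- at successor steps via $|\kappa_{\alpha+1}|\le 2^{\kappa_\alpha}<\gamma_{m+1}$, at limit steps via regularity of $\gamma_{m+1}$ --- and hence, since $\kappa_\alpha\ge\alpha$, the final stage gives $\kappa_{\gamma_{m+1}}=\sup_{\alpha<\gamma_{m+1}}\kappa_\alpha=\gamma_{m+1}$.

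Now fix $\alpha<\beta\le\gamma_{m+1}$. Each $N_\alpha$ is a transitive model of $\ZFC$ containing all ordinals, so $L^{N_\alpha}=L$ and $L_{\kappa_\alpha}^{N_\alpha}=L_{\kappa_\alpha}$. Since $L_{\kappa_\alpha}\subseteq V_{\kappa_\alpha}^{N_\alpha}$ and $\kappa_\alpha=\crit(i_{\alpha,\beta})$, the embedding $i_{\alpha,\beta}$ is the identity on $L_{\kappa_\alpha}$, while $i_{\alpha,\beta}(L_{\kappa_\alpha})=L_{i_{\alpha,\beta}(\kappa_\alpha)}=L_{\kappa_\beta}$. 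As $L_{\kappa_\alpha}\in N_\alpha$ and $i_{\alpha,\beta}$ is elementary, for every formula $\varphi$ and all $\bar a\in L_{\kappa_\alpha}$,
\[
L_{\kappa_\alpha}\models\varphi(\bar a)\ \Longleftrightarrow\ N_\alpha\models(L_{\kappa_\alpha}\models\varphi(\bar a))\ \Longleftrightarrow\ N_\beta\models(L_{\kappa_\beta}\models\varphi(\bar a))\ \Longleftrightarrow\ L_{\kappa_\beta}\models\varphi(\bar a),
\]
so $L_{\kappa_\alpha}\prec L_{\kappa_\beta}$. Taking $\alpha=0$ and $\beta=\gamma_{m+1}$ gives $L_{\gamma_m}\prec L_{\gamma_{m+1}}$, as required. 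Hence $\langle L_{\gamma_m}\colon m<\omega\rangle$ is an elementary chain, $L_\gamma=\bigcup_m L_{\gamma_m}\equiv L_{\gamma_0}\models\ZFC$, and in particular $(L_\gamma,\in)$ satisfies the $\Sigma_n$-collection scheme for every $n$.

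The part I expect to need the most care is the behaviour of the iterated ultrapower below $\gamma_{m+1}$: that it remains well founded, that the critical points stay below $\gamma_{m+1}$ throughout (this is exactly where inaccessibility of $\gamma_{m+1}$ is used), and that the final limit stage computes $\kappa_{\gamma_{m+1}}=\gamma_{m+1}$. These are standard properties of iterating a measure, but they are precisely what makes the argument go through: without measurability one only obtains that each $L_{\gamma_m}$ is a $\ZFC$-model, and a union of $\ZFC$-models along an increasing $\omega$-chain that is not elementary need not even be admissible (e.g.\ the supremum of the first $\omega$ inaccessible cardinals of $L$).
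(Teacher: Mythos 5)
Your main argument is correct, but it takes a genuinely different route from the paper. The paper's proof is a three-line appeal to sharps: since measurable cardinals exist, $0^{\sharp}$ exists, every uncountable cardinal of $V$ is a Silver indiscernible, the class of Silver indiscernibles is closed, so $\gamma=\sup_n\gamma_n$ is itself a Silver indiscernible and hence inaccessible in $L$, whence $L_\gamma\models\ZFC$ and $\gamma$ is $\Sigma_n$-admissible for every $n$. You reach the same intermediate target ($L_\gamma\models\ZFC$) by exhibiting $\langle L_{\gamma_m}\colon m<\omega\rangle$ as an elementary chain, proving $L_{\gamma_m}\prec L_{\gamma_{m+1}}$ by iterating a measure on $\gamma_m$ out to length $\gamma_{m+1}$ and using that the critical points are fixed below, are moved correctly above, and accumulate to $\gamma_{m+1}$. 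All the iterated-ultrapower facts you invoke (well-foundedness from countable completeness, $\kappa_\alpha=\crit(i_{\alpha,\beta})$, continuity, the cardinality bound $|\kappa_{\alpha+1}|\leq 2^{\kappa_\alpha}$, $\kappa_{\gamma_{m+1}}=\gamma_{m+1}$) are standard and used correctly, and the absoluteness step $L_{\kappa_\alpha}\models\varphi(\bar a)\leftrightarrow N_\alpha\models(L_{\kappa_\alpha}\models\varphi(\bar a))$ is fine. What your version buys is a sharp-free, local argument (only measurability of $\gamma_m$ and inaccessibility of $\gamma_{m+1}$ enter at stage $m$) and the slightly stronger conclusion $L_{\gamma_m}\prec L_\gamma$; what the paper's version buys is brevity, and it also fits the paper's theme, since the iteration you build at each step is essentially a linear special case of the $(*)$-iterations studied in \S 5.

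One caveat about your closing remark: the proposed counterexample is wrong. The supremum of the first $\omega$ inaccessible cardinals of $L$ is a limit of $L$-cardinals and hence an uncountable cardinal of $L$; since $L_\kappa\prec_{\Sigma_1}L$ for every uncountable $L$-cardinal $\kappa$ (equivalently, since ``every infinite cardinal is admissible'' relativizes to $L$), that ordinal \emph{is} admissible. It does fail to be a $\ZFC$-stage, so it still illustrates that mere worldliness of the $\gamma_m$ would not suffice for the full conclusion, but ``not even admissible'' is the wrong defect to attribute to it. This does not affect the validity of your proof of the proposition.
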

\begin{proof}
$\gamma=\lim_{n\rightarrow\omega}\gamma_n$. $\gamma$ is the limit of a sequence of infinite cardinals, so $\gamma$ is a singular cardinal whose cofinality is $\omega$.
$\gamma$ is a limit of Silver's indiscernibles, so
$\gamma$ is also a Silver's indiscernibles. Thus $\gamma$ is an inaccessible cardinal in $L$, so
\[L\models``L_{\gamma}\models\ZFC"\]So $L_{\gamma}\models\ZFC$.
So for each $n$, $\gamma$ is $\Sigma_n$-admissible.
\end{proof}

Let us define another reduction relation on subsets of $\gamma$ which will induce a new degree structure.
Every subset of $\gamma$ belongs to a
$\gamma$-degree, the foundation of defining $\gamma$-degree is the use of the model $L_{\gamma}$. Here we want to use another appropriate model to replace
$L_{\gamma}$. It is the model $\mathcal{V}_{\gamma}=(V_{\gamma},\in,f_1,f_2,f_3)$, where $f_i$'s are unary function symbols. $\mathcal{V}_{\gamma}$ satisfies:

\begin{itemize}
\item For $x\in\omega$, $f_1(x)=\gamma_x$; $x\notin\omega$, $f_1(x)=\emptyset$.
\item For $x$ is not an ordinal, $f_2(x)=\emptyset$. $f_2{\uh}\gamma\colon\gamma\rightarrow V_{\gamma}$ is a bijection, such that
\[\forall{x,y}\in{V_{\gamma}}(\rank(x)<\rank(y)\rightarrow f_2^{-1}(x)< f_2^{-1}(y)).\]
\item For $x$ is not an ordinal, $f_3(x)=\emptyset$. $f_3{\uh}\gamma\colon\gamma\rightarrow V_{\gamma}$ is an one-to-one, such that $f_3(0)=\mathcal{D}(0,0)$; $f_3{\uh}(\gamma_0{\setminus}\{0\})$ is an enumeration of $\{\mathcal{D}(1,\alpha)\colon\alpha<\gamma_0\}$; for $i<\omega$, $f_3{\uh}(\gamma_{i+1}{\setminus}\gamma_i)$ is an enumeration of $\{\mathcal{D}(i+2,\alpha)\colon\alpha<\gamma_{i+1}\}$.
\end{itemize}

Use $\vartriangleleft$ to denote the well-ordering of $V_{\gamma}$ induced by $f_2$, which has order type $\gamma$.
Now we use the model
$\mathcal{V}_{\gamma}$ to replace $(L_{\gamma},\in)$ in $\alpha$-recursion theory to define ``$V_{\gamma}$-degrees".

\subsection{$V_{\gamma}$-degrees}

If $K$ is an element of $V_{\gamma}$, we call $K$ a \emph{$V_{\gamma}$-finite set}. $V_{\gamma}$-finite set and $\gamma$-finite set have different properties:

\begin{proposition}
\begin{itemize}
\item[(1)] For $X\subset\gamma$, $X$ is
$V_{\gamma}$-finite iff $X$ is a bounded subset of $\gamma$.
\item[(2)] A subset of a $V_{\gamma}$-finite set is also a $V_{\gamma}$-finite set. Thus for any $T\subset{V_{\gamma}}$, any $V_{\gamma}$-finite set $x$, $T\cap{x}$ is a $V_{\gamma}$-finite set.
\end{itemize}
\end{proposition}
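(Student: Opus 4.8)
The plan is to unwind the definition: a set $K$ is $V_{\gamma}$-finite precisely when $K\in V_{\gamma}=\bigcup_{\beta<\gamma}V_{\beta}$, and the proposition is then a routine consequence of two standard facts about the cumulative hierarchy together with the observation that $\gamma$ is a limit ordinal. First I would record that $\gamma$, being the supremum of a strictly increasing $\omega$-sequence of infinite cardinals, is a limit ordinal; hence $\beta<\gamma$ implies $\beta+1<\gamma$, so $V_{\beta+1}\subseteq V_{\gamma}$ whenever $\beta<\gamma$. The two facts I would invoke are: (i) for any set $z$, $z\in V_{\gamma}$ iff $\rank(z)<\gamma$; and (ii) $\rank(z)=\sup\{\rank(y)+1\colon y\in z\}$, so that for a set of ordinals $X$ one has $\rank(X)=\sup\{\xi+1\colon\xi\in X\}$, and in particular $\rank(X)\leq\gamma$ with $\sup X\leq\rank(X)$.

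For part (1): if $X\subseteq\gamma$ is bounded, fix $\delta<\gamma$ with $X\subseteq\delta$; then $\rank(X)\leq\delta<\gamma$, so $X\in V_{\delta+1}\subseteq V_{\gamma}$, i.e.\ $X$ is $V_{\gamma}$-finite. Conversely, if $X\subseteq\gamma$ is $V_{\gamma}$-finite, then $\rank(X)<\gamma$ by (i), and $\sup X\leq\rank(X)<\gamma$ by (ii), so $X$ is a bounded subset of $\gamma$.

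For part (2): if $Y\subseteq x$ and $x\in V_{\gamma}$, then every $y\in Y$ lies in $x$, so by (ii) $\rank(y)+1\leq\rank(x)$ for each such $y$, whence $\rank(Y)\leq\rank(x)<\gamma$ and $Y\in V_{\gamma}$ by (i); thus $Y$ is $V_{\gamma}$-finite. The final clause is then immediate: for $T\subseteq V_{\gamma}$ and any $V_{\gamma}$-finite set $x$ we have $T\cap x\subseteq x$, so $T\cap x$ is $V_{\gamma}$-finite by what was just shown. (This is the analogue, in the $V_{\gamma}$-setting, of the assertion that every subset of $\gamma$ is regular, in contrast with the $\alpha$-recursion situation recalled in the introduction.)

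There is no genuine obstacle here: the only point requiring any care is the use of $\gamma$ being a limit ordinal in the step $V_{\delta+1}\subseteq V_{\gamma}$, and everything else is a direct application of the monotonicity of $\rank$ under $\in$ and $\subseteq$ together with the characterization $V_{\gamma}=\{z\colon\rank(z)<\gamma\}$.
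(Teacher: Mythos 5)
Your proof is correct and follows essentially the same route as the paper, which simply notes that $V_{\gamma}$ is the collection of all sets of rank less than $\gamma$ (so boundedness of $X\subseteq\gamma$ is equivalent to $\rank(X)<\gamma$) and that subsets of elements of $V_{\gamma}$ are again elements of $V_{\gamma}$. You have merely spelled out the rank computations that the paper leaves implicit.
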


\begin{proof}
(1) $V_{\gamma}$ is the collection of all sets of rank less than $\gamma$, so $X\in{V_{\gamma}}$ iff $X$ is bounded.
(2) A subset of element of $V_{\gamma}$ is also an element of $V_{\gamma}$. $T\cap{x}\subset{x}$, so $T\cap{x}\in{V_{\gamma}}$.
\end{proof}

This proposition tells us, using the model $\mathcal{V}_{\gamma}$ instead of $(L_{\gamma},\in)$, we need not consider the regularity of a set, since every subset of $\gamma$ is ``regular" in the sense of $\mathcal{V}_{\gamma}$.

\begin{definition}
We call $A\subset\gamma$ is a \emph{$V_{\gamma}$-RE set}, if there exists a
$\Sigma_1$-formula $\psi(x)$ in the language of $\mathcal{V}_{\gamma}$ with $V_{\gamma}$-finite sets as parameters, such that $A=\{a\in{{\gamma}}\colon {\mathcal{V}_{\gamma}}\models\psi(a)\}$.
\end{definition}

\begin{remark}
Note that $\mathcal{V}_{\gamma}$ has more function symbols than $(V_{\gamma},\in)$, so a $\Delta_0$-formula for $\mathcal{V}_{\gamma}$ has a little different meaning: $\exists x\in t\ (...)$, where $t$ is a term not just a variable symbol. $\Sigma_1$-formula is those formulas of the form
$\exists{x}\phi$, where $\phi$ is a $\Delta_0$-formula.
\end{remark}

For $n<\omega$, define a first-order model \[\mathcal{V}_{\gamma,n}=(V_{\gamma_n},\in,\gamma_0,...,\gamma_{n-1},f_2{\uh}\gamma_n,f_3{\uh}\gamma_{n-1}).\]
Because $f_2{\uh}\gamma_n\colon\gamma_n\rightarrow V_{\gamma_n}$, so this is well-defined. For each $n<\omega$, the model $\mathcal{V}_{\gamma,n}\in V_{\gamma}$ and for $\varphi$, a formula in the language of $\mathcal{V}_{\gamma,n}$, ``$\mathcal{V}_{\gamma,n}\vDash\varphi$" is equivalent to a $\Delta_0$-formula in $\mathcal{V}_{\gamma}$. Moreover, we know if $\psi$ is a $\Sigma_1$-formula in the language of $\mathcal{V}_{\gamma}$, then $\mathcal{V}_{\gamma}\vDash\psi$ is equivalent to a $\Sigma_1$-formula ``there is $n$, $\psi$ is true in $\mathcal{V}_{\gamma,n}$." So we have the following which is similar to $\alpha$-recursion theory:

\begin{proposition}\label{universalreV}
There exists a $V_{\gamma}$-RE set $W$ such that
\begin{itemize}
\item[(i)] For each
$e<\gamma$, $W_{e}=\{x\colon J(e,x)\in{W}\}$ is a
$V_{\gamma}$-RE set.
\item[(ii)] If $B$ is a $V_{\gamma}$-RE set, then there is $e<\gamma$ such that
$B=W_{e}=\{x\colon J(e,x)\in{W}\}$.
\end{itemize}
\end{proposition}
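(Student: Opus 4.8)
Proposition \ref{universalreV} is the familiar construction of a universal $\Sigma_1$-set, carried out over the model $\mathcal{V}_{\gamma}$ rather than $(L_\alpha,\in)$. The plan is to mimic the standard $\alpha$-recursion-theoretic proof (as in \cite{11}), the only new point being that we must exploit the decomposition $\mathcal{V}_{\gamma}\vDash\psi$ iff $(\exists n)\ \mathcal{V}_{\gamma,n}\vDash\psi$ noted just before the statement, so that the $\Sigma_1$-satisfaction predicate for $\mathcal{V}_{\gamma}$ is itself $\Sigma_1$ over $\mathcal{V}_{\gamma}$.

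First I would set up a $\Sigma_1$-definable coding of formulas: using the bijection $K\colon\gamma\to L_\gamma$ (or directly the well-ordering $\vartriangleleft$ on $V_\gamma$ given by $f_2$) assign to each pair $(\ulcorner\psi\urcorner,p)$ consisting of a (code of a) $\Sigma_1$-formula $\psi(x)$ in the language of $\mathcal{V}_{\gamma}$ and a $V_\gamma$-finite parameter $p$ an index $e<\gamma$, in a way that is $\Delta_0$-decodable inside $\mathcal{V}_{\gamma}$; since there are $\gamma$-many such pairs and $V_\gamma$ is well-ordered in type $\gamma$, such a coding exists and $e\mapsto(\ulcorner\psi_e\urcorner,p_e)$ is $\Sigma_1$ (indeed $\Delta_0$) definable. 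Then define
\[
W=\{J(e,x)\colon e<\gamma,\ x<\gamma,\ \mathcal{V}_{\gamma}\vDash\psi_e(x;p_e)\}.
\]
Item (ii) is then immediate: any $V_\gamma$-RE set $B$ is $\{x\colon\mathcal{V}_{\gamma}\vDash\psi(x)\}$ for some $\Sigma_1$ $\psi$ with parameter $p$, and taking $e$ with $(\ulcorner\psi_e\urcorner,p_e)=(\ulcorner\psi\urcorner,p)$ gives $B=W_e=\{x\colon J(e,x)\in W\}$. Item (i) is likewise automatic once $W$ itself is shown $V_\gamma$-RE, since $W_e=\{x\colon J(e,x)\in W\}$ is defined by a $\Sigma_1$-formula with $e$ as (a $V_\gamma$-finite) parameter.

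The one step requiring care — and the main obstacle — is verifying that $W$ is genuinely $V_\gamma$-RE, i.e.\ that the relation ``$\mathcal{V}_{\gamma}\vDash\psi_e(x;p_e)$'' is $\Sigma_1$ over $\mathcal{V}_{\gamma}$ uniformly in $e,x$. Here one uses the observation preceding the proposition: a $\Sigma_1$-formula $\psi\equiv\exists y\,\varphi$ holds in $\mathcal{V}_{\gamma}$ iff it already holds in some bounded stage $\mathcal{V}_{\gamma,n}$, and $\mathcal{V}_{\gamma,n}\in V_\gamma$ with ``$\mathcal{V}_{\gamma,n}\vDash\theta$'' a $\Delta_0$-relation of $n$ and the code of $\theta$ (ordinary finitary Tarski satisfaction in a set-sized structure, all of whose parameters $\gamma_0,\dots,\gamma_{n-1},f_2\uh\gamma_n,f_3\uh\gamma_{n-1}$ lie in $V_\gamma$). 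Hence
\[
\mathcal{V}_{\gamma}\vDash\psi_e(x;p_e)\quad\Longleftrightarrow\quad\exists n<\omega\ \bigl(\mathcal{V}_{\gamma,n}\vDash\psi_e(x;p_e)\bigr),
\]
the right-hand side being $\exists n\,(\Delta_0)$, hence $\Sigma_1$, over $\mathcal{V}_{\gamma}$; and the passage $e\mapsto(\ulcorner\psi_e\urcorner,p_e)$ from the coding is $\Delta_0$. So $W=\{z\colon\exists e,x,n\ (z=J(e,x)\ \&\ \mathcal{V}_{\gamma,n}\vDash\psi_e(x;p_e))\}$ is defined by a $\Sigma_1$-formula of $\mathcal{V}_{\gamma}$, completing the proof. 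I would also remark that the extra function symbols $f_1,f_2,f_3$ cause no difficulty: they are total functions in $V_\gamma$, each restriction to a bounded initial segment lies in $V_\gamma$, so atomic formulas involving the terms $f_i(\cdot)$ are $\Delta_0$ and the reflection to the $\mathcal{V}_{\gamma,n}$ goes through unchanged.
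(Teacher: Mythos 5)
Your proposal is correct and is exactly the argument the paper intends: the paper gives no explicit proof of Proposition \ref{universalreV}, but the paragraph immediately preceding it (the reduction of $\Sigma_1$-satisfaction over $\mathcal{V}_{\gamma}$ to the $\Delta_0$-expressible satisfaction in the set-sized structures $\mathcal{V}_{\gamma,n}$) is precisely the key point you isolate and elaborate. Your coding of formula--parameter pairs by ordinals below $\gamma$ via $f_2$ and the verification that $W$ is $\Sigma_1$ over $\mathcal{V}_{\gamma}$ supply the details the paper leaves implicit.
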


\begin{proposition}
Suppose $A\subset\gamma$ and $A$ is $\gamma$-RE, then $A$ is $V_{\gamma}$-RE.
\end{proposition}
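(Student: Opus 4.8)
The plan is to pass through the constructible hierarchy: since $L_\gamma=\bigcup_{\alpha<\gamma}L_\alpha$ and every $L_\alpha$ with $\alpha<\gamma$ is an element of $V_\gamma$, a $\Sigma_1$ fact over the (proper-class-sized) structure $(L_\gamma,\in)$ reflects to a $\Delta_0$ fact over a set-sized initial segment $L_\alpha\in V_\gamma$, and that reflected fact is a $\Sigma_1$ assertion of $\mathcal{V}_\gamma$.

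Concretely, suppose $A$ is $\gamma$-RE, so there are a $\Delta_0$ formula $\phi_0(x,y,z)$ in the language $\{\in\}$ and a parameter $p\in L_\gamma$ with $A=\{a<\gamma:(L_\gamma,\in)\models\exists y\,\phi_0(a,y,p)\}$ (if one adopts the parameter-free definition of $\gamma$-RE, simply drop $p$; nothing changes). First I would record the standard fact that the map $\alpha\mapsto L_\alpha$ is $\Sigma_1$-definable over $(V_\gamma,\in)$: the statement ``$z=L_\alpha$'' is expressed by ``$\exists h\,(h$ is a function with domain $\alpha+1$, $h(0)=\emptyset$, $h(\beta+1)=\mathrm{Def}(h(\beta))$ for $\beta<\alpha$, $h(\lambda)=\bigcup_{\beta<\lambda}h(\beta)$ for limit $\lambda\le\alpha$, and $h(\alpha)=z)$'', whose matrix is $\Delta_0$ in $h$ (the satisfaction predicate needed to evaluate $\mathrm{Def}$ on a set is absolute, hence $\Delta_0$). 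Because $\gamma$ is inaccessible — indeed $L_\gamma\models\ZFC$ — this recursion is executed faithfully inside $V_\gamma$ and returns the true $L_\alpha$ for each $\alpha<\gamma$, so that $(L)^{V_\gamma}=L_\gamma$. Since $\{\in\}\subseteq\{\in,f_1,f_2,f_3\}$, ``$z=L_\alpha$'' is a fortiori $\Sigma_1$ over $\mathcal{V}_\gamma$.

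Then I would verify the equivalence
\[
a\in A\iff\exists\alpha<\gamma\;\exists z\;\bigl(z=L_\alpha\wedge a\in z\wedge p\in z\wedge\exists y\in z\,\phi_0(a,y,p)\bigr).
\]
For $\Rightarrow$: a witness $y$ for $\exists y\,\phi_0(a,y,p)$ in $L_\gamma$ lies in some $L_\alpha$ with $\alpha<\gamma$ which also contains $a$ and $p$; as $L_\alpha$ is transitive and $\phi_0$ is $\Delta_0$, $\phi_0(a,y,p)$ holds in $(L_\alpha,\in)$ as well. For $\Leftarrow$: $\Delta_0$-absoluteness between the transitive sets $L_\alpha\subseteq L_\gamma$ returns the witness to $L_\gamma$. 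Folding the $\Sigma_1$ definition of ``$z=L_\alpha$'' (its auxiliary $\exists h$) into the leading block of existential quantifiers, the right-hand side is a $\Sigma_1$ formula in the language of $\mathcal{V}_\gamma$ with the single parameter $p$, and $p\in L_\gamma\subseteq V_\gamma$ is $V_\gamma$-finite. Hence $A$ is $V_\gamma$-RE. (Equivalently, using $\cf(\gamma)=\omega$ and $f_1$, one may restrict $\alpha$ to the cofinal sequence $\langle\gamma_n:n<\omega\rangle$ and phrase everything through the $\mathcal{V}_{\gamma,n}$ and the $L_{\gamma_n}$, matching the stratification used just before Proposition~\ref{universalreV}.)

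The only step requiring genuine care — the remainder being routine absoluteness — is the first one: checking that the constructible hierarchy is $\Sigma_1$-definable over $\mathcal{V}_\gamma$ under this paper's slightly nonstandard notion of $\Delta_0$-formula for structures with function symbols (bounded quantifiers over terms $f_i(x)$ are permitted, per the Remark on $\Delta_0$-formulas for $\mathcal{V}_\gamma$), and that the set-satisfaction predicate implicit in $\mathrm{Def}$ is genuinely $\Delta_0$. Both are standard, so the proof is short.
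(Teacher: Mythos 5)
Your proof is correct and takes essentially the same route as the paper, which disposes of the proposition by citing the $\Sigma_1$-definability of $L_\gamma$ in $\mathcal{V}_\gamma$; you have simply filled in the standard reflection-and-$\Delta_0$-absoluteness details. One small slip: $\gamma$ itself is not inaccessible in $V$ (it has cofinality $\omega$) --- what you actually need, and what does hold, is that $V_\gamma=\bigcup_{n<\omega}V_{\gamma_n}$ with each $V_{\gamma_n}\models\ZFC$, so the recursion computing $L_\alpha$ for $\alpha<\gamma$ is executed faithfully inside $V_\gamma$ and its witnessing function lies in some $V_{\gamma_n}$.
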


\begin{proof}
This follows from the fact that $L_{\gamma}$ is $\Sigma_1$-definable in $\mathcal{V}_{\gamma}$.
\end{proof}

Fix a $V_{\gamma}$-RE set $W$ which has the property stated in Lemma \ref{universalreV}.
For each $e<\gamma$, \[W_{e}=\{x\colon J(e,x)\in{W}\}\]
Now define the reduction relation between subsets of $\gamma$. As in notations in Section 1, given $P,X\subset\gamma$, $P^{X}=\{a\in\gamma\colon\exists x\in X\ J(a,x)\in P\}$. For $A\subset\gamma$, define \[N(A)=\{J(x,y)\colon f_2(x)\subset A\ \&\ f_2(y)\subset\gamma{\setminus}A\}.\] $P^{X}$ and $N(A)$ are both subsets of $\gamma$.

\begin{notation}
$A,B\subset\gamma$, $e<\gamma$, we say ``\emph{$B$ can enumerate $A$ using $W_e$}" iff $A=W_e^{B}$. We say ``\emph{$B$ can enumerate $A$}" iff there is $e<\gamma$, $A=W_e^{B}$. A pair $(K,L)$ such that $K,L$ are $V_{\gamma}$-finite and $K\subset A\ \&\ L\subset\gamma{\setminus}A$, is called a \emph{$V_{\gamma}$-finite information} for $A$.
\end{notation}

\begin{proposition}
\begin{itemize}
\item[(1)] $A$ can enumerate $A$.
\item[(2)] If $B$ can enumerated $A$ and $C$ can enumerated $B$, then $C$ can enumerated $A$.
\end{itemize}
\end{proposition}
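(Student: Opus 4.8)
The plan is to obtain both clauses by elementary manipulations of $\Sigma_{1}$-definitions over $\mathcal{V}_{\gamma}$, mirroring the corresponding basic facts of $\alpha$-recursion theory. First I would record one reformulation: ``$B$ can enumerate $A$'' holds iff there is \emph{some} $V_{\gamma}$-RE set $P$ with $A=P^{B}$. Indeed, if $A=W_{e}^{B}$ then $P=W_{e}$ works, and $W_{e}$ is $V_{\gamma}$-RE by Proposition~\ref{universalreV}(i); conversely, given any $V_{\gamma}$-RE set $P$ with $A=P^{B}$, Proposition~\ref{universalreV}(ii) furnishes an $e<\gamma$ with $W_{e}=P$, so $A=W_{e}^{B}$. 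Alongside this I would recall the usual closure of $\Sigma_{1}$-definability: the class of $V_{\gamma}$-RE sets is closed under finite unions and intersections and under existential quantification, and the pairing map $J$ together with its projections $u\mapsto(u)_{0}$ and $u\mapsto(u)_{1}$ is $\Sigma_{1}$ (in fact $\Delta_{1}$), since $J$ is primitive recursive.

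For (1) I would exhibit an explicit ``identity'' index. Put $P_{\mathrm{id}}=\{J(a,a)\colon a<\gamma\}$; this is $V_{\gamma}$-RE since $u\in P_{\mathrm{id}}\iff\exists a\,(u=J(a,a))$. For any $a<\gamma$ we have $a\in P_{\mathrm{id}}^{A}\iff\exists x\in A\,(J(a,x)\in P_{\mathrm{id}})\iff\exists x\in A\,(x=a)\iff a\in A$, so $P_{\mathrm{id}}^{A}=A$, and the reformulation gives an $e<\gamma$ with $A=W_{e}^{A}$.

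For (2), applying the reformulation to the two hypotheses, fix $V_{\gamma}$-RE sets $P,Q$ with $A=P^{B}$ and $B=Q^{C}$. I would then set
\[R=\{u<\gamma\colon\exists x<\gamma\,(J((u)_{0},x)\in P\ \wedge\ J(x,(u)_{1})\in Q)\},\]
which is $V_{\gamma}$-RE by the closure properties above, and check that
\begin{align*}
a\in R^{C}
&\iff \exists z\in C\,(J(a,z)\in R)
\iff \exists z\in C\,\exists x\,(J(a,x)\in P\ \wedge\ J(x,z)\in Q)\\
&\iff \exists x\,\bigl(J(a,x)\in P\ \wedge\ x\in Q^{C}\bigr)
\iff \exists x\in B\,(J(a,x)\in P)
\iff a\in P^{B}=A.
\end{align*}
Thus $A=R^{C}$, and the reformulation yields an $f<\gamma$ with $A=W_{f}^{C}$, i.e. $C$ can enumerate $A$.

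I do not anticipate a genuine obstacle; the only step demanding care is the ``closure properties'' bookkeeping — namely verifying that moving to the expanded structure $\mathcal{V}_{\gamma}$ (with the extra function symbols $f_{1},f_{2},f_{3}$ and the slightly generalized notion of $\Delta_{0}$-formula noted in the Remark) leaves $\Sigma_{1}$-absoluteness of $J$ and its projections intact and preserves closure of $\Sigma_{1}$-definability under $\wedge$, $\vee$, and existential quantification. Granting that, (1) and (2) follow immediately as above.
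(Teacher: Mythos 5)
Your proposal is correct and matches the paper's own argument essentially verbatim: part (1) uses the same diagonal set $\{J(a,a)\colon a<\gamma\}$, and part (2) composes the two reductions via the same $\Sigma_{1}$-defined set $\{k\colon\exists t\,(J(t,(k)_1)\in Q\ \&\ J((k)_0,t)\in P)\}$. No further comment is needed.
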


\begin{proof}
(1) Because $\{J(x,x)\colon x<\gamma\}$ is $V_{\gamma}$-RE set.

(2) $W,Y$ are $V_{\gamma}$-RE sets such that $A=W^{B}$ and $B=Y^{C}$.
\begin{equation*}
\begin{split}
x\in A\ &\leftrightarrow\ \exists t\ (t\in B\ \&\ J(x,t)\in W)\\
        &\leftrightarrow\ \exists t\ (\exists s\ (s\in C\ \&\ J(t,s)\in Y)\ \&\ J(x,t)\in W)\\
        &\leftrightarrow\ \exists s\ (s\in C\ \&\ \exists t(J(t,s)\in Y\ \&\ J(x,t)\in W))\\
        &\leftrightarrow\ \exists s\ (s\in C\ \&\ J(x,s)\in P).
\end{split}
\end{equation*}
where $P$ is a $V_{\gamma}$-RE set defined by $k\in P\ \leftrightarrow\ \exists t(J(t,(k)_1)\in Y\ \&\ J((k)_0,t)\in W)$.
\end{proof}

\begin{definition}\label{def:vgammadegree}
For $A,B\subset\gamma$, define \emph{$A\leq_{V_{\gamma}}B$} iff there is a $V_{\gamma}$-RE set $P$ such that $N(A)=P^{N(B)}$. This is just $N(B)$ can enumerate $N(A)$.
\end{definition}

From the proposition above, the relation $\leq_{V_{\gamma}}$ is reflexive and transitive.
For $A,B\subset\gamma$, $A\equiv_{V_{\gamma}}B$ iff
$A\leq_{V_{\gamma}}B$ and $B\leq_{V_{\gamma}}A$.
$\equiv_{V_{\gamma}}$ is an equivalence relation, every equivalence class is called a
\emph{$V_{\gamma}$-degree}. If $A\equiv_{V_{\gamma}}\emptyset$, then we say $A$ is
\emph{$V_{\gamma}$-recursive set}. The $V_{\gamma}$-degree consists of $V_{\gamma}$-recursive set is denoted by
$\mathbf{0}$. If $A$ is a $V_{\gamma}$-finite set, then $A$ is a
$V_{\gamma}$-recursive set. $A>_{V_{\gamma}}B$ iff
$A\geq_{V_{\gamma}}B$ and $\neg(B\geq_{V_{\gamma}}A)$. The following result is natural from our assumption:

\begin{lemma}\label{xuxing}
For each $B\subset\gamma$, there is $X=\{x_i\mid{0<i<\omega}\}$ such that
$\forall{0<i<\omega}(x_i<\gamma_i)$, and $X\equiv_{V_{\gamma}}B$.
\end{lemma}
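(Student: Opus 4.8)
The plan is to redistribute the information in $B$ across the coordinates $\gamma_i$: I use the bijection $f_2$ to compress each bounded initial segment $B\cap\gamma_{i-1}$ into a single ordinal, and the pairing $J$ to tag it with its index $i$, so that the decoding is uniform in both directions.

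Concretely, for $0<i<\omega$ I would set
\[x_i=J\bigl(i,\,f_2^{-1}(B\cap\gamma_{i-1})\bigr)\qquad(\text{reading }\gamma_{i-1}:=\gamma_0\text{ when }i=1),\qquad X=\{x_i\colon 0<i<\omega\}.\]
The size requirement $x_i<\gamma_i$ holds because $B\cap\gamma_{i-1}\subseteq\gamma_{i-1}$ has rank $\leq\gamma_{i-1}$, so $B\cap\gamma_{i-1}\in V_{\gamma_i}$; since $f_2$ respects the rank ordering, $f_2^{-1}[V_{\gamma_i}]$ is an initial segment of $\gamma$ of order type $|V_{\gamma_i}|=\gamma_i$ (using that $\gamma_i$ is measurable, hence inaccessible), so $f_2^{-1}(B\cap\gamma_{i-1})<\gamma_i$; and as $i<\omega\leq\gamma_i$ with $\gamma_i$ closed under $J$, we get $x_i<\gamma_i$. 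Note also that $(x_i)_0=i$, that $f_2((x_i)_1)=B\cap\gamma_{i-1}$, and that $i\mapsto x_i$ is injective.

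Next I would prove $B\equiv_{V_\gamma}X$. For $B\leq_{V_\gamma}X$ the reduction is clean: given an instance $J(u,v)\in N(B)$ (so $f_2(u)\subseteq B$, $f_2(v)\subseteq\gamma\setminus B$), pick $k$ with $f_2(u)\cup f_2(v)\subseteq\gamma_k$; the singleton $\{x_{k+1}\}$ is $V_\gamma$-finite, so the code $J(f_2^{-1}(\{x_{k+1}\}),f_2^{-1}(\emptyset))$ lies in $N(X)$, and from $x_{k+1}$ one reads off $B\cap\gamma_k=f_2((x_{k+1})_1)$, which decides membership in $B$ of every element of $f_2(u)\cup f_2(v)$; this gives a $V_\gamma$-RE $Q$ with $N(B)=Q^{N(X)}$. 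For $X\leq_{V_\gamma}B$, the key point is that for an ordinal $m$ with $(m)_0=i\in[1,\omega)$ one has $m\in X$ iff $f_2((m)_1)=B\cap\gamma_{i-1}$, and the set $f_2((m)_1)$ is $V_\gamma$-finite and, when $m<\gamma_m$, of rank below $\gamma_m$, so it can equal $B\cap\gamma_{i-1}$ only if $B$ restricted to $\gamma_{i-1}$ coincides with it and $B$ has nothing in $[\gamma_m,\gamma_{i-1})$; hence a single $V_\gamma$-finite query $(B\cap\gamma_{i-1},\gamma_{i-1}\setminus B)$ decides $m\in X$. To package these into one query for all elements of a given $f_2(u)\cup f_2(v)$, I would split on whether $B$ is bounded in $\gamma$: if it is, then $X$ is bounded, hence $V_\gamma$-finite, hence $V_\gamma$-recursive, and $N(X)$ is outright $\Delta_0$-definable; if $B$ is unbounded, then by the rank estimate every $V_\gamma$-finite subset of $X$ uses only boundedly many coordinates $i$, and any disagreement witnessing $m\notin X$ already appears below the least element of $B$ above $\gamma_k$, so one query at a suitable finite level $\gamma_{k'}$ certifies the whole instance. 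This produces a $V_\gamma$-RE $P$ with $N(X)=P^{N(B)}$.

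The step I expect to be the main obstacle is exactly this packaging in the direction $X\leq_{V_\gamma}B$: one must ensure that the a priori $V_\gamma$-many oracle queries to $N(B)$ needed to certify a single instance $J(u,v)\in N(X)$ collapse to one, which fails for a naive coding and is what the rank estimate on $f_2((m)_1)$ together with the bounded/unbounded case split is designed to repair; once that is in place, the passage from these $\Delta_0$-in-$N(B)$ decision procedures to an actual $V_\gamma$-RE reducing set is routine bookkeeping with the universal $V_\gamma$-RE set of Proposition \ref{universalreV}.
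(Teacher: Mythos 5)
Your proposal is correct and follows essentially the same route as the paper: both code the initial segments $B\cap\gamma_{i-1}$ by the single ordinals $f_2^{-1}(B\cap\gamma_{i-1})<\gamma_i$ and then trade $V_\gamma$-finite information back and forth. Your only real departure is tagging each code with its index via $J(i,\cdot)$, which is a harmless refinement that makes the decoding of an element of $X$ unambiguous (and makes explicit the bounded/unbounded case analysis for collapsing many oracle queries into one) — points the paper's one-line argument leaves implicit.
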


\begin{proof}
If $n<\omega$, then
$B\cap\gamma_n\subset\gamma_n$, so $B\cap\gamma_n\in{V_{\gamma_{n+1}}}$,
so $f_2^{-1}(B\cap\gamma_n)<\gamma_{n+1}$. If $i>0$, define
$x_i=f_2^{-1}(B\cap\gamma_{i-1})$.
$B\geq_{V_{\gamma}}X$ is obvious. Check
$X\geq_{V_{\gamma}}B$. Use $x_i$ to enumerate all ordered pairs consists of all subsets of $B\cap\gamma_{i-1}$
and all subsets of $\gamma_{i-1}{\setminus}B$.
\end{proof}

Suppose $A\subset\gamma$. If $A$ is a $V_{\gamma}$-recursive set, then $A$ and $\gamma{\setminus}A$ are both $V_{\gamma}$-RE set. But converse statement is not true. $A$ is
$V_{\gamma}$-recursive means $N(A)$ is $V_{\gamma}$-RE. To enumerate all $V_{\gamma}$-finite information of $A$, we should justify whether $x\in{K}$ for every $V_{\gamma}$-finite set $K$. So this cannot be completed in $V_{\gamma}$-finite time. The next lemma tells us, the structure of  $V_{\gamma}$-degrees is not trivial:

\begin{lemma}\label{bukejisuan}
There exists $A\subset\gamma$ such that $A>_{V_{\gamma}}\emptyset$.
\end{lemma}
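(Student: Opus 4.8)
The plan is to prove this by a cardinality argument, the higher analogue of the classical observation that there are more sets of integers than recursive ones. First I would dispose of one half of the degree inequality once and for all: for every $A\subset\gamma$ one has $\emptyset\leq_{V_{\gamma}}A$. Indeed $\emptyset$ is a $V_{\gamma}$-finite set, hence $V_{\gamma}$-recursive, so $N(\emptyset)$ is a $V_{\gamma}$-RE set; and a $V_{\gamma}$-RE set is enumerable from any oracle (take the $V_{\gamma}$-RE set $P$ defined by $k\in P\leftrightarrow (k)_0\in N(\emptyset)$, which ignores its second coordinate, and note $P^{N(A)}=N(\emptyset)$ because $N(A)$ is always nonempty, e.g. $J(f_2^{-1}(\emptyset),f_2^{-1}(\emptyset))\in N(A)$). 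Combined with transitivity of $\leq_{V_{\gamma}}$, this shows that $A\leq_{V_{\gamma}}\emptyset$ holds precisely when $N(A)$ is $V_{\gamma}$-RE, i.e. precisely when $A$ is $V_{\gamma}$-recursive. So it suffices to produce a single $A\subset\gamma$ that is not $V_{\gamma}$-recursive: such an $A$ satisfies $A\not\equiv_{V_{\gamma}}\emptyset$ while $\emptyset\leq_{V_{\gamma}}A$, hence $A>_{V_{\gamma}}\emptyset$.

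To produce such an $A$ I would count. By Proposition \ref{universalreV} there is one universal $V_{\gamma}$-RE set $W$ with the property that every $V_{\gamma}$-RE subset of $\gamma$ equals $W_{e}$ for some $e<\gamma$; consequently there are at most $\gamma$ many $V_{\gamma}$-RE sets. On the other hand the map $A\mapsto N(A)$ from $\mathcal{P}(\gamma)$ to subsets of $\gamma$ is injective, since $A$ is recovered from $N(A)$ via the equivalence $a\in A\leftrightarrow J(f_2^{-1}(\{a\}),f_2^{-1}(\emptyset))\in N(A)$. If every $A\subset\gamma$ were $V_{\gamma}$-recursive, then $A\mapsto N(A)$ would be an injection of $\mathcal{P}(\gamma)$ into the family of $V_{\gamma}$-RE sets, whence $2^{\gamma}\leq\gamma$, contradicting Cantor's theorem. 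Therefore some $A\subset\gamma$ fails to be $V_{\gamma}$-recursive, and by the previous paragraph this $A$ witnesses $A>_{V_{\gamma}}\emptyset$.

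There is really no hard step here: the only points needing a line of care are that $N(\emptyset)$ is $V_{\gamma}$-RE (immediate, since $\emptyset$ is $V_{\gamma}$-finite hence $V_{\gamma}$-recursive) and that $A\mapsto N(A)$ is injective (immediate from the definitions of $N$ and $f_2$ and the fact that $f_2{\uh}\gamma$ is a bijection onto $V_{\gamma}$). If one wanted an explicit, effective witness instead of a counting argument, one could build $A$ by a straightforward diagonalization against $W$, deciding at stage $e<\gamma$ enough membership facts about $W_{e}$ to force some pair $J(x,y)$ into the symmetric difference of $N(A)$ and $W_{e}$ while keeping the commitments made about $A$ bounded (hence $V_{\gamma}$-finite); but this needs the $V_{\gamma}$-jump and is strictly more work than the cardinality argument, which already suffices for the lemma.
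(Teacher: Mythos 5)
Your proof is correct, but it takes a genuinely different route from the paper's. The paper diagonalizes: it fixes an enumeration $\langle f_e\colon e<\gamma\rangle$ of all $\Sigma_1$-definable partial functions in $\mathcal{V}_{\gamma}$ and takes $A=\{x<\gamma\colon f_x(x)\mbox{ is defined}\}$, the analogue of the halting problem; if $A$ were $V_{\gamma}$-recursive, the total function $F$ with $F(x)=f_x(x)+1$ when $f_x(x)$ is defined and $F(x)=0$ otherwise would be $\Sigma_1$-definable yet differ from every $f_e$, a contradiction. That argument buys an explicit witness which is moreover itself $V_{\gamma}$-RE (the natural jump of $\mathbf{0}$), at the cost of justifying the enumeration $\langle f_e\rangle$ and the $\Sigma_1$-definability of $F$. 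Your counting argument is non-constructive but more elementary: it needs only clause (ii) of Proposition \ref{universalreV} (every $V_{\gamma}$-RE set is some $W_e$ with $e<\gamma$, hence there are at most $\gamma$ of them), the injectivity of $A\mapsto N(A)$ via $a\in A\leftrightarrow J(f_2^{-1}(\{a\}),f_2^{-1}(\emptyset))\in N(A)$, and Cantor's theorem. The two preliminary observations you make --- that $\emptyset\leq_{V_{\gamma}}A$ for every $A$ (so that failure of $V_{\gamma}$-recursiveness upgrades $A\not\equiv_{V_{\gamma}}\emptyset$ to $A>_{V_{\gamma}}\emptyset$), and that $A\leq_{V_{\gamma}}\emptyset$ holds exactly when $N(A)$ is $V_{\gamma}$-RE --- are precisely the facts the paper asserts without proof in the paragraph preceding the lemma (``$A$ is $V_{\gamma}$-recursive means $N(A)$ is $V_{\gamma}$-RE''), and the paper's own conclusion ``So $A>_{V_{\gamma}}\emptyset$'' tacitly uses the first of them as well; you are right to check both. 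The only thing your version gives up is the explicit, effectively presented witness, which is not needed for the statement of the lemma.
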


\begin{proof}
Fix $\langle{f_e}\mid{e<\gamma}\rangle$ is the enumeration of all $\Sigma_1$-definable partial function (from $\gamma$ to $\gamma$) in $\mathcal{V}_{\gamma}$.
Let
$A=\{x<\gamma\mid{f_{x}(x)}$ has definition$\}$. Suppose $A$ is
$V_{\gamma}$-recursive. Define the function $F$:
\[
F(x)=\left\{
\begin{array}{ll}
f_{x}(x)+1  &\mbox{ $f_{x}(x)$ has definition}.\\
0         &\mbox{ $f_{x}(x)$ no definition}.
\end{array}
\right.
\]
Since $A$ is $V_{\gamma}$-recursive, So $F$ is $\Sigma_1$-definable.
But for each $e<\gamma$, $F\neq{f_{e}}$. A contradiction.
So $A>_{V_{\gamma}}\emptyset$.
\end{proof}

The effect of $\Sigma_1$-admissibility in $\alpha$-recursion theory is to define a function recursively, see \cite{16}, in particular, $\Sigma_1$-boundedness is important in $\alpha$-recursion theory. In our environment, $\Sigma_1$-boundedness fails, but given a function $f$ from $\omega$ to $\omega$, we can still construct a function bounded by $\gamma_{f(n)}$ at the stage $n$. The proof is clear:

\begin{proposition}\label{induction2}
$f\colon\omega\rightarrow\omega$ is a recursive function. $I\colon V_{\gamma}\rightarrow V_{\gamma}$ such that if $\sigma$ is a sequence with length less than $\gamma_n$, then $I(\sigma)\in V_{\gamma_{f(n)}}$. Then there is unique function $g$ such that $g(\alpha)=I(g{\uh}\alpha)$ and the graph of $g$ is $\Sigma_1$-definable in $\mathcal{V}_{\gamma}$.
\end{proposition}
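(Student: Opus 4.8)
\emph{Proof proposal.} The plan is to build $g$ as the union of a sequence of \emph{bounded} approximations $\langle h_n\colon n<\omega\rangle$, each produced by the ordinary transfinite recursion theorem inside a genuine model of $\ZFC$, and then to read off $\Sigma_1$-definability of the graph from the localization of $\mathcal{V}_\gamma$ to the substructures $\mathcal{V}_{\gamma,m}$ recorded in the remark just before the proposition. Throughout I take for granted (as holds in all our applications, and as is needed for the conclusion to be meaningful) that $I$ is $\Sigma_1$-definable over $\mathcal{V}_\gamma$ with $V_\gamma$-finite parameters; observe that the stated range bound then also forces, for each $m<\omega$, the restriction of $I$ to sequences lying in $V_{\gamma_m}$ to be a single element of $V_\gamma$.

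First, uniqueness and coherence. If $g,g'$ both satisfy $g(\alpha)=I(g\uh\alpha)$, $g'(\alpha)=I(g'\uh\alpha)$ for all $\alpha<\gamma$, a routine transfinite induction on $\alpha$ gives $g\uh\alpha=g'\uh\alpha$ for every $\alpha$, hence $g=g'$; the same induction truncated at $\gamma_n$ shows that any two functions with domain $\gamma_n$ obeying the recursion on $\gamma_n$ coincide. This coherence is exactly what will let the approximations be glued.

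For existence, fix $n<\omega$ and choose $m=m(n)$ (a recursive function of $n$, e.g.\ $m=\max(n,f(n))+2$) large enough that every sequence of length $<\gamma_n$ with values in $V_{\gamma_{f(n)}}$, together with its $I$-image, lies in $V_{\gamma_m}$. Since $\gamma_m$ is measurable, hence inaccessible, $V_{\gamma_m}\models\ZFC$, so the transfinite recursion theorem applies there and yields the unique $h_n\colon\gamma_n\to V_{\gamma_{f(n)}}$ with $h_n(\alpha)=I(h_n\uh\alpha)$ for all $\alpha<\gamma_n$; the hypothesis on the range of $I$ is precisely what keeps this recursion bounded and shows $h_n$ is $V_\gamma$-finite. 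By coherence $h_n\uh\gamma_{n'}=h_{n'}$ whenever $n'<n$, so $g:=\bigcup_{n<\omega}h_n$ is a well-defined function with domain $\gamma$, and for $\alpha<\gamma$, picking $n$ with $\alpha<\gamma_n$ gives $g(\alpha)=h_n(\alpha)=I(h_n\uh\alpha)=I(g\uh\alpha)$, as required.

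It remains to see the graph is $\Sigma_1$. By the remark preceding the proposition, for each $m$ the relation ``$\mathcal{V}_{\gamma,m}\models\varphi$'' is $\Delta_0$ over $\mathcal{V}_\gamma$, and ``$\mathcal{V}_\gamma\models\psi$'' for $\Sigma_1$ $\psi$ is equivalent to ``$\exists m$ ($\psi$ holds in $\mathcal{V}_{\gamma,m}$)''. Carrying out the level-$n$ recursion above inside $\mathcal{V}_{\gamma,m(n)}$ (an element of $V_\gamma$ built on $V_{\gamma_{m(n)}}\models\ZFC$), the assertion ``$h$ is a function, $\dom h=\gamma_n$, $\forall\beta<\gamma_n\,(h(\beta)=I(h\uh\beta))$, and $h(\alpha)=y$'' becomes $\Delta_0$ in the parameters $n,m(n),h,\alpha,y$ — the apparently dangerous universal bounded quantifier over $\beta<\gamma_n$ and the $\Sigma_1$ clause defining $I$ are both absorbed into satisfaction in the set model $\mathcal{V}_{\gamma,m(n)}$, and $f$, being recursive on $\omega$, is itself a parameter in $V_\gamma$. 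Hence
\[
(\alpha,y)\in\mathrm{graph}(g)\iff\exists n\,\exists h\;\bigl(\alpha<\gamma_n\ \wedge\ \dom h=\gamma_n\ \wedge\ \forall\beta<\gamma_n\,(h(\beta)=I(h\uh\beta))\ \wedge\ h(\alpha)=y\bigr),
\]
which is $\Sigma_1$ over $\mathcal{V}_\gamma$.

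The main obstacle is exactly this last step. A priori the recursion condition $h(\beta)=I(h\uh\beta)$ puts a $\Sigma_1$ subformula under a universal bounded quantifier, and in the non-admissible expanded structure $\mathcal{V}_\gamma$ (recall $(L_\gamma,\in,A)$ need not be admissible) there is no $\Sigma_1$-collection to collapse that back to $\Sigma_1$. The hypothesis ``$|\sigma|<\gamma_n\Rightarrow I(\sigma)\in V_{\gamma_{f(n)}}$'' is the device that circumvents this: it confines the entire level-$n$ computation — including $I$ restricted to the relevant sequences — to a single set model $\mathcal{V}_{\gamma,m(n)}\in V_\gamma$, so the computation can be quantified over as a set and the complexity stays $\Sigma_1$. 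Everything else is bookkeeping.
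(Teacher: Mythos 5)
The paper offers no argument here at all (it introduces the proposition with ``the proof is clear''), so there is nothing to compare against; your write-up is the intended argument, and the existence/uniqueness half is completely fine: transfinite induction for uniqueness, the boundedness hypothesis to see that each level-$n$ approximation $h_n\colon\gamma_n\to V_{\gamma_{f(n)}}$ is a $V_\gamma$-finite object obtainable by ordinary recursion, coherence, and gluing. You have also correctly isolated the conceptual point, namely that the hypothesis $|\sigma|<\gamma_n\Rightarrow I(\sigma)\in V_{\gamma_{f(n)}}$ is a surrogate for the $\Sigma_1$-collection that $\mathcal{V}_\gamma$ lacks, and that the definability claim must be routed through the set models $\mathcal{V}_{\gamma,m}$.

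There is, however, one real gap in the last step. Your displayed formula is $\Sigma_1$ only if the clause $\forall\beta<\gamma_n\,(h(\beta)=I(h{\uh}\beta))$ is (equivalent to something) $\Delta_0$ in set parameters, and the justification you give --- that the \emph{values} of $I$ on the relevant sequences are bounded, so the restriction of $I$ is an element of $V_\gamma$ --- does not deliver this. If $I$ is merely assumed to have $\Sigma_1$ graph, say $y=I(x)\leftrightarrow\exists w\,\theta(x,y,w)$ with $\theta\in\Delta_0$, then the natural rewriting $\exists W\,\forall\beta<\gamma_n\,\exists w\in W\,\theta(h{\uh}\beta,h(\beta),w)$ requires collecting the $\gamma_n$-many witnesses $w$ into a single $W\in V_\gamma$; but nothing in the value bound prevents those witnesses from being cofinal in $\gamma$, and precisely because $\mathcal{V}_\gamma$ is not admissible you cannot invoke collection to bound them. (Likewise, existentially quantifying a set $J$ and asserting ``$J$ is the restriction of $I$'' reintroduces the same $\forall\exists$ pattern.) The fix is to strengthen your standing assumption on $I$ from ``$\Sigma_1$-definable with bounded values'' to ``uniformly locally $\Sigma_1$'': there is a recursive $m(n)$ such that the restriction of $I$ to sequences of length $<\gamma_n$ with values in $V_{\gamma_{f(n)}}$ is ($\Delta_0$-)definable over $\mathcal{V}_{\gamma,m(n)}$, uniformly in $n$ --- equivalently, the map sending $n$ to this restricted function (an element of $V_\gamma$) has $\Sigma_1$ graph. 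This is exactly what holds in the paper's applications, where $I(\sigma)$ is the $\vartriangleleft$-least object satisfying a condition checkable in some $\mathcal{V}_{\gamma,m}$, and with it your formula is genuinely $\Sigma_1$ and the proof closes. You should state this hypothesis explicitly rather than let it ride on the value bound.
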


\subsection{Isomorphic to a cone of $\gamma$-degrees}

Let us use a subset $W$ of $\gamma$ to code the model $\mathcal{V}_{\gamma}$:
Define
\begin{equation*}
\begin{split}
W_0=\{J&(\alpha,\beta)\colon f_2(\alpha)\in f_2(\beta)\}.\\
W_1=\{J&(\alpha,\beta)\colon f_1(f_2(\alpha))=f_2(\beta)\}.\\
W_2=\{J&(\alpha,\beta)\colon f_3(f_2(\alpha))=f_2(\beta)\}.
\end{split}
\end{equation*}
Let $W=W_0\oplus W_1\oplus W_2$. By definition, we know

\begin{proposition}
$W$ is $V_{\gamma}$-RE.
\end{proposition}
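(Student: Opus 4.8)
The plan is to exhibit, for each of $W_0,W_1,W_2$, an explicit $\Sigma_1$ formula of the language of $\mathcal{V}_\gamma$ that defines it, and then to observe that $\oplus$ preserves $V_\gamma$-RE-ness. The only delicate point is recovering the projections $(x)_0,(x)_1$ from an ordinal $x$, i.e. inverting the primitive recursive pairing function $J$; everything else will be $\Delta_0$, because $f_1,f_2,f_3$ are genuine function symbols of $\mathcal{V}_\gamma$, so expressions such as $f_2(u)\in f_2(v)$ or $f_1(f_2(u))=f_2(v)$ are atomic in that language.

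First I would record that the ternary relation ``$x=J(u,v)$'' is $\Sigma_1$ (in fact $\Delta_1$) over $\mathcal{V}_\gamma$. This follows from two facts already invoked in the excerpt: $J$ is primitive recursive, so its graph is $\Delta_1$ over $L_\gamma$; and $L_\gamma$ is $\Sigma_1$-definable in $\mathcal{V}_\gamma$ (the observation used for ``$\gamma$-RE $\Rightarrow$ $V_\gamma$-RE''). Alternatively one may argue level by level through the models $\mathcal{V}_{\gamma,n}$: for each $n$, $J$ restricted to $\gamma_n\times\gamma_n$ has range inside $\gamma_n$ and is first-order definable over $(V_{\gamma_n},\in)$ because $\gamma_n$ is inaccessible, while ``$\mathcal{V}_{\gamma,n}\models\varphi$'' is $\Delta_0$ over $\mathcal{V}_\gamma$, so quantifying over $n$ yields a $\Sigma_1$ definition. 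Fix a $\Sigma_1$ formula $\theta(x,u,v)$ expressing $x=J(u,v)$, and recall $u,v\le x$ whenever it holds.

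Then I would write down the defining formulas. Set
\[\psi_0(x)\ \equiv\ \exists u\,\exists v\,(\,\theta(x,u,v)\ \wedge\ f_2(u)\in f_2(v)\,),\]
and let $\psi_1(x)$, $\psi_2(x)$ be obtained from $\psi_0(x)$ by replacing the conjunct $f_2(u)\in f_2(v)$ with $f_1(f_2(u))=f_2(v)$, respectively $f_3(f_2(u))=f_2(v)$. In each case the replaced conjunct is atomic, hence $\Delta_0$, so the matrix is $\Sigma_1$ in the usual sense (finitely many existentials over a $\Delta_0$ matrix); since the leading quantifiers are effectively bounded by $x$, this reduces to the paper's single-existential $\Sigma_1$ form by the standard contraction available in $\mathcal{V}_\gamma$. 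Unwinding the definitions — using that $f_2{\uh}\gamma$ is a bijection onto $V_\gamma$ and that $(x)_0,(x)_1<\gamma$ and $J(\alpha,\beta)<\gamma$ for $\alpha,\beta<\gamma$ — one checks $W_i=\{x<\gamma:\mathcal{V}_\gamma\models\psi_i(x)\}$ for $i=0,1,2$, so each $W_i$ is $V_\gamma$-RE.

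Finally I would verify closure of the $V_\gamma$-RE sets under $\oplus$: if $A,B,C$ are $V_\gamma$-RE via $\Sigma_1$ formulas $\psi_A,\psi_B,\psi_C$, then $k\in A\oplus B\oplus C$ holds iff one of ``$\exists\xi(\theta(k,\xi,\mathbf{0})\wedge\psi_A(\xi))$'', ``$\exists\xi(\theta(k,\xi,\mathbf{1})\wedge\psi_B(\xi))$'', ``$\exists\xi(\theta(k,\xi,\mathbf{2})\wedge\psi_C(\xi))$'' holds, where $\mathbf{0},\mathbf{1},\mathbf{2}$ are the $\Delta_0$-definable numerals; their disjunction is again $\Sigma_1$. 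Applying this to $W_0,W_1,W_2$ gives that $W=W_0\oplus W_1\oplus W_2$ is $V_\gamma$-RE. I expect no genuine obstacle here: the argument is essentially a formula-writing exercise, and its single non-routine ingredient — the $\Sigma_1$-definability of $J$ over $\mathcal{V}_\gamma$ — is subsumed by the $\Sigma_1$-definability of $L_\gamma$ inside $\mathcal{V}_\gamma$ that the paper has already used.
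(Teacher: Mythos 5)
Your verification is correct and is exactly the routine unwinding that the paper compresses into ``by definition'': each $W_i$ is defined by an atomic condition in the function symbols $f_1,f_2,f_3$ applied to the $J$-components of $x$, and the graph of $J$ is $\Sigma_1$ over $\mathcal{V}_\gamma$ via the $\Sigma_1$-definability of $L_\gamma$ that the paper already uses. No gap; the closure of $V_\gamma$-RE sets under $\oplus$ is handled correctly as well.
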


Because $f_2{\uh}\gamma\colon\gamma\rightarrow V_{\gamma}$ is a bijection, so $(\gamma,W_0,W_1,<,W_2)$ is isomorphic to the model $\mathcal{V}_{\gamma}$.

Let $\mathbf{w}$ be the $\gamma$-degree which $W$ belongs. Let $\Cone(\mathbf{w})$ be the set of all $\gamma$-degrees stronger than $\mathbf{w}$ and
$\mathcal{S}$ be the set of all $V_{\gamma}$-degrees.

\begin{theorem}\label{tonggou}
$(\Cone(\mathbf{w}),\geq_{\gamma})$ and
$(\mathcal{S},\geq_{V_{\gamma}})$ are isomorphic.
\end{theorem}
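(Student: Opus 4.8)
The goal is to establish an order-isomorphism between $(\Cone(\mathbf{w}),\geq_\gamma)$ and $(\mathcal{S},\geq_{V_\gamma})$. The natural candidate map is $\Phi(\mathbf{a}) = $ the $V_\gamma$-degree of $A$, for any $A\in\mathbf{a}$ with $\mathbf{a}\geq_\gamma\mathbf{w}$; and conversely $\Psi(\mathbf{b}) = $ the $\gamma$-degree of $B\oplus W$ for any $B\in\mathbf{b}$. First I would check these are well-defined: if $A\equiv_\gamma A'$ then $A\equiv_{V_\gamma}A'$ (since $\gamma$-RE implies $V_\gamma$-RE by the Proposition following Lemma \ref{universalreV}, so a $\gamma$-reduction is in particular a $V_\gamma$-reduction, using that $N(A)$ and $\NL(A)$ are interchangeable up to $V_\gamma$-recursiveness because $f_2$ and $K$ are mutually $\Sigma_1$-definable in $\mathcal{V}_\gamma$); and if $B\equiv_{V_\gamma}B'$ then $B\oplus W\equiv_\gamma B'\oplus W$ (a $V_\gamma$-reduction of $N(B')$ from $N(B)$ uses finitely many $V_\gamma$-finite parameters and the $\Sigma_1$-theory of $\mathcal{V}_\gamma$, all of which is coded by $W$, hence is an honest $\gamma$-reduction once $W$ is available as an oracle).

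The core of the argument is the two containments showing $\Phi$ and $\Psi$ are mutually inverse and order-preserving. For $\Phi\circ\Psi = \mathrm{id}$: given a $V_\gamma$-degree $\mathbf{b}$ with representative $B$, I must show $B\oplus W\equiv_{V_\gamma}B$. Clearly $B\oplus W\geq_{V_\gamma}B$. For the reverse, $W$ is $V_\gamma$-RE, hence $W\leq_{V_\gamma}\emptyset\leq_{V_\gamma}B$, so $B\oplus W\leq_{V_\gamma}B$; thus $\Phi(\Psi(\mathbf{b}))=\mathbf{b}$. For $\Psi\circ\Phi=\mathrm{id}$: given $\mathbf{a}\geq_\gamma\mathbf{w}$ with representative $A$, I must show $A\oplus W\equiv_\gamma A$. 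Since $\mathbf{a}\geq_\gamma\mathbf{w}$, we have $W\leq_\gamma A$, so $A\oplus W\equiv_\gamma A$. Finally, order preservation: $\mathbf{a}\geq_\gamma\mathbf{a}'$ (both above $\mathbf{w}$) trivially gives $A\geq_{V_\gamma}A'$ since every $\gamma$-reduction is a $V_\gamma$-reduction; conversely if $A\geq_{V_\gamma}A'$, then the $V_\gamma$-reduction, being $\Sigma_1$ over $\mathcal{V}_\gamma$ with $V_\gamma$-finite (i.e.\ bounded) parameters, can be simulated $\gamma$-recursively relative to $W$ — the models $\mathcal{V}_{\gamma,n}$ are uniformly $\gamma$-finite and $W$ codes $\mathcal{V}_\gamma$ via $f_2$ — and relative to $W$ the bounded parameters are $\gamma$-finite, so $A\oplus W\geq_\gamma A'\oplus W$, i.e.\ $\Psi$ is order-preserving. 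Surjectivity of $\Phi$ onto $\mathcal{S}$ follows from $\Phi\circ\Psi=\mathrm{id}$, and injectivity from $\Psi\circ\Phi=\mathrm{id}$.

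The step I expect to be the main obstacle is the precise translation between $V_\gamma$-reducibility and $\gamma$-reducibility-relative-to-$W$: namely showing that a $\Sigma_1$ statement over $\mathcal{V}_\gamma$ is equivalent to a $\Sigma_1$ statement over $(L_\gamma,\in,W)$. This rests on two facts that must be carefully verified: (i) $\mathcal{V}_\gamma$ is "$\gamma$-recursively presentable" from $W$, i.e.\ the satisfaction relation for $\Delta_0$-formulas of $\mathcal{V}_\gamma$ restricted to the $\gamma$-finite approximations $\mathcal{V}_{\gamma,n}$ is $\gamma$-recursive in $W$ (using that each $\mathcal{V}_{\gamma,n}\in V_\gamma$ and that "$\mathcal{V}_{\gamma,n}\vDash\varphi$" is $\Delta_0$ over $\mathcal{V}_\gamma$, as noted in the excerpt), and $\Sigma_1$-truth in $\mathcal{V}_\gamma$ is $\exists n$ of such a $\Delta_0$-fact; and (ii) conversely, $L_\gamma$ together with the relevant bookkeeping is $\Sigma_1$-definable in $\mathcal{V}_\gamma$ (the Proposition after Lemma \ref{universalreV} already gives the "$\gamma$-RE $\Rightarrow$ $V_\gamma$-RE" half). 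Once (i) and (ii) are in hand, the equivalence of the reducibility notions on the cone above $\mathbf{w}$ is bookkeeping, and the rest of the theorem is formal. I would also remark that the map does not depend on the choice of the parameter functions $f_1,f_2,f_3$ up to the isomorphism, since any two such choices are themselves $\gamma$-recursive in one another relative to $W$.
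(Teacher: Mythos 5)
Your skeleton is the right one, and it is in fact how the paper organizes things: everything reduces to the single equivalence $A\geq_{V_\gamma}B$ iff $A\oplus W\geq_\gamma B$ (the paper's Lemmas \ref{tonggou1} and \ref{tonggou2}), after which the isomorphism $\mathbf{b}\mapsto$ the $\gamma$-degree of $B\oplus W$ is bookkeeping. The problem is that you locate the main difficulty in the wrong place and dispose of the real one with a false assertion. The two reducibilities are defined through the neighbourhood systems $\NL(\cdot)$ and $N(\cdot)$, and these are \emph{not} ``interchangeable up to $V_{\gamma}$-recursiveness because $f_2$ and $K$ are mutually $\Sigma_1$-definable'': a $V_\gamma$-finite set is merely a bounded subset of $\gamma$ and need not lie in $L_\gamma$, so $N(B)$ carries strictly more information than $\NL(B)$ --- there is no way, from a single $\gamma$-finite certificate $K_x\subset B$, to certify $u\subset B$ for a non-constructible bounded $u$. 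This is exactly why the direction $A\oplus W\geq_\gamma B\Rightarrow A\geq_{V_\gamma}B$ needs an extra idea that your proposal lacks: the paper first replaces $B$ by a $\equiv_{V_\gamma}$-equivalent set $B'$ of order type $\omega$ (Lemma \ref{xuxing}), for which finite, $\gamma$-finite and $V_\gamma$-finite subsets all coincide (Proposition \ref{xuxing2}), and only then passes from the $L_\gamma$-finite information of $B'$ to its full $V_\gamma$-finite information. Symmetrically, in the direction $A\geq_{V_\gamma}B\Rightarrow A\oplus W\geq_\gamma B$ one must show how to enumerate, from $\NL(A)$ and $\NL(W)$, which \emph{bounded} sets are contained in $A$ and which in $\gamma{\setminus}A$ (Step 1 of the paper's Lemma \ref{tonggou2}, via the sets $C_n$ and the stratification by the $A\cap\gamma_n$). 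Your points (i)--(ii) about translating $\Sigma_1$-satisfaction between $\mathcal{V}_\gamma$ and $(L_\gamma,\in,W)$ are needed but do not touch this issue.

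A smaller slip: ``$W$ is $V_\gamma$-RE, hence $W\leq_{V_\gamma}\emptyset$'' is a non sequitur. The paper explicitly warns that $A$ and $\gamma{\setminus}A$ both being $V_\gamma$-RE does not make $A$ $V_\gamma$-recursive, since $V_\gamma$-recursiveness asks for $N(A)$ itself to be $V_\gamma$-RE. The conclusion happens to hold here only because $W$ is $\Delta_0$-definable in $\mathcal{V}_\gamma$, so that ``$f_2(x)\subset W\ \&\ f_2(y)\subset\gamma{\setminus}W$'' is again $\Delta_0$ and hence $N(W)$ is $V_\gamma$-RE; that is the fact you actually need for $\Phi\circ\Psi=\mathrm{id}$ and it should be stated.
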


The proof of this theorem follows from the following lemma \ref{tonggou1} and lemma \ref{tonggou2}.

\begin{proposition}\label{xuxing2}
\begin{itemize}
\item[(1)] If $P\subset L_{\gamma}$, define $P_1=\{\alpha<\gamma\colon K_{\alpha}\in P\}\subset\gamma$, $P_2=\{\beta<\gamma\colon f_2(\beta)\in P\}\subset\gamma$. Then $P_1$ can enumerate $P_2$.
\item[(2)] Suppose $X\subset\gamma$, $X$ is cofinal in $\gamma$ and the order type of $X$ is $\omega$. Then in the model $\mathcal{V}_{\gamma}$, $\NL(X)$ can enumerate $N(X)$.
\end{itemize}
\end{proposition}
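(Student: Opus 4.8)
The plan is to treat the two parts separately, in each case exhibiting an explicit $V_\gamma$-RE reduction.

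\emph{Part (1).} The idea is that $K\colon\gamma\to L_\gamma$ and $f_2{\uh}\gamma\colon\gamma\to V_\gamma$ agree on their common range $L_\gamma$ up to a re-indexing that is $\Sigma_1$ over $\mathcal{V}_\gamma$. I would set
\[Q=\{J(\beta,\alpha)\colon \mathcal{V}_\gamma\models K_\alpha=f_2(\beta)\},\]
and check that $Q$ is $V_\gamma$-RE: since $L_\gamma$ — and hence the enumeration function $K$ — is $\Sigma_1$-definable in $\mathcal{V}_\gamma$ (the same fact used earlier for ``$\gamma$-RE $\Rightarrow$ $V_\gamma$-RE''), the condition ``$K_\alpha=f_2(\beta)$'' is $\exists c\,(c=K_\alpha\wedge c=f_2(\beta))$, which is $\Sigma_1$ in the language of $\mathcal{V}_\gamma$ ($f_2$ being a function symbol). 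Then $P_2=Q^{P_1}$: if $\beta\in Q^{P_1}$, pick $\alpha\in P_1$ with $J(\beta,\alpha)\in Q$; then $f_2(\beta)=K_\alpha\in P$, so $\beta\in P_2$. Conversely, if $\beta\in P_2$ then $f_2(\beta)\in P\subseteq L_\gamma$, so $f_2(\beta)=K_\alpha$ for $\alpha=K^{-1}(f_2(\beta))<\gamma$; then $K_\alpha\in P$, i.e.\ $\alpha\in P_1$, and $J(\beta,\alpha)\in Q$, so $\beta\in Q^{P_1}$. The hypothesis $P\subseteq L_\gamma$ enters only here, to guarantee the index $\alpha$ exists.

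\emph{Part (2).} The main obstacle is a genuine mismatch between the two notions of ``finite'': $\NL(X)$ records membership/non-membership information only for $\gamma$-finite (constructible) sets, whereas $N(X)$ ranges over all $V_\gamma$-finite (bounded, possibly non-constructible) sets. The hypothesis that $X$ is cofinal in $\gamma$ of order type $\omega$ is exactly what closes this gap: (i) for each $\delta<\gamma$, $X\cap\delta$ is finite, hence $\gamma$-finite; (ii) therefore any set of ordinals contained in $X$ and lying in $V_\gamma$ is finite, so if $f_2(x)\subseteq X$ then $f_2(x)=K_u$ for $u=K^{-1}(f_2(x))<\gamma$; and (iii) for each $\delta<\gamma$, $\delta\setminus X=\delta\setminus(X\cap\delta)$ — an ordinal minus a finite set — is constructible, hence $=K_v$ for some $v<\gamma$. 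With this I would take
\[Q=\bigl\{\,J\bigl(J(x,y),J(u,v)\bigr)\colon \mathcal{V}_\gamma\models K_u=f_2(x)\ \wedge\ f_2(y)\subseteq K_v\,\bigr\},\]
which is $V_\gamma$-RE by the reasoning of Part (1). To verify $N(X)=Q^{\NL(X)}$: if $J(J(x,y),J(u,v))\in Q$ and $J(u,v)\in\NL(X)$, then $K_u\subseteq X$ and $K_v\subseteq\gamma\setminus X$, so $f_2(x)=K_u\subseteq X$ and $f_2(y)\subseteq K_v\subseteq\gamma\setminus X$, i.e.\ $J(x,y)\in N(X)$. Conversely, given $J(x,y)\in N(X)$, use (ii) to obtain $u=K^{-1}(f_2(x))$ with $K_u=f_2(x)\subseteq X$, pick $\delta<\gamma$ with $f_2(y)\subseteq\delta$, and use (iii) to obtain $v=K^{-1}(\delta\setminus X)$ with $K_v=\delta\setminus X\subseteq\gamma\setminus X$; since $f_2(y)\subseteq\delta$ and $f_2(y)\cap X=\emptyset$ we have $f_2(y)\subseteq K_v$, so $J(u,v)\in\NL(X)$ and $J(J(x,y),J(u,v))\in Q$, giving $J(x,y)\in Q^{\NL(X)}$.

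I expect the only genuinely delicate bookkeeping to be checking that the clauses defining the two sets $Q$ are $\Sigma_1$ over $\mathcal{V}_\gamma$; this is immediate once one notes, as already invoked in the excerpt, that $L_\gamma$ together with its canonical enumeration $K$ and inverse is $\Sigma_1$-definable in $\mathcal{V}_\gamma$, so that ``$c=K_u$'' is $\Sigma_1$ and the primitive recursive pairing $J$ and its projections are available. Everything else reduces to the two direct membership computations above and the elementary combinatorics of an $\omega$-sequence cofinal in $\gamma$.
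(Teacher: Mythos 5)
Your proposal is correct and follows essentially the same route as the paper: part (1) is exactly the paper's translation table $\{J(f_2^{-1}(K_\alpha),\alpha)\colon\alpha<\gamma\}$, and part (2) rests on the same observation that subsets of a cofinal $\omega$-sequence are finite, hence simultaneously $\gamma$-finite and $V_\gamma$-finite. You additionally spell out the one point the paper's one-line proof leaves implicit — that the negative information $f_2(y)\subseteq\gamma\setminus X$ is witnessed by the constructible sets $\delta\setminus X=\delta\setminus(X\cap\delta)$ — and that detail is handled correctly.
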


\begin{proof}
(1) $\{J(f_2^{-1}(K_{\alpha}),\alpha)\colon\alpha<\gamma\}$ is a $V_{\gamma}$-RE set.

(2) From (1) and the fact that if $X$ of order type $\omega$ and cofinal, then for every subset $t$, $t$ is finite iff $t$ is $\gamma$-finite iff $t$ is $V_{\gamma}$-finite.
\end{proof}

Given $B\subset\gamma$, define $B'$ as follows: $B\cap\gamma_0$ is an element of
$V_{\gamma_1}$, so $\pi^{-1}(B\cap\gamma_0)<\gamma_1$, define
$x_i$ is $\pi^{-1}(B\cap\gamma_{i-1})$, then define
$B'=\{x_i\mid{0}<i<\omega\}$. By lemma \ref{xuxing}, $B\equiv_{V_{\gamma}}B'$.

\begin{lemma}\label{tonggou1}
Suppose $A,B\subset\gamma$, if $A\oplus{W}\geq_{\gamma}B$, then
$A\geq_{V_{\gamma}}B$.
\end{lemma}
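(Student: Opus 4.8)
The plan is to turn the hypothesis $A\oplus W\ge_\gamma B$, i.e.\ an identity $\NL(B)=P^{\NL(A\oplus W)}$ for some $\gamma$-RE set $P$, into a $V_\gamma$-enumeration $N(B)=Q^{N(A)}$ for some $V_\gamma$-RE set $Q$. The translation rests on two ``interface'' facts linking $\NL(\cdot)$ and $N(\cdot)$, together with the proposition that $\gamma$-RE sets are $V_\gamma$-RE and the observation that $W$ is $V_\gamma$-recursive. The interface facts are: \emph{(i)} for every $C\subseteq\gamma$, $N(C)$ can $V_\gamma$-enumerate $\NL(C)$ --- since every $\gamma$-finite set is $V_\gamma$-finite (an element of $L_\gamma\subseteq V_\gamma$) and $L_\gamma$ with its enumeration $K$ is $\Sigma_1$-definable in $\mathcal{V}_\gamma$, from a $V_\gamma$-finite information $(f_2(u),f_2(v))$ for $C$ one $\Sigma_1$-searches its $\gamma$-finite sub-informations and reads off the matching neighbourhood conditions; and \emph{(ii)} if $C\subseteq\gamma$ is cofinal in $\gamma$ of order type $\omega$, then $\NL(C)$ can $V_\gamma$-enumerate $N(C)$ --- this is Proposition~\ref{xuxing2}(2), which uses that for such $C$ the notions ``finite'', ``$\gamma$-finite'' and ``$V_\gamma$-finite'' subset of $C$ all coincide. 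Note that \emph{(ii)} fails for a general $C$: a bounded piece $C\cap\gamma_n$ that is not in $L_\gamma$ witnesses a condition in $N(C)$ that no $\gamma$-finite subset of $C$ can certify, which is exactly why one cannot simply invoke ``$\gamma$-RE $\Rightarrow V_\gamma$-RE'' for the whole reduction.

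First I would check that $W$ is $V_\gamma$-recursive: after treating $f_1,f_2,f_3$ as primitive function symbols, the defining conditions of $W_0,W_1,W_2$ are $\Delta_0$ in $\mathcal{V}_\gamma$, so membership in $W$ and in $\gamma\setminus W$ is $V_\gamma$-decidable. Two consequences: $A\equiv_{V_\gamma}A\oplus W$ (from any $V_\gamma$-finite information for $A$ one freely appends a $V_\gamma$-finite piece of the decidable set $W$, and conversely projects), so $N(A)$ and $N(A\oplus W)$ $V_\gamma$-enumerate each other; and, since $W_0$ codes the $\in$-relation on $f_2$-codes, the bijection $f_2{\uh}\gamma$ is $\gamma$-recursive in $W$.

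Next I would pass to a thin cofinal representative of the $V_\gamma$-degree of $B$. We may assume $B$ is cofinal in $\gamma$, for otherwise $B\in V_\gamma$ is $V_\gamma$-finite, hence $V_\gamma$-recursive and trivially $\le_{V_\gamma}A$. Put $b_i=f_2^{-1}(B\cap\gamma_{i-1})$ for $0<i<\omega$ and $B'=\{b_i\colon 0<i<\omega\}$; by (the argument of) Lemma~\ref{xuxing}, $B'\equiv_{V_\gamma}B$. Since $f_2$ is rank-respecting the sequence $\langle b_i\rangle$ is non-decreasing, and since $\bigcup_i(B\cap\gamma_{i-1})=B$ is cofinal in $\gamma$ the ranks of the sets $B\cap\gamma_{i-1}$ tend to $\gamma$, so $\sup_i b_i=\gamma$; hence $B'$ is cofinal in $\gamma$ of order type $\omega$. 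Moreover $B'$ is $\gamma$-recursive in $B\oplus W$: to decide $\beta\in B'$, recover $f_2(\beta)$ as a set of ordinals from $W_0$ and test whether it equals $B\cap\gamma_j$ for some $j$ (the $\gamma_j$ being obtained from $W_1$ and the membership queries going to the oracle $B$). Combining with $A\oplus W\ge_\gamma B$ and $A\oplus W\ge_\gamma W$ we get $A\oplus W\ge_\gamma B\oplus W\ge_\gamma B'$, so there is a $\gamma$-RE set $P'$ with $\NL(B')=P'^{\NL(A\oplus W)}$.

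Finally I would chain the enumerations, using transitivity of enumeration reducibility throughout: $N(A)$ and $N(A\oplus W)$ $V_\gamma$-enumerate each other (paragraph two); $N(A\oplus W)$ $V_\gamma$-enumerates $\NL(A\oplus W)$ by interface fact~(i); $\NL(A\oplus W)$ $V_\gamma$-enumerates $\NL(B')$ via the set $P'$, now read as a $V_\gamma$-RE set; $\NL(B')$ $V_\gamma$-enumerates $N(B')$ by interface fact~(ii), applicable since $B'$ is cofinal in $\gamma$ of order type $\omega$; and $N(B')$ $V_\gamma$-enumerates $N(B)$ since $B'\equiv_{V_\gamma}B$. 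Composing, $N(A)$ $V_\gamma$-enumerates $N(B)$, i.e.\ $A\ge_{V_\gamma}B$. The main obstacle, and the reason the proof is not a one-line ``$\gamma$-RE implies $V_\gamma$-RE'' argument, is extracting an $N(\cdot)$-set back out of an $\NL(\cdot)$-set: this is only possible for cofinal order-type-$\omega$ sets, which forces the detour through $B'$ and the two small verifications that $B'$ has order type $\omega$ and is $\gamma$-recursive in $B\oplus W$.
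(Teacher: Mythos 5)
Your proof is correct and follows essentially the same route as the paper's: reduce to the order-type-$\omega$ representative $B'$, then chain $N(A)\to N(A\oplus W)\to \NL(A\oplus W)\to \NL(B')\to N(B')$ using that $\gamma$-RE sets are $V_\gamma$-RE and Proposition~\ref{xuxing2}. Your explicit verifications (that $W$ is $V_\gamma$-decidable, that $B'$ is cofinal of order type $\omega$, and the bounded-$B$ case) only make precise points the paper leaves implicit.
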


\begin{proof}
The following enumerations are all in the model $\mathcal{V}_{\gamma}$.
From the definition of $B'$, $A\oplus{W}\geq_{\gamma}B$ iff
$A\oplus{W}\geq_{\gamma}B'$. $A\geq_{V_{\gamma}}B$
iff $A\geq_{V_{\gamma}}B'$. So without loss of generality, we can assume $B$ is equal to
$B'$, i.e. the order type of $B$ is at most $\omega$.

We use the $V_{\gamma}$-finite information of $A$ to enumerate the $V_{\gamma}$-finite information of $B$ in the following steps:

\noindent\underline{\emph{Step 1.}} Because $W$ is a $V_{\gamma}$-RE set, one can use the $V_{\gamma}$-finite information of $A$, one can enumerate all $V_{\gamma}$-finite information of $A\oplus{W}$.

\noindent\underline{\emph{Step 2.}} Suppose $(K,L)$ is a $V_{\gamma}$-finite information of $A\oplus{W}$, then $(K,L)$ is a $L_{\gamma}$-finite information of $A\oplus{W}$ iff $K,L\in{L_{\gamma}}$.
Thus use the $V_{\gamma}$-finite information of $A\oplus{W}$, one can enumerate all $L_{\gamma}$-finite information of $A\oplus{W}$.

\noindent\underline{\emph{Step 3.}} Because $A\oplus{W}\geq_{\gamma}B$, so there is ${\gamma}$-RE set $U_{e}$, $\NL(B)=U_e^{\NL(A\oplus W)}$, because $U_e$ is also a $V_{\gamma}$-RE set, so use the $L_{\gamma}$-finite information of $A\oplus{W}$, one can enumerate all $L_{\gamma}$-finite information of $B$ in $\mathcal{V}_{\gamma}$.

\noindent\underline{\emph{Step 4.}} Because $B$ is of order type at mose $\omega$, by proposition \ref{xuxing2},
use the $L_{\gamma}$-finite information of $B$, one can enumerate all $V_{\gamma}$-finite information of $B$.

So we have enumerated all $V_{\gamma}$-finite information of $B$, using the $V_{\gamma}$-finite information of $A$.
\end{proof}

\begin{lemma}\label{tonggou2}
Suppose $A,B\subset\gamma$, if $A\geq_{V_{\gamma}}B$, then
$A\oplus{W}\geq_{\gamma}B$.
\end{lemma}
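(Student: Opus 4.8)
The plan is to prove the equivalent statement $B\leq_{\gamma}A\oplus W$, that is, to exhibit a $\gamma$-RE set $U$ with $\NL(B)=U^{\NL(A\oplus W)}$, by composing three enumeration steps, all carried out \emph{inside} the structure $\mathcal{V}_{\gamma}$. The background fact that legitimizes passing back to $L_{\gamma}$ is that $(\gamma,W_0,W_1,<,W_2)$ is isomorphic to $\mathcal{V}_{\gamma}$ and $W$ is $V_{\gamma}$-RE; hence any procedure that is $\Sigma_1$ over $\mathcal{V}_{\gamma}$ — using auxiliary subsets of $\gamma$ only as enumeration oracles — can be run as a $\gamma$-RE procedure relative to those subsets together with $W$, and in particular relative to $\NL(A\oplus W)$ once $W\leq_{\gamma}A\oplus W$ is noted. (One may first replace $B$ by the set $B'$ of order type $\leq\omega$ as in Lemma \ref{tonggou1}, since $B\equiv_{V_{\gamma}}B'$, but, in contrast with Lemma \ref{tonggou1}, this reduction turns out to be unnecessary here.)

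\emph{Step 1.} From $\NL(A\oplus W)$ one reads off, by routine $\gamma$-recursion, the $L_{\gamma}$-finite positive and negative information about $A$; in particular one enumerates $A$ and $\gamma\setminus A$ as sets of ordinals. \emph{Step 2.} Using the hypothesis, fix a $V_{\gamma}$-RE set $P$ with $N(B)=P^{N(A)}$; the point is that the oracle $N(A)$ may here be replaced by the weaker Step-1 data. Indeed $J(x_0,x_1)\in N(A)$ exactly when $f_2(x_0),f_2(x_1)\subseteq\gamma$, $\forall\xi\in f_2(x_0)\,(\xi\in A)$ and $\forall\eta\in f_2(x_1)\,(\eta\notin A)$, and since $f_2(x_0),f_2(x_1)$ are $V_{\gamma}$-finite \emph{sets} these last two clauses are universal quantifiers bounded by genuine elements of $V_{\gamma}$ ranging over the enumerable predicates ``$\xi\in A$'' and ``$\eta\notin A$''. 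Hence $N(B)$ is enumerable by a $\Sigma_1$-over-$\mathcal{V}_{\gamma}$ procedure relative to Step 1; reading off singletons, so are $B$ and $\gamma\setminus B$. \emph{Step 3.} From an enumeration of $B$ and of $\gamma\setminus B$ one enumerates $\NL(B)$, again inside $\mathcal{V}_{\gamma}$: $J(x,y)\in\NL(B)$ iff $K_x\subseteq B$ and $K_y\subseteq\gamma\setminus B$, and the crucial observation is that $K_x,K_y$, being $\gamma$-finite, are elements of $L_{\gamma}\subseteq V_{\gamma}$, hence \emph{bounded} subsets of $\gamma$, hence $V_{\gamma}$-finite sets; so the two clauses are once more bounded universal quantifiers over the predicates of Step 2 (here $K$ is $\Sigma_1$ over $\mathcal{V}_{\gamma}$ because $L_{\gamma}$ is). Composing Steps 1--3 and invoking the background fact produces the required $\gamma$-RE set $U$.

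The main obstacle is that $\mathcal{V}_{\gamma}$ is \emph{not} admissible — this is precisely clause (a) of the main Theorem — so one cannot contract an arbitrary ``bounded-quantifier-over-$\Sigma_1$'' formula to $\Sigma_1$, and one must verify that the searches above actually terminate at $\gamma$-finite stages. The argument is arranged so that every universal quantifier is bounded by an honest $V_{\gamma}$-finite \emph{set} ($f_2(x_0)$, $f_2(x_1)$, $K_x$, $K_y$), never by a proper class; and since each such bound lies inside some $V_{\gamma_n}$, the searches can be organised level by level along $\langle\gamma_n\colon n<\omega\rangle$, using Proposition \ref{induction2} in place of $\Sigma_1$-boundedness. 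The only other ingredient is the one-sided identification ``$\gamma$-finite $\Rightarrow$ $V_{\gamma}$-finite'' used in Step 3 (an $L_{\gamma}$-finite subset of $\gamma$ is automatically bounded, as $L_{\gamma}\subseteq V_{\gamma}$), which is exactly what fails in the reverse direction and is why Lemma \ref{tonggou1} needed the passage to order type $\leq\omega$.
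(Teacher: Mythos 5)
There is a genuine gap, and it sits exactly where your proof departs from the paper's: you decompose the oracle information into \emph{singleton} facts ``$\xi\in A$'', ``$\eta\notin A$'' (Step 1) and then reassemble set-sized facts from them by bounded universal quantification (Steps 2 and 3). Recall what $B\leq_{\gamma}A\oplus W$ demands: a $\gamma$-RE set $U$ such that each element of $\NL(B)$ is certified by a \emph{single} element of $\NL(A\oplus W)$, i.e.\ by one pair of \emph{$L_{\gamma}$-constructible} sets $K_u\subseteq A\oplus W$, $K_v\subseteq\gamma\setminus(A\oplus W)$; everything else must be $\Sigma_1$ over $L_{\gamma}$ with no oracle. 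Now take the clause $\forall\xi\in f_2(x_0)\,(\xi\in A)$ of your Step 2 in the case $f_2(x_0)=A\cap\gamma_n\notin L_{\gamma}$ (a legitimate, indeed typical, element of the data the reduction $P$ may consume). No single query certifies it, because a constructible $K$ with $f_2(x_0)\subseteq K\subseteq A$ would force $A\cap\gamma_n=K\cap\gamma_n\in L_{\gamma}$; and contracting the bounded universal quantifier over the oracle-relative $\Sigma_1$ predicate ``$\xi\in A$'' back to a one-query $\Sigma_1$ form is precisely an instance of $\Sigma_1$-bounding for $(L_{\gamma},\in,A\oplus W)$ --- the admissibility you correctly note is unavailable. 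Proposition \ref{induction2} does not repair this: it governs oracle-free $\Sigma_1$ recursions in $\mathcal{V}_{\gamma}$ and says nothing about amalgamating unboundedly many $\NL(A\oplus W)$-queries into one. Your Step 3 repeats the error in its classical form: from enumerations of $B$ and of $\gamma\setminus B$ (weak reducibility) you infer an enumeration of $\NL(B)$ (reducibility), an implication that fails for non-regular sets. Finally, the ``background fact'' announced at the outset --- that any $\Sigma_1$-over-$\mathcal{V}_{\gamma}$ procedure with subsets of $\gamma$ as oracles runs as a $\gamma$-RE procedure relative to those sets and $W$ --- is essentially the content of the lemma and cannot be assumed.

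The paper's proof is architected to avoid exactly this trap: it never passes through singletons. Its Step 1 enumerates, as indivisible items, the $f_2$-codes of the whole $V_{\gamma}$-finite facts ``$f_2(\beta)\subseteq A$'' and ``$f_2(\beta)\subseteq\gamma\setminus A$'', using the auxiliary sets $C_n$ and the single witness $A\cap\gamma_n$; Step 2 pushes these coded chunks through the $V_{\gamma}$-RE reduction inside the structure $(\gamma,W_0,W_1,<,W_2)$; and Steps 3--4 only then pass from $V_{\gamma}$-finite to $L_{\gamma}$-finite information about $B$ --- in the harmless direction, since every $L_{\gamma}$-finite set is $V_{\gamma}$-finite, so the required facts are already among the enumerated chunks and merely have to be selected and decoded via $T_1$. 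At no point is a set-sized fact about $A$ or $B$ reconstituted from pointwise facts. To repair your argument you would have to replace Steps 1--3 by an enumeration of $V_{\gamma}$-finite information as whole chunks, which amounts to reproducing the paper's four steps.
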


\begin{proof}
Enumerations in this proof are all in the sense of $L_{\gamma}$.
We want to use the $L_{\gamma}$-finite information of $A\oplus{W}$ to enumerate the $L_{\gamma}$-finite information of $B$.

Note that the following fact: define
\[T=\{J(\alpha,\beta)\colon\alpha<\gamma\ \&\ \beta\mbox{ is the }\alpha\mbox{-th ordinal in }(\gamma,W_0,W_1,<,W_2)\}.\]
\[T_1=\{J(\alpha,\beta)\colon\alpha<\gamma\ \&\ \beta\mbox{ is the }\alpha\mbox{-th constructible set in }(\gamma,W_0,W_1,<,W_2)\}.\]
Then $\NL(T)$ and $\NL(T_1)$ can be enumerated by $\NL(W)$ in the sense of $L_{\gamma}$.

The following is the steps:

\noindent\underline{\emph{Step 1.}} Enumerate $f_2^{-1}[N(A)]$ from $\NL(A)$ and $\NL(W)$.

For $n<\omega$, define $C_n=\{J(\beta_1,\beta_2)\colon\forall x<\gamma_n\ (J(x,\beta_1)\in W_0\rightarrow J(x,\beta_2)\in W_0)\}$. Then $C_n$ is a $\gamma$-RE set. Also we have
\begin{equation*}
\begin{split}
f_2(\beta)\subset A\ &\leftrightarrow\ \exists n<\omega\ J(\beta,A\cap\gamma_n)\in C_n.\\
f_2(\beta)\subset \gamma{\setminus}A\ &\leftrightarrow\ \exists n<\omega\ J(\beta,\gamma_n{\setminus}A)\in C_n.
\end{split}
\end{equation*}
So $f_2^{-1}[\{x\in V_{\gamma}\colon x\subset A\}]$ and $f_2^{-1}[\{x\in V_{\gamma}\colon x\subset \gamma{\setminus}A\}]$ can be enumerated from $\NL(A)$ and $\NL(W)$.

\noindent\underline{\emph{Step 2.}} Enumerate $f_2^{-1}[N(B)]$ from $f_2^{-1}[N(A)]$ and $\NL(W)$.

Since $A\geq_{V_{\gamma}}B$, so this is clear.

\noindent\underline{\emph{Step 3.}} Enumerate $f_2^{-1}[\NL(B)]$ from $f_2^{-1}[N(B)]$ and $\NL(W)$.

Since $L_{\gamma}$ is definable in $V_{\gamma}$ effectively, so from $f_2^{-1}[N(B)]$ and $\NL(W)$, $f_2^{-1}[\NL(B)]$ is enumerable.

\noindent\underline{\emph{Step 4.}} Enumerate $\NL(B)$ from $f_2^{-1}[\NL(B)]$ and $\NL(W)$.

$T_1$ is the set defined at the beginning of this proof. Define $\tilde{T_1}=\{J(\beta,\alpha)\colon J(\alpha,\beta)\in T_1\}$. Then $\tilde{T_1}^{f_2^{-1}[\NL(B)]}=\NL(B)$. But $\tilde{T_1}$ can be enumerated from $\NL(W)$. So $\NL(B)$ is enumerable from $f_2^{-1}[\NL(B)]$ and $\NL(W)$.

So we use the $L_{\gamma}$-finite information of $A\oplus{W}$ to enumerate all $L_{\gamma}$-finite information of $B$.
\end{proof}

\subsection{The forcing $\mathbb{Q}_{\mathcal{D}}$}

We use a variant of $\mathbb{P}_{\mathcal{D}}$ to handle $V_{\gamma}$-degrees.

\begin{proposition}
There is a function $g\colon V_{\gamma}\rightarrow V_{\gamma}$ such that
\begin{itemize}
\item[(a)] $\{(x,y)\colon g(x)=y\}$ is $\Sigma_1$-definable in $V_{\gamma}$.
\item[(b)] If $x\in\gamma$, $g(x)\in f_3(x)$.
\item[(c)] If $x,y\in\gamma$ and $x\neq y$, then $g(x)\cap g(y)=\emptyset$.
\end{itemize}
\end{proposition}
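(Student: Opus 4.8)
The plan is to build $g$ so that, on each block of $\gamma$ on which $f_3$ enumerates the normal measures on a fixed $\gamma_n$, the values of $g$ form a pairwise disjoint system of large sets — one per measure — and to get disjointness across blocks for free by keeping the values on the $\gamma_n$-block inside $\gamma_n\setminus\gamma_{n-1}$.

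First I would fix notation. For $x<\gamma$ let $n(x)$ be the index with $f_3(x)$ a normal measure on $\gamma_{n(x)}$; by the definition of $f_3$, $n(0)=0$, $n(x)=1$ for $0<x<\gamma_0$, and $n(x)=i+2$ for $\gamma_i\le x<\gamma_{i+1}$, so $x\mapsto n(x)$ is definable over $\mathcal V_\gamma$ from $f_1$. Put $B_0=\gamma_0$ and $B_m=\gamma_m\setminus\gamma_{m-1}$ for $m\ge 1$; since a normal measure on $\gamma_m$ contains every set whose complement in $\gamma_m$ has size $<\gamma_m$, we have $B_m\in f_3(x)$ whenever $n(x)=m$. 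For each $n$, the family $\{f_3(x):n(x)=n\}$ consists of pairwise distinct normal measures on $\gamma_n$ (as $f_3{\uh}\gamma$ is injective), and it has exactly $\gamma_{n-1}<\gamma_n$ members for $n\ge 1$ (one member for $n=0$). Hence Proposition \ref{bujiao} applies to each such family — in its case $\delta<\kappa$, where only $\kappa$-completeness of the measures is used — and yields pairwise disjoint sets $A_x\in f_3(x)$ for $n(x)=n$. Tracing its proof, $A_x$ is an explicit Boolean combination, over the (bounded) index block of $x$, of the sets $F_{u,v}:=$ the $\vartriangleleft$-least member of $f_3(u)\setminus f_3(v)$ — which exists because $f_3(u)\ne f_3(v)$ are ultrafilters on the same set, so neither contains the other. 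In particular $A_x$ depends only on data lying inside $V_{\gamma_{n(x)}+\omega}\in V_\gamma$.

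Now define $g(x)=A_x\cap B_{n(x)}$ for $x<\gamma$ and $g(x)=\emptyset$ otherwise. Then (b) holds since $A_x,B_{n(x)}\in f_3(x)$ and $f_3(x)$ is a filter; and (c) holds since for $n(x)=n(y)$ the inclusions $g(x)\subseteq A_x$, $g(y)\subseteq A_y$ give disjointness, while for $n(x)\ne n(y)$ the sets $g(x)\subseteq B_{n(x)}$ and $g(y)\subseteq B_{n(y)}$ lie in disjoint blocks. For (a), I would note that computing $g(x)$ only ever refers to the ordinals $\gamma_{n(x)-1},\gamma_{n(x)}$ (via $f_1$), to the finitely many values of $f_3$ inside the block of $x$, and to the restriction of $\vartriangleleft$ (equivalently of $f_2$) to a bounded $V_\beta$ containing all of these; hence "$g(x)=y$" has the form $\exists t\,\varphi(t,x,y)$ with $\varphi$ a $\Delta_0$ formula of $\mathcal V_\gamma$, where $t$ ranges over sufficiently large rank-initial segments together with the pertinent slices of $f_2,f_3$, and every quantifier in the verification — including "$F_{u,v}$ is the $\vartriangleleft$-least element of $f_3(u)\setminus f_3(v)$" — is bounded by $t$, because the $\vartriangleleft$-predecessors of any member of $f_3(u)$ have rank $\le\gamma_{n(x)}$ and so lie in $t$. (Equivalently one can present $g{\uh}\gamma$ by the $\Sigma_1$-recursion of Proposition \ref{induction2}, with growth function $k\mapsto k+2$, the step map at $x$ outputting $A_x\cap B_{n(x)}$.)

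The main obstacle is exactly this last point: verifying that the canonical-choice construction of Proposition \ref{bujiao} can be carried out \emph{definably} inside $\mathcal V_\gamma$ with a $\Sigma_1$ graph. The combinatorics (disjoint refinements of $<\kappa$ many distinct normal measures) is already supplied by \ref{bujiao}; the work is bookkeeping — isolating the bounded chunk of $\mathcal V_\gamma$ on which each $g(x)$ is computed and checking that "$\vartriangleleft$-least" and the finitely many Boolean operations are expressible by a $\Delta_0$ matrix over that chunk, so that the non-admissibility of $\mathcal V_\gamma$ (a definable cofinal $\omega$-sequence exists) never intervenes.
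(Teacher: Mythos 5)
Your proposal is correct and takes essentially the same route as the paper, which likewise runs the disjointification argument of Proposition \ref{bujiao} inside $\mathcal{V}_{\gamma}$ with all choices made $\vartriangleleft$-least and appeals to Proposition \ref{induction2} for the $\Sigma_1$ graph; your block sets $B_m=\gamma_m\setminus\gamma_{m-1}$ are exactly the device the paper itself uses for the $\mathcal{D}$-choice function $K$ in \S 4.2 to get disjointness across levels. One small slip: the index block of $x$ has $\gamma_{n(x)-1}$-many elements, so computing $g(x)$ refers to that many values of $f_3$, not ``finitely many'' --- but since all of them lie in a single bounded rank-initial segment, this does not affect your boundedness argument for $\Sigma_1$-definability.
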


\begin{proof}
Go the same proof as proposition \ref{bujiao} in the model $\mathcal{V}_{\gamma}$. Proposition \ref{induction2} is used here. Pick every object by the $\vartriangleleft$-least.
\end{proof}

Fix such a $g$ in this proposition. Let us define our forcing notion $\mathbb{Q}_{\mathcal{D}}$.

\begin{definition}
The forcing conditions in $\mathbb{Q}_{\mathcal{D}}$ is of form $p=(D_p,<_p)$ satisfies:
\begin{itemize}
\item[(i)] $p$ is a tree of height $\omega$, $D_p\subset\gamma$, $<_p$ is a tree relation on $D_p$ and \[\forall x,y\in D_p\ (x<_p y\rightarrow x<y).\]
\item[(ii)] For $x\in D_p$, $h(x)$ is the height of $x$ in $p$. For all $n$, if $h(x)=n$, then $x<\gamma_n$.
\item[(iii)] $\stem(p)$ is the $<_p$-largest element of $D_p$ which is $<_p$-comparable with all elements of $D_p$. If $\alpha\in D_p$ such that $\alpha\geq_p\stem(p)$, \[A_{\alpha}=\{x\in D_p\colon h(x)=h(\alpha)+1\ \&\ \alpha<_p x\},\] then $A_{\alpha}\subset g(\alpha)$ and $A_{\alpha}\in f_3(\alpha)$.
\item[(iv)] $D_p$ and $\{J(x,y)\colon x,y\in D_p\ \&\ x<_p y\}$ are $V_{\gamma}$-RE sets.
\end{itemize}

Define $p$ is stronger than $q$ iff $D_p\subset D_q$.
\end{definition}

Given $D_p$ for some $p\in\mathbb{Q}_{\mathcal{D}}$, $<_p$ is completely determined. So we usually use $p$ to denote $D_p$. For $p,q\in\mathbb{Q}_{\mathcal{D}}$, $p$ is a \emph{pure extension} of $q$, if $p\subset q$ and $\stem(p)=\stem(q)$. Note that $p\in\mathbb{Q}_{\mathcal{D}}$, then $p\notin V_{\gamma}$, but $p$ is $\Sigma_1$-definable in $\mathcal{V}_{\gamma}$. From the definitions, we have

\begin{proposition}\label{619}
\begin{itemize}
\item[(1)] There is a $V_{\gamma}$-RE set $R$, such that
\begin{itemize}
\item For each $e<\gamma$, $R_e=\{x<\gamma\colon J(x,e)\in R\}\in\mathbb{Q}_{\mathcal{D}}$.
\item If $p\in\mathbb{Q}_{\mathcal{D}}$, then there is $e<\gamma$, $p=R_e$.
\end{itemize}
\item[(2)] There is $V_{\gamma}$-RE set $S$, such that
\begin{itemize}
\item For each $e<\gamma$, $x$ is the least element of $S_e$, then $p_e=S_e{\setminus}\{x\}\in\mathbb{Q}_{\mathcal{D}}$ and $x=\stem(p)$.
\item If $p\in\mathbb{Q}_{\mathcal{D}}$, then there is $e<\gamma$, $p=p_e$.
\end{itemize}
Thus ``$\stem(p)=x$" is a $\Sigma_1$-formula in $\mathcal{V}_{\gamma}$.
\item[(3)] ``$p$ is a pure extension of $q$" is a $\Sigma_1$-formula in $\mathcal{V}_{\gamma}$.
\end{itemize}
\end{proposition}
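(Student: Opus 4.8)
The plan is to derive all three parts from three ingredients already in hand: the universal $V_\gamma$-RE set of Proposition \ref{universalreV}, the $\Sigma_1$-recursion theorem for $\mathcal{V}_\gamma$ (Proposition \ref{induction2}), and the canonical choice function $g$. The structural observation that makes everything go through is that, although $D_p$ for $p\in\mathbb{Q}_{\mathcal{D}}$ is only $V_\gamma$-RE, each level of $D_p$ is a subset of some $\gamma_n$ and hence $V_\gamma$-finite; in particular every branching set $A_\alpha$ of a condition is $V_\gamma$-finite, so ``$A_\alpha\in f_3(\alpha)$'' is a $\Delta_0$ assertion about objects one literally holds in hand. First I would note that $<_p$ is recoverable from $D_p$ in a $\Sigma_1$ way: $\alpha$ is the immediate $<_p$-predecessor of $\beta$ iff $\alpha,\beta\in D_p$ and $\beta\in g(\alpha)$ (this $\alpha$ is unique by clause (c) for $g$), and $h$ and $<_p$ follow by taking the transitive closure from $\stem(p)$, which is the $<$-least element of $D_p$. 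Thus a condition is coded by the single $V_\gamma$-RE set $D_p$, i.e. by an index $e$ with $D_p=W_e$.

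For part (1) I would build $R$ by a $\Sigma_1$-recursion (Proposition \ref{induction2}) that reads the universal enumeration of $W_e$ as a blueprint: commit as $\stem$ the first node $W_e$ proposes, and having committed a node $\alpha$, commit as its set of children the first $V_\gamma$-finite set $A$ that $W_e$ proposes for $\alpha$ with $A\subseteq g(\alpha)$, normalized to $A$ itself if $A\in f_3(\alpha)$ and to $g(\alpha)$ otherwise. The normalization is legitimate because $g(\alpha)\subseteq g(\alpha)$ and $g(\alpha)\in f_3(\alpha)$ by clause (b) for $g$; and at every stage the finite approximation and the parameter $f_3(\alpha)$ are $V_\gamma$-finite, so every test performed is $\Delta_0$ and $R$ is $V_\gamma$-RE. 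By construction each $R_e$ is a tree satisfying (i)--(iv), so $R_e\in\mathbb{Q}_{\mathcal{D}}$; and if $W_e$ is exactly the canonical code of a condition $p$ (its node set together with, for each node $\alpha$, the pair $(\alpha,A_\alpha^p)$ --- a $V_\gamma$-RE datum since $\alpha\mapsto A_\alpha^p$ is a $V_\gamma$-RE function), then the normalization never fires and $R_e=p$, so every condition is captured.

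For part (2) I would run the same construction with the index scheme adjusted so that $S_e$ enumerates $R_e$ with its $<$-least element --- which is $\stem(R_e)$ --- listed first; reading this back gives that ``$\stem(p)=x$'' is equivalent to ``there is $e$ with $p=R_e$ and $x$ the designated first element of $S_e$'', which is $\Sigma_1$ since the $R$- and $S$-indices are $\Sigma_1$-correlated. For part (3), ``$p$ is a pure extension of $q$'' unwinds to ``$\stem(p)=\stem(q)$ and $D_p\subseteq D_q$'', and using (1)--(2) I would verify this level by level: the stems agree, and for each common node $\alpha$ the branching set of $p$ at $\alpha$ is included in that of $q$ at $\alpha$, each such comparison being a $\Delta_0$ fact about the $V_\gamma$-finite branching sets, which are $\Sigma_1$-computable from indices for $p$ and $q$. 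The hard part throughout is exactly this bookkeeping: keeping the predicates genuinely $\Sigma_1$ rather than $\Pi_1$ or worse. This is where one must use, firstly, that $\mathbb{Q}_{\mathcal{D}}$-conditions have $V_\gamma$-finite levels, so that membership in the ultrafilters $f_3(\alpha)$ is decidable on approximations, and secondly, that $g$ supplies a uniform, $\Sigma_1$-definable ``fallback'' branching set lying in every $f_3(\alpha)$, so that no $\Pi_1$ search for a defect is ever required.
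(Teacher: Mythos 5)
The paper itself offers no argument for this proposition (it is introduced with ``From the definitions, we have''), so your sketch is supplying a proof where none is given; the ingredients you reach for --- the universal $V_{\gamma}$-RE set, Proposition \ref{induction2}, the disjointness of the $g(\alpha)$'s, and the $V_{\gamma}$-finiteness of each level --- are the right ones, and your observation that $<_p$ is recoverable from $D_p$ via $g$ matches the paper's own remark. But the sketch has a genuine gap exactly at the point you flag as the hard part. Your normalization only repairs a proposal that \emph{arrives and is invalid} (replace $A$ by $g(\alpha)$ when $A\notin f_3(\alpha)$); it does not handle an index $e$ whose blueprint $W_e$ \emph{never} proposes any $A\subseteq g(\alpha)$ for some committed node $\alpha$. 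For such $e$ your tree acquires a dead branch and $R_e\notin\mathbb{Q}_{\mathcal{D}}$, violating the first bullet of (1); and falling back to $g(\alpha)$ in that case requires recognizing ``no valid proposal ever appears,'' which is $\Pi_1$. The standard repair (accept a proposal only if it appears by a stage determined by $h(\alpha)$, otherwise default to $g(\alpha)$) restores totality but then surjectivity needs a promptness argument: one must find, for an arbitrary condition $p$, an index whose proposals for \emph{all} $\gamma_n$-many nodes at height $n$ appear by a single stage $f(n)$. That is an instance of $\Sigma_1$-bounding, which the paper stresses \emph{fails} in $\mathcal{V}_{\gamma}$; it has to be extracted instead from the rank-respecting $f_2$ and Proposition \ref{induction2}, and your sketch does not do this. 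A related unjustified step is your claim that the ``canonical code'' $\alpha\mapsto A_{\alpha}^{p}$ of an arbitrary condition is a $V_{\gamma}$-RE datum: clause (iv) only makes $D_p$ and $<_p$ $V_{\gamma}$-RE, and pinning down $A_{\alpha}=D_p\cap g(\alpha)$ as a \emph{completed} set requires the complement of $D_p$.

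Two smaller points. First, the $<$-least element of $D_p$ is the root, not $\stem(p)$; the stem may sit above a linear initial segment, which your recursion must read off separately (easy, but omitted, and it is what the coding in part (2) is designed to expose). Second, in part (3) the clause $D_p\subseteq D_q$ hides a quantifier $\forall n\in\omega$ over all levels; in a non-admissible structure $\forall n\in\omega\,\exists y\,\phi$ is not automatically $\Sigma_1$, so reducing it to a $\Delta_0$ bounded form (witnesses of rank below $\gamma_{f(n)}$, bounded via $f_1$ and $f_2$) is precisely the bookkeeping that must be carried out rather than asserted. None of these obstacles is fatal --- the conditions actually used later in the paper are all produced by explicit $\Sigma_1$ recursions for which prompt indices exist by construction --- but as written your proof does not establish the proposition for \emph{every} $e<\gamma$ and \emph{every} $p\in\mathbb{Q}_{\mathcal{D}}$.
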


Fix a $V_{\gamma}$-RE set described in proposition above (1). We want to obtain a branch of trees in some ``generic" subset of $\mathbb{Q}_{\mathcal{D}}$ to satisfies that it has minimal $V_{\gamma}$-degree.

\subsection{Construction of minimal $V_{\gamma}$-degrees}

Suppose $\mathbf{a}$ is a $V_{\gamma}$-degree. $\mathbf{a}$ is called a \emph{minimal
$V_{\gamma}$-degree} iff $\mathbf{a}>_{V_{\gamma}}\mathbf{0}$ and there is no $\mathbf{c}$ such that
$\mathbf{0}<_{V_{\gamma}}\mathbf{c}<_{V_{\gamma}}\mathbf{a}$.

Note that the sequence $\langle E_n^{\alpha}\colon n<\omega\rangle$, $p^{(x)}$, $p^A$, $p(n)$ are all defined as above. For every forcing condition $p\in\mathbb{Q}_{\mathcal{D}}$, $p(n)\in E_n^{p}$ and if $q$ is a subtree of $p$, then $q\in\mathbb{Q}_{\mathcal{D}}$ iff for all $n$, $q(n)\in E_n^q$ and $q$ is $V_{\gamma}$-RE. Corollary \ref{hanshufenlei2} for classification of functions are also valid here.

\begin{lemma}
Suppose $e,x<\gamma$. There is a $V_{\gamma}$-RE set $R'$ such that for each $t<\gamma$, $R'_t\in\mathbb{Q}_{\mathcal{D}}$ and $R'_t$ is a pure extension of $R_t$ such that $(a)$ or $(b)$ holds:
\begin{itemize}
\item[(a)] If $G$ is a branch of $R'_t$, $x\notin W_e^{N(G)}$.
\item[(b)] If $G$ is a branch of $R'_t$, $x\in W_e^{N(G)}$.
\end{itemize}
\end{lemma}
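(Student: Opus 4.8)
The plan is to run a standard density-plus-fusion argument inside $\mathcal{V}_\gamma$, treating the reduction $W_e^{N(G)}$ as a continuous process along branches and using the classification of functions (Corollary \ref{hanshufenlei2}) together with $\gamma_n$-completeness of the ultrafilters $E_n^p$ to thin each $R_t$ to a pure extension that decides ``$x\in W_e^{N(G)}$'' uniformly. First I would fix $e,x$ and $t$, and for the single tree $p=R_t$ define the dense set $D=\{q\le p\colon q\text{ decides whether }x\in W_e^{N(G)}\text{ along every branch through }q\}$; concretely, $q\in D$ if either (a) no branch $G$ of $q$ has $x\in W_e^{N(G)}$, or (b) there is a single $V_\gamma$-finite information $(K,L)$ with $K\subset\stem(q)$-part of every branch and $x\in W_e^{\text{(that information)}}$ witnessed by membership of the relevant pair in $W_e$, which then holds along every branch extending $\stem(q)$. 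The key point, exactly as in the Prikry-property proofs (Theorem after Lemma \ref{geo1} and the $\mathbb{P}_{\mathcal{D}}$ analogue), is: given any $q$, either some branch already forces $x\in W_e^{N(G)}$ via a finite piece of information lying on a fixed initial segment — in which case we thin to the pure extension forcing (b) — or, by iterating ``$\{y\in q(n)\colon q^{(y)}\text{ has a pure extension deciding }\}\in E_n^q$'' and using $\gamma_n$-completeness, we get a pure extension all of whose branches avoid putting $x$ into $W_e^{N(G)}$, giving (a). This is precisely Lemma \ref{geo0}/\ref{geo1} transplanted into $\mathbb{Q}_{\mathcal{D}}$, which is legitimate since, as noted just before the statement, $p(n)\in E_n^p$, the subtree characterization holds, and the function-classification corollary is valid here.

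Next I would make the construction uniform in $t$, i.e. produce the single $V_\gamma$-RE set $R'$. By Proposition \ref{619}(1) the family $\langle R_t\colon t<\gamma\rangle$ is $V_\gamma$-RE, and by Proposition \ref{619}(3) ``is a pure extension of'' is $\Sigma_1$ over $\mathcal{V}_\gamma$; the thinning operation described above is carried out by iterating a $\Sigma_1$ operator (deciding membership in $W_e$ is $\Sigma_1$, and at each stage we pick the $\vartriangleleft$-least witness, as in the proof of the Proposition fixing $g$), so Proposition \ref{induction2} applies to give a $\Sigma_1$-definable function stage-by-stage bounded appropriately — a sequence of length $<\gamma_n$ gets sent into some $V_{\gamma_{f(n)}}$ — hence the resulting tree $R'_t$ is $V_\gamma$-RE, and the map $t\mapsto R'_t$ is $V_\gamma$-RE, so $R'=\{J(y,t)\colon y\in R'_t\}$ is $V_\gamma$-RE. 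One must also check $R'_t\in\mathbb{Q}_{\mathcal{D}}$: conditions (i)--(iii) are inherited because we only pass to pure subtrees with each splitting set still in the right $f_3(\alpha)$ (using $\gamma_n$-completeness to keep $A_\alpha\in f_3(\alpha)$ after taking $\gamma_n$-many intersections), and (iv) is exactly the $V_\gamma$-RE-ness just argued.

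The main obstacle, I expect, is the bookkeeping in the ``deciding'' step: unlike deciding a sentence, membership $x\in W_e^{N(G)}$ depends on the whole branch $G$, so I need the alternative (b) to be witnessed by a piece of information that is already pinned down at a bounded height of the tree — otherwise thinning along one branch need not help another. The resolution is that $W_e^{N(G)}$ is a $\Sigma_1$-enumeration: if $x\in W_e^{N(G)}$ then there is a single $V_\gamma$-finite information $(K,L)$ for $G$ with $K\subset G$, $L\subset\gamma\setminus G$ and $J(x,\text{code}(K,L))\in W_e$; I thin $q$ so that this $(K,L)$ is determined by $\stem(q)$ together with the tree structure (every branch through $q$ agrees with $K$ on $K$ and avoids $L$), which is exactly where the disjointness property $A_\alpha\subset g(\alpha)$ and the pairwise-disjointness of $g$'s values are used — they guarantee that ``$\alpha\in G$'' for $\alpha$ below a fixed height is decided by the first few entries of the branch. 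With that observation the two cases (a),(b) genuinely partition and the fusion goes through. The rest is routine and parallels Lemma \ref{geo0}, so I would not belabor it.
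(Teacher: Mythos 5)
Your proposal is correct and follows essentially the paper's argument: the paper attaches to each node $\alpha$ at level $n$ the $V_{\gamma}$-finite set $B(\alpha)$ of all informations about a branch that are determined by the segment of the tree below $\alpha$, asks whether $\{\alpha\in p(n)\colon x\in W_e^{B(\alpha)}\}\in E_n^{p}$ for some $n$, and thins that level (case (b)) or diagonally intersects the complements (case (a)) — exactly your one-sided $\Sigma_1$ dichotomy, made uniform in $t$ by $\Sigma_1$-definability of the whole operator. One small correction to your bookkeeping: in case (b) the witness need not be a single $(K,L)$ contained in or determined by the stem — it varies with the node $\alpha$ at the chosen level — which is precisely why the paper works with the node-indexed family $W_e^{B(\alpha)}$ rather than one fixed pair.
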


\begin{proof}
Let us define an operator, input a forcing condition $p$ of $\mathbb{Q}_{\mathcal{D}}$, and output a pure extension $q$.
Begin from $p$. For $\alpha\in p$ such that $\alpha>_p\stem(p)$ and $\gamma_{n-1}<\alpha<\gamma_n$. Define
\[B(\alpha)=\{J(z,y)\colon f_2(z)\subset\{s\in p\colon s\leq_p\alpha\}\ \&\ f_2(y)\subset\gamma_n{\setminus}\{s\in p\colon s\leq_p\alpha\}\}.\]
Then $B(\alpha)\in V_{\gamma}$. Suppose $G$ is a branch of $p$, then we have $W_e^{N(G)}=\bigcup_{n=1}^{\infty}B(G(n))$.

Ask a question: is there $n<\omega$ such that $\{\alpha\in p(n)\colon x\in W_e^{B(\alpha)}\}\in E_n^p$?

This question is $V_{\gamma}$-RE. The reason is: $\varphi(w,\alpha)$ is the formula ``$w\in W_e^{B(\alpha)}$" is $\Sigma_1$ in $\mathcal{V}_{\gamma}$.

\noindent(1) If the answer is ``yes". Use the set $\{\alpha\in p(n)\colon x\in W_e^{B(\alpha)}\}$ to shrink $p(n)$, i.e. let $q=p^{\{\alpha\in p(n)\colon x\in W_e^{B(\alpha)}\}}$. Then $q$ is a pure extension of $p$ and $q$ is also a forcing condition of $\mathbb{Q}_{\mathcal{D}}$. If $G$ is a branch of $q$, then $G(n+|\stem(q)|-1)\in q(n)$, so \[x\in W_e^{B(G(n+|\stem(q)|-1))}\subset W_e^{N(G)}.\]

\noindent(2) If the answer is ``no". Then there is a pure extension $q$, such that if $\alpha\in q$, then $x\notin W_e^{B(\alpha)}$. Suppose If $G$ is a branch of $q$, since $W_e^{N(G)}=\bigcup_{n=1}^{\infty}B(G(n))$, so $x\notin W_e^{N(G)}$.
\end{proof}

\begin{lemma}
Suppose $e<\gamma$. There is a $V_{\gamma}$-RE set $\Omega$ such that for each $t<\gamma$, $\Omega_t\in\mathbb{Q}_{\mathcal{D}}$ and $\Omega_t$ is an extension of $R_t$ such that if $G$ is a branch of $\Omega_t$, $A\subset\gamma$ and $N(A)=W_e^{N(G)}$, then $(a)$ or $(b)$ holds:
\begin{itemize}
\item[(a)] $A$ is $V_{\gamma}$-recursive.
\item[(b)] $A\geq_{V_{\gamma}}G$.
\end{itemize}
\end{lemma}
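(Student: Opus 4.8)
The plan is to adapt Spector's $e$-splitting construction of a minimal degree to $\mathbb{Q}_{\mathcal D}$, using the $\mathbb{P}_{\mathcal D}$-analysis of Lemma \ref{gammaziji} as a template and carrying everything out effectively inside $\mathcal V_\gamma$. We define a single operator on conditions of $\mathbb Q_{\mathcal D}$, let $\Omega_t$ be its value at $R_t$, and take $\Omega$ to be the resulting $V_\gamma$-RE set coding $\langle\Omega_t\colon t<\gamma\rangle$ (using the universal $V_\gamma$-RE set of Proposition \ref{619}). Throughout, for a branch $G$ of a condition $p$ write $W_e^{N(G)}=\bigcup_n B(G(n))$, where (as in the preceding lemma) $B(x)$ is the $V_\gamma$-finite part of this enumeration forced by the $<_p$-path up to $x$, and for a node $x$ of height $k$ put $v_p(x)=W_e^{N(G)}\cap\gamma_k$ for $G$ through $x$. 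The first task is a pure extension (``stabilisation'') making each $v_p(x)$ well defined, i.e.\ independent of the branch through $x$: this is a Prikry-property shrinking, since there are fewer than $\gamma_{k+1}$ candidate values $c\subseteq\gamma_k$ and the statement ``$W_e^{N(G)}\cap\gamma_k=c$'' about the generic branch is decided by a pure extension; we do this at all nodes simultaneously, using $\gamma_{k+1}$-completeness of $\mathcal D(k+1,\cdot)$ and the fact that an $\omega$-intersection of pure extensions with a fixed stem is again a condition (the surviving levels stay measure-one, and the shrinking can be phrased $\Sigma_1$).

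With stabilisation in hand we process the tree from the stem and attach to each node $z$ a datum $(\tilde k(z),k(z))$ and a recorded value-function, exactly in the spirit of the predicate $\Phi(q,\mathcal A)$ of Lemma \ref{gammaziji}. At $z$ (of height $k$) look, for successive $n\ge 1$, at the function $x\mapsto v_p(x)$ on the level-$n$ descendants of $z$; by the Principle of Classification of Functions (Theorem \ref{hanshufenlei2}) it is, on a measure-one set, of one of the types $(0),\dots,(n)$. If for some $n$ it is not of type $(0)$, then at the least such $n$ there are $1\le m\le n$ and a pure extension of $p^{(z)}$ on which the level-$m$ value-function is one-to-one; we set $(\tilde k(z),k(z))=(m,n)$, record those pairwise distinct values, and recurse into each surviving level-$m$ descendant as a new root. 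If for every $n$ the value-function is of type $(0)$, then intersecting the countably many measure-one sets by completeness yields a pure extension of $p^{(z)}$ on which $W_e^{N(G)}$ is constant over all branches $G$ through $z$; we stop below $z$. Each such step must be driven by a $\Sigma_1$ question about $W_e$: by the device of the preceding lemma, ``$\{x\colon\varphi(x)\}\in E_n$'' with $\varphi$ a $\Sigma_1$ formula is again $\Sigma_1$ — the level is bounded, so its witnesses package into a single element of $V_\gamma$, and each $E_n$ is a $V_\gamma$-definable ultrafilter lying in $V_\gamma$, so ``$X\in E_n$'' is $\Delta_0$ — and non-type-$(0)$-ness of the level-$n$ value-function is witnessed at a finite enumeration stage where that function is $\Delta_0$. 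Running the whole construction by $\Sigma_1$-recursion on the tree (Proposition \ref{induction2}) produces a $V_\gamma$-RE condition $\Omega_t$ extending $R_t$, uniformly in $t$.

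Now fix a branch $G$ of $\Omega_t$ with $N(A)=W_e^{N(G)}$. If $G$ eventually reaches a ``stop'' node $z$, then past $z$ the set $W_e^{N(G)}$ is a fixed set depending only on $\Omega_t$ and $z$, hence $V_\gamma$-RE; so $N(A)$ is $V_\gamma$-RE and $A$ is $V_\gamma$-recursive — conclusion (a). Otherwise $G$ passes through infinitely many recorded splitting nodes $z_0<_{\Omega_t}z_1<_{\Omega_t}\cdots$. Since enumerating $N(A)$ also enumerates $\gamma\setminus A$ alongside $A$, membership in $N(A)$ is decidable from $N(A)$, and because the $z_i$'s and the recorded values are $V_\gamma$-RE absolutely, one reads the $z_i$'s off in turn: given $z_i$, exactly one of its level-$k(z_i)$ descendants in $\Omega_t$ has recorded value equal to the matching initial segment of $N(A)$, and that descendant is $z_{i+1}$. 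Hence $G$ is enumerable from $N(A)$, i.e.\ $A\ge_{V_\gamma}G$ — conclusion (b); unwinding through $N(\cdot)$ and $\NL(W)$ as in Lemma \ref{tonggou2} puts this in the exact shape of Definition \ref{def:vgammadegree}.

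The main obstacle is not the combinatorial dichotomy — that is the $\mathbb P_{\mathcal D}$ argument of Lemma \ref{gammaziji} fused with the classical $e$-splitting idea — but keeping the entire operator inside $\Sigma_1$ over $\mathcal V_\gamma$, so that every intermediate condition, and above all $\Omega_t$, is genuinely $V_\gamma$-RE. This is exactly why each shrinking has to be triggered by a positive, finite-stage $\Sigma_1$ search mentioning $W_e$, why it matters that all the ultrafilters $E_n$ lie in $V_\gamma$, and why the recursion on the tree must be carried by Proposition \ref{induction2}; it is also why the honest dichotomy is ``the branch meets infinitely many splitting nodes'' versus ``it eventually meets a stop node'', rather than the non-effective global alternative ``there is an $e$-splitting pair somewhere below'' versus ``the value is constant everywhere''. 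A secondary point needing care is verifying that the recovery in case (b) yields $G$ itself in the form demanded by $\le_{V_\gamma}$ (and not merely a cofinal subsequence of its nodes), where one leans on the flexibility of $V_\gamma$-degrees under the coding $A\mapsto A\oplus W$ of Section 6.4.
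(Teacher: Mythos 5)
Your proposal is correct and follows essentially the same route as the paper's proof: a stabilisation step making $W_e^{N(G)}\cap\gamma_k$ depend only on the node, followed by the classification-of-functions dichotomy (value function constant at every level, yielding a $V_\gamma$-RE value of $W_e^{N(G)}$ and hence conclusion (a), versus a one-to-one level that is recorded and recursed on, yielding conclusion (b)), with every shrinking driven by a $\Sigma_1$ question and the whole operator assembled by Proposition \ref{induction2}. The only divergence is that you omit the paper's intermediate step of first deciding on a measure-one set whether $W_e^{N(G)}$ has the form $N(T)$, working with $W_e^{N(G)}\cap\gamma_k$ directly instead; since the lemma's conclusion is conditional on $N(A)=W_e^{N(G)}$, this is a harmless streamlining rather than a different argument.
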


\begin{proof}
Let us define an operator, input a forcing condition $p$ of $\mathbb{Q}_{\mathcal{D}}$, and output an extension $q$.
Begin from $p$.

\noindent\underline{\emph{Step 1.}} Shrink $p$ to a pure extension $p_1$ such that for all $\alpha\in p_1$ such that $\alpha>_{p_1}\stem(p_1)$ and $\gamma_{n-1}<\alpha<\gamma_n$, for all $x<\gamma_n$, then (a) or (b) happens:
\begin{itemize}
\item[(a)] if $G$ is a branch of $p_1^{(\alpha)}$, then $x\in W_e^{N(G)}$.
\item[(b)] if $G$ is a branch of $p_1^{(\alpha)}$, then $x\notin W_e^{N(G)}$.
\end{itemize}
The reason is: for $\gamma_n$ many $\alpha$, such that $\gamma_{n-1}<\alpha<\gamma_n$, intersection of $p^{(\alpha)}$ is also a forcing condition of $\mathbb{Q}_{\mathcal{D}}$; for one $x<\gamma_n$, the lemma above give us the method to shrink the condition.

This step output $p_1$ and the function $F$ with domain $p_1$, the value on $\alpha$ is the subset of $\gamma_n$ consists those $x$ which is in $W_e^{N(G)}$ for every branch of $p_1^{\alpha}$. Then $p_1\in\mathbb{Q}_{\mathcal{D}}$ and the graph of $F$ is $V_{\gamma}$-RE.

\noindent\underline{\emph{Step 2.}} Ask a question: is there $n<\omega$, such that
\[\{\alpha\in p_1(n)\colon \exists T\subset\gamma_{n+|\stem(p_1)|-1}\ F(\alpha)=\{J(x,y)\colon f_2(x)\subset T\mathrel{\&} f_2(y)\subset\gamma_n{\setminus}T\}\}\notin E_n^{p_1}?\]

\noindent(1) If the answer is ``yes". Then shrink $p_1$ to $p_2$ using the set in question. Then for all $G$, a branch of $p_2$, $W_e^{N(G)}$ is not $N(A)$ for some $A\subset\gamma$. Then our operator stops.

\noindent(2) If the answer is ``no". Then shrink $p_1$ to $p_2$ using the set in question. Output the condition $p_2$ and a function $F_1$ on $p_2$, $F_1(\alpha)$ is the $0$-$1$ sequence of length $\gamma_n$ which is determined by $p_2^{(\alpha)}$. Then go to the next step.

\noindent\underline{\emph{Step 3.}}
Given a forcing condition $p_2$ as the output of step 2, ask a question: is there $n<\omega$, such that $F_1{\uh}p_2(n)$ is not $E_n^{p_2}$-equivalent to a constant function?

\noindent(1) If the answer is ``yes". Then by the principle of classification of functions, there is $0<n'\leq n$, $F_1$ is an one-to-one function on $p_2(n')$. shrink $p_2$ to $p_3$ by the set in the question. Output $p_3$, $n'$ and $F_1{\uh}p_2(n')$.

\noindent(2) If the answer is ``no". Then shrink $p_2$ to $p_3$ by the set in the question. For each $G$, a branch of $p_3$, $W_e^{N(G)}$ is $N(A)$ for some $V_{\gamma}$-recursive set $A$. The operator stops.

And then repeat this process for $p_3^{(x)}$ for all $x\in p_3(n')$. Inductively, we have defined the operation at step 3.
We have two cases:

\noindent\emph{Case 1.} Step 3 does not stop at any stage.
Then we obtain a pure extension of $p_2$ and an one-to-one function, for any branch $G$ of $p_2$, $A\geq_{V_{\gamma}}G$.

\noindent\emph{Case 2.} Step 3 stops at some stage.
Then there is $V_{\gamma}$-recursive set $A$ such that $W_e^{N(G)}=N(A)$.

By the induction principle in $\mathcal{V}_{\gamma}$, the output of step 3 is also $V_{\gamma}$-RE. So we complete the proof of this lemma.
\end{proof}

Use $\Omega^{(e)}$ to denote the $V_{\gamma}$-RE set described in the above lemma.

\begin{lemma}
Suppose $e<\gamma$. There is a $V_{\gamma}$-RE set $\Theta$ such that for each $t<\gamma$, $\Theta_t\in\mathbb{Q}_{\mathcal{D}}$ and $\Theta_t$ is a pure extension of $R_t$ such that if $G$ is a branch of $\Theta_t$ then $N(G)\neq W_e$.
\end{lemma}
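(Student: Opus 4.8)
The plan is to reduce the requirement $N(G)\neq W_e$ to a single, stem-level diagonalization against $W_e$, exactly as in the classical argument that a Sacks-generic real is not recursive. Fix a condition $p\in\mathbb{Q}_{\mathcal{D}}$, let $\beta=\stem(p)$ and let $I=(\gamma_{h(\beta)},\gamma_{h(\beta)+1})$, so that $A_\beta^p=p(1)\in f_3(\beta)$ is a nonprincipal normal measure-one set sitting inside $I$. The key observation is this: \emph{if} some branch $G$ of $p$ satisfied $N(G)=W_e$, then $\beta\in G$, and for every successor $\delta\in A_\beta^p$ other than the one $\delta_0$ lying on $G$ we would have $\{\delta\}\subseteq\gamma\setminus G$, hence $J(f_2^{-1}(\emptyset),f_2^{-1}(\{\delta\}))\in N(G)=W_e$; while $\delta_0\in G$ forces $J(f_2^{-1}(\emptyset),f_2^{-1}(\{\delta_0\}))\notin W_e$. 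Thus the set
\[C=\{\delta\in A_\beta^p\colon J(f_2^{-1}(\emptyset),f_2^{-1}(\{\delta\}))\in W_e\},\]
which is $V_\gamma$-RE uniformly in $e$ and in the index of $p$, would have to equal $A_\beta^p\setminus\{\delta_0\}$, and in particular $C\in f_3(\beta)$.

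This immediately suggests the operator $p\mapsto q$ driving the proof. If $C\in f_3(\beta)$, set $q=p^{C}$; this is a legitimate pure extension of $p$ (its stem is still $\beta$ and $A_\beta^q=C\in f_3(\beta)$), it is $V_\gamma$-RE because $C$ is and shrinking a $V_\gamma$-RE tree along a $\Sigma_1$-definable measure-one subset of one level preserves $V_\gamma$-RE-ness, and every branch $G$ of $q$ now takes its first step $\delta_G$ \emph{inside} $C$, so $J(f_2^{-1}(\emptyset),f_2^{-1}(\{\delta_G\}))\in W_e$ although $\delta_G\in G$, giving $N(G)\neq W_e$. If instead $C\notin f_3(\beta)$, set $q=p$: by the observation of the first paragraph no branch of $p$ can have $N(\cdot)=W_e$ at all (such a branch would force $C=A_\beta^p\setminus\{\delta_0\}\in f_3(\beta)$, a contradiction), so $q=p$ already works. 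In either case $q$ is a pure extension of $p$ every branch of which satisfies the conclusion, so one application to $R_t$ (the universal condition of Proposition \ref{619}) yields $\Theta_t$, and one reads off $\Theta=\{J(x,t)\colon x\in\Theta_t,\ t<\gamma\}$.

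The combinatorial content is therefore short; the main obstacle is the \emph{uniformity}, i.e.\ verifying that $\Theta$ is genuinely $V_\gamma$-RE, which amounts to showing that the case split ``$C\in f_3(\beta)$?'' together with the resulting domain is $\Sigma_1$-definable in $\mathcal{V}_\gamma$. Because $\mathcal{V}_\gamma$ is not admissible one cannot invoke $\Sigma_1$-boundedness, so this has to be done by hand: one uses that $C$ is a subset of $\gamma_{h(\beta)+1}$ and $f_3(\beta)$ is $\gamma_{h(\beta)+1}$-complete and normal to phrase the whole operator via the universal $V_\gamma$-RE sets $R,S$ of Proposition \ref{619} and the $\Sigma_1$-recursion principle of Proposition \ref{induction2}, exactly as the two preceding lemmas handle their operators. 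Getting this bookkeeping exactly right — in particular checking that the conditional thinning to $C$ does not smuggle in a $\Pi_1$ predicate — is the delicate step; the rest follows the pattern already established for $\Omega^{(e)}$ and for the ``splitting'' lemma.
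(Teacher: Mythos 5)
Your combinatorial core is correct, and it is a genuinely different route from the paper's. You diagonalize at one level with \emph{negative} information: the codes $J(f_2^{-1}(\emptyset),f_2^{-1}(\{\delta\}))$ lie in $N(G)$ exactly when $\delta\notin\ran(G)$, so a branch with $N(G)=W_e$ forces $C=A_\beta^p\setminus\{\delta_0\}$, hence $C\in f_3(\beta)$; the dichotomy on whether the r.e.\ set $C$ is $f_3(\beta)$-large then settles everything at the first splitting level. The paper instead diagonalizes with \emph{positive} information at every level: for each $n$ it considers the set of $\alpha\in p(n)$ for which some $x\in W_e$ has $(x)_0$ coding the initial segment $\{t\in p\colon t\leq_p\alpha\}$, and argues that either some level fails to be $E_n^p$-large (then the branch's own initial segment lies in $N(G)\setminus W_e$) or all are (then the witness attached to an off-branch node of that level lies in $W_e\setminus N(G)$). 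Your version is more economical, and your ``large'' case is cleaner because you thin to the r.e.\ set $C$ itself rather than to the complement of an r.e.\ set.

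That said, the step you defer as ``bookkeeping'' is the only real difficulty, and as written it is a gap. Writing $C=\bigcup_{n}C^{(n)}$ for the stage approximations, countable completeness of $f_3(\beta)$ makes ``$C\in f_3(\beta)$'' a $\Sigma_1$ statement (some completed $V_\gamma$-finite $C^{(n)}$ is already measure one), but ``$C\notin f_3(\beta)$'' is $\Pi_1$. Since your operator outputs \emph{different} trees in the two cases, the natural description of $D_{\Theta_t}$ --- admit $x$ iff its level-$(h(\beta)+1)$ ancestor lies in $C$, \emph{or} $C\notin f_3(\beta)$ --- contains a $\Pi_1$ disjunct, and a monotone $\Sigma_1$ enumeration cannot wait for the $\Sigma_1$ event and then retract nodes already committed above points of $A_\beta^p\setminus C$. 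Closing this requires an actual idea, not bookkeeping: e.g.\ replace $C$ by the first approximation $C^{(m)}\in f_3(\beta)$ when the event fires (so the thinned level is a genuine $V_\gamma$-finite set), and let the level at which you perform the singleton diagonalization move with the enumeration, so that every branch passes through some node $\alpha$ that entered $D_{\Theta_t}$ only after its event fired (or for which $C_\alpha\notin f_3(\alpha)$, in which case no action is needed). In fairness, the paper's own proof asserts ``this question is $V_\gamma$-RE'' for a question of the same or higher complexity and in one case thins to the complement of an r.e.\ set, so it faces the identical uniformity problem; but your write-up explicitly leaves the crux open.
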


\begin{proof}
We will define an operator on forcing conditions in $\mathbb{Q}_{\mathcal{D}}$:
Begin from $p\in\mathbb{Q}_{\mathcal{D}}$, let us define a pure extension of $p$.

Ask the question: is there $n<\omega$, such that \[\{\alpha\in p(n)\colon \exists x\in W_e\ f_2^{-1}[\{t\in p\colon t\leq_p \alpha\}]=(x)_0\}\notin E_n^{p}\ ?\]

\noindent(1) If the answer is ``yes", let $n$ is the least witness. Then we obtain a pure extension $q$ by shrinking $p$ on the $n$-th level by the set in the question. Then $W_e$ is uncountable. But if $G$ is a branch of $q$, then $N(G)$ is countable, so $N(G)\neq W_e$.

\noindent(2) If the answer is ``no", then we obtain a pure extension $q$ by shrinking $p$ on the $n$-th level by complement of the set in the question. If $G$ is a branch of $q$, then $W_e\neq N(G)$.

This question is $V_{\gamma}$-RE. The result $q$ is also a forcing condition of $\mathbb{Q}_{\mathcal{D}}$. Thus we can define the $V_{\gamma}$-RE set $\Theta$.
\end{proof}

Use $\Theta^{(e)}$ to denote the $V_{\gamma}$-RE set described in the above lemma.

\begin{theorem}\label{jixiao}
There exists minimal $V_{\gamma}$-degree.
\end{theorem}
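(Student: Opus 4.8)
The plan is to run a Spector-style minimal-degree construction inside the model $\mathcal{V}_\gamma$, with $\mathbb{Q}_{\mathcal{D}}$ playing the role of Sacks' perfect-tree forcing and the three shrinking lemmas just proved (producing the $V_\gamma$-RE families $R'$, $\Omega^{(e)}$, $\Theta^{(e)}$) as the tools for meeting requirements.

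\emph{Requirements.} I would meet three families of requirements, all indexed by ordinals below $\gamma$: for $n<\omega$, a totality requirement $\mathcal{E}_n$ demanding $|\stem(G)|>n$; for $e<\gamma$, a requirement $\mathcal{N}_e$ demanding $N(G)\neq W_e$, met by $\Theta^{(e)}$; and for $e<\gamma$, a minimality requirement $\mathcal{M}_e$ demanding that whenever $N(A)=W_e^{N(G)}$ for some $A\subseteq\gamma$, either $A$ is $V_\gamma$-recursive or $A\geq_{V_\gamma}G$, met by $\Omega^{(e)}$. I would list these in order type $\gamma$ following the block partition of $\gamma$ into intervals $[\gamma_{n-1},\gamma_n)$ (with $\gamma_{-1}:=0$): at the first stage of block $n$ meet $\mathcal{E}_n$, and at the remaining $\gamma_n$-many stages meet the $\mathcal{M}_e,\mathcal{N}_e$ with $e$ in that block. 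Since the outputs of $\Omega^{(e)}$ and $\Theta^{(e)}$ are pure extensions, the stem is altered only $\omega$-many times in all.

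\emph{The construction.} I would build a $\leq$-descending sequence $\langle q_\xi:\xi<\gamma\rangle$ of conditions of $\mathbb{Q}_{\mathcal{D}}$, recorded via indices $t_\xi<\gamma$ with $R_{t_\xi}=q_\xi$ (Proposition \ref{619}(1)): start with a fixed full condition; at a successor stage set $q_{\xi+1}$ equal to $\Omega^{(e)}_{t_\xi}$, to $\Theta^{(e)}_{t_\xi}$, or (for an $\mathcal{E}_n$-stage) to $q_\xi^{(x)}$ for the $\vartriangleleft$-least $x\in q_\xi$ with $\stem(q_\xi)<_{q_\xi}x$ and $h(x)>n$; at a limit $\lambda$ set $q_\lambda=\bigcap_{\xi<\lambda}q_\xi$. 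Two points need care. First, each $q_\lambda$ must again lie in $\mathbb{Q}_{\mathcal{D}}$: inside block $n$ (past its first stage) all the $q_\xi$ share one stem, so at each level $m\geq 1$ fewer than $\gamma_n$ many subsets of the $\gamma_{n+m}$-complete ultrafilter $E_m$ are intersected, whence the intersection stays in $E_m$ (Proposition \ref{lambdacomplet}) and $q_\lambda\in\mathbb{Q}_{\mathcal{D}}$; one also checks $q_\lambda$ stays $V_\gamma$-RE. Second, the maps $t_\xi\mapsto t_{\xi+1}$ and the limit step are uniformly $V_\gamma$-RE, by the uniform $V_\gamma$-RE-ness of $R$, $\Omega^{(e)}$, $\Theta^{(e)}$ plus an $s$-$m$-$n$-style indexing; so by the recursion principle Proposition \ref{induction2} the whole sequence $\langle t_\xi:\xi<\gamma\rangle$, and hence the branch $G:=\bigcup_{\xi}\stem(q_\xi)$, is $\Sigma_1$-definable in $\mathcal{V}_\gamma$ and in particular exists.

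\emph{Verification and the obstacle.} The $\mathcal{E}_n$'s make $G$ a total height-$\omega$ branch, and since the chain descends and stems only grow, $G$ is a branch of every $q_\xi$. From the $\mathcal{N}_e$'s, $N(G)\neq W_e$ for all $e<\gamma$, so $N(G)$ is not $V_\gamma$-RE (Proposition \ref{universalreV}), i.e. $G>_{V_\gamma}\emptyset$. If $A\leq_{V_\gamma}G$, fix $e$ with $N(A)=W_e^{N(G)}$; since $G$ is a branch of the relevant $\Omega^{(e)}_t$, the minimality lemma gives that $A$ is $V_\gamma$-recursive (so $A\equiv_{V_\gamma}\emptyset$) or $A\geq_{V_\gamma}G$ (so $A\equiv_{V_\gamma}G$). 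Hence the $V_\gamma$-degree of $G$ is minimal. The hard part is the two-sided bookkeeping in the construction: keeping the limit conditions inside $\mathbb{Q}_{\mathcal{D}}$ forces us to intersect only $<\gamma_n$-many conditions at a time (whence the block decomposition), while keeping the construction $\Sigma_1$ over $\mathcal{V}_\gamma$ forces the recorded states to remain $V_\gamma$-finite; reconciling these is exactly where the hypotheses that each $\gamma_n$ carries $\gamma_{n-1}$ normal measures (so the summed ultrafilters $E_m$ are highly complete) and that $\gamma$ is $\Sigma_n$-admissible for every $n$ are used.
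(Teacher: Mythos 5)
Your proposal is correct and follows essentially the same route as the paper: the paper's own proof is an $\omega$-step fusion in which at stage $n$ all $\gamma_n$-many pure extensions $\Omega^{(e)}_{x}$, $\Theta^{(e)}_{x}$ for $e$ in the current block are intersected at once (using the completeness of the relevant ultrafilters, i.e.\ what the paper calls the $\gamma_{n+1}$-completeness of $p_n$) and then the stem is extended by one element, so your $\gamma$-length recursion with one requirement per successor stage and intersections at limits is just a finer-grained presentation of the same block construction. The essential ingredients --- the uniformly $V_{\gamma}$-RE families $\Omega^{(e)}$ and $\Theta^{(e)}$, completeness of the summed ultrafilters to justify the intersections, $\Sigma_1$-recursion in $\mathcal{V}_{\gamma}$ for definability of the resulting branch, and the verification of minimality from the two lemmas --- are identical.
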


\begin{proof}
Let us construct a sequence $G\colon\omega{\setminus}\{0\}\rightarrow\gamma$.
Begin from condition $p_0$. $p_0$ is a condition of $\mathbb{Q}_{\mathcal{D}}$ such that
$h(\stem(p_0))=0$. Define $G(0)=\stem(p_0)$.

$p_0=R_{x_0}$ ($R_t$ is defined in Proposition \ref{619}).
$\langle(\Omega^{(e)}_{x_0},\Theta^{(e)}_{x_0})\mid{e<\gamma_0}\rangle$
is $\Sigma_1$-definable in $\mathcal{V}_{\gamma}$. Since $p_0$ has
$\gamma_1$-completeness(this means less than $\gamma_1$ many pure extensions of $p_0$ can be fused to a stronger pure extension of $p_0$.)
Thus \[\tilde{p_0}=\bigcap_{e<\gamma_0}\Omega^{(e)}_{x_0}\cap\bigcap_{e<\gamma_0}\Theta^{(e)}_{x_0}\]
is also a forcing condition of $\mathbb{Q}_{\mathcal{D}}$. Define $G(1)$ is the least element of
$\tilde{p_0}(1)$. Define $p_1=\tilde{p_0}^{G(1)}$.

Since $p_1$ has $\gamma_2$-completeness, so we can continue this process $\omega$ times. So we construct $G$, from two lemmas above, we know $G$ has minimal $V_{\gamma}$-degree.
\end{proof}

The following theorem is our conclusion, it is also the solution of question \ref{question}.

\begin{theorem}\label{thm:zuizhong}
Suppose $\langle\gamma_n\colon n<\omega\rangle$  is a strictly increasing sequence of measurable cardinals such that for each $n>0$,  $\gamma_n$ carries at least $\gamma_{n-1}$-many normal measures. Let $\gamma=\sup\{\gamma_n\colon n<\omega\}$. 
Then there is an $A\subset\gamma$ such that
\begin{itemize}
\item[(a)] $(L_{\gamma},\in,A)$ is not admissible.
\item[(b)] The $\gamma$-degree that contains $A$ has a minimal cover.
\end{itemize}
\end{theorem}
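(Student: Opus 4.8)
The plan is to build $A$ by combining the $V_\gamma$-RE set $W=W_0\oplus W_1\oplus W_2$ that codes $\mathcal{V}_\gamma$ (so $(\gamma,W_0,W_1,{<},W_2)$ is isomorphic to $\mathcal{V}_\gamma$, as in \S6.3) with an auxiliary set that makes the singular sequence visible to $L_\gamma$. Explicitly I would set
\[
C=\{J(n,\xi)\colon n<\omega\ \&\ \xi<\gamma_n\},\qquad A=C\oplus W .
\]
First I would check that $C$ is $V_\gamma$-recursive: since $f_1(n)=\gamma_n$ is a primitive of $\mathcal{V}_\gamma$, for a $V_\gamma$-finite parameter $K$ the statements ``$K\subseteq C$'' and ``$K\subseteq\gamma\setminus C$'' are $\Delta_0$ over $\mathcal{V}_\gamma$, so $N(C)$ is $V_\gamma$-RE, i.e.\ $C\equiv_{V_\gamma}\emptyset$. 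Applying Lemma~\ref{tonggou2} with $\emptyset\geq_{V_\gamma}C$ gives $W\equiv_\gamma\emptyset\oplus W\geq_\gamma C$, hence $C\leq_\gamma W$ and therefore $A\equiv_\gamma W$; in particular the $\gamma$-degree of $A$ is $\mathbf{w}$.

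For clause (b) the argument is purely order-theoretic. The degree $\mathbf{w}$ is the least element of $\Cone(\mathbf{w})$, and by Theorem~\ref{tonggou} there is an order isomorphism $\Phi\colon(\Cone(\mathbf{w}),\geq_\gamma)\to(\mathcal{S},\geq_{V_\gamma})$; since an isomorphism preserves least elements, $\Phi(\mathbf{w})=\mathbf{0}$. By Theorem~\ref{jixiao} there is a minimal $V_\gamma$-degree $\mathbf{m}$, that is, a minimal cover of $\mathbf{0}$ in $\mathcal{S}$. Put $\mathbf{b}=\Phi^{-1}(\mathbf{m})$. Then $\mathbf{b}>_\gamma\mathbf{w}$, and if some $\gamma$-degree satisfied $\mathbf{w}<_\gamma\mathbf{c}<_\gamma\mathbf{b}$, then $\mathbf{c}\geq_\gamma\mathbf{w}$ would place $\mathbf{c}$ in $\Cone(\mathbf{w})$, whence $\mathbf{0}<_{V_\gamma}\Phi(\mathbf{c})<_{V_\gamma}\mathbf{m}$, contradicting the minimality of $\mathbf{m}$. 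So $\mathbf{b}$ is a minimal cover of the $\gamma$-degree of $A$.

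For clause (a) I would exhibit a total $\Sigma_1$ function $\omega\to\gamma$, definable over $(L_\gamma,\in,A)$, whose range is cofinal in $\gamma$. From $A$ one recovers $C$ by a $\Delta_0$ reading of its first coordinate, and then ``$y=\gamma_n$'' is equivalent to ``$y\in\Ord\ \&\ (\forall\xi\in y)\,J(n,\xi)\in C\ \&\ J(n,y)\notin C$'', which is $\Sigma_1$ over $(L_\gamma,\in,A)$ (indeed $\Delta_0$ over $(L_\gamma,\in,C)$, using that pairing is $\Delta_0$ on ordinals). This defines a total map on $\omega$ with range $\{\gamma_n\colon n<\omega\}$. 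If $(L_\gamma,\in,A)$ were admissible, then $\Sigma_1$-collection applied with $u=\omega$ would yield $z\in L_\gamma$ with $\gamma_n\in z$ for every $n$; but $L_\gamma\models\ZFC$, so $z\cap\Ord$ is bounded below $\gamma$, contradicting $\sup_n\gamma_n=\gamma$. Hence $(L_\gamma,\in,A)$ is not admissible, and together with the Proposition opening \S6.1 (each such $\gamma$ is $\Sigma_n$-admissible for all $n$) this answers Question~\ref{question}.

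The main obstacle I anticipate is getting clause (b) exactly right: verifying that the minimal cover furnished by Theorem~\ref{jixiao} transfers along $\Phi$ to a minimal cover in the \emph{full} $\gamma$-degree ordering rather than merely within the cone — handled above by the observation that anything $\gamma$-below a member of $\Cone(\mathbf{w})$ and $\gamma$-above $\mathbf{w}$ already lies in $\Cone(\mathbf{w})$ — and confirming that $\Phi$ pairs least element with least element. The remaining points, that $C$ is $V_\gamma$-recursive and that adjoining $C$ makes the failure of admissibility completely explicit, are routine.
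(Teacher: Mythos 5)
Your proposal is correct and follows essentially the same route as the paper, which simply derives the theorem from Theorem~\ref{tonggou} and Theorem~\ref{jixiao}: pull the minimal $V_\gamma$-degree back through the cone isomorphism, noting that any degree strictly between $\mathbf{w}$ and its cover already lies in $\Cone(\mathbf{w})$. Your explicit choice $A=C\oplus W$ and the $\Sigma_1$-cofinal-function argument for clause (a) merely supply details the paper leaves implicit (there $A$ is in effect $W$ itself, from which $n\mapsto\gamma_n$ is likewise recoverable via $W_1$), and both verifications are sound.
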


\begin{proof}
This follows from Theorem \ref{tonggou} and Theorem \ref{jixiao}.
\end{proof}
\begin{remark}
  Such sequence of measure cardinals can be found in an inner model for
  $o(\kappa)=\kappa$. Generalized degree structures at uncountable cardinals
  (especially of countable cofinality) are discussed in~\cite{ShiWoodin2012}.
  This result is a critical stage of their big picture.
\end{remark}

\subsection*{Acknowledgements}
During
the preparation of this paper, Andreas Blass, Moti Gitik answered my questions regarding ultrafilters, Yang Yue, Shi Xianghui and Yu Liang spent time with me for many stimulating discussions, I would like to thank them all.

\end{document}